\documentclass{article}
\usepackage[a4paper]{geometry}
\usepackage{graphicx} 
\usepackage{mathtools,amssymb,bm,amsthm}
\usepackage{ytableau}
\usepackage{algorithm2e}[ruled]
\usepackage{tikz}
\usepackage{tikz-cd}

\newcommand{\cone}[2]{\mathrm{Cone}_{#1}(#2)}

\newcommand{\modulo}{\ \mathrm{mod} \ }
\newcommand{\qinteger}[1]{(#1)_q}
\newcommand{\qfactorial}[1]{(#1)_q^!}
\newcommand{\qbinom}[2]{\binom{#1}{#2}_q}
\newcommand{\projbound}[1]{p(#1,\ell)}
\newcommand{\injbound}[1]{i(#1,\ell)}
\newcommand{\composition}[2]{\Lambda(#1,#2)} 
\newcommand{\flag}[1]{\mathcal{F}_{#1}} 
\newcommand{\leftcoset}[1]{\mathcal{D}_{#1}} 
\newcommand{\rightcoset}[1]{\mathcal{D}^{-1}_{#1}} 
\newcommand{\doublecosetrepr}[2]{\mathcal{D}_{#1,#2}} 
\newcommand{\orbit}[1]{\mathcal{O}_{#1}} 
\newcommand{\matrixwithfixedrowcol}[2]{\Theta_{#1,#2}}
\newcommand{\matrixwithfixedsum}[3]{\Theta(#1,#2)_{#3}}
\newcommand{\compositionsanszero}[2]{\Lambda_{>0}(#1,#2)}

\newcommand{\refinement}[2]{\mathbf{Ref}_{#1}(#2)}
\newcommand{\field}{\Bbbk} 
\newcommand{\ffield}{\mathbb{F}_{q}} 
\newcommand{\GL}[1]{\mathbf{GL}_{#1}} 
\newcommand{\symgroup}[1]{\mathcal{S}_{#1}} 
\newcommand{\qSchurAlg}[2]{S_q(#1,#2)} 
\newcommand{\hecke}[1]{\mathcal{H}_{#1}} 
\newcommand{\cuspidal}[1]{C_{#1}} 
\newcommand{\quantumfunctor}[1]{\mathcal{P}^{#1}_q}
\newcommand{\polyfunctor}[1]{\mathcal{P}^{#1}_1}
\newcommand{\source}[1]{\Gamma^{#1}_q \mathcal{V}}
\newcommand{\module}[1]{#1\mathrm{-mod}}
\newcommand{\cible}{\mathcal{V}}

\newcommand{\flagmod}[1]{\field[\mathcal{F}_{#1}]} 
\newcommand{\qSchur}[3]{S_q(#1 \times #2 , #3)}
\newcommand{\triv}[1]{\mathcal{I}_{#1}} 
\newcommand{\sign}[1]{\mathcal{E}_{#1}} 
\newcommand{\dividedmod}[1]{\dot{\Gamma}^{#1}} 
\newcommand{\symflag}[1]{\dot{S}^{#1}}
\newcommand{\Image}{\mathrm{Im}} 
\newcommand{\standard}[2]{V_{#1}^{\otimes #2}} 
\newcommand{\weightmod}[1]{V_{#1}} 

\newcommand{\extreso}[2]{C_{#1}(#2)}
\newcommand{\ribext}[2]{D_{#1}(#2)}
\newcommand{\Induction}[2]{\mathrm{Ind}_{#1}^{#2}} 
\newcommand{\Restriction}[2]{\mathrm{Res}_{#1}^{#2}} 
\newcommand{\Hom}[1]{\mathrm{Hom}_{#1}} 
\newcommand{\Endo}[1]{\mathrm{End}_{#1}}
\newcommand{\annihilator}[1]{\mathrm{ann}_{#1}} 
\newcommand{\Ext}[2]{\mathrm{Ext}_{#1}^{#2}}

\newcommand{\qdivided}[1]{\Gamma_q^{#1}}
\newcommand{\qexterior}[1]{\Lambda_q^{#1}}
\newcommand{\Schur}[1]{\mathbb{S}_{#1}}
\newcommand{\qtensor}[1]{I_q^{\otimes #1}}
\newcommand{\qsymmetric}[1]{S_q^{#1}}
\newcommand{\Weyl}[1]{\mathbb{W}_{#1}}
\newcommand{\Simple}[1]{\mathbb{L}_{#1}}

\newcommand{\Coproduct}[1]{\Delta^{#1}}

\newcommand{\divided}[1]{\Gamma^{#1}}
\newcommand{\exterior}[1]{\Lambda^{#1}}
\newcommand{\tensor}[1]{I^{\otimes #1}}
\newcommand{\symmetric}[1]{S^{#1}}
\newcommand{\Id}{\mathrm{Id}}
\newcommand{\Vect}{\mathrm{Vect}}
\newcommand{\row}[1]{\mathrm{row}(#1)}
\newcommand{\col}[1]{\mathrm{col}(#1)}
\newcommand{\twist}[1]{#1^{(1)_q}}
\newcommand{\twistr}[2]{#1^{(#2)_q}}
\newcommand{\qdual}[1]{(#1)^{*}}
\newcommand{\qdualwithoutparenthesis}[1]{#1^{*}}
\newcommand{\contradual}[1]{#1^{\tau}}
\newcommand{\diagonal}{\Delta}
\newcommand{\sumfunctor}{\Sigma}
\theoremstyle{definition}
    \newtheorem{definition}{Definition}[section]
\theoremstyle{plain}
    \newtheorem{lemma}[definition]{Lemma}
    \newtheorem*{lemma*}{Lemma}
    \newtheorem{corollary}[definition]{Corollary}
    \newtheorem{proposition}[definition]{Proposition}
    \newtheorem{theorem}[definition]{Theorem}
    \newtheorem{example}[definition]{Example}

\title{Cohomology of $\GL{d}(\ffield)$ in non-defining characteristic via the quantum schur algebra}
\author{Theo Deturck}
\date{May 2026}

\begin{document}

\maketitle

\begin{abstract}
    Let $G = \GL{d}(\ffield)$ be the general linear group over a field of cardinal $q$, and let $\field$ be a field of positive characteristic which does not divide $q(q-1)$. Building on the works of Cline, Parshall, and Scott, we show how to compute Ext-groups between $\field G$-modules using the quantum Schur algebra. The main novelty is our ability to compute these Ext-groups in higher degree than what was done before. More precisely, let $\ell$ be the order of $q$ in $\field$. In previous work, this method enabled the computation of the cohomology groups $H^*(\GL{d},M)$ in degree $*\leq \ell-1$. We show that for a lot of modules $M$, we can compute these cohomology groups in higher degree, with an example where we can compute until degree $3(\ell-1)$. We also show some new result on Ext-groups between modules over the quantum Schur algebra along the way. 
\end{abstract}


\section{Introduction}

Let $q$ be a prime power, $\GL{d} = \GL{d}(\ffield)$ and $\field$ be a field of positive characteristic which does not divide $q(q-1)$. In \cite{cline1999generic}, Cline, Parshall and Scott have proven that the algebra $\field \GL{d}$ admits a quotient algebra $\cuspidal{d}$, called the cuspidal algebra, which is Morita equivalent to the $q$-Schur algebra $\qSchurAlg{d}{d}$. 

The $q$-Schur algebra $\qSchurAlg{n}{d}$ is a quantum deformation of the classical Schur algebra (which is the case $q=1$), and its modules are exactly the polynomial representations of the quantum deformation of $\GL{n}(\field)$ of degree $d$. Moreover, the $q$-Schur algebra is a quasi-hereditary algebra, enabling the use of combinatorial methods to study $\Ext{}{}$-groups of modules over the $q$-Schur algebra, and hence also of modules over the cuspidal algebra. In particular, this implies many vanishing results for $\Ext{}{}$-groups. Furthermore, the block structure of the $q$-Schur algebra is well understood, giving rise to even more vanishing results.

Therefore, it is natural to use the Morita equivalence between the $q$-Schur algebra $\qSchurAlg{d}{d}$ and the cuspidal algebra $\cuspidal{d}$ to compute $\Ext{}{}$-groups of $\field \GL{d}$-modules, in particular between modules that are also modules over the cuspidal algebra. This approach led Cline, Parshall and Scott to show that low-degree cohomology groups of such modules can be computed in the category $\qSchurAlg{d}{d}$-mod. More precisely, if $\ell$ is the order of $q$ in $\field$,
\[
    H^*(\GL{d},N) \simeq \Ext{\qSchurAlg{d}{d}}{*}(\qexterior{d}(d), \alpha(N)) \quad \text{for } * \leq \ell-1
\]
where $\alpha$ is the Morita equivalence, and $\qexterior{d}(d)$ is the unique one-dimensional $\qSchurAlg{d}{d}$-module. In \cite{shalotenko2020ext}, Shalotenko generalized this result to the computation of other $\Ext{}{}$-groups of $\field \GL{d}$-modules in degree $\leq \ell - 1$.

Both of them use a projective resolution $P_*$ of certain $\cuspidal{d}$-modules such that $P_0,P_1,...,P_{\ell-2}$ are also projective as $\field \GL{d}$-modules. We will call such modules good projectives in this introduction. Dually, we can also define a notion of good injectives. In this paper, we describe explicitly a large family of good projectives, and also a large family of good injectives, which are in fact the projectives used by Cline, Parshall and Scott.

Our innovation is to compute $\Ext{\field \GL{d}}{}(M,N)$ using both a projective resolution of $M$ starting with good projectives, and an injective coresolution of $N$ starting with good injectives at the same time. This enables us to relate $\Ext{\field \GL{d}}{*}(M,N)$ and $\Ext{\qSchurAlg{d}{d}}{*}(\alpha(M),\alpha(N))$ in degree higher than $\ell-1$. For example :
\begin{equation}\label{eq : intro}
    H^*(\GL{d}, \dividedmod{d}) \simeq \Ext{\qSchurAlg{d}{d}}{*}(\qexterior{d}(d), \qdivided{d}(d))\quad \text{for } * \leq 3(\ell-1)
\end{equation}
where $\qdivided{d}(d)$ is the projective cover of a particular simple $\qSchurAlg{d}{d}$-module, and $\dividedmod{d}$ the corresponding $\cuspidal{d}$-module under the Morita equivalence.

To obtain this type of result, we study the maximal integer $n$ such that a given $\cuspidal{d}$-module $M$ admits a projective resolution $P_*$ of $M$ such that $P_0,...,P_{n-1}$ belong to our family of good projectives. We denote this integer (which can be $\infty$) by $\projbound{M}$. Dually, for good injective, we have an integer $\injbound{M}$. Then we have the following theorem, which improves \cite[Theorem 12.4]{cline1999generic} (knowing that $\projbound{\field} = \ell - 1$).

\begin{theorem}\label{thm : intro 1}
    Let $M,N$ be $\cuspidal{d}$-modules. Then the natural map
    \[
         \Ext{\qSchurAlg{d}{d}}{*}(\alpha(M),\alpha(N)) \to \Ext{\field \GL{d}}{*}(M,N)
    \]
    induced by the inverse equivalence of $\beta$ is an isomorphism in degree $* \leq \projbound{M} + \injbound{N}$, and is an injection in degree $*=\projbound{M} + \injbound{N}+1$.
\end{theorem}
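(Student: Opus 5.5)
We set $p=\projbound{M}$ and $i=\injbound{N}$. Since $\alpha$ is a Morita equivalence, we may compute $\Ext{\qSchurAlg{d}{d}}{*}(\alpha(M),\alpha(N))$ as $\Ext{\cuspidal{d}}{*}(M,N)$. The map in question is then the one induced by inflation along the quotient $\field\GL{d}\to\cuspidal{d}$. We choose a projective resolution $P_*\to M$ over $\cuspidal{d}$ whose terms $P_0,\dots,P_{p-1}$ are good projectives, i.e.\ also projective over $\field\GL{d}$. Dually, we choose an injective coresolution $N\to I^*$ over $\cuspidal{d}$ whose terms $I^0,\dots,I^{i-1}$ are good injectives, i.e.\ also injective over $\field\GL{d}$. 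If $p$ or $i$ is infinite we truncate at an arbitrary finite level and argue in each degree separately.

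\textbf{Step 1: truncations and dimension shifting.} Let $K=\ker(P_{p-1}\to P_{p-2})$ be the $p$-th syzygy, and $C=\mathrm{coker}(I^{i-2}\to I^{i-1})$ the $i$-th cosyzygy. Because the first $p$ terms are projective in both categories, dimension shifting along $0\to K\to P_{p-1}\to\cdots\to P_0\to M\to 0$ identifies $\Ext{}{n}(M,X)$ with $\Ext{}{n-p}(K,X)$ for $n>p$, over both algebras. For $n=p$ it identifies the groups with the cokernels of the restriction maps $\Hom{}(P_{p-1},X)\to\Hom{}(K,X)$. The same holds over both $\cuspidal{d}$ and $\field\GL{d}$, and the comparison maps are compatible with these identifications by naturality of the connecting morphisms. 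The dual statement holds in the second variable, with $N$ replaced by $C$ and degrees shifted by $i$. Combining the two shifts reduces the comparison in degree $n$ for the pair $(M,N)$ to a comparison in degree $n-p-i$ for the pair $(K,C)$. Care is needed at the boundary degrees, where the shift produces cokernels of $\Hom{}$'s rather than honest $\Ext{}{}$-groups.

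\textbf{Step 2: base comparison.} For $\cuspidal{d}$-modules $X,Y$ we have $\Hom{\cuspidal{d}}(X,Y)=\Hom{\field\GL{d}}(X,Y)$, since $\cuspidal{d}$ is a quotient of $\field\GL{d}$. Moreover, $\Ext{}{1}$ always injects: an extension of $\cuspidal{d}$-modules that splits over $\field\GL{d}$ also splits over $\cuspidal{d}$, because the splitting map is a $\field\GL{d}$-map between $\cuspidal{d}$-modules. Hence the map in degree $0$ is an isomorphism and the map in degree $1$ is injective. Pushing these facts through Step 1, we get an isomorphism in degrees $n\le p+i$, where the relevant groups are expressed through $\Hom{}$'s of good terms and of $K,C$, all computed identically over the two algebras. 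In degree $p+i+1$ the comparison reduces to $\Ext{}{1}(K,C)$, which gives injectivity.

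\textbf{Main obstacle.} The delicate step is the combined shift, i.e.\ performing both dimension shifts simultaneously. The cleanest route is a double complex argument: we compare $\Hom{}(P_*,I^*)$ with the total complex computing $\Ext{}{}$ over each algebra. This replaces $P_{\ge p}$ and $I^{\ge i}$ by genuinely $\field\GL{d}$-projective and injective resolutions of $K$ and $C$ respectively. The two total complexes then agree on every bidegree $(a,b)$ with $a<p$ or $b<i$. They can differ only on the bidegrees with $a\ge p$ and $b\ge i$, that is, from total degree $p+i$ onward. The claimed isomorphism range, including the boundary degree, and the injectivity in degree $p+i+1$ then follow from a five-lemma argument on the filtration. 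The part needing care is the edge term in degree $p+i$, and I expect this to be the step where the precise bookkeeping must be checked.
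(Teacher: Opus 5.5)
Your Steps 1 and 2 are essentially the paper's proof. You shift first in the first variable, replacing $M$ by the syzygy $K$ and using that $P_0,\dots,P_{p-1}$ are projective over both algebras. You then shift in the second variable for the pair $(K,N)$, using that $I^0,\dots,I^{i-1}$ are injective over both. After this, every group in degree $\le p+i$ is expressed through $\Hom{}$'s between $\cuspidal{d}$-modules, which are the same over $\cuspidal{d}$ and $\field\GL{d}$, and degree $p+i+1$ becomes $\Ext{}{1}(K,C)$, where inflation is injective. Your treatment of the boundary degrees as cokernels of $\Hom{}$-maps is more accurate than the paper's. The paper writes the degree $p+i$ group as $\Hom{}(\beta(K),\beta(I))$, where only a quotient of it is meant.

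The last paragraph should be dropped, though. The ``combined shift'' needs no extra device, since the two shifts are done one after the other exactly as in Step 1. The double-complex claim is also false as stated. In a bidegree $(a,b)$ with $a<p$ but $b\ge i$ you compare $\Hom{\cuspidal{d}}(P_a,I^b)$ with $\Hom{\field\GL{d}}(P_a,J^b)$, and $I^b$, $J^b$ are different modules. So the two total complexes already differ in total degree $i$ (for instance in bidegree $(0,i)$ when $p>0$), not only from degree $p+i$ on. Only the column (resp. row) cohomology agrees there. Turning that into the stated range would need a spectral-sequence comparison that in effect re-uses the one-sided comparison in each variable, which is the argument you already gave.
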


We then study in detail the integers $\projbound{M}$ and $\projbound{N}$. For this purpose, we use a category of functors denoted by $\quantumfunctor{d}$, and called the category of quantum polynomial functors of degree $d$, which is equivalent to the category of $\qSchurAlg{d}{d}$-modules. This change of point of view enables us to use more categorical tools and in particular a method based on an adjunction between two functors, the sum-diagonal adjunction. This adjunction can be expressed on the level of $\Ext{}{}$-groups by the following proposition.

\begin{proposition}
    Let $F \in \quantumfunctor{d}$ and $G \in \quantumfunctor{d;2}$ be a two-variable quantum polynomial functor (see definition \ref{def : quantum functor with several variable}). Then
    \[
        \Ext{\quantumfunctor{d}}{*}(F(n),G(n,n)) \simeq \Ext{\quantumfunctor{d;2}}{*}(F(n+m),G(n,m)) 
    \]
    where $n$ and $m$ are variables.
\end{proposition}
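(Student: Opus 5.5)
The plan is to deduce the isomorphism from an adjunction between exact functors. I will set up the two functors, check exactness and the adjunction, and then lift the Hom-level adjunction to Ext.

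\emph{The two functors.} The first is the diagonal (precomposition) functor
\[
\diagonal : \quantumfunctor{d;2} \to \quantumfunctor{d}, \qquad G \mapsto \bigl(n \mapsto G(n,n)\bigr).
\]
The second is the sum functor
\[
\sumfunctor : \quantumfunctor{d} \to \quantumfunctor{d;2}, \qquad F \mapsto \bigl((n,m) \mapsto F(n+m)\bigr).
\]
Concretely, both are given by precomposition with functors between the source categories $\source{d}$. On the source side, the diagonal $n\mapsto (n,n)$ and the direct sum $(n,m)\mapsto n\oplus m$ form an adjunction. In the quantum setting I expect this to be a biadjunction, as in the classical case, because direct sum is both a product and a coproduct in the additive quantum source category $\source{d}$. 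Precomposition transports this to an adjunction between $\sumfunctor$ and $\diagonal$ on functor categories. The side needed here is
\[
\Hom{\quantumfunctor{d}}(F, \diagonal G) \simeq \Hom{\quantumfunctor{d;2}}(\sumfunctor F, G).
\]
I would write its unit and counit explicitly. The unit $F(n)\to F(n+n)$ is induced by the diagonal morphism $n\to n\oplus n$. The counit $G(n+m,n+m)\to G(n,m)$ is induced by the two projections. The triangle identities then reduce to the corresponding identities for the biproduct.

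\emph{Exactness.} Both $\sumfunctor$ and $\diagonal$ are exact, since they are precomposition functors and exactness in these functor categories is computed pointwise.

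\emph{Passage to Ext.} An exact functor with an exact adjoint preserves projectives and injectives. Since $\diagonal$ is exact, its left adjoint $\sumfunctor$ sends projectives to projectives. Take a projective resolution $P_\bullet \to F$ in $\quantumfunctor{d}$. By exactness of $\sumfunctor$, the complex $\sumfunctor P_\bullet$ is a projective resolution of $\sumfunctor F$. Applying the adjunction isomorphism degreewise gives
\[
\Hom{}(P_\bullet, \diagonal G) \simeq \Hom{}(\sumfunctor P_\bullet, G).
\]
This is an isomorphism of complexes, because the adjunction is natural in the first variable. Taking cohomology yields the claimed isomorphism on $\Ext{}{*}$.

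\emph{Main obstacle.} The hard step is making the adjunction rigorous in the quantum setting. The source categories for quantum polynomial functors are not plain additive categories: the morphisms involve $q$-divided powers, and composition is deformed. So I must check two things:
\begin{itemize}
\item $\sumfunctor F$ genuinely lies in $\quantumfunctor{d;2}$, i.e.\ the sum on objects extends to a functor on the two-variable quantum source category of degree $d$;
\item the diagonal and projection maps live in $\source{d}$ and are natural there.
\end{itemize}
If a direct construction is awkward, my fallback is to use the equivalence of $\quantumfunctor{d}$ with $\qSchurAlg{n}{d}$-modules for $n$ large and verify the adjunction on representable (projective) generators by a Yoneda argument. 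One would show that $\sumfunctor$ of the representable at $n$ is the representable at the diagonal object $(n,n)$. That identification suffices to produce the adjunction and the preservation of projectives.
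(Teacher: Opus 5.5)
\textbf{There is a genuine gap: the quantum-specific step is missing, and the reason you give for it is false.}

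Your overall reduction matches the paper's. You use a biadjunction between $\diagonal:\source{d}\to\source{d;2}$ and $\sumfunctor:\source{d;2}\to\source{d}$ on the source categories, transport it to the precomposition functors, and pass to $\mathrm{Ext}$ by exactness. That last step is correct; it is the Franjou--Pirashvili lemma the paper cites.

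The problem is the step that carries the actual content of the proposition. You never say what $\sumfunctor$ does on morphisms of $\source{d;2}$. You also justify the source-level adjunction by claiming that $n+m$ is a biproduct of $n$ and $m$ in $\source{d}$, and this is false. Already $\Hom{\source{2}}(2,1)=\qSchur{2}{1}{2}$ is $3$-dimensional, while $\Hom{\source{2}}(1,1)^{\oplus 2}$ is $2$-dimensional. So there are no ``diagonal'' and ``projection'' morphisms whose biproduct identities would give your unit, counit and triangle identities. Even for $q=1$ the biproduct lives in the category of vector spaces, not in the source category. The paper stresses that the classical way of transporting it (composition of strict polynomial functors) is unavailable in the quantum case.

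\textbf{The missing idea is to pass through the Hecke algebra.} The paper defines the two functors on morphisms as follows.
\begin{itemize}
\item $\diagonal(f)=\bigoplus_{d_1+d_2=d}\Restriction{\hecke{d_1}\otimes\hecke{d_2}}{\hecke{d}}(f)$, using $\Restriction{\hecke{d_1}\otimes\hecke{d_2}}{\hecke{d}}(\standard{n}{d})=\standard{n}{d_1}\otimes\standard{n}{d_2}$.
\item $\sumfunctor$ induces each component $f_{d_1,d_2}$, using the decomposition $\standard{n_1+n_2}{d}\simeq\bigoplus_{d_1+d_2=d}\Induction{\hecke{d_1}\otimes\hecke{d_2}}{\hecke{d}}(\standard{n_1}{d_1}\otimes\standard{n_2}{d_2})$.
\end{itemize}
This construction is what makes $F(n+m)$ an object of $\quantumfunctor{d;2}$ in the first place. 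With these definitions, the source-level biadjunction is exactly the induction--restriction biadjunction for the parabolic subalgebra $\hecke{d_1}\otimes\hecke{d_2}\subset\hecke{d}$.

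Your Yoneda fallback does not get around this. The claim that $\sumfunctor$ of the representable at $k$ is the representable at $(k,k)$ is the statement $\Hom{\source{d}}(k,\sumfunctor(n,m))\simeq\Hom{\source{d;2}}((k,k),(n,m))$, naturally in $k$ and $(n,m)$. Unwound, this reads $\Hom{\hecke{d}}(M,\mathrm{Ind}\,N)\simeq\Hom{\hecke{d_1}\otimes\hecke{d_2}}(\mathrm{Res}\,M,N)$, so it needs restriction to be left adjoint to induction. That is not formal: it holds because induction coincides with coinduction for Hecke algebras. The fallback also already presupposes that $\sumfunctor$ is a functor on $\source{d;2}$.
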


This adjunction is particularly useful for tackling a number of problems in computing $\Ext{}{}$-group. It was used in \cite{touze2018connectedness} to study similar integers $\projbound{M}$ and $\injbound{N}$ but in the classical setting (for modules over the classical Schur algebra). In fact, most of the results of \cite{touze2018connectedness} carry over to our setting, and the only difficulty is to adapt the sum-diagonal adjunction, which we do. In the classical case, this adjunction can be defined using the fact that we can compose strict polynomial functors, it is not possible in general to do so for quantum polynomial functors (see \cite{buciumas2019quantum} where a way to define a composition is discussed). Hence more care is needed in the quantum case. In this paper, the sum and diagonal are introduced using the restriction and induction functors between Hecke algebras and parabolic subalgebras. Generalization to other type than type $A$ might be possible.

This enables us to give a homological characterization of the integers $\projbound{M}$ and $\projbound{N}$, in terms of non-vanishing of certain $\Ext{}{}$-groups (proposition \ref{prop : characterization of projective bound}). We use this to compute some of them, and in particular $\injbound{\dividedmod{d}}$ which (for $d \geq \ell$), is equal to $2(\ell-1)$. The isomorphism (\ref{eq : intro}) is then a direct consequence of the theorem \ref{thm : intro 1} and the fact that $\projbound{\field} = \ell-1$, which we also prove.

We end this article with a computation of $\Ext{\qSchurAlg{d}{d}}{*}(\qexterior{d}(d), \qdivided{d}(d))$ for all $d$ (and in fact, we only need $q$ to be a root of order $\ell$ in $\field$, and not that $q$ is a prime power) following the method of \cite{raicu2023stable}. This enables us to illustrate our method to compute $H^*(\GL{d}, \dividedmod{d})$ for all $d$ and all $* \leq 3(\ell-1)$, giving a lot of non-trivial cohomology groups. For example, we have the following computation of $\Ext{}{}$-groups giving some non-vanishing cohomology groups in degree higher than what was already known.

\begin{proposition}
    If $q$ has order $\ell > 2$ in $\field$, then $\dim H^*(\GL{3\ell},\dividedmod{3\ell}) = 1$ in degree $* = 3\ell-6$ and $3\ell-5$, and $H^*(\GL{3\ell},\dividedmod{3\ell}) = 0$ in every other degree $* \leq 3\ell-3$. Moreover, if $\field$ has characteristic $3$, then $H^*(\GL{3\ell},\dividedmod{3\ell}) \neq 0$ for $* = 3\ell-2$.
\end{proposition}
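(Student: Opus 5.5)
The plan is to reduce the group cohomology to an Ext computation over the $q$-Schur algebra, and then to compute that Ext group explicitly. Take $d = 3\ell$. Theorem~\ref{thm : intro 1} applies with $M = \field$ and $N = \dividedmod{d}$, using $\projbound{\field} = \ell-1$ and $\injbound{\dividedmod{d}} = 2(\ell-1)$ (valid since $d \geq \ell$). It gives the isomorphism (\ref{eq : intro})
\[
H^*(\GL{3\ell},\dividedmod{3\ell}) \simeq \Ext{\qSchurAlg{3\ell}{3\ell}}{*}(\qexterior{3\ell}(3\ell),\qdivided{3\ell}(3\ell))
\]
for $* \leq 3(\ell-1) = 3\ell-3$. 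The theorem also says the natural map is injective in degree $3(\ell-1)+1 = 3\ell-2$. So all three claims follow from three facts about the $q$-Schur side:
\begin{itemize}
\item $\Ext{}{*}(\qexterior{3\ell},\qdivided{3\ell})$ is one-dimensional for $* = 3\ell-6$ and $* = 3\ell-5$;
\item it vanishes in every other degree $\leq 3\ell-3$;
\item it is nonzero in degree $3\ell-2$ when $\mathrm{char}\,\field = 3$.
\end{itemize}
Injectivity then transports the last fact to $H^{3\ell-2}$.

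To compute $\Ext{\quantumfunctor{d}}{*}(\qexterior{d},\qdivided{d})$, I would follow the method of \cite{raicu2023stable}, as the introduction announces. The key input is a resolution of $\qexterior{d}$ by direct sums of tensor products of quantum divided powers, which are projective in $\quantumfunctor{d}$. This is a quantum Koszul-type (Akin--Buchsbaum--Weyman style) complex whose terms are indexed by compositions of $d$.

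Applying $\Hom{}(-,\qdivided{d})$ and using the sum--diagonal adjunction of the Proposition turns each term into a Hom into $\qdivided{d}$. These Homs are dual to weight spaces of $\qsymmetric{d}$, one-dimensional per composition. The resulting cochain complex should therefore be a combinatorial complex on compositions of $d$. Its differentials are $q$-binomial coefficients $\qbinom{a+b}{a}$, arising from the multiplication maps $\qdivided{a}\otimes\qdivided{b}\to\qdivided{a+b}$.

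Next I would exploit that $q$ has order $\ell$. By the $q$-Lucas theorem, $\qbinom{a+b}{a}$ vanishes in $\field$ unless no carry occurs at the $\ell$-adic first digit. After that reduction, the ordinary binomial coefficients in the quotients $\lfloor a/\ell\rfloor$ interact with $\mathrm{char}\,\field$. This should split the complex, up to quasi-isomorphism, into a tensor product of two pieces:
\begin{itemize}
\item a "$q$-part", which is a complex of length about $\ell$ with cohomology in a shifted degree (this is what produces $\projbound{\field} = \ell-1$ type shifts);
\item a classical part computing $\Ext{}{}(\Lambda^{3},\Gamma^{3})$-like groups over the classical Schur algebra in degree $3$ for the Frobenius-twisted variable.
\end{itemize}
Concretely I expect that for $d = m\ell$ the answer is, with some degree stretching, $\Ext{\qSchurAlg{}{}}{*}(\qexterior{m\ell},\qdivided{m\ell})$ built from $\Ext{}{}$ over the classical Schur algebra $S(m,m)$ with degrees multiplied by roughly $(\ell-2)$ plus shifts. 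For $m = 3$, the two one-dimensional classes come from the classical classes of $\Ext{}{}(\Lambda^3,\Gamma^3)$, which exist in all characteristics. The extra class in degree $3\ell-2$ comes from $3$-torsion phenomena (the coefficient $\binom{3}{1}$ vanishing), which only appear when $\mathrm{char}\,\field = 3$.

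The main obstacle is making this computation precise in the specific degrees. I must identify exactly which compositions contribute in degrees $3\ell-6$, $3\ell-5$ and $3\ell-2$, and show that all other cohomology below $3\ell-2$ cancels. I would first handle the small cases by computing the complex restricted to compositions of $3\ell$ whose parts are congruent to $0$ or small residues mod $\ell$. Then I would check by a spectral sequence, filtering by the $\ell$-adic quotient composition, that the $E_1$-page is concentrated as predicted and degenerates in the relevant range. The hypothesis $\ell > 2$ enters here to keep the degrees $3\ell-6$, $3\ell-5$ and $3\ell-2$ distinct and nonnegative, so that no unwanted differentials hit them.
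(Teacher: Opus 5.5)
Your reduction is exactly the paper's. Theorem~\ref{thm : compute ext}, with $\projbound{\qexterior{d}}=\ell-1$ and $\injbound{\qdivided{d}}=\projbound{\qsymmetric{d}}=2(\ell-1)$, gives the isomorphism up to degree $3\ell-3$ and injectivity in degree $3\ell-2$. The complex you get is also right: it has compositions of $3\ell$ as basis, differentials $\qbinom{a+b}{a}$, and $\Ext^k$ in bar length $3\ell-k$, which is the paper's $\ribext{\bullet}{1^{3\ell}}$, the reduced bar complex of $\qdivided{*}(1)$.

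The gap is that you never compute its homology. Phrases like ``should split'', ``I expect'' and ``check that the $E_1$-page is concentrated as predicted'' stand in for the whole content of Theorem~\ref{theorem : ext between symmetric and exterior powers}, and all three claims come from that theorem. Your prediction is also wrong.
\begin{itemize}
\item Classical $\Ext{}{*}(\Lambda^3,\Gamma^3)$ vanishes for $p>3$: the classical degree-$3$ category is semisimple and $\Lambda^3\not\simeq\Gamma^3$. So it cannot supply two classes ``in all characteristics''.
\item The $q$-part does not have cohomology in one shifted degree; it has one class in every bar length.
\item The generic classes are $[\ell-1,1,\ell-1,1,\ell-1,1]$ (length $6$, degree $3\ell-6$) and $[\ell-1,1,\ell-1,1]*[\ell]$ (length $5$, degree $3\ell-5$). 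Both come from the $q$-part in internal degree $3\ell$ resp.\ $2\ell$, tensored with the trivial classical classes $[\,]$ and $[\ell]$.
\item The classical degree-$3$ part contributes only when $p=3$, giving $[2\ell,\ell]$ in degree $3\ell-2$ and $[3\ell]$ in degree $3\ell-1$.
\item The splitting $\ell+2\ell$ contributes nothing, since the twisted classical part has no classes in internal degree $2\ell$ when $p\neq 2$.
\end{itemize}

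There are two ways to close the gap. One is to follow the paper: induct on $d$ using the long exact sequences of Lemma~\ref{lemma : exact triangle dual}. The other ingredients are
\begin{itemize}
\item acyclicity of $\ribext{\bullet}{a,1^b}$ from $\ell$-cores;
\item the quasi-isomorphism $\ribext{\bullet}{\ell^{d'}}\hookrightarrow\ribext{\bullet}{\ell,1^{d-\ell}}$, together with the $q$-Lucas identification with the $q=1$ complex;
\item shuffle-product lifts of cycles;
\item vanishing of $H_*(\Psi)$.
\end{itemize}
The other, closer to your intuition, is to make the splitting precise at the level of algebras rather than through a spectral sequence. By $q$-Lucas, $x^{(n_0)}\otimes y^{(n')}\mapsto x^{(n_0+\ell n')}$ gives an algebra isomorphism $\field[x]/(x^\ell)\otimes\Gamma^*(y)\simeq\qdivided{*}(1)$, with $\deg x=1$ and $\deg y=\ell$. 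Moreover $\Gamma^*(y)\simeq\bigotimes_{i\ge 0}\field[y_i]/(y_i^p)$. By Künneth, the bar homology is then a tensor product of $\mathrm{Tor}$'s of truncated polynomial algebras, each with exactly one class per homological degree. Reading off internal degree $3\ell$ by summing over the splittings $3\ell=j\ell+(3-j)\ell$, $0\le j\le 3$, gives exactly the answer above. That bookkeeping over $j$ is what your sketch gets wrong.
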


More generally, the computation of all $\dim H^*(\GL{d},\dividedmod{d})$ for $* \leq 3(\ell-1)$ can be found in the examples \ref{exam : coh 1} to \ref{exam : coh 4}.

We now briefly discuss the structure of this paper. It is divided into three parts. The first one presents the Morita equivalence itself, and does not contain any new result, except maybe the compatibility with the duality in proposition \ref{prop : morita}. We first describe briefly the $q$-Schur algebra and the Hecke algebra, and define the category of quantum polynomial functor. Then we talk about the cuspidal algebra, and define the Morita equivalence, which is formulated in terms of quantum polynomial functors instead of modules over the $q$-Schur algebra. This formulation seems more natural knowing \cite[Lemma 3.5c]{brundan2001quantum}. Then we present some explicit example of the correspondence between quantum polynomial functor and module over the cuspidal algebra.

The second part is devoted to the integer $\projbound{M}$ and $\injbound{N}$. We prove Theorem \ref{thm : intro 1} there. We adapt the sum diagonal adjunction used in the classical setting to our new setting, and then state some of the consequences of this adjunction, and in particular the characterization of $\projbound{M}$ in term of non-vanishing of some $\Ext{}{}$-group. We then compute it for several modules.

The third and last part is nearly independent. We compute in the category of quantum polynomial functor some $\Ext{}{}$-groups that are highly non-trivial. The method relies on the analysis of some quotient of the cobar complex of the quantum exterior algebra, giving resolutions of some important quantum polynomial functor. The complexes we obtain to compute $\Ext{}{}$-group are part of the reduced bar complex of the quantum divided power algebra, and we compute entirely its homology using simultaneously results on blocks of the $q$-Schur algebra, several short exact sequences of complexes and the shuffle product of the bar complex. As an application of the result of the second part, we compute entirely $H^*(\GL{d}, \dividedmod{d})$ for $* \leq 3\ell-3$, and obtain some non-vanishing result in degree $3\ell - 2$.

\section{The Morita equivalence}\label{sec : morita}

This section presents a classical method used to study the modular representation theory of finite general linear groups, which uses a functor from the category of modules over the quantum Schur algebra to the category of modules over $\field \GL{d}(\ffield)$, introduced by Cline, Parshall and Scott \cite{cline1999generic}. See also \cite{shalotenko2020ext} for concrete use of this functor. Our treatment closely follow \cite{brundan2001quantum} where all the missing proofs can be found. The purpose of this section is mainly to introduce notations and definitions that will be useful in the sequel. In section \ref{subsec : example morita}, we also make explicit a few examples of correspondence between modules over the $q$-Schur algebra and representations of  $\GL{d}$, which are implicit in the literature, and we prove a duality property (proposition \ref{prop : morita}) which does not seem to appear in the literature.

\subsection{The Hecke algebra and the quantum Schur algebra}\label{subsec : Hecke and Schur}

Let $\field$ be a field of positive characteristic $p>0$ and $q$ be a prime power. We denote $\GL{d} = \GL{d}(\ffield)$ and suppose that $p \nmid q(q-1)$. We also denote $\ell$ the multiplicative order of $q$ in $\field$.

\vspace{3mm}
In this subsection, we present two algebras that are central to the subject : the Hecke algebra $\hecke{d}$ and the $q$-Schur algebra. The Hecke algebra is a deformation of the algebra of the symmetric group $\field \symgroup{d}$ which naturally appears in the study of representations of $\field \GL{d}$ : it is the endomorphism algebra of the module of complete flags. The $q$-Schur algebra is a larger algebra which can be constructed as an endomorphism algebra of certain $\hecke{d}$-module, as it will be done in this subsection. This is also the endomorphism algebra of several $\field \GL{d}$-module, vas we will see in subsection \ref{subsec : cuspidal}. These two algebras will later be realized as endomorphism algebra of a certain representation of $\GL{d}$. We begin by introducing some notation.

\begin{definition}\label{def : composition}
    A composition of $d$ into $n$ parts is an $n$-tuple $\nu = (\nu_1,...,\nu_n)$ of non-negative integers whose sum $\nu_1 + \cdots + \nu_n$ is equal to $d$. We denote by $\composition{n}{d}$ the set of composition of $d$ into $n$ parts.
\end{definition}

\begin{definition}\label{def : symmetric group}
    Let $\symgroup{d}$ denote the symmetric group on $\{1,...,d\}$.
    \begin{itemize}
        \item For $\omega \in \symgroup{d}$, $\ell(\omega)$ is the number of inversions of $\omega$, and is called the length of $\omega$.
        \item We denote $s_i$ the transposition exchanging $i$ and $i+1$.
        \item Given a composition $\nu \in \composition{n}{d}$, we view $\symgroup{\nu} = \symgroup{\nu_1} \times \cdots \times \symgroup{\nu_n}$ as a subgroup of $\symgroup{d}$. This is the subgroup generated by the $s_i$ for $i \neq \nu_1,\nu_1+\nu_2,...,\nu_1+\nu_2+\cdots + \nu_{n-1}$.
        \item The set of distinguished left coset representatives of $\symgroup{\nu}$ is denoted $\leftcoset{\nu}$. This is the set of permutations $\sigma \in \symgroup{d}$ such that
        \[
            \sigma(i) < \sigma(i+1) \quad \text{for } i \not \in \{ \nu_1,\ \nu_1+\nu_2,\ ...,\ \nu_1 + \cdots + \nu_{n-1}\} \ .
        \]
        \item The set of distinguished right coset representatives of $\symgroup{\nu}$ is denoted $\rightcoset{\nu}$. It is the set of $\sigma \in \symgroup{d}$ such that $\sigma^{-1} \in \leftcoset{\nu}$.
        \item Given two compositions $\nu$ and $\mu$, the set of distinguished double coset representatives of $\symgroup{\nu} - \symgroup{\mu}$ is $\doublecosetrepr{\nu}{\mu} = \rightcoset{\nu} \cap \leftcoset{\mu}$.
    \end{itemize}
\end{definition}

\begin{definition}\label{def : Hecke algebra}
    The Iwahori-Hecke algebra $\hecke{d}$ associated to the symmetric group is the $\field$-algebra with basis $\{T_\omega\}_{\omega \in \symgroup{d}}$ and relations
    \[
        T_\omega T_{s_i} = \left \{ \begin{array}{cl}
            T_{\omega s_i} & \text{if } \ell(\omega s_i) > \ell(\omega), \\
            q T_{\omega s_i} + (q-1) T_{\omega} & \text{if } \ell(\omega s_i) < \ell(\omega),
        \end{array} \right .
    \]
    for all $\omega \in \symgroup{d}$ and all basic transposition $s_i$.
\end{definition}

We write $s_i = (i,i+1)$ for the basic transposition exchanging $i$ and $i+1$, and $T_i = T_{s_i}$. Each $\omega \in \symgroup{d}$ can be written as a product of basic transpositions $\omega = s_{i_1} s_{i_2} \cdots s_{i_{\ell(\omega)}}$, and we have $T_\omega = T_{i_1} T_{i_2} \cdots T_{i_{\ell(\omega)}}$. Therefore, the elements $T_i$ generate $\hecke{d}$. In fact, we can describe $\hecke{d}$ as the $\field$-algebra generated by $T_1,...,T_{d-1}$ with relations
\[
    T_iT_j = T_jT_i \  \text{ if } |i-j|>1, \quad T_iT_{i+1}T_i = T_{i+1}T_iT_{i+1}, \quad (T_i-q)(T_i+1) = 0.
\]
We will consider the following $\hecke{d}$-module, which we call standard module.
\begin{definition}\label{def : standard module}
    Let $V_n = \field^n$ with canonical basis $e_1,...,e_n$. We define $R : \standard{n}{2} \to \standard{n}{2}$ by
    \[
        R(e_i \otimes e_j) = \left \{ \begin{array}{cl}
            e_j \otimes e_i &  \text{if } i<j, \\
            - e_i \otimes e_i & \text{if } i=j, \\
            (q-1) e_i \otimes e_j + q e_j \otimes e_i & \text{if } i>j.
        \end{array} \right .
    \]
    Then $R$ defines an action of $\hecke{d}$ on $\standard{n}{d}$ by
    \[
        v \cdot T_i = (1^{i-1} \otimes R \otimes 1^{d-1-i})(v) \ .
    \]
\end{definition}
These standard modules decompose as a direct sum of cyclic $\hecke{d}$-modules, called weight modules :
\[
    \standard{n}{d} = \bigoplus_{\nu \in \composition{n}{d}} \weightmod{\nu} \quad \text{with } \ \weightmod{\nu} = e^{\otimes\nu} \hecke{d}
\]
(here $e^{\otimes\nu} = e_1^{\otimes \nu_1} \otimes e_2^{\otimes \nu_2} \otimes \cdots \otimes e_n^{\otimes \nu_n}$). These weight modules can be constructed by inducing characters from cerain subalgebras of $\hecke{d}$.

\begin{definition}\label{def : character of hecke}
    The Hecke algebra $\hecke{d}$ has exactly two linear characters, the trivial character $\triv{}$ and the sign character $\sign{}$, defined by 
    \[
        \triv{}(T_\omega) = q^{\ell(\omega)} \quad \text{and} \quad \sign{}(T_\omega) = (-1)^{\ell(\omega)} \ .
    \]
    Associated with the subgroup $\symgroup{\nu}$ of $\symgroup{d}$, we have the subalgebra $\hecke{\nu}$ of $\hecke{d}$ with basis $\{T_\omega\}_{\omega \in \symgroup{\nu}}$. The characters $\triv{}$ and $\sign{}$ of $\hecke{d}$ restrict to characters $\triv{\nu}$ and $\sign{\nu}$ of $\hecke{\nu}$.
\end{definition}

For later use, we define an involutive automorphism of the Hecke algebra that exchanges the two characters.

\begin{definition}\label{def : involution of hecke}
    We let $-^{\#}$ be the algebra automorphism of $\hecke{d}$ defined by $T_i^\# = (q-1) - T_i$.
\end{definition}

Note that $\standard{1}{d} \simeq \sign{}$ as an $\hecke{d}$-module. More generally, the weight modules are isomorphic to modules obtained by inducing the sign character from the subalgebra $\hecke{\nu}$.

\begin{lemma}\label{lemma : weight module as induced module}
    For $\nu \in \composition{n}{d}$, we have an isomorphism of $\hecke{d}$-module
    \[
        \weightmod{\nu} \simeq \Induction{\hecke{\nu}}{\hecke{d}}(\sign{\nu}) = \sign{\nu} \otimes_{\hecke{\nu}} \hecke{d} \ .
    \]
\end{lemma}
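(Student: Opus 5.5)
The plan is to exhibit $e^{\otimes\nu}$ as a vector on which $\hecke{\nu}$ acts by the sign character. Frobenius reciprocity then gives a map from the induced module, and a dimension count shows it is an isomorphism.

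\emph{Step 1: $\hecke{\nu}$ acts on $e^{\otimes\nu}$ by $\sign{\nu}$.} For a generator $T_i$ of $\hecke{\nu}$ we have $i\notin\{\nu_1,\nu_1+\nu_2,\dots\}$. So positions $i$ and $i+1$ of $e^{\otimes\nu}$ carry the same basis vector $e_j$. By Definition \ref{def : standard module}, $R(e_j\otimes e_j)=-e_j\otimes e_j$, hence $e^{\otimes\nu}\cdot T_i=-e^{\otimes\nu}=\sign{}(T_i)\,e^{\otimes\nu}$. Since the $T_i$ with $s_i\in\symgroup{\nu}$ generate $\hecke{\nu}$, the line $\field e^{\otimes\nu}$ is a right $\hecke{\nu}$-module isomorphic to $\sign{\nu}$.

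\emph{Step 2: the surjection.} Define $\phi:\sign{\nu}\otimes_{\hecke{\nu}}\hecke{d}\to \weightmod{\nu}$ by $1\otimes h\mapsto e^{\otimes\nu}\cdot h$. It is well defined by Step 1, it is $\hecke{d}$-linear, and it is surjective because $\weightmod{\nu}=e^{\otimes\nu}\hecke{d}$ by definition.

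\emph{Step 3: dimension count.} $\hecke{d}$ is a free left $\hecke{\nu}$-module with basis $\{T_\sigma\}_{\sigma\in\rightcoset{\nu}}$. This follows from the standard factorisation $T_{w\sigma}=T_wT_\sigma$ for $w\in\symgroup{\nu}$, $\sigma\in\rightcoset{\nu}$, where lengths add. Hence the induced module has dimension $|\symgroup{d}/\symgroup{\nu}|=d!/\prod\nu_i!$. It remains to check $\dim\weightmod{\nu}\ge d!/\prod\nu_i!$. The elements $e^{\otimes\nu}T_\sigma$ for $\sigma\in\rightcoset{\nu}$ all lie in the span of the pure tensors $e_{i_1}\otimes\cdots\otimes e_{i_d}$ of weight $\nu$. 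There are exactly $d!/\prod\nu_i!$ such tensors, one for each permutation of the word $1^{\nu_1}\cdots n^{\nu_n}$.

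We show by induction on $\ell(\sigma)$ that $e^{\otimes\nu}T_\sigma$ equals the pure tensor $e^{\otimes\nu}\cdot\sigma$ (the place-permuted word), with no correction terms. Write $\sigma=\sigma's_i$ with $\ell(\sigma)=\ell(\sigma')+1$; then $\sigma'\in\rightcoset{\nu}$ as well. Distinguishedness forces the word $e^{\otimes\nu}\cdot\sigma'$ to have $e_a\otimes e_b$ with $a<b$ in positions $i,i+1$. In that case $R$ acts by a plain swap. This is the key point: it uses the case $i<j$ of $R$, which has no $q$-terms. The $\sigma\in\rightcoset{\nu}$ give pairwise distinct words, so these vectors are linearly independent. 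Therefore $\phi$ is a surjection between spaces of equal dimension, hence an isomorphism.

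The main obstacle is the bookkeeping in Step 3. One must check that the convention for $\rightcoset{\nu}$ in Definition \ref{def : symmetric group} matches the right action, so that each step really lands in the ``$i<j$'' case of $R$. If the convention turns out reversed, I would instead use $\leftcoset{\nu}$ or the opposite ordering. A fallback that avoids this is a triangularity argument: expand $e^{\otimes\nu}T_\sigma$ in pure tensors and show the leading word (for an order by number of inversions) appears with coefficient a power of $q$, which is nonzero in $\field$ since $p\nmid q$.
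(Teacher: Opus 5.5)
Your proof is correct and complete. The paper states this lemma without proof and treats it as standard, so there is no proof in the paper to compare against.

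The convention issue you flagged as the main obstacle works out in your favour with the paper's definitions, so no fallback is needed. Suppose $\sigma=\sigma's_i\in\rightcoset{\nu}$ with $\ell(\sigma)=\ell(\sigma')+1$.

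\begin{itemize}
\item First, $\sigma'\in\rightcoset{\nu}$. If $\ell(s_j\sigma')<\ell(\sigma')$ for some $s_j\in\symgroup{\nu}$, then $\ell(s_j\sigma)\le \ell(s_j\sigma')+1<\ell(\sigma)$. This contradicts the fact that $\sigma^{-1}\in\leftcoset{\nu}$, which means exactly that $\ell(s_j\sigma)>\ell(\sigma)$ for all such $s_j$.
\item Second, the length condition gives $\sigma'(i)<\sigma'(i+1)$.
\item These two values cannot lie in the same block of $\nu$. Otherwise $\sigma^{-1}$ would send the smaller one to $i+1$ and the larger one to $i$, whereas $\sigma^{-1}\in\leftcoset{\nu}$ is increasing on each block.
\end{itemize}

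The word of $e^{\otimes\nu}T_{\sigma'}$ has, at position $k$, the index of the block containing $\sigma'(k)$. Hence its letters at positions $i,i+1$ are strictly increasing, and $R$ acts by a plain swap, as you claimed.

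As a small bonus, your Step 3 shows that $\phi$ sends the basis $\{1\otimes T_\sigma\}_{\sigma\in\rightcoset{\nu}}$ to pairwise distinct pure tensors. So injectivity is immediate without any dimension comparison, and you also obtain that $\weightmod{\nu}$ is exactly the span of the pure tensors of weight $\nu$.
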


The space of homomorphisms between weight modules will play a central role in what follows, so we now introduces an explicit basis.

\begin{lemma}\label{lemma : hom between weight module}
    Let $\nu \in \composition{n}{d}$, $\mu \in \composition{m}{d}$. For $\sigma \in \doublecosetrepr{\mu}{\nu}$, we let
    \[
        \phi_{\nu,\mu}^{\sigma} : V_\nu \to V_\mu, \quad \phi_{\nu,\mu}^{\sigma}(e^{\otimes \nu} \cdot h) = e^{\otimes \mu} \cdot \left ( \sum_{\omega \in \symgroup{\mu} \sigma \symgroup{\nu} \cap \rightcoset{\mu}} T_\omega^{\#} \right ) h \ .
    \]
    Then $\{ \phi_{\nu,\mu}^{\sigma}\}_{\sigma \in \doublecosetrepr{\mu}{\nu}}$ is a basis of $\Hom{\hecke{d}}(\weightmod{\nu},\weightmod{\mu})$.
\end{lemma}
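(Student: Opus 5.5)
We use Lemma \ref{lemma : weight module as induced module} to identify $\weightmod{\nu} \simeq \sign{\nu}\otimes_{\hecke{\nu}}\hecke{d}$ and $\weightmod{\mu}\simeq \sign{\mu}\otimes_{\hecke{\mu}}\hecke{d}$, with $e^{\otimes\nu}$ corresponding to $1\otimes 1$. By the tensor--hom adjunction (Frobenius reciprocity),
\[
\Hom{\hecke{d}}(\weightmod{\nu},\weightmod{\mu}) \simeq \{ v \in \weightmod{\mu} \ : \ v\cdot T_i = - v \text{ for all } s_i\in \symgroup{\nu}\},
\]
the isomorphism sending $\phi$ to $\phi(e^{\otimes\nu})$. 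Any such $v$ determines a well-defined map $e^{\otimes\nu}h\mapsto vh$. So the problem reduces to describing the $\sign{\nu}$-isotypic invariants of $\weightmod{\mu}$ under the right action of $\hecke{\nu}$, and checking that $\phi^\sigma_{\nu,\mu}(e^{\otimes\nu}) = e^{\otimes\mu}\sum_{\omega} T^\#_\omega$ runs over a basis of this space.

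First we use the structure of $\weightmod{\mu}$: it has basis $\{e^{\otimes\mu}T_w\}_{w\in\rightcoset{\mu}}$ (equivalently, the tensors $e_{i_1}\otimes\cdots\otimes e_{i_d}$ of weight $\mu$). Applying the automorphism $\#$ turns the sign character into the trivial one, since $T_i^\# = (q-1)-T_i$ acts as $q$ when $T_i$ acts as $-1$. The invariant condition then becomes the classical statement that the $\triv{\nu}$-type invariants of a module induced from $\triv{\mu}$ have a basis given by orbit sums over double cosets $\symgroup{\mu}\sigma\symgroup{\nu}$, $\sigma\in\doublecosetrepr{\mu}{\nu}$. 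This is the Dipper--James description of $\Hom{}(M^\nu,M^\mu)$ for permutation modules of the Hecke algebra.

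Concretely, the plan is as follows.
\begin{enumerate}
\item Use Deodhar's lemma: for $w\in\rightcoset{\mu}$ and a simple $s_i\in\symgroup{\nu}$, either $ws_i\in\rightcoset{\mu}$ with length $\ell(w)\pm1$, or $ws_i = s_jw$ with $s_j\in\symgroup{\mu}$. This gives explicit formulas for $e^{\otimes\mu}T_wT_i$ in the basis, namely $e^{\otimes\mu}T_{ws_i}$, or $qe^{\otimes\mu}T_{ws_i}+(q-1)e^{\otimes\mu}T_w$, or $-e^{\otimes\mu}T_w$.
\item Translate these formulas into the statement that the right action of the $T_i$ on the basis preserves each block indexed by $\symgroup{\mu}\sigma\symgroup{\nu}\cap\rightcoset{\mu}$. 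The $\sign{\nu}$-invariants therefore decompose over double cosets.
\item Show that within one block the invariant space is one-dimensional and spanned by $e^{\otimes\mu}\sum_\omega T_\omega^\#$. A short calculation pairing $\omega$ with $\omega s_i$ shows the sum is invariant. For uniqueness, the coefficient of the minimal-length element $\sigma$ determines all others by the length-increasing recursion.
\end{enumerate}
Linear independence of the $\phi^\sigma_{\nu,\mu}$ is then automatic, because the blocks are disjoint.

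The main obstacle is step 3, in particular the invariance check for $T^\#_\omega$ rather than $T_\omega$. The cleanest route is to note that $e^{\otimes\mu}T^\#_\omega$ satisfies the same three-case multiplication rules with $q$ and $-1$ interchanged. This follows from $T_i^\# T_i$-relations and the fact that $e^{\otimes\mu}T_j=-e^{\otimes\mu}$ for $s_j\in\symgroup{\mu}$, which gives $e^{\otimes\mu}T_j^\# = q\,e^{\otimes\mu}$. Then, in the sum $\sum_\omega T_\omega^\# T_i$, the terms pair up exactly as in the classical orbit-sum argument, with the terms $(q-1)e^{\otimes\mu}T^\#_\omega$ cancelling, and the total is $-\sum_\omega e^{\otimes\mu}T_\omega^\#$. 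If this bookkeeping proves awkward, the fallback is a dimension count: $\dim\Hom{}=|\doublecosetrepr{\mu}{\nu}|$ by a Mackey-type decomposition of $\Restriction{\hecke{\nu}}{\hecke{d}}\Induction{\hecke{\mu}}{\hecke{d}}$. This reduces the problem to independence plus invariance, without needing uniqueness inside each block.
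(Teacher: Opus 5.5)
Your argument is correct in substance but takes a different route from the paper. The paper gives no proof and simply cites (1.2c) of \cite{brundan2001quantum}. Your first reduction is already a complete proof modulo the Dipper--James basis theorem. Frobenius reciprocity identifies the Hom space with $\{v\in\weightmod{\mu} : vT_i=-v \text{ for } s_i\in\symgroup{\nu}\}$. Twisting by the involution $\#$, which satisfies $\sign{}\circ\#=\triv{}$ on $\hecke{\mu}$, gives $\weightmod{\mu}^\#\simeq\triv{\mu}\otimes_{\hecke{\mu}}\hecke{d}$ with $e^{\otimes\mu}T^\#_w\leftrightarrow 1\otimes T_w$. It turns the condition into $vT_i^\#=qv$, and it carries $\phi^\sigma_{\nu,\mu}$ to the Dipper--James map $x_\nu\mapsto\sum_{w\in\symgroup{\mu}\sigma\symgroup{\nu}}T_w$, where $x_\nu=\sum_{w\in\symgroup{\nu}}T_w$. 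Compared with the bare citation, your route explains why $T^\#_\omega$ rather than $T_\omega$ appears in the formula. If you carry out the Deodhar analysis, it is also self-contained.

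As written, however, your concrete plan mixes two bases, and Step 3 is false for the blocks defined in Step 2. You define the double-coset blocks as spans of $e^{\otimes\mu}T_w$ for $w\in\symgroup{\mu}\sigma\symgroup{\nu}\cap\rightcoset{\mu}$. But $T^\#_w$ expands into the $T_v$ with $v\le w$ in Bruhat order, so $e^{\otimes\mu}\sum_\omega T^\#_\omega$ generally leaks into other blocks. Take $\mu=\nu=(1,1)$ and $\sigma=s_1$. Then $\hecke{\nu}=\field$, the block of $s_1$ is $\field\,e^{\otimes\mu}T_1$, and its invariants are the whole block. Yet $e^{\otimes\mu}T_1^\#=(q-1)e^{\otimes\mu}-e^{\otimes\mu}T_1$ does not lie in it. Your claim that independence is automatic ``because the blocks are disjoint'' fails for the same reason.

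The fix is the one your last paragraph hints at: run Steps 1--3 entirely in the basis $\{e^{\otimes\mu}T^\#_w\}_{w\in\rightcoset{\mu}}$, that is, in $\weightmod{\mu}^\#$, with $T_i^\#$ in place of $T_i$. This is a basis by the twist isomorphism. Since $\#$ is an algebra automorphism, the first two Deodhar rules are unchanged. Only the third changes, to $e^{\otimes\mu}T^\#_wT^\#_i=q\,e^{\otimes\mu}T^\#_w$; this is not a wholesale interchange of $q$ and $-1$. With this basis the steps go through:
\begin{itemize}
\item The blocks are $\hecke{\nu}$-stable, and invariance of the block sum is your pairing computation.
\item For uniqueness, comparing coefficients of $e^{\otimes\mu}T^\#_{ws_i}$ when $\ell(ws_i)>\ell(w)$ gives $c_w+(q-1)c_{ws_i}=qc_{ws_i}$, that is, $c_w=c_{ws_i}$.
\item To propagate this from $\sigma$ to the whole block, you need a standard fact you left implicit. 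Every $w$ in the block has the form $\sigma y$ with $y\in\symgroup{\nu}$ and $\ell(w)=\ell(\sigma)+\ell(y)$. Moreover, for each prefix $y'$ of a reduced word for $y$, the element $\sigma y'$ stays in $\rightcoset{\mu}$, and hence in the block.
\end{itemize}
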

\begin{proof}
    See (1.2c) of \cite{brundan2001quantum}.
\end{proof}

\begin{definition}\label{def : quantum Schur algebra}
    For $n,m,d \geq 0$, we let
    \[
        \qSchur{n}{m}{d} = \Hom{\hecke{d}}(\standard{n}{d}, \standard{m}{d}).
    \]
    Then $\qSchurAlg{n}{d}$ is an algebra, called the $q$-Schur algebra.
\end{definition}

Note that Lemma \ref{lemma : hom between weight module} gives a basis of $\qSchur{n}{m}{d}$. In this article, we will see how to associate a $\field \GL{d}$-module to an $\qSchurAlg{n}{d}$-module. This module will not depend on the chosen $n$, assuming that $n \geq d$. Again assuming that $n \geq d$, the category of $\qSchurAlg{n}{d}$-module is equivalent to the category of quantum polynomial functor $\quantumfunctor{d}$, which we introduce below. Using the category of quantum polynomial functor at the place of the category of $\qSchurAlg{n}{d}$-module will be useful for several reasons. First, avoids to choose a value of $n$ (even if, in the computation, it will be easier to use such a choice). This will be useful when we will use tensor product of quantum polynomial functor, which changes the degree $d$. A second advantage is related to the sum-diagonal adjunction, which we will use in section \ref{sec : low degree ext}.

\begin{definition}\label{def : quantum polynomial functor}
    Let $\source{d}$ denote the category whose objects are the integers $n \geq 0$, and the homomorphisms are given by
    \[
        \Hom{\source{d}}(n,m) = \qSchur{n}{m}{d} \ .
    \]
    and let $\cible$ denote the category of finite dimensional $\field$-vector space. The category $\quantumfunctor{d}$ of (homogeneous) quantum polynomial functor of degree $d$ is the category of $\field$-linear functor $\source{d} \to \cible$. The category of all quantum polynomial functor is $\quantumfunctor{} = \bigoplus_{d \geq 0} \quantumfunctor{d}$.
\end{definition}
We will also use the term contravariant quantum polynomial functor of degree $d$ for $\field$-linear functor $\source{d}^{op} \to \cible$.

The category of quantum polynomial functors is a braided monoidal category (see \cite{hong2017quantum} for properties and examples of quantum polynomial functors).

\subsection{The cuspidal algebra}\label{subsec : cuspidal}

The cuspidal algebra $\cuspidal{d}$ is a quotient of a block of $\field \GL{d}$, the block containing the trivial module. It is deeply linked with the combinatorics of flags. This algebra is Morita equivalent to the $q$-Schur algebra, as we will see in this subsection. We will first construct some $\field \GL{d}$-modules which are also $\cuspidal{d}$-modules, and find a progenerator of the category of $\cuspidal{d}$-modules, with endomorphism algebra the $q$-Schur algebra. This module will therefore define a Morita equivalence, which we reformulate in terms of quantum polynomial functor.

\begin{definition}\label{def : flags}
    A flag in $\ffield^d$ is an increasing sequence of subspace
    \[
        0 = V_0 \subset V_1 \subset V_2 \subset \cdots \subset V_{n-1} \subset V_n = \ffield^d\ . 
    \]
\end{definition}

The group $\GL{d}$ acts naturally on the set of flags in $\ffield^d$, and the orbits of this action are parametrized by compositions. Given a flag $V_* = (V_0 \subset V_1 \subset \cdots \subset V_n)$ in $\ffield^d$, we associate a composition $\nu \in \composition{n}{d}$ by setting $\nu_i = \dim V_i - \dim V_{i-1}$. Then two flags are in the same orbit under the action of $\GL{d}$ if and only if they are associated with the same composition, and we denote by $\flag{\nu}$ this orbit. Considering $\flag{\nu}$ as a basis of a $\field$-vector space, we obtain a representation $\flagmod{\nu}$ of the group algebra $\field \GL{d}$. These modules enable us to give a geometric realization of the $q$-Schur algebra, as we will see now.

Let $\matrixwithfixedsum{n}{m}{d}$ denote the set of $n \times m$ matrices with positive integer entries whose sum is equal to $d$. Given $A \in \matrixwithfixedsum{n}{m}{d}$, we associate compositions $\row{A} \in \composition{n}{d}$ and $\col{A} \in \composition{m}{d}$ by
\[
    \row{A}_i = \sum_{j=1}^m a_{ij} \quad \text{and} \quad \col{A}_j = \sum_{i=1}^n a_{ij} \ .
\]
We denote by $\matrixwithfixedrowcol{\nu}{\mu}$ the set of $A \in \matrixwithfixedsum{n}{m}{d}$ such that $\row{A} = \nu$ and $\col{A} = \mu$. To such an $A$ we associate a diagram. For example, with $n=3$ and $m=4$, we obtain from $A$ the following diagram.
\begin{center}
    \begin{tikzpicture}[xscale = 2]
        \coordinate (source 1) at (0,6) ;
        \coordinate (source 2) at (3,6) ;
        \coordinate (source 3) at (6,6) ;

        \coordinate (target 1) at (0,0) ;
        \coordinate (target 2) at (2,0) ;
        \coordinate (target 3) at (4,0) ;
        \coordinate (target 4) at (6,0) ;

        \draw (source 1) -- (target 1) node[near start, fill = white] {$a_{11}$} ;
        \draw (source 1) -- (target 2) node[near start, fill = white] {$a_{12}$} ;
        \draw (source 1) -- (target 3) node[near start, fill = white] {$a_{13}$} ;
        \draw (source 1) -- (target 4) node[near start, fill = white] {$a_{14}$} ;
        \draw (source 2) -- (target 1) node[near start, fill = white] {$a_{21}$} ;
        \draw (source 2) -- (target 2) node[near start, fill = white] {$a_{22}$} ;
        \draw (source 2) -- (target 3) node[near start, fill = white] {$a_{23}$} ;
        \draw (source 2) -- (target 4) node[near start, fill = white] {$a_{24}$} ;
        \draw (source 3) -- (target 1) node[near start, fill = white] {$a_{31}$} ;
        \draw (source 3) -- (target 2) node[near start, fill = white] {$a_{32}$} ;
        \draw (source 3) -- (target 3) node[near start, fill = white] {$a_{33}$} ;
        \draw (source 3) -- (target 4) node[near start, fill = white] {$a_{34}$} ; 

        \node[above] at (source 1) {$\nu_1$};
        \node[above] at (source 2) {$\nu_2$};
        \node[above] at (source 3) {$\nu_3$};

        \node[below] at (target 1) {$\mu_1$};
        \node[below] at (target 2) {$\mu_2$};
        \node[below] at (target 3) {$\mu_3$};
        \node[below] at (target 4) {$\mu_4$};
    \end{tikzpicture}
\end{center}
This diagram represents a permutation of $\symgroup{d}$, that permute blocks of size $a_{ij}$. Formally, we obtain the permutation $\sigma_A$ defined by the equalities :
\[
    \sigma_A( \nu_1 + \cdots + \nu_{i-1} + a_{i,1} + \cdots + a_{i,j-1} + k) = \mu_1 + \cdots + \mu_{j-1} + a_{1,j} + \cdots + a_{i-1,j} + k \ \text{ for } 1 \leq k \leq a_{ij} \ . 
\]
Then $\sigma_A \in \doublecosetrepr{\mu}{\nu}$, and inversely every permutation in $\doublecosetrepr{\mu}{\nu}$ is of this form for a unique $A \in \matrixwithfixedrowcol{\nu}{\mu}$. This enables us to reparametrize our basis of lemma \ref{lemma : hom between weight module} by $\matrixwithfixedrowcol{\nu}{\mu}$, and hence our basis of $\qSchur{n}{m}{d}$ by $\matrixwithfixedsum{n}{m}{d}$.

\begin{lemma}\label{lemma : morphism between flag representation}
    Let $A \in \matrixwithfixedrowcol{\nu}{\mu}$. Set $\orbit{A}$ to be the set of pairs $(V_*,W_*)$ of flags in $\flag{\nu} \times \flag{\mu}$ such that for $i = 1,...,n$ and $j= 1,...,m$,
    \[
        \dim(V_i \cap W_j) - \dim(V_{i-1} \cap W_j) - \dim(V_i \cap W_{j-1}) + \dim(V_{i-1} \cap W_{j-1}) = a_{ij} \ .
    \]
    Define a linear map $e_A : \flagmod{\nu} \to \flagmod{\mu}$ by
    \[
        e_A(V_*) = \sum_{\substack{W_* \in \flag{\mu} \text{ such that} \\ (V_*,W_*) \in \orbit{A}}} W_* \ .
    \]
    Then $e_A$ is $\field \GL{d}$-linear, and the map
    \[
        \Hom{\hecke{d}}(\weightmod{\nu},\weightmod{\mu}) \to \Hom{\field \GL{d}}(\flagmod{\nu},\flagmod{\mu}), \quad \phi_{\nu,\mu}^{\sigma_A} \mapsto e_A
    \]
    is an isomorphism, and is compatible with all compositions.
\end{lemma}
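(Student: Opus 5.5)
The plan is to reduce everything to the complete flag module $\flagmod{(1^d)}$ and the Borel subgroup $B$ of upper triangular matrices. Let $P_\nu$ be the standard parabolic stabilizing the standard flag of type $\nu$. Then $\flagmod{\nu} \simeq \Induction{P_\nu}{\GL{d}}(\field)$. The $\GL{d}$-linearity of $e_A$ is immediate, since $\orbit{A}$ is a union of $\GL{d}$-orbits on $\flag{\nu}\times\flag{\mu}$: it is defined by dimensions of intersections, which $g$ preserves. By Mackey/Frobenius,
\[
    \Hom{\field\GL{d}}(\flagmod{\nu},\flagmod{\mu}) \simeq \field[P_\nu\backslash \GL{d}/P_\mu],
\]
with basis the orbit sums. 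The Bruhat decomposition identifies $P_\nu\backslash\GL{d}/P_\mu$ with $\symgroup{\nu}\backslash\symgroup{d}/\symgroup{\mu}$, hence with $\doublecosetrepr{\mu}{\nu}$ and with $\matrixwithfixedrowcol{\nu}{\mu}$. One checks that the relative position invariant (the $a_{ij}$ given by the dimension formula) of the pair (standard flag, $\sigma_A$ applied to standard flag) is exactly $A$. Hence the $e_A$ form a basis, the map in the statement is a bijection of bases, and it is a linear isomorphism.

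The key step is compatibility with composition. I would first treat the case $\nu=\mu=(1^d)$. Here $\Endo{\field\GL{d}}(\field[\GL{d}/B])$ is the classical Iwahori–Hecke algebra, with $e_{s_i}$ satisfying the braid relations and $(e_{s_i}-q)(e_{s_i}+1)=0$. The quadratic relation holds because, for a complete flag, there are exactly $q$ flags differing only in position $i$, and the count of how $e_{s_i}^2$ hits each flag gives $q\cdot\Id + (q-1)e_{s_i}$. Moreover $e_{w}e_{s_i}=e_{ws_i}$ when lengths add. This gives an isomorphism $\hecke{d}\simeq \Endo{\field\GL{d}}(\field[\flag{(1^d)}])$ with $T_w \mapsto e_w$, possibly after twisting by the involution $-^\#$ and a left/right convention. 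This sign/convention bookkeeping has to match the choice of sign character in Lemma~\ref{lemma : weight module as induced module}.

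Next I would realize everything inside the complete flag module. The map $\flagmod{\nu} \hookrightarrow \field[\flag{(1^d)}]$, sending $V_*$ to the sum of complete flags refining it, identifies $\flagmod{\nu}$ with the $P_\nu$-invariants. Via the Hecke action, this image is $x_\nu$ applied to $\field[\flag{(1^d)}]$, where $x_\nu$ is the trivial idempotent-like element $\sum_{w\in\symgroup{\nu}} T_w$. Applying $-^\#$ turns it into the sign-type element.

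On the other side, $\weightmod{\nu}\simeq \sign{\nu}\otimes_{\hecke{\nu}}\hecke{d}$ embeds in $\hecke{d}$ as $y_\nu\hecke{d}$, with $y_\nu=\sum_{w\in\symgroup{\nu}}(-1)^{\ell(w)}T_w$ up to the $\#$-twist. Both Hom spaces then become $x_\nu\hecke{d}x_\mu$-type spaces, more precisely $\Hom{\hecke{d}}(y_\nu\hecke{d},y_\mu\hecke{d}) \simeq y_\mu\hecke{d}y_\nu$. Composition on both sides is multiplication in $\hecke{d}$, transported through $-^\#$, so compatibility with composition follows formally once the bases match.

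It then remains to check that $\phi^{\sigma_A}_{\nu,\mu}$ corresponds to $e_A$. Under the embeddings, $e_A$ corresponds to the sum of $e_w$ over $w$ in the double coset $\symgroup{\mu}\sigma_A\symgroup{\nu}$ restricted to distinguished representatives, composed with the inclusion. This restricted sum is exactly the formula defining $\phi^{\sigma_A}_{\nu,\mu}$ with $T_\omega^\#$. I expect this matching, together with the $\#$/left-right conventions, to be the main obstacle. I would try to verify it by counting refinements, using the fact that for $w\in\rightcoset{\mu}$ the complete flags in relative position $w$ to a fixed one, refined appropriately, are counted without multiplicity. Alternatively, I would appeal to \cite[(1.2c) and \S 1]{brundan2001quantum}, where this identification is carried out.
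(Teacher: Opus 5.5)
You handle the $\GL{d}$-linearity, the bijection of bases (Bruhat decomposition and relative position $A$), and the Iwahori--Hecke case $\nu=\mu=(1^d)$ correctly. The gap is in the step meant to give compatibility with composition, which is the real content of the lemma.

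You identify $\Hom{\hecke{d}}(y_\nu\hecke{d},y_\mu\hecke{d})$ with $y_\mu\hecke{d}y_\nu$, and the flag side with an $x_\nu\hecke{d}x_\mu$-type space, and take composition to be multiplication in $\hecke{d}$. This is valid when $y_\nu,y_\mu$ are quasi-idempotents with invertible constant, i.e.\ when all parts are $<\ell$. It fails in the cases the paper cares about.

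For $\nu=\mu=(d)$ with $d\geq\ell$:
\begin{itemize}
\item $\Endo{\hecke{d}}(y_{(d)}\hecke{d})=\field$ and $\Endo{\field\GL{d}}(\flagmod{(d)})=\Endo{\field\GL{d}}(\field)=\field$;
\item but $y_{(d)}\hecke{d}y_{(d)}=\field\,y_{(d)}^2=0$, since $y_{(d)}^2$ is a unit multiple of $\qfactorial{d}\,y_{(d)}$;
\item similarly $x_{(d)}^2=\qfactorial{d}\,x_{(d)}=0$.
\end{itemize}
The correct description (Dipper--James) is $\Hom{\hecke{d}}(y_\nu\hecke{d},y_\mu\hecke{d})\simeq y_\mu\hecke{d}\cap\hecke{d}y_\nu$ via $\phi\mapsto\phi(y_\nu)$. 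Under it, composition is not the product: if $\phi(y_\nu)=y_\mu k$, then $(\phi'\circ\phi)(y_\nu)=\phi'(y_\mu)k$. So compatibility does not ``follow formally once the bases match.'' (Incidentally, $\sum_w(-1)^{\ell(w)}T_w$ is not a sign eigenvector for $q\neq 1$. The right element is $x_\nu^{\#}$, a unit multiple of the paper's $y_\nu$.)

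The repair is to get compatibility from a functor rather than from multiplication. Apply $M\mapsto\Hom{\field\GL{d}}(\flagmod{(1^d)},M)\simeq M^B$; it respects composition automatically.
\begin{itemize}
\item Via your embedding $\flagmod{\nu}\simeq\flagmod{(1^d)}\,x_\nu$, it sends $\flagmod{\nu}$ to $\hecke{d}x_\nu$. This is the cyclic module generated by the standard flag $E^\nu$ of type $\nu$, and it becomes $\weightmod{\nu}$ after the anti-involution $T_w\mapsto T_{w^{-1}}$ and $(-)^{\#}$.
\item The induced map $\Hom{\field\GL{d}}(\flagmod{\nu},\flagmod{\mu})\to\Hom{\hecke{d}}(\hecke{d}x_\nu,\hecke{d}x_\mu)$ is injective, because $\flagmod{\nu}$ is generated over $\GL{d}$ by the $B$-fixed vector $E^\nu$. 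By your dimension count it is then bijective.
\item It only remains to evaluate $e_A$ on $E^\nu$. Pushed into $\flagmod{(1^d)}$, $e_A(E^\nu)$ is the sum of the complete flags whose relative position to the standard one lies in the double coset of $\sigma_A$. That is, it is the standard complete flag acted on by $\sum_w T_w$ over that double coset.
\item Finally, $\sum_{w\in\symgroup{\mu}\sigma\symgroup{\nu}}T_w=x_\mu\sum_{\omega\in\symgroup{\mu}\sigma\symgroup{\nu}\cap\rightcoset{\mu}}T_\omega$ is, up to your side conventions, the $\#$-twist of the formula defining $\phi^{\sigma}_{\nu,\mu}$.
\end{itemize}
This is essentially the content of Du's Remark~1.4, which the paper cites instead of giving a proof.
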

\begin{proof}
    See Remark 1.4 in \cite{du1995note}.
\end{proof}

We note the following particular case.

\begin{corollary}\label{cor : endomorphism of complete flag}
    The algebra $\hecke{d}$ acts on the right on $\flagmod{(1^d)}$ by
    \[
        V_* \cdot T_i = \sum_{\substack{W_* \in \flag{(1^d)} \text{ such that} \\ \forall j \neq i, \ W_j = V_j \text{ and } W_i \neq V_i}} W_* \ .
    \]
    This action defines an algebra isomorphism
    \[
        \hecke{d} \simeq \Endo{\GL{d}}(\flagmod{(1^d)})^{op} \ .
    \]
\end{corollary}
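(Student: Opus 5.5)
The plan is to specialize Lemma \ref{lemma : morphism between flag representation} to $\nu = \mu = (1^d)$, and then match the resulting basis element attached to $s_i$ with $T_i$. Take $n = m = d$ and $\nu = \mu = (1^d)$. In this case $\symgroup{\nu}$ is trivial, so $\doublecosetrepr{\nu}{\nu} = \symgroup{d}$. The matrices in $\matrixwithfixedrowcol{\nu}{\nu}$ are exactly the permutation matrices, and $\sigma_A$ runs over all of $\symgroup{d}$.

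First, Lemma \ref{lemma : weight module as induced module} gives $\weightmod{(1^d)} \simeq \sign{\emptyset}\otimes_{\field}\hecke{d} = \hecke{d}$, the right regular module, generated by $e_1\otimes\cdots\otimes e_d$. Hence left multiplication gives an algebra isomorphism
\[
\Endo{\hecke{d}}(\weightmod{(1^d)}) \simeq \hecke{d}.
\]
In the formula of Lemma \ref{lemma : hom between weight module}, each double coset $\symgroup{\mu}\sigma\symgroup{\nu}$ is the singleton $\{\sigma\}$. So $\phi^{\sigma}$ is left multiplication by $T_\sigma^{\#}$. Since $-^{\#}$ is an automorphism, the elements $T_\sigma^\#$ run over a basis, and $\sigma\mapsto\phi^\sigma$ is compatible with the algebra structure up to $\#$.

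Second, apply Lemma \ref{lemma : morphism between flag representation}, which is compatible with composition. It gives an isomorphism between $\Endo{\hecke{d}}(\weightmod{(1^d)})$ and $\Endo{\GL{d}}(\flagmod{(1^d)})$. Composing with the first step gives $\hecke{d}\simeq\Endo{\GL{d}}(\flagmod{(1^d)})$, possibly with an opposite depending on the conventions.

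I will convert this into a right action. I twist by $\#$, sending $T_\sigma^\#\mapsto e_{A_\sigma}$, so that $T_i$ corresponds to $e_{A_{s_i}}$ up to the automorphism. Then I track the order of composition. The Hom formula composes as $\phi^{\sigma}\circ\phi^{\tau}$, given by left multiplication by $T_\sigma^\# T_\tau^\#$. I expect this order-of-composition bookkeeping to account for the ${}^{op}$ once the endomorphisms are written as acting on the right of $\flagmod{(1^d)}$. If instead it produces a non-opposite isomorphism, I will compose with the anti-automorphism $T_\omega\mapsto T_{\omega^{-1}}$.

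Third, identify the action of $T_i$ with the stated geometric formula. For $A$ the permutation matrix of $s_i$, the defining condition of $\orbit{A}$ says the relative position of the complete flags $V_*$ and $W_*$ is $s_i$. That means $W_j = V_j$ for $j\neq i$ and $W_i \neq V_i$. So $e_{A}(V_*)$ is exactly the sum in the statement.

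It remains to check that this operator satisfies the quadratic relation $(T_i-q)(T_i+1)=0$. The flags with $W_j = V_j$ for $j \neq i$ are the $q+1$ lines of the plane $V_{i+1}/V_{i-1}$. Squaring the operator therefore gives $q\cdot V_* + (q-1)\,T_i(V_*)$, which is precisely the Hecke relation. This also fixes which normalization is correct, so $T_i$ itself rather than $T_i^\#$ acts by $e_{A_{s_i}}$. One could worry that $e_{A_{s_i}}$ corresponds to $T_i^\#$ under the lemma. In that case I would precompose with $\#$; the lemma's map stays an isomorphism because $\#$ is an automorphism. The braid relations then follow from the isomorphism.

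The main obstacle is purely this bookkeeping: $\#$ versus identity, and which side acts. I will settle it by the direct quadratic-relation computation above.
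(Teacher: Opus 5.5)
Your proposal is correct and is essentially the paper's argument: the paper gives no separate proof and presents the corollary as the case $\nu=\mu=(1^d)$ of Lemma~\ref{lemma : morphism between flag representation}, as you do. One remark is wrong, though harmless because of your fallback: the quadratic relation cannot distinguish $T_i$ from $T_i^{\#}=(q-1)-T_i$, since both satisfy $(X-q)(X+1)=0$ ($\#$ is an automorphism), and in fact the lemma sends left multiplication by $T_i^{\#}$, not by $T_i$, to $e_{A_{s_i}}$, so the twist by $\#$ is genuinely required rather than a contingency, after which $T_\omega\mapsto T_{\omega^{-1}}$ (or simply the palindromic form of the Hecke relations) gives the ${}^{op}$ version.
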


Our progenerator is defined using this action. More precisely, since the actions of $\hecke{d}$ and $\field \GL{d}$ commute, the intersections of eigenspaces of the $T_i$ are $\field \GL{d}$-submodule of $\flagmod{(1^d)}$. Hence, we can define the following $\field \GL{d}$-module.

\begin{definition}\label{def : divided flag module}
    Let $\nu \in \composition{n}{d}$. Define the divided flag module $\dividedmod{\nu}$ by
    \[
        \dividedmod{\nu} = \{ x \in \flagmod{(1^d)} \ | \ \forall h \in \hecke{\nu}, \ x \cdot h = \sign{\nu}(h) x \} \simeq \Hom{\hecke{d}}(\weightmod{\nu}, \flagmod{(1^d)}) \ .
    \]
    We also define
    \[
        \dividedmod{n,d} = \bigoplus_{\nu \in \composition{n}{d}} \dividedmod{\nu} \ .
    \]
\end{definition}

\begin{theorem}\label{thm : hom between divided flag module}
    Let $\dividedmod{n,d} = \bigoplus_{\nu \in \composition{n}{d}} \dividedmod{\nu}$. Then
    \[
        \Hom{\field \GL{d}}(\dividedmod{n,d},\dividedmod{m,d}) \simeq \qSchur{m}{n}{d} \ .
    \]
    In fact, $\dividedmod{*,d} : n \mapsto \dividedmod{n,d}$ is a contravariant polynomial functor.
\end{theorem}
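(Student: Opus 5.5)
The plan is to identify each divided flag module with a Hom-space out of a weight module and reduce everything to a double centralizer statement for the Hecke algebra. By Definition \ref{def : divided flag module}, $\dividedmod{\nu} \simeq \Hom{\hecke{d}}(\weightmod{\nu}, \flagmod{(1^d)})$. Summing over $\nu \in \composition{n}{d}$ and using $\standard{n}{d} = \bigoplus_\nu \weightmod{\nu}$ gives
\[
    \dividedmod{n,d} \simeq \Hom{\hecke{d}}(\standard{n}{d}, \flagmod{(1^d)}) ,
\]
with $\field\GL{d}$ acting through its left action on $\flagmod{(1^d)}$, which commutes with the right $\hecke{d}$-action. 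This already exhibits functoriality: an element $\phi \in \qSchur{n}{m}{d} = \Hom{\hecke{d}}(\standard{n}{d},\standard{m}{d})$ acts by precomposition, $f \mapsto f\circ\phi$, giving a $\field\GL{d}$-linear map $\dividedmod{m,d} \to \dividedmod{n,d}$. Compatibility with composition and $\field$-linearity are immediate, so $n \mapsto \dividedmod{n,d}$ is a contravariant functor $\source{d}^{op} \to \cible$.

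It remains to show that the resulting map $\qSchur{m}{n}{d} \to \Hom{\field\GL{d}}(\dividedmod{n,d},\dividedmod{m,d})$ is an isomorphism. (The direction of indices depends on the left/right conventions; I will fix them so that precomposition matches.) I would do this block by block, for fixed $\nu \in \composition{n}{d}$ and $\mu \in \composition{m}{d}$. The key observation is that $\dividedmod{\nu}$ is realized inside $\flagmod{\nu}$. The map $\flagmod{(1^d)} \to \flagmod{\nu}$ forgetting the intermediate subspaces, together with the map $\flagmod{\nu}\to\flagmod{(1^d)}$ summing over refinements, are the morphisms $e_A$ of Lemma \ref{lemma : morphism between flag representation}. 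Twisted by the sign character, they should identify $\dividedmod{\nu}$ with a direct summand, or rather a sign-twisted image, of $\flagmod{(1^d)}$ cut out by the idempotent-like element attached to $\hecke{\nu}$.

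More cleanly, I would proceed as follows. Write $\dividedmod{\nu} = \flagmod{(1^d)}\cdot x_\nu$ with $x_\nu = \sum_{w\in\symgroup{\nu}} T_w^{\#}$, up to checking that the sign-eigenspace equals this image. That check holds because $q-1$ and the relevant Poincaré polynomials are handled by the hypothesis $p \nmid q(q-1)$, or via Lemma \ref{lemma : weight module as induced module}. Then compute
\[
    \Hom{\field\GL{d}}(\flagmod{(1^d)}x_\nu, \flagmod{(1^d)}x_\mu)
\]
using Corollary \ref{cor : endomorphism of complete flag}, which gives $\Endo{\GL{d}}(\flagmod{(1^d)}) = \hecke{d}^{op}$, and Lemma \ref{lemma : morphism between flag representation}. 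If both modules were summands of $\flagmod{(1^d)}$, this Hom-space would be $x_\mu\hecke{d}x_\nu$, which is $\Hom{\hecke{d}}(x_\nu\hecke{d}, x_\mu\hecke{d}) = \Hom{\hecke{d}}(\weightmod{\nu},\weightmod{\mu})$ in the opposite direction. Lemma \ref{lemma : hom between weight module} then supplies the basis, and summing over $\nu,\mu$ gives $\qSchur{m}{n}{d}$.

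The main obstacle is that $x_\nu$ need not be a scalar multiple of an idempotent in non-semisimple situations, so the modules are not obviously summands. The fallback I would try first is to avoid idempotents entirely. Using the sign-twisted version of Lemma \ref{lemma : morphism between flag representation} (via the involution $-^{\#}$), I would identify $\dividedmod{\nu}$ with $\flagmod{\nu}$ twisted appropriately, or prove directly that $\Hom{\hecke{d}}(\weightmod{\nu},\flagmod{(1^d)})$ is $\field\GL{d}$-isomorphic to a permutation-type module on $\flag{\nu}$. I would then transport the isomorphism of Lemma \ref{lemma : morphism between flag representation}, with $\flagmod{\nu}$ replaced by $\dividedmod{\nu}$, using the sign instead of the trivial character. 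Failing that, I would fall back on the citation to \cite{brundan2001quantum}, where this is proved.
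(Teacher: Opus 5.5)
Your construction of the comparison map (precomposition on $\dividedmod{n,d}\simeq\Hom{\hecke{d}}(\standard{n}{d},\flagmod{(1^d)})$) and the resulting contravariant functoriality are exactly what the paper does. The paper then defers bijectivity to \cite[Theorem 3.4c]{brundan2001quantum}. Your own argument for bijectivity, however, breaks down as soon as some part of $\nu$ is $\geq\ell$.

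The identity $\dividedmod{\nu}=\flagmod{(1^d)}\cdot x_\nu$ does not follow from $p\nmid q(q-1)$. Each $T_i^{\#}$ with $s_i\in\symgroup{\nu}$ acts by $q$ on $\dividedmod{\nu}$, so $v\cdot x_\nu=\prod_i\qfactorial{\nu_i}\,v$ for $v\in\dividedmod{\nu}$ (compare the computation with $y_\nu$ in Proposition \ref{prop : projectivity under the morita}). This scalar is $0$ once some $\nu_i\geq\ell$, and the image can then be strictly smaller. For example, take $d=2$ and $q\equiv-1\pmod p$ (e.g.\ $q=2$, $p=3$). Then $T_1$ acts on $\field[\mathbb{P}^1(\ffield)]$ as $J-1$, with $J$ the all-ones matrix. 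So $\dividedmod{(2)}$ is the $q$-dimensional augmentation kernel, while $x_{(2)}=q-T_1=-J$ has one-dimensional image.

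More fundamentally, no repair of the summand strategy can work. Direct summands of $\flagmod{(1^d)}$ are projective $\field\GL{d}$-modules, but $\dividedmod{(2)}$ in this example is not. Otherwise the trivial module would split off $\field[\mathbb{P}^1(\ffield)]$, although $p\mid q+1$ divides $|\GL{2}|$.

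Your second fallback fails too. $\dividedmod{\nu}$ is not a (twisted) permutation module on $\flag{\nu}$: $\flag{(2)}$ is a single point, while $\dim\dividedmod{(2)}=q$. Indeed $\flagmod{\nu}$ and $\dividedmod{\nu}$ are the images under $\beta$ of the different functors $\qexterior{\nu}$ and $\qdivided{\nu}$. So, as written, you prove the statement only when all compositions involved are $\ell$-bounded, and otherwise you rely on the same citation as the paper.

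A self-contained argument is available if you use injectivity instead of idempotents.
The module $M=\flagmod{(1^d)}$ is projective, hence injective, over $\field\GL{d}$, with $\Endo{\GL{d}}(M)^{op}=\hecke{d}$. By definition, $\dividedmod{\mu}$ has the copresentation $0\to\dividedmod{\mu}\to M\to\bigoplus_{s_i\in\symgroup{\mu}}M$, $v\mapsto(v(T_i+1))_i$.
Applying the exact functor $\Hom{\field\GL{d}}(-,M)$ gives $\Hom{\field\GL{d}}(\dividedmod{\mu},M)\simeq\hecke{d}/\sum_{s_i\in\symgroup{\mu}}(T_i+1)\hecke{d}\simeq\weightmod{\mu}$ as right $\hecke{d}$-modules.
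Combine this free presentation with left exactness of $\Hom{\field\GL{d}}(X,-)$. It shows that, for every $X$, the map $g\mapsto(-\circ g)$ is an isomorphism $\Hom{\field\GL{d}}(X,\dividedmod{\mu})\simeq\Hom{\hecke{d}}(\Hom{\field\GL{d}}(\dividedmod{\mu},M),\Hom{\field\GL{d}}(X,M))$.
Taking $X=\dividedmod{\nu}$ yields $\Hom{\hecke{d}}(\weightmod{\mu},\weightmod{\nu})$. Summing over $\nu\in\composition{n}{d}$ and $\mu\in\composition{m}{d}$ gives $\qSchur{m}{n}{d}$.
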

\begin{proof}
    See theorem 3.4c of \cite{brundan2001quantum} for a proof. We just construct the isomorphism for the reader's convenience. Let $\nu \in \composition{n}{d}$. Note that $\dividedmod{\nu} \simeq \Hom{\hecke{\nu}}(\sign{\nu},\flagmod{(1^d)}) \simeq \Hom{\hecke{d}}(\weightmod{\nu}, \flagmod{(1^d)})$ by adjunction between induction and restriction. Hence, for $\mu \in \composition{m}{d}$, we have a natural map
    \[
        \Hom{\hecke{d}}(\weightmod{\mu},\weightmod{\nu}) \to \Hom{\field \GL{d}}(\Hom{\hecke{d}}(\weightmod{\nu}, \flagmod{(1^d)}),\Hom{\hecke{d}}(\weightmod{\mu}, \flagmod{(1^d)})) \simeq \Hom{\field \GL{d}}(\dividedmod{\nu},\dividedmod{\mu}) \ .
    \]
    This map is an isomorphism, and the isomorphism of the theorem is just the direct sum of these maps : 
    \[
        \Hom{\field \GL{d}}(\dividedmod{n,d},\dividedmod{m,d}) \simeq \bigoplus_{\substack{\nu \in \composition{n}{d} \\ \mu \in \composition{m}{d}}} \Hom{\field \GL{d}}(\dividedmod{\nu},\dividedmod{\mu}) \simeq \bigoplus_{\substack{\nu \in \composition{n}{d} \\ \mu \in \composition{m}{d}}} \Hom{\hecke{d}}(\weightmod{\mu},\weightmod{\nu}) \simeq \qSchur{m}{n}{d} \ .
    \]
\end{proof}

The module $\dividedmod{n,d}$ are not projective in general as $\field \GL{d}$-module. But they are as module over the following algebra.

\begin{definition}\label{def : cuspidal algebra}
    Let $\annihilator{\field \GL{d}}(\flagmod{(1^d)})$ denote the two-sided ideal of $\field \GL{d}$ of elements $g$ such that $g \cdot \flagmod{(1^d)} = 0$. The cuspidal algebra is the quotient
    \[
        \cuspidal{d} = \field \GL{d} / \annihilator{\field \GL{d}}(\flagmod{(1^d)}) \simeq \Image(\field \GL{d} \to \Endo{\field}(\flagmod{(1^d)}) \ .
    \]
\end{definition}

From this definition, it is easy to see that all $\field \GL{d}$-modules we have defined so far are $\cuspidal{d}$-modules : they are all subquotients of $\flagmod{(1^d)}$ ($\flagmod{\nu}$ is a quotient of $\flagmod{(1^d)}$, the quotient map being obtained by forgetting some parts of the flags). In particular, $\dividedmod{n,d}$ is a $\cuspidal{d}$-module. In fact, under a certain condition on $n$, it is a progenerator.

\begin{proposition}\label{prop : progenerator of cuspidal algebra}
    If $n \geq d$, then $\dividedmod{n,d} = \bigoplus_{\nu \in \composition{n}{d}} \dividedmod{\nu}$ is a projective generator of $\module{\cuspidal{d}}$.
\end{proposition}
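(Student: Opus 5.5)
The plan is to deduce the proposition from a general Morita-theoretic lemma, applied to the Hecke algebra. We work with $\flagmod{(1^d)}$ as a $(\field\GL{d},\hecke{d})$-bimodule, using Corollary \ref{cor : endomorphism of complete flag}. The lemma is the following. Let $H$ be a finite-dimensional algebra and $X,T$ two finite-dimensional right $H$-modules with $\mathrm{add}(X) = \mathrm{add}(T)$. Then $\Hom{H}(T,X)$ is a projective generator of the category of left $\Endo{H}(X)$-modules, and its endomorphism algebra is $\Endo{H}(T)$ (up to taking opposite algebras).

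The proof of this lemma is formal. The functor $\Hom{H}(-,X)$ sends $\mathrm{add}(X)$ contravariantly and fully faithfully onto the projective $\Endo{H}(X)$-modules, and it sends $X$ to the regular module. Hence $T \in \mathrm{add}(X)$ makes $\Hom{H}(T,X)$ projective, and $X \in \mathrm{add}(T)$ makes the regular module a summand of some $\Hom{H}(T,X)^{\oplus m}$. We apply this with $H = \hecke{d}$, $X = \flagmod{(1^d)}$ and $T = \standard{n}{d} = \bigoplus_{\nu} \weightmod{\nu}$. Then $\Hom{H}(T,X) = \dividedmod{n,d}$ by Definition \ref{def : divided flag module}, and Theorem \ref{thm : hom between divided flag module} already identifies its endomorphism algebra with the $q$-Schur algebra.

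So two facts remain to be proved.

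\emph{(a) Double centralizer.} The natural map $\cuspidal{d} \to \Endo{\hecke{d}}(\flagmod{(1^d)})$ is an isomorphism. It is injective by the definition of $\cuspidal{d}$ as the image of $\field\GL{d}$ in $\Endo{\field}(\flagmod{(1^d)})$, and its image is contained in the centralizer. For surjectivity, we would use that the order of the Borel subgroup $B$ is $(q-1)^d q^{d(d-1)/2}$, which is prime to $p$. Hence $\flagmod{(1^d)} = \field\GL{d}\, e_B$ for the idempotent $e_B = |B|^{-1}\sum_{b \in B} b$, and $\hecke{d} \simeq (e_B \field\GL{d}\, e_B)^{op}$. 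Now $\field\GL{d}$ is a symmetric algebra, and $\field\GL{d}\,e_B$ is a projective (hence also injective) $\field\GL{d}$-module. The classical double centralizer property for an idempotent $e$ of a self-injective algebra $A$ then gives that $A \to \Endo{eAe}(Ae)$ is surjective. This step could alternatively be quoted from \cite{cline1999generic} or \cite{brundan2001quantum}.

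\emph{(b) $\mathrm{add}(\flagmod{(1^d)}) = \mathrm{add}(\standard{n}{d})$ as right $\hecke{d}$-modules when $n \geq d$.} Since $n \geq d$, the composition $(1^d,0,\dots,0)$ lies in $\composition{n}{d}$, and by Lemma \ref{lemma : weight module as induced module} the corresponding weight module is $\hecke{d}$ itself, the regular module. So it suffices to show two things: that $\flagmod{(1^d)}$ is a direct sum of summands of $\standard{n}{d}$, and that each $\weightmod{\nu}$ is a summand of a sum of copies of $\flagmod{(1^d)}$. For the first, we decompose $\flagmod{(1^d)} = \field\GL{d}\,e_B$ as a right $e_B\field\GL{d}e_B$-module, using the Bruhat decomposition and a Harish-Chandra/Dipper–James type argument, and aim to show it is a sum of modules of the form $\Hom{\field \GL{d}}(\field\GL{d}e_B, \flagmod{\nu})$. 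By Lemma \ref{lemma : morphism between flag representation}, these last modules are precisely the weight modules $\weightmod{\nu}$. For the converse inclusion, we would try to exhibit each $\weightmod{\nu}$ as a direct summand of $\hecke{d}^{\oplus m}$, or at least of copies of $\flagmod{(1^d)}$.

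I expect (b) to be the main obstacle. The weight modules $\weightmod{\nu}$ are not projective over $\hecke{d}$ when $\ell \leq d$, so the inclusion $\mathrm{add}(\standard{n}{d}) \subset \mathrm{add}(\flagmod{(1^d)})$ must genuinely use that $\flagmod{(1^d)}$ is larger than $\hecke{d}$. The hint is that $\flagmod{(1^d)}$ contains the $\hecke{d}$-submodules $\flagmod{\nu}$, and one has to check that these split off.

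One might hope to prove the generator property without (b), by showing directly that every simple $\cuspidal{d}$-module is a quotient of $\flagmod{(1^d)}$. This fails, because cuspidal composition factors of $\flagmod{(1^d)}$ have no $B$-fixed vectors. It is precisely the summands $\dividedmod{\nu}$ with $\nu \neq (1^d)$ that cover these simples, which is why the hypothesis $n \geq d$ and the full sum over $\composition{n}{d}$ are needed. If establishing (b) by hand becomes too technical, I would quote the relevant statement of \cite[Theorem 3.4c]{brundan2001quantum}, or the corresponding result of \cite{cline1999generic}, and present only the formal Morita deduction above.
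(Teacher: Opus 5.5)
The formal Morita lemma you state is correct, and granting (a) and (b) it does give the proposition. The gap is that (b) is never proved, and it is not a preliminary step: it carries the entire content of the statement. Given (a), (b) is essentially equivalent to the proposition. Indeed, once the Morita equivalence is known, $\flagmod{(1^d)}|_{\hecke{d}}$ decomposes as $\bigoplus_\lambda \Hom{\quantumfunctor{}}(P_\lambda,\qtensor{d})^{\oplus m_\lambda}$ with $P_\lambda$ the indecomposable projectives and $m_\lambda\geq 1$, while $\weightmod{\nu}\cong\Hom{\quantumfunctor{}}(\qdivided{\nu},\qtensor{d})$. So (b) is the proposition transported to the Hecke side.

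You establish neither inclusion of (b):
\begin{itemize}
\item For $\flagmod{(1^d)}\in\mathrm{add}(\standard{n}{d})$, you only announce a ``Bruhat/Harish-Chandra'' decomposition without saying what it is.
\item For $\weightmod{\nu}\in\mathrm{add}(\flagmod{(1^d)})$, which is what gives projectivity of $\dividedmod{\nu}$ over $\cuspidal{d}$, your first route (a summand of $\hecke{d}^{\oplus m}$) is impossible, as you note yourself. Your second route is just the claim restated.
\end{itemize}
The missing idea is a genuine structure theorem for $\field\GL{d}e_B$ as a right $\hecke{d}$-module. It must be a direct sum of weight modules in which every $\weightmod{\lambda}$, $\lambda\vdash d$, occurs. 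This is precisely the nontrivial input the paper imports by citing \cite[Theorem 3.4g]{brundan2001quantum}, and that citation is the paper's entire proof.

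Your justification of (a) also fails as stated. There is no general double centralizer property for idempotents of self-injective algebras. Take the basic Brauer tree algebra $A$ of a line with three edges labelled $1,2,3$ in order, and $e=e_2$:
\begin{itemize}
\item $Ae=P_2$ has top $L_2$, heart $L_1\oplus L_3$ and socle $L_2$.
\item $eAe\cong\field[x]/(x^2)$, and $Ae\cong eAe\oplus\field\oplus\field$ as right $eAe$-modules, so $\dim\Endo{eAe}(Ae)=10$.
\item The socle elements of $e_1Ae_1$ and $e_3Ae_3$ annihilate $Ae$, so the image of $A$ has dimension at most $8$.
\end{itemize}
Balancedness does hold for faithful modules over self-injective algebras. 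However, $\flagmod{(1^d)}$ is faithful only over $\cuspidal{d}$, which is not self-injective once $d\geq\ell$. So (a) is true here only for reasons specific to $\GL{d}$ and $e_B$, and it too has to be quoted. With both (a) and (b) quoted, your argument repackages the same citation the paper makes rather than giving an independent proof.
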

\begin{proof}
    See \cite[Theorem 3.4g]{brundan2001quantum}.
\end{proof}

Now, we have a progenerator of the category $\module{\cuspidal{d}}$ of cuspidal module, with endomorphism algebra $\qSchurAlg{n}{d}$. This gives a Morita equivalence, which we reformulate in terms of quantum polynomial functor.

\begin{corollary}\label{cor : morita 2}
    Consider the contravariant quantum polynomial functor
    \[
        \dividedmod{*,d} : \source{d}^{op} \to \module{\cuspidal{d}}, \quad \dividedmod{*,d}(n) = \dividedmod{n,d} \ .
    \]
    Let $\beta : \quantumfunctor{d} \to \module{\cuspidal{d}}$ denote the functor
    \[
        \beta(F) = \dividedmod{*,d} \otimes_{\source{d}} F \ .
    \]
    Then $\beta$ is an equivalence of category with inverse
    \[
        \alpha(M) : n \mapsto \Hom{\cuspidal{d}}(\dividedmod{n,d},M) \ .
    \]
\end{corollary}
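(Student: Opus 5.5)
The plan is to reduce the statement to classical Morita theory for the progenerator $P = \dividedmod{n,d}$ with $n \geq d$ fixed, and then check that the tensor-product description of $\beta$ and the Hom description of $\alpha$ agree with the usual Morita functors. The first step is to compare $\quantumfunctor{d}$ with $\module{\qSchurAlg{n}{d}}$ for $n \geq d$. Evaluation at $n$, $F \mapsto F(n)$, is an equivalence. Indeed, $\qSchurAlg{n}{d}=\Endo{\source{d}}(n)$, and $n$ is a generator of the $\field$-linear category $\source{d}$: for every $m$, the identity of $m$ factors through $n$. To see this, write $\standard{m}{d}=\bigoplus_{\mu} \weightmod{\mu}$. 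Each $\mu \in \composition{m}{d}$ has at most $d \leq n$ nonzero parts, so $\weightmod{\mu}\simeq \weightmod{\mu'}$ for some $\mu'\in\composition{n}{d}$ by Lemma \ref{lemma : weight module as induced module}, since reordering and dropping zero parts gives conjugate parabolic subalgebras. Hence $\standard{m}{d}$ is a direct summand of a finite sum of copies of $\standard{n}{d}$. The standard lemma on additive linear categories with a generating object then shows that restriction to $\Endo{}(n)$ is an equivalence.

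The second step is to translate $\alpha$ and $\beta$ through this equivalence. By Theorem \ref{thm : hom between divided flag module}, $\Endo{\cuspidal{d}}(\dividedmod{n,d})\simeq \qSchurAlg{n}{d}^{op}$ with the appropriate variance, so $\dividedmod{n,d}$ is a $(\cuspidal{d},\qSchurAlg{n}{d})$-bimodule. By Proposition \ref{prop : progenerator of cuspidal algebra}, it is a progenerator of $\module{\cuspidal{d}}$. Morita's theorem then says that $M\mapsto \Hom{\cuspidal{d}}(\dividedmod{n,d},M)$ and $X\mapsto \dividedmod{n,d}\otimes_{\qSchurAlg{n}{d}} X$ are mutually inverse equivalences. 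By definition, $\alpha(M)(n)$ is exactly the first functor. For $\beta$, I would show that the coend $\dividedmod{*,d}\otimes_{\source{d}} F$ is canonically isomorphic to $\dividedmod{n,d}\otimes_{\qSchurAlg{n}{d}} F(n)$. Every object $m$ is a retract of sums of $n$, and both $\dividedmod{*,d}$ and $F$ are additive functors, so the co-Yoneda reduction applies: the coend over $\source{d}$ equals the coend over the full subcategory on $n$, which is the ordinary tensor product over $\qSchurAlg{n}{d}$.

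The third step checks that the functor $\alpha(M): m\mapsto \Hom{\cuspidal{d}}(\dividedmod{m,d},M)$ is a genuine quantum polynomial functor for every $m$, including $m<d$. This follows from the contravariant functoriality of $\dividedmod{*,d}$ in Theorem \ref{thm : hom between divided flag module}. The unit and counit isomorphisms are then the Morita ones composed with the evaluation equivalence. Naturality in $m$ is automatic because both sides are determined by their value at $n$.

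The main obstacle is bookkeeping rather than a new idea. One has to make sure the variances (op's) in Theorem \ref{thm : hom between divided flag module} match the left and right module structures, so that $\dividedmod{*,d}\otimes_{\source{d}} -$ is well defined. One also has to check that the isomorphisms $\weightmod{\mu}\simeq\weightmod{\mu'}$ used in the generator argument are realized inside $\source{d}$ by composable maps $m\to n\to m$. I would handle the latter explicitly: take the basis elements $\phi^{\sigma}$ of Lemma \ref{lemma : hom between weight module} indexed by the permutation matrices that embed $\mu$ into $\mu'$ and back, and verify that their composite is the identity using Lemma \ref{lemma : morphism between flag representation}.
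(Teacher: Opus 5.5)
Your proposal is correct and follows the same route as the paper. Both apply Morita theory to the progenerator $\dividedmod{n,d}$ ($n\geq d$) of Proposition \ref{prop : progenerator of cuspidal algebra}, whose endomorphism algebra is $\qSchurAlg{n}{d}$ (up to op) by Theorem \ref{thm : hom between divided flag module}, and transport it along the evaluation equivalence $\quantumfunctor{d}\simeq\module{\qSchurAlg{n}{d}}$ and the reduction $\beta(F)\simeq\dividedmod{n,d}\otimes_{\qSchurAlg{n}{d}}F(n)$; the paper cites these last two from \cite[Theorem 4.7]{hong2017quantum}, while you prove them by the retract argument.

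The bookkeeping you flag at the end is unnecessary. Morphisms of $\source{d}$ are by definition all $\hecke{d}$-linear maps, and padding the nonzero parts of $\mu$ with zeros, without reordering, already gives $\hecke{\mu}=\hecke{\mu'}$. So the isomorphisms $\weightmod{\mu}\simeq\weightmod{\mu'}$ are automatically morphisms of $\source{d}$, and no appeal to Lemma \ref{lemma : morphism between flag representation} is needed.
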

By definition, $\dividedmod{*,d} \otimes_{\source{d}} F$ can be defined as the $\field \GL{d}$-module generated by the symbol $v \otimes_n w$, with $n \geq 0$, $v \in \dividedmod{n,d}$ and $w \in F(n)$, modulo relation $v \cdot \zeta \otimes_n w = v \otimes_m \zeta \cdot w$ for $\zeta \in \qSchur{n}{m}{d}$. But by theorem 4.7 of \cite{hong2017quantum}, we can use only one $n$ :
\[
    \beta(F) \simeq \dividedmod{n,d} \otimes_{\qSchurAlg{n}{d}} F(n) \quad \text{if } n \geq d.
\]


\subsection{Examples of correspondences between quantum polynomial functors and representations of $\GL{d}$}\label{subsec : example morita}

In this section, we give some examples of computations of $\beta(F)$, for some standard quantum polynomial functors $F$ : the tensor power $\qtensor{d}$, the quantum exterior power $\qexterior{\nu}$, the quantum divided power $\qdivided{\nu}$ and the quantum symmetric power $\qsymmetric{\nu}$. Finally, we will see that the functor $\beta$ transforms tensor products into Harish-Chandra inductions, and transforms a certain duality of the category $\quantumfunctor{d}$ into a duality induced by transpose of matrix in the category $\module{\cuspidal{d}}$.

\subsubsection{The tensor power functor and the complete flag module}

\begin{definition}\label{def : tensor power functor}
    The tensor power functor $\qtensor{d} \in \quantumfunctor{d}$ is given by $\qtensor{d}(n) = \standard{n}{d}$.
\end{definition}

If we let $\qtensor{1} = I_q$, we have $\qtensor{d} = \underbrace{I_q \otimes \cdots \otimes I_q}_{d \text{ times}}$ (using the monoidal structure of $\quantumfunctor{}$).

\begin{lemma}\label{lemma : morita tensor power}
    We have $\beta(\qtensor{d}) = \flagmod{(1^d)}$. Moreover, every $h \in \hecke{d}$ induces a natural transformation $\cdot h : \qtensor{d} \to \qtensor{d}$ given by the action of $h$ on $\standard{n}{d}$, and $\beta(\cdot h)$ is the action of $h$ on $\flagmod{(1^d)}$.
\end{lemma}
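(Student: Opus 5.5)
We compute $\beta(\qtensor{d})$ using a single $n \geq d$, namely $\beta(\qtensor{d}) \simeq \dividedmod{n,d} \otimes_{\qSchurAlg{n}{d}} \standard{n}{d}$, and show this is $\flagmod{(1^d)}$ via the Yoneda lemma. Here $\qSchurAlg{n}{d}$ acts on $\standard{n}{d}$ on the left, as $\Endo{\hecke{d}}(\standard{n}{d})$, and on $\dividedmod{n,d}$ on the right, through Theorem \ref{thm : hom between divided flag module}. The plan has three steps: identify $\qtensor{d}$ as a representable functor, apply co-Yoneda, and then check that the two $\hecke{d}$-actions correspond.

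\textbf{Step 1: representability.} The weight decomposition $\standard{n}{d} = \bigoplus_{\nu \in \composition{n}{d}} \weightmod{\nu}$ contains the summand $\weightmod{(1^d,0,\dots,0)}$, and Lemma \ref{lemma : weight module as induced module} gives $\weightmod{(1^d,0,\dots,0)} \simeq \hecke{d}$, the regular right module. Hence
\[
    \qtensor{d}(n) = \standard{n}{d} \simeq \Hom{\hecke{d}}(\hecke{d},\standard{n}{d}) = \Hom{\hecke{d}}(\standard{d}{d}e, \standard{n}{d}) \simeq \qSchur{d}{n}{d}\, e ,
\]
where $e \in \qSchurAlg{d}{d}$ is the idempotent projecting onto $\weightmod{(1^d)}$. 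These isomorphisms are natural in $n$, because $\qSchur{\cdot}{\cdot}{d}$ acts by postcomposition.

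\textbf{Step 2: co-Yoneda.} Since $\qtensor{d}$ is a direct summand $\Hom{\source{d}}(d,-)e$ of a representable functor, the coend formula gives
\[
    \beta(\qtensor{d}) = \dividedmod{*,d} \otimes_{\source{d}} \Hom{\source{d}}(d,-)e \simeq \dividedmod{d,d}\, e = \dividedmod{(1^d)} .
\]
By Definition \ref{def : divided flag module}, $\hecke{(1^d)} = \field$ imposes no condition, so $\dividedmod{(1^d)} = \flagmod{(1^d)}$. Alternatively, with a fixed $n \geq d$, one can write $\standard{n}{d} \simeq \qSchurAlg{n}{d}\, e_{(1^d,0,\ldots)}$ and compute $\dividedmod{n,d} \otimes_{\qSchurAlg{n}{d}} \qSchurAlg{n}{d}e \simeq \dividedmod{n,d}e = \dividedmod{(1^d,0,\dots,0)} = \flagmod{(1^d)}$.

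\textbf{Step 3: naturality of the $\hecke{d}$-action.} Right multiplication by $h$ on $\standard{n}{d}$ corresponds, on the summand $\weightmod{(1^d)} \simeq \hecke{d}$, to left multiplication by $h$ on $\hecke{d}$. This is an endomorphism of $\weightmod{(1^d)}$, i.e. an element $\zeta_h \in e\,\qSchurAlg{d}{d}\,e \simeq \Endo{\hecke{d}}(\hecke{d})$. Under Yoneda, $\cdot h$ becomes precomposition with $\zeta_h$, so $\beta(\cdot h)$ is the action of $\zeta_h$ on $\dividedmod{(1^d)}$ provided by Theorem \ref{thm : hom between divided flag module}. That action was built from $\dividedmod{\nu} \simeq \Hom{\hecke{d}}(\weightmod{\nu},\flagmod{(1^d)})$ by precomposition. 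For $\nu = (1^d)$ we have $\Hom{\hecke{d}}(\hecke{d},\flagmod{(1^d)}) \simeq \flagmod{(1^d)}$ via $f \mapsto f(1)$, and precomposing with left multiplication by $h$ sends $f$ to $f(h\cdot -)$. Evaluating at $1$ gives $f(h) = f(1)\cdot h$, which is exactly the right action of $h$ on $\flagmod{(1^d)}$ from Corollary \ref{cor : endomorphism of complete flag}.

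\textbf{Main obstacle.} The real content is in Step 3: keeping track of the conventions, namely the left versus right actions and the $op$ in Theorem \ref{thm : hom between divided flag module}. One must also check that the isomorphism $\weightmod{(1^d)} \simeq \hecke{d}$, $e_1\otimes\cdots\otimes e_d \mapsto 1$, is genuinely an isomorphism of right $\hecke{d}$-modules. This follows from Lemma \ref{lemma : weight module as induced module} with $\nu=(1^d)$, and can be confirmed directly because $R$ acts on increasing tensors $e_i\otimes e_j$ with $i<j$ by the flip.
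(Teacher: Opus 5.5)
Your proof is correct, but it evaluates the tensor product by a different route than the paper. The paper also fixes $n \geq d$ and uses the idempotent $\phi^{id}_{(1^d,0^{n-d}),(1^d,0^{n-d})}$, but only to deduce that $\standard{n}{d}$ is a projective $\qSchurAlg{n}{d}$-module. It then rewrites $\Hom{\hecke{d}}(\standard{n}{d},\flagmod{(1^d)}) \otimes_{\qSchurAlg{n}{d}} \standard{n}{d}$ as $\Hom{\hecke{d}}(\Endo{\qSchurAlg{n}{d}}(\standard{n}{d}),\flagmod{(1^d)})$. Next it invokes the double centralizer property $\Endo{\qSchurAlg{n}{d}}(\standard{n}{d}) \simeq \hecke{d}$ (Corollary 4.11 of \cite{hong2017quantum}), and finally evaluates at $1$.

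You instead use the truncation $M \otimes_A Ae \simeq Me$, or co-Yoneda for the summand $\Hom{\source{d}}(d,-)e$ of a representable functor. This needs only the freeness $\weightmod{(1^d)} \simeq \hecke{d}$, which you verify directly, and no double centralizer theorem. The co-Yoneda version also avoids the reduction to a single $n \geq d$ (Theorem 4.7 of \cite{hong2017quantum}), since it works with the coend over all of $\source{d}$.

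Your Step 3 also makes the second assertion about $\beta(\cdot h)$ explicit. The paper leaves this implicit in its claim that the chain of isomorphisms is one of $\hecke{d}$-modules. The paper's formulation has the conceptual advantage that the $\hecke{d}$-action on $\beta(\qtensor{d})$ visibly comes from $\Endo{\qSchurAlg{n}{d}}(\standard{n}{d})$. Still, checking that this action matches the flag action of Corollary \ref{cor : endomorphism of complete flag} needs the same left/right bookkeeping you carry out, ending with $f(h) = f(1)\cdot h$.

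One small notational point: write $e(\standard{d}{d})$ rather than $\standard{d}{d}e$ for the image of the idempotent. Right juxtaposition on $\standard{d}{d}$ already denotes the $\hecke{d}$-action.
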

\begin{proof}
    Let $n \geq d$. 
    We have the following isomorphism of $\hecke{d}$-modules :
    \begin{align*}
        \beta(\qtensor{d}) & \simeq \Hom{\hecke{d}}(\standard{n}{d},\flagmod{(1^d)}) \otimes_{\qSchurAlg{n}{d}} \standard{n}{d} \\
        & \simeq \Hom{\hecke{d}}(\Hom{\qSchurAlg{n}{d}}(\standard{n}{d}, \standard{n}{d}),\flagmod{(1^d)}) \\
        & \simeq \Hom{\hecke{d}}(\hecke{d}, \flagmod{(1^d)}) \simeq \flagmod{(1^d)} \ .
    \end{align*}
    We justify these isomorphisms below.

    For the first isomorphism, since $\standard{n}{d} \simeq \bigoplus_{\nu \in \composition{n}{d}} \weightmod{\nu}$, 
    \[
        \dividedmod{n,d} \simeq \Hom{\hecke{d}}(\standard{n}{d},\flagmod{(1^d)}) \ .
    \]
    
    The second isomorphism follows from the projectivity of $\standard{n}{d}$ as a $\qSchurAlg{n}{d}$-module. To prove it, note that
    \[
        \standard{n}{d} \simeq \bigoplus_{\nu \in \composition{n}{d}}\weightmod{\nu} \simeq \Hom{\hecke{d}}(\weightmod{(1^d)}, \bigoplus_{\nu \in \composition{n}{d}} \weightmod{\nu}) \simeq \qSchurAlg{n}{d} \ \phi^{id}_{(1^d,0^{n-d}),(1^d,0^{n-d})} \ .
    \]
    The element $\phi^{id}_{(1^d,0^{n-d}),(1^d,0^{n-d})}$ is an idempotent, and hence $\standard{n}{d}$ is projective.

    The third isomorphism follows from Corollary 4.11 of \cite{hong2017quantum}, and the last one is evaluation at the unit element of $\hecke{d}$.
\end{proof}

\subsubsection{Quantum exterior power and flag module}

\begin{definition}\label{def : quantum exterior power}
    For $\nu \in \composition{m}{d}$, we define the quantum exterior power $\qexterior{\nu}$ by 
    \[
        \qexterior{\nu}(n) = \standard{n}{d} \otimes_{\hecke{\nu}} \triv{\nu} \simeq \standard{n}{d} \otimes_{\hecke{d}} \Induction{\hecke{\nu}}{\hecke{d}}(\triv{\nu}) \
    \]
    where $\triv{\nu}$ is the trivial character of $\hecke{\nu}$ viewed as a left module, see definition \ref{def : character of hecke}.
    We let $\qexterior{d} = \qexterior{(d)}$, with $(d)$ the composition of $d$ in one part. Then $\qexterior{\nu} \simeq \qexterior{\nu_1} \otimes \cdots \otimes \qexterior{\nu_m}$.
\end{definition}

With this definition, we obtain a map : 
\[
    \Hom{\hecke{d}}(\Induction{\hecke{\nu}}{\hecke{d}}(\triv{\nu}), \Induction{\hecke{\mu}}{\hecke{d}}(\triv{\mu})) \to \Hom{\quantumfunctor{d}}(\qexterior{\nu},\qexterior{\mu}) \ .
\]
This map is in fact an isomorphism (as a consequence of lemma \ref{lemma : morita exterior flag}).

\begin{lemma}\label{lemma : morita exterior flag}
    For $\nu \in \composition{m}{d}$, we have $\beta(\qexterior{\nu}) \simeq \flagmod{\nu}$. Moreover, for $A \in \matrixwithfixedrowcol{\nu}{\mu}$, $\beta(1 \otimes \phi^{\sigma_A}_{\nu,\mu}) : \flagmod{\nu} \to \flagmod{\mu}$ is given by the map $e_A$ of Lemma \ref{lemma : morphism between flag representation}.
\end{lemma}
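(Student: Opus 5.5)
We mimic the proof of Lemma \ref{lemma : morita tensor power}, replacing $\hecke{d}$ by the induced module $\Induction{\hecke{\nu}}{\hecke{d}}(\triv{\nu})$. Fix $n \geq d$. By definition, $\qexterior{\nu}(n) = \standard{n}{d} \otimes_{\hecke{d}} \Induction{\hecke{\nu}}{\hecke{d}}(\triv{\nu})$. Since $\beta(F) \simeq \dividedmod{n,d} \otimes_{\qSchurAlg{n}{d}} F(n)$ and $\dividedmod{n,d} \simeq \Hom{\hecke{d}}(\standard{n}{d},\flagmod{(1^d)})$, we get
\[
    \beta(\qexterior{\nu}) \simeq \Hom{\hecke{d}}(\standard{n}{d},\flagmod{(1^d)}) \otimes_{\qSchurAlg{n}{d}} \standard{n}{d} \otimes_{\hecke{d}} \Induction{\hecke{\nu}}{\hecke{d}}(\triv{\nu}) \simeq \flagmod{(1^d)} \otimes_{\hecke{d}} \Induction{\hecke{\nu}}{\hecke{d}}(\triv{\nu}),
\]
where the last step is exactly the chain of isomorphisms in Lemma \ref{lemma : morita tensor power}. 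That chain identifies $\beta(\qtensor{d})$ with $\flagmod{(1^d)}$ compatibly with the right $\hecke{d}$-action, and $\beta$ is right exact and commutes with $-\otimes_{\hecke{d}} X$ because it is a tensor product over $\qSchurAlg{n}{d}$. Hence $\beta(\qexterior{\nu}) \simeq \flagmod{(1^d)} \otimes_{\hecke{\nu}} \triv{\nu}$, the coinvariants of $\flagmod{(1^d)}$ for $\hecke{\nu}$ acting through the trivial character.

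The main step is to identify $\flagmod{(1^d)} \otimes_{\hecke{\nu}} \triv{\nu}$ with $\flagmod{\nu}$. We use the forgetful map $\pi : \flagmod{(1^d)} \to \flagmod{\nu}$, which sends a complete flag to its subflag indexed by the partial sums $\nu_1+\cdots+\nu_i$. This map is $\GL{d}$-equivariant and surjective. For $s_i \in \symgroup{\nu}$, Corollary \ref{cor : endomorphism of complete flag} shows that $V_*\cdot T_i$ is the sum of the $q$ complete flags that differ from $V_*$ only in position $i$, and all of these have the same image under $\pi$. Therefore $\pi(x\cdot T_i) = q\,\pi(x) = \triv{}(T_i)\pi(x)$, so $\pi$ factors through the coinvariants. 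To prove bijectivity we compare dimensions: $\flagmod{(1^d)} \otimes_{\hecke{\nu}} \triv{\nu}$ is spanned by the images of flags modulo the relations, and $\flagmod{(1^d)}$ is free over $\hecke{\nu}$ as a right module. The freeness holds because the fibres of $\pi$ are products of complete flag varieties of $\ffield^{\nu_j}$, on which $\hecke{\nu}$ acts regularly via $\hecke{\nu_1}\otimes\cdots\otimes\hecke{\nu_j}$. So the coinvariants have dimension $|\flag{(1^d)}|/\dim\hecke{\nu} = |\flag{\nu}|$. I expect this freeness/dimension count to be the main obstacle. A cleaner route is to apply Lemma \ref{lemma : morphism between flag representation} with the pair $(1^d),\nu$, which identifies $\flagmod{\nu}$ with the corresponding Hom-space and gives the required compatibility; I would try that route first.

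For the second statement, recall that $1\otimes\phi^{\sigma_A}_{\nu,\mu}$ comes from a homomorphism $\Induction{\hecke{\nu}}{\hecke{d}}(\triv{\nu}) \to \Induction{\hecke{\mu}}{\hecke{d}}(\triv{\mu})$. Under the identifications above, $\beta$ of this map is the induced map $\flagmod{\nu} \to \flagmod{\mu}$. Both $\beta$ and the assignment $A\mapsto e_A$ of Lemma \ref{lemma : morphism between flag representation} are compatible with composition. It therefore suffices to check the claim on a generating set, for example the maps with one-step merge/split matrices $A$ between $\nu$ and compositions refining it. Even more simply, we can check it after precomposing with the surjection $\pi$, comparing the images of a single complete flag. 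This comparison is a direct count of flags, with the $\#$-twist accounting for the passage between $\sign{}$ and $\triv{}$.
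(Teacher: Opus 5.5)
Your first step is the same as the paper's, and it is correct: you reduce $\beta(\qexterior{\nu})$ to the coinvariants $\flagmod{(1^d)}\otimes_{\hecke{\nu}}\triv{\nu}$. Your check that the forgetful map $\pi$ factors through them is also correct, since each $T_i$ with $s_i\in\symgroup{\nu}$ sends a flag to the sum of the $q$ other flags in its $i$-panel, and these all have the same image.

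The gap is in the injectivity step. The claim that $\hecke{\nu}$ acts regularly on the fibres of $\pi$, so that $\flagmod{(1^d)}$ is free over $\hecke{\nu}$, is false. The fibre over a point of $\flag{\nu}$ has $\prod_j\prod_{i=1}^{\nu_j}(1+q+\cdots+q^{i-1})$ elements, not $\prod_j \nu_j!=\dim\hecke{\nu}$. So your count $|\flag{(1^d)}|/\dim\hecke{\nu}$ equals $|\flag{\nu}|$ only when $q=1$. Already for $d=2$, $\nu=(2)$, the module $\flagmod{(1^2)}$ has dimension $q+1$; when $1+q\neq 0$ in $\field$ it decomposes as $\triv{}\oplus\sign{}^{\oplus q}$ over $\hecke{2}$, which is not free. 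The Hecke algebra is the endomorphism algebra of $\flagmod{(1^d)}$ (Corollary \ref{cor : endomorphism of complete flag}); it does not act freely on it.

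Your fallback via Lemma \ref{lemma : morphism between flag representation} does not work as stated either. That lemma identifies the Hom-space $\Hom{\field\GL{d}}(\flagmod{(1^d)},\flagmod{\nu})$ with $\Hom{\hecke{d}}(\weightmod{(1^d)},\weightmod{\nu})$. It does not identify the module $\flagmod{\nu}$ with anything, and it says nothing about the coinvariants.

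A correct argument uses the defining relations directly.
\begin{itemize}
\item Let $V_*$ and $W_*$ differ only in position $i$, with $s_i\in\symgroup{\nu}$, and let $\Sigma$ be the sum of the $q+1$ flags in their common $i$-panel. Then $V_*\cdot T_i-qV_*=\Sigma-(1+q)V_*$, and likewise for $W_*$. Hence $(1+q)(V_*-W_*)$ vanishes in the coinvariants.
\item Since $1+q\neq 0$ in $\field$, you get $V_*\equiv W_*$. This is exactly where $\ell\neq 2$ enters. For $\ell=2$ one has $\triv{}=\sign{}$ on $\hecke{2}$, and the coinvariants of $\flagmod{(1^2)}$ have dimension $q$. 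So no argument that ignores $\ell$, such as a pure dimension count, can work.
\item Each fibre of $\pi$ is a product of sets of complete flags of the $\ffield^{\nu_j}$. Any two points of it are joined by such elementary moves; this is gallery-connectedness, e.g. via the Bruhat decomposition.
\item Therefore the coinvariants are spanned by one class per element of $\flag{\nu}$, and the induced surjection onto $\flagmod{\nu}$ is an isomorphism.
\end{itemize}

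The paper itself only asserts this last isomorphism, citing Brundan--Dipper--Kleshchev and Cline--Parshall--Scott. Your plan for the second assertion (compose with $\pi$ and compare on a single cyclic generator) is reasonable but remains a sketch. The paper also relies on the citations there.
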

\begin{proof}
    By \cite[Theorem 4.2a]{brundan2001quantum} and \cite[Remark 9.18c]{cline1999generic}, we have an isomorphism $\beta(\qexterior{\nu}) \simeq \flagmod{\nu}$. More precisely, as in the proof of Lemma \ref{lemma : morita tensor power}
    \[
        \beta(\qexterior{\nu}) \simeq \Hom{\hecke{d}}(\standard{n}{d}, \flagmod{(1^d)}) \otimes_{\qSchurAlg{n}{d}} (\standard{n}{d} \otimes_{\hecke{\nu}} \triv{\nu} ) \simeq \flagmod{(1^d)} \otimes_{\hecke{\nu}} \triv{\nu} \ .
    \]
    Then the natural quotient map $\flagmod{(1^d)} \twoheadrightarrow \flagmod{\nu}$ induces a quotient map $\flagmod{(1^d)} \otimes_{\hecke{\nu}} \triv{\nu} \twoheadrightarrow \flagmod{\nu}$, which is an isomorphism.
\end{proof}

\subsubsection{Quantum divided power and divided flag module}

\begin{definition}\label{def : quantum divided power}
    For $\nu \in \composition{m}{d}$, we define the quantum divided power $\qdivided{\nu}$ by
    \[
        \qdivided{\nu}(n) = \Hom{\hecke{d}}(\weightmod{\nu}, \standard{n}{d}) \simeq \{x \in \standard{n}{d} \ | \ \forall h \in \hecke{\nu}, \ x \cdot h = \sign{\nu}(h) x \}
    \]
    We have $\qdivided{\nu} \simeq \qdivided{\nu_1} \otimes \cdots \otimes \qdivided{\nu_m}$, where $\qdivided{d} = \qdivided{(d)}$ with $(d)$ the composition of $d$ in one part.
\end{definition}

The $\qdivided{\nu}$ form an important family of quantum polynomial functors because they are all projective, and generate $\quantumfunctor{}$. As with the quantum exterior power, the homomorphisms between different quantum divided power functors are given by part of the quantum Schur algebra. In fact

\begin{lemma}\label{lemma : morita quantum divided power}
    We have $\beta(\qdivided{\nu}) \simeq \dividedmod{\nu}$. Moreover, under this isomorphism,
    \[
        \beta(- \circ \phi^{\sigma_A}_{\mu,\nu}) = - \circ \phi^{\sigma_A}_{\mu,\nu} : \dividedmod{\nu} \to \dividedmod{\mu} \ .
    \]
\end{lemma}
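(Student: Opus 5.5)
Take $n \geq d$ and use the description $\beta(F) \simeq \dividedmod{n,d} \otimes_{\qSchurAlg{n}{d}} F(n)$, as in the proof of Lemma \ref{lemma : morita tensor power}. The idea is to compute $\beta(\qdivided{\nu})$ through the projectivity of $\qdivided{\nu}$, i.e. through the fact that $\qdivided{\nu}(n)$ is a summand of $\qSchurAlg{n}{d}$ cut out by an idempotent. Since $n \geq d$, choose a composition $\tilde\nu \in \composition{n}{d}$ obtained from $\nu$ by deleting or padding with zero parts. Then $\hecke{\tilde\nu} = \hecke{\nu}$ and $\weightmod{\tilde\nu} \simeq \weightmod{\nu}$ by Lemma \ref{lemma : weight module as induced module}. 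Hence
\[
    \qdivided{\nu}(n) = \Hom{\hecke{d}}(\weightmod{\nu}, \standard{n}{d}) \simeq \qSchurAlg{n}{d}\, \xi_{\tilde\nu},
\]
where $\xi_{\tilde\nu} = \phi^{id}_{\tilde\nu,\tilde\nu}$ is the idempotent projecting $\standard{n}{d}$ onto $\weightmod{\tilde\nu}$. This is the same argument used there for $\standard{n}{d}$. Consequently
\[
    \beta(\qdivided{\nu}) \simeq \dividedmod{n,d} \otimes_{\qSchurAlg{n}{d}} \qSchurAlg{n}{d}\,\xi_{\tilde\nu} \simeq \dividedmod{n,d}\cdot \xi_{\tilde\nu}.
\]

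Next, identify $\dividedmod{n,d}\cdot\xi_{\tilde\nu}$. By Theorem \ref{thm : hom between divided flag module}, $\dividedmod{n,d} \simeq \Hom{\hecke{d}}(\standard{n}{d}, \flagmod{(1^d)})$, and $\qSchurAlg{n}{d}$ acts by precomposition. Precomposing with the projection onto $\weightmod{\tilde\nu}$ picks out the summand $\Hom{\hecke{d}}(\weightmod{\nu},\flagmod{(1^d)}) \simeq \dividedmod{\nu}$, by Definition \ref{def : divided flag module}. The composite isomorphism is explicit: it sends $f \otimes x$, with $f \in \Hom{\hecke{d}}(\standard{n}{d},\flagmod{(1^d)})$ and $x \in \Hom{\hecke{d}}(\weightmod{\nu},\standard{n}{d})$, to $f \circ x \in \Hom{\hecke{d}}(\weightmod{\nu},\flagmod{(1^d)})$. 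Equivalently, this is the isomorphism of Corollary 4.11 of \cite{hong2017quantum}, applied as in Lemma \ref{lemma : morita tensor power} but with $\weightmod{\nu}$ in place of $\hecke{d}$. This description also makes $\GL{d}$-equivariance clear, because $\GL{d}$ acts only on $\flagmod{(1^d)}$.

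For naturality, the morphism $-\circ\phi^{\sigma_A}_{\mu,\nu} : \qdivided{\nu} \to \qdivided{\mu}$ acts on the second tensor factor $x$ by precomposition. Under $f\otimes x \mapsto f\circ x$, this becomes $f\circ x \mapsto f \circ x \circ \phi^{\sigma_A}_{\mu,\nu}$. This is exactly precomposition by $\phi^{\sigma_A}_{\mu,\nu}$ on $\Hom{\hecke{d}}(\weightmod{\nu},\flagmod{(1^d)}) = \dividedmod{\nu}$, as claimed.

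The main point requiring care is the independence of choices. One must check that the isomorphism does not depend on $n$ or on the padding $\tilde\nu$, and that $f\otimes x\mapsto f\circ x$ is an isomorphism rather than merely well defined. Well-definedness over $\qSchurAlg{n}{d}$ is clear. For bijectivity I would reduce via the idempotent description: $x$ ranges over $\qSchurAlg{n}{d}\xi_{\tilde\nu}$, so the map becomes restriction of $f$ to $\weightmod{\tilde\nu}$. Restriction is an isomorphism onto $\Hom{\hecke{d}}(\weightmod{\nu},\flagmod{(1^d)})$ because $\standard{n}{d}$ is the direct sum of the $\weightmod{\lambda}$. Independence of $n$ then follows from \cite[Theorem 4.7]{hong2017quantum}.
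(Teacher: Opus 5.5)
Your proof is correct. It shares the paper's skeleton (evaluate $\beta$ at one $n\geq d$ and use projectivity of $\qdivided{\nu}(n)$), but the middle step is different and more elementary.

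The paper uses projectivity abstractly. It passes through the evaluation isomorphism $\dividedmod{n,d}\otimes_{\qSchurAlg{n}{d}}\qdivided{\nu}(n)\simeq\Hom{\hecke{d}}(\Hom{\qSchurAlg{n}{d}}(\qdivided{\nu}(n),\standard{n}{d}),\flagmod{(1^d)})$. It then cites the double-centralizer type result $\Hom{\qSchurAlg{n}{d}}(\qdivided{\nu}(n),\standard{n}{d})\simeq\weightmod{\nu}$ (Corollary 4.10 of \cite{hong2017quantum}). Naturality is left to the remark that all isomorphisms commute with the $q$-Schur actions.

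You instead make projectivity explicit, $\qdivided{\nu}(n)\simeq\qSchurAlg{n}{d}\,\xi_{\tilde\nu}$, just as the paper does for $\standard{n}{d}$ in Lemma \ref{lemma : morita tensor power}. After that you only need $M\otimes_{\qSchurAlg{n}{d}}\qSchurAlg{n}{d}\,\xi\simeq M\xi$ for a right module $M$. In your setting the paper's cited step reduces to the triviality $\Hom{\qSchurAlg{n}{d}}(\qSchurAlg{n}{d}\,\xi_{\tilde\nu},\standard{n}{d})\simeq\xi_{\tilde\nu}\standard{n}{d}=\weightmod{\tilde\nu}$. So your argument is self-contained, and your aside identifying the map with Corollary 4.11 of \cite{hong2017quantum} is not needed.

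The main gain of your route is the closed formula $f\otimes x\mapsto f\circ x$:
\begin{itemize}
\item it does not involve the padding $\tilde\nu$ at all;
\item it is visibly compatible with the coend relations of $\dividedmod{*,d}\otimes_{\source{d}}\qdivided{\nu}$, so independence of $n$ is automatic once the coend is computed at a single $n\geq d$;
\item it turns compatibility with $-\circ\phi^{\sigma_A}_{\mu,\nu}$ into a one-line associativity check.
\end{itemize}
The paper's chain of isomorphisms is shorter on the page but leaves that verification implicit.
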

\begin{proof}
    We have
    \begin{align*}
        \beta(\qdivided{\nu}) & \simeq \Hom{\hecke{d}}(\standard{n}{d}, \flagmod{(1^d)}) \otimes _{\qSchurAlg{n}{d}} \qdivided{\nu}(n) \\
        & \simeq \Hom{\hecke{d}}(\Hom{\qSchurAlg{n}{d}}(\qdivided{\nu}(n), \standard{n}{d}), \flagmod{(1^d)}) \\
        & \simeq \Hom{\hecke{d}}(\weightmod{\nu}, \flagmod{(1^d)}) \\
        & \simeq \dividedmod{\nu} \ .
    \end{align*}
    The second isomorphism follows from the projectivity of $\qdivided{\nu}(n)$ (see \cite[Theorem 4.7 and (4.0.30)]{hong2017quantum}) and the third is due to corollary 4.10 of \cite{hong2017quantum}. The result for homomorphisms follows from the fact that all the isomorphisms commute with the actions of the $q$-Schur algebras.
\end{proof}

\subsubsection{Quantum symmetric power and symmetric flag module}

\begin{definition}
    For $\nu \in \composition{m}{d}$, we define the quantum symmetric power functor $\qsymmetric{\nu} \in \quantumfunctor{d}$ by
    \[
        \qsymmetric{\nu} = \standard{n}{d} \otimes_{\hecke{\nu}} \sign{\nu} \ .
    \]
    We have $\qsymmetric{\nu} \simeq \qsymmetric{\nu_1} \otimes \cdots \otimes \qsymmetric{\nu_m}$. 
    We also define the symmetric flag module $\symflag{\nu} = \flagmod{(1^d)} \otimes_{\hecke{\nu}} \sign{\nu}$.
\end{definition}

The quantum symmetric powers are dual to the quantum divided power (see section \ref{subsub : tensor and duality}). They form a cogeneratrice family of injective quantum polynomial functor. As before, the homomorphisms between symmetric flag modules and those between quantum symmetric functors are given by parts of the $q$-Schur algebra.

\begin{lemma}\label{lemma : morita between symmetric}
    We have $\beta(\qsymmetric{\nu}) \simeq \symflag{\nu}$. Moreover we have an isomorphism
    \[
        \Hom{\quantumfunctor{d}}(\qsymmetric{\nu},\qsymmetric{\mu}) \simeq \Hom{\hecke{d}}(\weightmod{\nu},\weightmod{\mu}) \ .
    \]
    Under this isomorphism, $\phi^{\sigma}_{\nu,\mu} \in \Hom{\hecke{d}}(\weightmod{\nu},\weightmod{\mu})$ is sent to $n \mapsto \Id_{\standard{n}{d}} \otimes \phi^{\sigma}_{\nu,\mu} : \qsymmetric{\nu}(n) \to \qsymmetric{\mu}(n)$. The functor $\beta$ send this natural transformation to $\Id_{\flagmod{(1^d)}} \otimes \phi^{\sigma}_{\nu,\mu}$.
\end{lemma}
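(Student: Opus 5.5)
We follow the pattern of the proofs of Lemma \ref{lemma : morita exterior flag} and Lemma \ref{lemma : morita tensor power}, with the trivial character $\triv{\nu}$ replaced by the sign character $\sign{\nu}$. Fix $n \geq d$. By definition $\qsymmetric{\nu}(n) = \standard{n}{d} \otimes_{\hecke{d}} \Induction{\hecke{\nu}}{\hecke{d}}(\sign{\nu})$, where $\Induction{\hecke{\nu}}{\hecke{d}}(\sign{\nu}) = \hecke{d}\otimes_{\hecke{\nu}} \sign{\nu}$ is a left $\hecke{d}$-module. Using $\dividedmod{n,d} \simeq \Hom{\hecke{d}}(\standard{n}{d},\flagmod{(1^d)})$ and $\beta(F) \simeq \dividedmod{n,d}\otimes_{\qSchurAlg{n}{d}} F(n)$, we obtain
\[
\beta(\qsymmetric{\nu}) \simeq \Hom{\hecke{d}}(\standard{n}{d},\flagmod{(1^d)}) \otimes_{\qSchurAlg{n}{d}} \standard{n}{d} \otimes_{\hecke{\nu}} \sign{\nu} \simeq \beta(\qtensor{d}) \otimes_{\hecke{\nu}} \sign{\nu}.
\]
This works because the tensor product over $\hecke{\nu}$ only involves the right $\hecke{d}$-action on $\standard{n}{d}$, which commutes with $\qSchurAlg{n}{d}$, so right exactness lets us move it outside. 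Lemma \ref{lemma : morita tensor power} then identifies $\beta(\qtensor{d})$ with $\flagmod{(1^d)}$, compatibly with the $\hecke{d}$-actions. Hence $\beta(\qsymmetric{\nu}) \simeq \flagmod{(1^d)}\otimes_{\hecke{\nu}}\sign{\nu} = \symflag{\nu}$. The same chain is natural in $\hecke{d}$-linear maps of the induced modules. So $\beta$ sends $\Id_{\standard{n}{d}}\otimes f$ to $\Id_{\flagmod{(1^d)}}\otimes f$ for any $\hecke{d}$-linear map $f$ between induced modules; this gives the final claim once the Hom isomorphism is in place.

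For the Hom isomorphism, we first relate $\Induction{\hecke{\nu}}{\hecke{d}}(\sign{\nu})$ to $\weightmod{\nu}$. Lemma \ref{lemma : weight module as induced module} gives $\weightmod{\nu} \simeq \sign{\nu}\otimes_{\hecke{\nu}}\hecke{d}$ as right modules. The left induced module should be identified with it through the anti-involution $T_\omega \mapsto T_{\omega^{-1}}$, which preserves $\hecke{\nu}$ and $\sign{}$. Through this identification, a map $\phi^\sigma_{\nu,\mu}$ becomes a left-module map between the induced modules, and we obtain a map
\[
\Hom{\hecke{d}}(\weightmod{\nu},\weightmod{\mu}) \to \Hom{\quantumfunctor{d}}(\qsymmetric{\nu},\qsymmetric{\mu}), \quad f \mapsto (n \mapsto \Id_{\standard{n}{d}}\otimes f).
\]
To see that this map is bijective, we compose it with $\beta$, which is an equivalence and hence fully faithful. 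This reduces the question to showing that
\[
\Hom{\hecke{d}}(\weightmod{\nu},\weightmod{\mu}) \to \Hom{\field\GL{d}}(\symflag{\nu},\symflag{\mu})
\]
is bijective.

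The target can be analysed geometrically, in the same way as the analogous step for flag modules in Lemma \ref{lemma : morphism between flag representation}. Alternatively, a dimension count suffices. Since $\hecke{d}$ is free over $\hecke{\nu}$ with basis indexed by coset representatives, and $\standard{n}{d}$ is projective, we compute
\[
\Hom{\quantumfunctor{d}}(\qsymmetric{\nu},\qsymmetric{\mu}) \simeq \Hom{\qSchurAlg{n}{d}}(\standard{n}{d}\otimes_{\hecke{\nu}}\sign{\nu}, \standard{n}{d}\otimes_{\hecke{\mu}}\sign{\mu}).
\]
Using the double centralizer property $\Endo{\qSchurAlg{n}{d}}(\standard{n}{d}) = \hecke{d}$ (\cite[Corollary 4.11]{hong2017quantum}), together with the fact that $\standard{n}{d}\otimes_{\hecke{\nu}}\sign{\nu}$ is a quotient of a direct summand of a direct sum of copies of $\standard{n}{d}$, we want to identify this Hom space with $\Hom{\hecke{d}}(\Induction{\hecke{\nu}}{\hecke{d}}\sign{\nu}, \Induction{\hecke{\mu}}{\hecke{d}}\sign{\mu})$. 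By Lemma \ref{lemma : hom between weight module}, the latter has dimension $|\matrixwithfixedrowcol{\nu}{\mu}|$.

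The main obstacle is exactly this identification. The functor $\standard{n}{d}\otimes_{\hecke{d}}-$ is only right exact, and fully faithfulness of such a functor on induced modules of a \emph{sign} character is not automatic. I would first try to use the automorphism $-^\#$ of Definition \ref{def : involution of hecke}. It exchanges $\triv{}$ and $\sign{}$, and I expect it to transport the statement to the exterior case, where the analogous isomorphism was already obtained via Lemma \ref{lemma : morita exterior flag}. Failing that, I would compute directly: $\qsymmetric{\nu}$ is the injective hull functor dual to $\qdivided{\nu}$, so by the Yoneda lemma its Hom spaces can be read off from weight spaces. Concretely, $\dim\Hom{}(\qsymmetric{\nu},\qsymmetric{\mu}) = \dim\Hom{}(\qdivided{\mu},\qdivided{\nu}) = |\matrixwithfixedrowcol{\nu}{\mu}|$, and this matches the dimension above. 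It then remains to check injectivity of the map, which should follow by evaluating at $n \geq d$ on the generator $e^{\otimes\nu}\otimes 1$.
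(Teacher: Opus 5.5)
Your computation of $\beta(\qsymmetric{\nu})$ (associativity of the tensor product plus Lemma \ref{lemma : morita tensor power}) is fine, and so is the naturality statement for $\beta(\Id\otimes f)$; these two parts are what the paper's one-line proof (``similar to Lemma \ref{lemma : morita exterior flag}'') intends. The count $\dim\Hom{\quantumfunctor{d}}(\qsymmetric{\nu},\qsymmetric{\mu})=\dim\Hom{\quantumfunctor{d}}(\qdivided{\mu},\qdivided{\nu})=|\Theta_{\nu,\mu}|$, via duality and Yoneda, is also correct.

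The step that fails is the injectivity check by evaluating on $e^{\otimes\nu}\otimes 1$. For a left-module map $f$ between the induced sign modules, $(\Id\otimes f)(e^{\otimes\nu}\otimes 1)=e^{\otimes\nu}\otimes f(1\otimes 1)$. This only records the image of the $\sign{\nu}$-invariant vector $f(1\otimes 1)$ in $\weightmod{\nu}\otimes_{\hecke{d}}\Induction{\hecke{\mu}}{\hecke{d}}(\sign{\mu})$. That image can vanish for $f\neq 0$ as soon as a part of $\nu$ is $\geq\ell$, which is exactly the case of interest. For example, take $\nu=(\ell)$, $\mu=(1^\ell)$ and $f(1)=y_{(\ell)}$, the element from the proof of Proposition \ref{prop : projectivity under the morita}, which satisfies $T_i y_{(\ell)}=-y_{(\ell)}$. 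Then $e_1^{\otimes\ell}\cdot y_{(\ell)}=\bigl(\sum_{\omega\in\symgroup{\ell}}q^{-\ell(\omega)}\bigr)e_1^{\otimes\ell}$, which is a unit multiple of $\qfactorial{\ell}\,e_1^{\otimes\ell}=0$. So the nonzero transformation $\qsymmetric{\ell}\to\qtensor{\ell}$, $v\otimes 1\mapsto v\cdot y_{(\ell)}$, kills $e_1^{\otimes\ell}\otimes 1$.

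The repair is to evaluate instead on $e_1\otimes e_2\otimes\cdots\otimes e_d\otimes 1$ (recall $n\geq d$). Its weight module $\weightmod{(1^d)}$ is free of rank one over $\hecke{d}$. Hence $\weightmod{(1^d)}\otimes_{\hecke{d}}M\simeq M$ is a direct summand of $\standard{n}{d}\otimes_{\hecke{d}}M$, and on this summand $\Id\otimes f$ restricts to $f$ itself.

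This observation also removes what you call the main obstacle, so both the dimension count and the $-^\#$ idea become unnecessary. The $-^\#$ idea would not work as stated anyway: $\#$ also twists $\standard{n}{d}$, turning each $\weightmod{\lambda}\simeq\Induction{\hecke{\lambda}}{\hecke{d}}(\sign{\lambda})$ into $\Induction{\hecke{\lambda}}{\hecke{d}}(\triv{\lambda})$, so it does not carry $\qsymmetric{\nu}$ to $\qexterior{\nu}$.

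Here is the direct argument. As in the proof of Lemma \ref{lemma : morita tensor power}, $\standard{n}{d}\simeq S\epsilon$ with $S=\qSchurAlg{n}{d}$, $\epsilon=\phi^{id}_{(1^d,0^{n-d}),(1^d,0^{n-d})}$, and $\epsilon S\epsilon=\Endo{\hecke{d}}(\weightmod{(1^d)})\simeq\hecke{d}$, compatibly with the right $\hecke{d}$-action. Hence $\standard{n}{d}\otimes_{\hecke{d}}-=S\epsilon\otimes_{\epsilon S\epsilon}-$ is left adjoint to $M\mapsto\epsilon M$. Its unit $M\to\epsilon(S\epsilon\otimes_{\epsilon S\epsilon}M)=\epsilon S\epsilon\otimes_{\epsilon S\epsilon}M\simeq M$ is an isomorphism. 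So this functor is fully faithful on all $\hecke{d}$-modules, projective or not, and right exactness plays no role.

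Apply this to the induced sign modules, transport to $\weightmod{\nu},\weightmod{\mu}$ by your anti-involution, and combine with the evaluation equivalence at $n\geq d$. This gives the Hom isomorphism together with its explicit form $f\mapsto\Id\otimes f$ directly, without passing through $\Hom{\field\GL{d}}(\symflag{\nu},\symflag{\mu})$.
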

\begin{proof}
    The proof is similar to that of Lemma \ref{lemma : morita exterior flag}.
\end{proof}

\subsubsection{Tensor product and duality}\label{subsub : tensor and duality}

As we have said before, $\quantumfunctor{}$ is a braided monoidal category. Moreover, the category $\source{d}$ admits an involutive anti-automorphism, which is the identity on object and can be described on morphisms by
\[
    \tau : \qSchur{n}{m}{d} \to \qSchur{m}{n}{d}, \quad \phi_{\nu,\mu}^{\sigma} \mapsto \phi_{\mu,\nu}^{\sigma^{-1}}.
\]
This induces a duality in $\quantumfunctor{d}$ given by $F \mapsto \qdualwithoutparenthesis{F}$, where
\[
    \qdualwithoutparenthesis{F}(n) = \Hom{\field}(F(n),\field), \quad \text{and} \quad \qdualwithoutparenthesis{F}(\zeta) = - \circ \tau(\zeta) \ \text{ for } \zeta \in \qSchur{n}{m}{d}.
\]
An important property is that $\qdual{\qsymmetric{\nu}} = \qdivided{\nu}$ and vice-versa.

The category $\bigoplus_{d \geq 0} \module{\cuspidal{d}}$ is also a monoidal category, with tensor product given by Harish-Chandra induction, and has a duality induced by the transpose of matrix.

\begin{proposition}\label{prop : morita}
    The functors $\beta : \quantumfunctor{d} \to \module{\cuspidal{d}}$ combine to a functor $\beta : \quantumfunctor{} \to \bigoplus_{d \geq 0} \module{\cuspidal{d}}$ satisfying :
    \[
        \beta(F \otimes G) = \beta(F) \otimes \beta(G) \quad \text{and} \quad \beta(\qdualwithoutparenthesis{F}) = \beta(F)^\tau \ .
    \]
    Here the tensor product in $\bigoplus_{d \geq 0} \module{\cuspidal{d}}$ is defined using the Harish-Chandra induction and $M^\tau = \Hom{\field}(M,\field)$ with $\cuspidal{d}$-module structure defined by the involutive anti-automorphism of $\cuspidal{d}$ induced by the transposition.
\end{proposition}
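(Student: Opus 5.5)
The proof has two parts: compatibility of $\beta$ with tensor products, and compatibility with duality. In both, I reduce to the generators $\qdivided{\nu}$ (projective) and $\qsymmetric{\nu}$ (injective), using the explicit identifications of Lemmas \ref{lemma : morita quantum divided power} and \ref{lemma : morita between symmetric}. I then extend by exactness.

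For tensor products, first take $F=\qtensor{d}$ and $G=\qtensor{e}$. Then $F\otimes G=\qtensor{d+e}$, so $\beta(F\otimes G)=\flagmod{(1^{d+e})}$ by Lemma \ref{lemma : morita tensor power}. Harish-Chandra induction of $\flagmod{(1^d)}\boxtimes\flagmod{(1^e)}$ from the parabolic $P_{(d,e)}$ is also $\flagmod{(1^{d+e})}$. Indeed, inflating complete-flag permutation modules from the Levi $\GL{d}\times\GL{e}$ and inducing to $\GL{d+e}$ gives the permutation module on complete flags refining $\ffield^d\subset\ffield^{d+e}$, which is transitive with stabilizer $B$. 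I then check that the right $\hecke{d}\otimes\hecke{e}=\hecke{(d,e)}\subset\hecke{d+e}$ actions agree; this follows from the flag description of Corollary \ref{cor : endomorphism of complete flag}. Next, every $F\in\quantumfunctor{d}$ is functorially $F(n)\simeq \standard{n}{d}\otimes_{\hecke{d}} M$ for a suitable $\hecke{d}$-module, at least on projectives $\qdivided{\nu}$ (via Definition \ref{def : quantum divided power}). From there I would argue more robustly: both $(F,G)\mapsto\beta(F\otimes G)$ and $(F,G)\mapsto \beta(F)\otimes\beta(G)$ are exact in each variable, since Harish-Chandra induction is exact and $\otimes$ in $\quantumfunctor{}$ is exact. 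It therefore suffices to build a natural isomorphism on the projective generators $\qdivided{\nu}\otimes\qdivided{\mu}=\qdivided{(\nu,\mu)}$ compatible with morphisms, and then extend by presentations. On generators, $\dividedmod{(\nu,\mu)}$ is the $\sign{}$-eigenspace of $\hecke{(\nu,\mu)}$ in $\flagmod{(1^{d+e})}$. Harish-Chandra induction commutes with taking these eigenspaces because $p\nmid q$, so induction is exact and commutes with the Hecke action. This gives $\dividedmod{\nu}\otimes\dividedmod{\mu}\simeq\dividedmod{(\nu,\mu)}$. Compatibility with the maps $-\circ\phi^{\sigma_A}\otimes-\circ\phi^{\sigma_B}$ follows because these are given by Hecke elements of $\hecke{(d,e)}$, which act identically on both sides.

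For duality, I use that $\tau$ on $\GL{d}$ (transpose-inverse composed with transposition anti-automorphism) gives a contravariant exact functor $M\mapsto M^\tau$. Since both $F\mapsto\beta(F^*)$ and $F\mapsto\beta(F)^\tau$ are contravariant exact, it suffices to compare them on the projectives $\qdivided{\nu}$ naturally. Here $\beta((\qdivided{\nu})^*)=\beta(\qsymmetric{\nu})=\symflag{\nu}$. For the other side, $\flagmod{(1^d)}$ is self-dual via the standard bilinear form making flags orthonormal. Under the anti-automorphism $g\mapsto g^{t}$, the form identifies $\flagmod{(1^d)}^\tau\simeq\flagmod{(1^d)}$: a flag $V_*$ pairs with the orthogonal flag $V_*^{\perp}$, reversed. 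This intertwines the Hecke action with its adjoint, so $T_i$ is self-adjoint up to the reversal $i\mapsto d-i$. Dualizing the eigenspace $\dividedmod{\nu}$ then gives the coinvariants $\flagmod{(1^d)}\otimes_{\hecke{\nu'}}\sign{\nu'}=\symflag{\nu'}$, with $\nu'$ the reversed composition, which is isomorphic to $\symflag{\nu}$.

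The main obstacle is naturality: checking that these identifications send $-\circ\phi^{\sigma_A}_{\mu,\nu}$ to the transpose of $\Id\otimes\phi^{\sigma_A^{-1}}$, i.e. realize $\tau$ on $\qSchur{n}{m}{d}$. The hardest point is the reversal twist, which must be absorbed via conjugation by $w_0$ and the symmetry $\Theta_{\nu,\mu}\ni A\mapsto A^{t}$. I would first verify this on the basis $e_A$ of Lemma \ref{lemma : morphism between flag representation}. There the adjoint of $e_A$ under the orthonormal flag forms is visibly $e_{A^t}$, since orbit $\orbit{A}$ transposes to $\orbit{A^t}$, and $\tau(\phi^{\sigma_A})=\phi^{\sigma_A^{-1}}=\phi^{\sigma_{A^t}}$. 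I would then transport this along Lemma \ref{lemma : morita exterior flag}, working with the $\qexterior{\nu}$, which are self-dual up to the same twist, rather than with divided powers.
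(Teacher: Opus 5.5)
\paragraph{Comparison with the paper.} Your duality argument is the paper's, with projectives and injectives exchanged. The paper presents $F$ by the injectives $\qsymmetric{\nu}$ and reduces to a natural isomorphism $\contradual{(\symflag{\nu})}\simeq\dividedmod{\nu}$. It builds this from the same two ingredients you use: the self-duality of $\flagmod{(1^d)}$ via orthogonal flags, and the adjunction between $\sign{\nu}$-eigenvectors and $\sign{\nu}$-coinvariants.

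For the tensor product the paper simply cites \cite[Theorem 4.2a]{brundan2001quantum}. You instead reduce to the projective generators $\qdivided{\nu}\otimes\qdivided{\mu}\simeq\qdivided{(\nu,\mu)}$, using two facts: Harish-Chandra induction of $\flagmod{(1^d)}\boxtimes\flagmod{(1^e)}$ is $\flagmod{(1^{d+e})}$, compatibly with $\hecke{d}\otimes\hecke{e}\subset\hecke{d+e}$; and both bifunctors are exact in each variable. This is a sound, self-contained substitute. Two small corrections:
\begin{itemize}
\item Harish-Chandra induction is exact with no hypothesis, since $\field\GL{d+e}$ is free over the parabolic subgroup algebra. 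The condition $p\nmid q$ matters for Harish-Chandra restriction, not here.
\item The claim that functors have the form $n\mapsto\standard{n}{d}\otimes_{\hecke{d}}M$ fails even for $\qdivided{d}$ when $d\ge\ell$. Indeed $\qdivided{d}$ maps onto the non-$\ell$-restricted simple $\Simple{(d)}$, whereas $\Hom{}(\qtensor{d},\Simple{(d)})=0$. So it is right that you discard it.
\end{itemize}

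\paragraph{The reversal twist.} The reversal you found is genuine, and the paper's proof passes over it. After reindexing, $\psi(V_\bullet)_k=V_{d-k}^{\perp}$. This is forced: it is the only bijection of complete flags equivariant for $g\mapsto(g^{\tau})^{-1}$. It intertwines $T_i$ with $T_{d-i}$, so the paper's second isomorphism literally produces $\dividedmod{\nu^{\mathrm{rev}}}$, not $\dividedmod{\nu}$.

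Where your proposal is weak is in how you absorb this twist.
\begin{itemize}
\item Identifying $\symflag{\nu^{\mathrm{rev}}}\simeq\symflag{\nu}$ separately for each $\nu$ creates a fresh naturality problem.
\item Checking that the adjoint of $e_A$ is $e_{A^{t}}$ on the $\qexterior{\nu}$ does not by itself give naturality on the projectives $\qdivided{\nu}$, which is what your exactness reduction needs. The $\qexterior{\nu}$ are tilting, not projective, and making that detour work would need a separate argument with tilting coresolutions.
\end{itemize}

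The clean fix is that the twist is inner: $T_{w_0}T_iT_{w_0}^{-1}=T_{d-i}$, and right multiplication by $T_{w_0}$ is $\GL{d}$-linear. Hence $f\mapsto\bigl(\sum_{V_\bullet}f(V_\bullet)\psi(V_\bullet)\bigr)\cdot T_{w_0}$ is an isomorphism $\contradual{\flagmod{(1^d)}}\to\flagmod{(1^d)}$ that is both $\GL{d}$-linear and $\hecke{d}$-linear. Here $\hecke{d}$ acts on the dual through the anti-involution $T_w\mapsto T_{w^{-1}}$; this works because each $T_i$ is self-adjoint for the orthonormal flag form.

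With this single global correction you get $\contradual{(\dividedmod{\nu})}\simeq\symflag{\nu}$ for all $\nu$ at once, and no reversed compositions appear. Naturality then reduces to one fact: $T_w\mapsto T_{w^{-1}}$ carries the Hecke element defining $\phi^{\sigma}_{\nu,\mu}$ to the one defining $\phi^{\sigma^{-1}}_{\mu,\nu}$, i.e.\ it realizes $\tau$.
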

\begin{proof}
    For the tensor product, see theorem 4.2a of \cite{brundan2001quantum}. For the involution, we use the fact that any quantum polynomial functor admits an injective coresolution by direct sums of functors $\qsymmetric{\nu}$. Let $F \in \quantumfunctor{d}$. Write the start of an injective coresolution of $F$ :
    \[
        0 \to F \to \bigoplus_{\nu} \qsymmetric{\nu} \to \bigoplus_{\mu} \qsymmetric{\mu} \ .
    \]
    Applying $\qdual{-}$, we obtain the start of an injective coresolution :
    \[
        \bigoplus_{\mu} \qdivided{\mu} \to \bigoplus_{\nu} \qdivided{\nu} \to \qdualwithoutparenthesis{F} \to 0
    \]
    Hence $\beta(\qdualwithoutparenthesis{F})$ is the cokernel of $\bigoplus_{\mu} \qdivided{\mu} \to \bigoplus_{\nu} \qdivided{\nu}$. Meanwhile, $\beta(F)$ is the kernel of $\bigoplus_{\nu} \qsymmetric{\nu} \to \bigoplus_{\mu} \qsymmetric{\mu}$, and hence $\contradual{\beta(F)}$ is the kernel of $\bigoplus_{\mu} \contradual{(\symflag{\mu})} \to \bigoplus_{\nu} \contradual{(\symflag{\nu})}$. Hence, we just need to show that there is a natural isomorphism $(\symflag{\nu})^\# \simeq \dividedmod{\nu}$. This is done as follows :
    \begin{align*}
        \qdual{\symflag{\nu}} & = \Hom{\field}(\symflag{\nu}, \field) \\
        & \simeq \Hom{\field}(\flagmod{d} \otimes_{\hecke{\nu}} \sign{\nu}, \field) \\
        & \simeq \Hom{\hecke{\nu}}(\sign{\nu}, \Hom{\field}(\flagmod{d}, \field)) \\
        & \simeq \Hom{\hecke{\nu}}(\sign{\nu}, \flagmod{d}) \\
        & = \dividedmod{\nu} \ .
    \end{align*}
    The isomorphisms follow from the adjunction between tensor and $\Hom{}$ and from an isomorphism $\contradual{\flagmod{(1^d)}} \simeq \flagmod{(1^d)}$ that we describe now. Let $e_1,...,e_d$ denote the canonical basis of $\ffield^d$, and $E_\bullet$ the complete flag with $E_k = \Vect(e_1,...,e_k)$. For any complete flag $V_\bullet$, choose $A \in \GL{d}$ such that $A \cdot E_\bullet = V_\bullet$. We use $A$ to define the dual flag $\psi(V_\bullet)$ of $V_\bullet$ with $k$-th component :
        \[
            \psi(V_\bullet)_k = \{v \in \ffield{q}^d \ | \ e_1^\tau A^\tau v = \cdots = e_k^\tau A^\tau v = 0 \}\ .
        \]
        where $\tau$ is the transpose. We can show that this does not depend on the choice of $A$, and that for any $g \in \GL{d}$,
        \[
            \psi(g \cdot V_\bullet) = (g^\tau)^{-1} \psi(V_\bullet) = (g^{-1})^\tau \psi(V_\bullet)
        \]
        Using this we can show that
        \[
            \Hom{\field}(\flagmod{d}, \field) \to \flagmod{d}, \quad f \mapsto \sum_{V_\bullet \in \flag{d}} f(V_\bullet) \psi(V_\bullet)
        \]
        is an isomorphism of $\field \GL{d}$-modules. 
        
        Naturality follows directly from the description the action of the quantum Schur algebras on $\symflag{\nu}$ and $\dividedmod{\nu}$.
\end{proof}

To compute further examples of $\beta(F)$, it is often useful to use resolution or coresolution by quantum exterior power, where it is often easier to write explicit formula for the different map using lemma \ref{lemma : morphism between flag representation}. Most of the quantum polynomial functors we will encounter in the rest of this article will have such resolution or coresolution.

\section{$\Ext{\field \GL{d}}{}$-groups in low degree}\label{sec : low degree ext}

In this section, we explain how to use the functor $\beta$ to compute $\Ext{}{}$-groups of $\field \GL{d}$-modules. In the next subsection, we explain which projective quantum polynomial functors $\qdivided{\nu}$ are send to projective $\field \GL{d}$-modules by $\beta$, and explain how these can be used to compute $\Ext{}{}$-groups in low degree. Then the rest of this section will be devoted to finding exactly until which degree of $\Ext{}{}$ we can compute in this manner.

\subsection{Projectivity of $\ell$-bounded divided flag module}\label{subsec : effect on projective}

If $F \in \quantumfunctor{d}$, then $\beta(F)$ is a $\cuspidal{d}$-module and hence a $\field \GL{d}$-module. We want to use $\beta$ to compute Ext-groups between $\field \GL{d}$-modules. The problem is that not all projective $\cuspidal{d}$-modules remains projective when seen as a $\field \GL{d}$-modules. The projective $\cuspidal{d}$-modules are the image by $\beta$ of projective quantum polynomial functors and, in particular, we have a large family $\beta(\qdivided{\nu}) = \dividedmod{\nu}$ of projective $\cuspidal{d}$-modules. In this subsection, we show that some of them remain projective as $\field \GL{d}$-modules, depending on the order $\ell$ of $q$ in $\field$, and deduce a method for computing $\Ext{}{}$-groups of $\field \GL{d}$-modules.

\begin{proposition}\label{prop : projectivity under the morita}
    Let $\nu \in \composition{n}{d}$. If all parts of $\nu$ are $< \ell$, then $\beta(\qdivided{\nu})$ is a projective $\field \GL{d}$-module.
\end{proposition}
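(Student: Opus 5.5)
We reduce to a single idempotent in $\hecke{\nu}$ acting on the complete flag module. By Lemma \ref{lemma : morita quantum divided power} and Definition \ref{def : divided flag module},
\[
    \beta(\qdivided{\nu}) \simeq \dividedmod{\nu} = \{ x \in \flagmod{(1^d)} \ | \ \forall h \in \hecke{\nu}, \ x \cdot h = \sign{\nu}(h) x \}.
\]
Since $\flagmod{(1^d)} = \Induction{B}{\GL{d}}(\field)$, where $B$ is the Borel subgroup of upper triangular matrices, and $p \nmid |B|$ because $|B| = (q-1)^d q^{d(d-1)/2}$ and $p \nmid q(q-1)$, the module $\flagmod{(1^d)}$ is projective over $\field\GL{d}$. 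Indeed, $\field$ is a projective $\field B$-module (the averaging idempotent $\frac{1}{|B|}\sum_{b\in B} b$ exists), and induction preserves projectivity. It therefore suffices to show that $\dividedmod{\nu}$ is a direct summand of $\flagmod{(1^d)}$ as a $\field\GL{d}$-module. Because the right $\hecke{d}$-action commutes with $\GL{d}$, it is enough to find an idempotent $e \in \hecke{\nu}$ with $\flagmod{(1^d)} \cdot e = \dividedmod{\nu}$.

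The natural choice is the sign idempotent of $\hecke{\nu} = \hecke{\nu_1}\otimes\cdots\otimes\hecke{\nu_n}$. For a single factor $\hecke{k}$, set
\[
    y_k = \sum_{\omega \in \symgroup{k}} (-q)^{-\ell(\omega)} T_\omega .
\]
We will check, using Definition \ref{def : Hecke algebra}, that $y_k T_i = -y_k$ for all $i<k$. To do this, pair $\omega$ with $\omega s_i$ and use $T_\omega T_i = qT_{\omega s_i}+(q-1)T_\omega$ when $\ell(\omega s_i)<\ell(\omega)$. The same identity gives $y_k^2 = c_k\, y_k$, where
\[
    c_k = \sum_{\omega\in\symgroup{k}} (-q)^{-\ell(\omega)}\,\sign{}(T_\omega) = \sum_{\omega} q^{-\ell(\omega)} = q^{-\binom{k}{2}}\prod_{j=1}^{k} \qinteger{j},
\]
with $\qinteger{j} = 1+q+\cdots+q^{j-1}$ (the Poincaré polynomial of $\symgroup{k}$ evaluated at $q^{-1}$, up to a power of $q$). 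For $j \geq 2$ we have $\qinteger{j} = (q^j-1)/(q-1)$, which is nonzero in $\field$ exactly when $\ell \nmid j$. Hence $c_k \neq 0$ whenever $k<\ell$, and this is exactly where the hypothesis on the parts of $\nu$ enters. When $c_k \ne 0$ we set $e_k = c_k^{-1} y_k$, which is an idempotent.

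Next put $e = e_{\nu_1}\otimes\cdots\otimes e_{\nu_n}\in\hecke{\nu}$. This is an idempotent satisfying $e\,h = h\, e = \sign{\nu}(h)e$ for $h\in\hecke{\nu}$; the left-hand version follows from the symmetric computation, since $y_k$ is invariant under the anti-involution $T_\omega\mapsto T_{\omega^{-1}}$. We then verify $\flagmod{(1^d)}e = \dividedmod{\nu}$. First, every $x e$ satisfies the sign condition, so $\flagmod{(1^d)}e \subseteq \dividedmod{\nu}$. Conversely, if $x\in\dividedmod{\nu}$ then $x e = \sign{\nu}(e)\, x$, and $\sign{\nu}(e) = \prod_k c_k^{-1}\sign{}(y_{\nu_k}) = 1$, so $x \in \flagmod{(1^d)}e$. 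Thus $\flagmod{(1^d)} = \flagmod{(1^d)}e \oplus \flagmod{(1^d)}(1-e)$ is a $\field\GL{d}$-decomposition, and $\dividedmod{\nu}$ is projective as a direct summand of a projective module.

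The main obstacle is the Hecke algebra computation: establishing $y_kT_i=-y_k$ and identifying $c_k$ precisely as a product of $q$-integers, so that its invertibility is equivalent to $k<\ell$. Once that is settled, the remainder is formal.
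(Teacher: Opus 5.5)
Your proof is correct and follows the same route as the paper. Both arguments get projectivity of $\flagmod{(1^d)}$ by inducing from the Borel subgroup, then use the $q$-antisymmetrizer $y_\nu=\sum_{\omega\in\symgroup{\nu}}(-q)^{-\ell(\omega)}T_\omega$, normalized by a product of $q$-factorials that is invertible because all parts of $\nu$ are $<\ell$, as a $\field\GL{d}$-equivariant idempotent projection of $\flagmod{(1^d)}$ onto $\dividedmod{\nu}$. The only differences are that you verify $y_kT_i=-y_k$ directly instead of citing Takeuchi, and that your constant $c_k=q^{-\binom{k}{2}}\qfactorial{k}$ is the accurate one; the paper's displayed scalar drops this unit power of $q$, which is harmless.
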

\begin{proof}
    First, we note that $\flagmod{(1^d)}$ is projective as a $\field \GL{d}$-module, since $\flagmod{(1^d)}$ is the module induced from the trivial representation of the subgroup of upper triangular matrices in $\GL{d}$, whose cardinal is invertible in $\field$.
    
    As we have already seen,
    \[
        \beta(\qdivided{\nu}) \simeq \dividedmod{\nu} = \{ x \in \flagmod{(1^d)} \ | \ \forall h \in \hecke{\nu}, x \cdot h = \sign{\nu}(h) x \} \ .
    \]
    Let 
    \[
        y_\nu = \sum_{\omega \in \symgroup{\nu}} (-q^{-1})^{\ell(\omega)} T_\omega \in \hecke{\nu}.
    \]
    By proposition 2.2 of \cite{takeuchi1996group}, $\flagmod{(1^d)} y_\nu  \subset \dividedmod{\nu}$. Moreover, for $x \in \dividedmod{\nu}$
    \begin{align*}
        x y_\nu & = (-1)^{\ell(\omega_\nu)}  \sum_{\omega \in \symgroup{\nu}} (-q)^{\ell(\omega_\nu)- \ell(\omega)} (x \cdot T_\omega) \\
        & = (-1)^{\ell(\omega_\nu)}  \sum_{\omega \in \symgroup{\nu}} (-q)^{\ell(\omega_\nu)- \ell(\omega)} (-1)^{\ell(\omega)}  x \\
        & = \left ( \sum_{\omega \in \symgroup{\nu}} q^{\ell(\omega_\nu)- \ell(\omega)} \right ) x \\
        & = \left ( \sum_{\omega \in \symgroup{\nu}} q^{\ell(\omega)} \right ) x
         = \left (\prod_{i=1}^n \qfactorial{\nu_i} \right ) x
    \end{align*}
    where $\qfactorial{k} = \qinteger{1} \qinteger{2} \cdots \qinteger{k}$, $\qinteger{i} = 1 + q + q^2 + \cdots + q^{i-1}$. Since $q$ has order $\ell > 1$ in $\field^\times$, $\qinteger{k} \neq 0$ for $k<\ell$, and hence $\qfactorial{k} \neq 0$ for $k < \ell$. Under the assumption on the parts of $\nu$, $\left (\prod_{i=1}^n \qfactorial{\nu_i} \right ) \neq 0$. Hence, the map
    \[
        \pi : \flagmod{(1^d)} \to \dividedmod{\nu}, \quad x \mapsto \frac{1}{\prod_{i=1}^n \qfactorial{\nu_i}} x \cdot y_\nu 
    \]
    is a projection onto $\dividedmod{\nu}$, and is a homomorphism of $\field \GL{d}$-modules. Moreover, if $\iota : \dividedmod{\nu} \to \flagmod{(1^d)}$ denote the natural injection, $\pi \circ \iota$ is the identity of $\dividedmod{\nu}$. Since $\flagmod{(1^d)}$ is projective, this suffices to conclude that $\dividedmod{\nu}$ is also projective as a $\field \GL{d}$-module.
\end{proof}

A consequence of this is that if $F$ admits a projective resolution whose terms are direct sums of functors $\qdivided{\nu}$ with all parts of $\nu$ strictly smaller than the order of $q$ in $\field$, then applying $\beta$ yields a projective resolution of $\beta(F)$ as a $\field \GL{d}$-module. In this case, for all quantum polynomial functors $G$, $\Ext{\field \GL{d}}{}(\beta(F),\beta(G)) \simeq \Ext{\quantumfunctor{}}{}(F,G)$. This optimal situation is rare, but in most cases, we can compute $\Ext{}{}$-groups in low degree with this method, as we will see.

\begin{definition}\label{def : bounded resolution}
    A $\ell$-bounded projective is a quantum polynomial functor $\qdivided{\nu}$ with $\nu$ a composition with all parts $< \ell$. For $F \in \quantumfunctor{}$, we let $\projbound{F}$ denote the maximal integer $n$ (possibly $\infty$) such that $F$ admits a projective resolution $P_*$ where $P_0,...,P_{n-1}$ are direct sums of $\ell$-bounded projective. Similarly, an $\ell$-bounded injective is a quantum polynomial functor $\qsymmetric{\nu}$ with $\nu$ a composition with all parts $< \ell$, and we let $\injbound{G}$ denote the maximal integer $n$ such that $G$ admits an injective coresolution whose first $n$ terms are direct sums of $\ell$-bounded injectives.
\end{definition}

The integers $\projbound{F}$ and $\injbound{G}$ correspond under the Morita equivalence to the integers $\projbound{M}$ and $\injbound{N}$ of the introduction.

By duality, $\projbound{F} = \injbound{F^\#}$ and vice-versa. We will therefore focus on the computation of $\projbound{F}$ in the rest of this article. Their importance is illustrated by the following theorem, which is an improvement of \cite[Theorem 12.4]{cline1999generic} when combined with proposition \ref{prop : value of bound}.

\begin{theorem}\label{thm : compute ext}
    Let $F,G \in \quantumfunctor{d}$. Then the natural map
    \[
        \Ext{\quantumfunctor{}}{*}(F,G) \to \Ext{\field \GL{d}}{*}(\beta(F),\beta(G))
    \]
    is an isomorphism in degree $* \leq \projbound{F} + \injbound{G}$ and an injection in degree $* = \projbound{F} + \injbound{G}+1$
\end{theorem}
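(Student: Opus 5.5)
Set $a = \projbound{F}$ and $b = \injbound{G}$. The plan is to compute both Ext-groups from one double complex built from a projective resolution of $F$ and an injective coresolution of $G$, and to compare them on the part of the double complex whose terms are $\ell$-bounded.

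\textbf{The double complexes.} Choose a projective resolution $P_\bullet \to F$ in $\quantumfunctor{d}$ with $P_0,\dots,P_{a-1}$ sums of $\ell$-bounded projectives $\qdivided{\nu}$. Choose an injective coresolution $G \to J^\bullet$ with $J^0,\dots,J^{b-1}$ sums of $\ell$-bounded injectives $\qsymmetric{\nu}$. The total complex of $\Hom{\quantumfunctor{}}(P_\bullet, J^\bullet)$ computes $\Ext{\quantumfunctor{}}{*}(F,G)$. Applying the exact functor $\beta$ gives a resolution $\beta(P_\bullet)\to\beta(F)$ and a coresolution $\beta(G)\to\beta(J^\bullet)$ of $\field\GL{d}$-modules. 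Proposition \ref{prop : projectivity under the morita} shows $\beta(P_i)$ is $\field\GL{d}$-projective for $i<a$. By the duality of Proposition \ref{prop : morita}, $\beta(\qsymmetric{\nu}) = \beta(\qdivided{\nu})^\tau$, and the $\tau$-dual of a projective $\field\GL{d}$-module is injective. So $\beta(J^j)$ is $\field\GL{d}$-injective for $j<b$.

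\textbf{Comparison map.} Since $\beta$ is fully faithful onto $\module{\cuspidal{d}}$, which is a full subcategory of $\module{\field\GL{d}}$, we get an isomorphism of double complexes
\[\Hom{\quantumfunctor{}}(P_i,J^j)\simeq \Hom{\field\GL{d}}(\beta P_i,\beta J^j).\]
The natural map of the statement factors through the right-hand double complex. To see this, pick a $\field\GL{d}$-projective resolution $Q_\bullet\to\beta(F)$ and lift the identity to a chain map $Q_\bullet\to\beta(P_\bullet)$; dually, pick a $\field\GL{d}$-injective coresolution $I^\bullet$ of $\beta(G)$ with a map $\beta(J^\bullet)\to I^\bullet$. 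This gives maps of total complexes
\[
\mathrm{Tot}\,\Hom{}(P,J)\xrightarrow{\ \sim\ }\mathrm{Tot}\,\Hom{\field\GL{d}}(\beta P,\beta J)\to \mathrm{Tot}\,\Hom{\field\GL{d}}(Q,I),
\]
and the composite induces the natural map on cohomology.

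\textbf{Main step: the second map is an isomorphism in a range.} Compare the double complexes $\Hom{}(\beta P_i,\beta J^j)$ and $\Hom{}(Q_i,I^j)$ via the intermediate ones $\Hom{}(\beta P_i, I^j)$ and $\Hom{}(Q_i,\beta J^j)$. Truncate by brutal truncation: $\beta P_{<a}$ replaces $\beta P$ and is quasi-isomorphic to it up to its degree-$a$ syzygy; similarly for $J$. The key point is that $\Hom{}(\beta P_i,-)$ is exact for $i<a$ and $\Hom{}(-,\beta J^j)$ is exact for $j<b$. So columns with $i<a$ compute the same thing with $\beta J$ or $I$, and rows with $j<b$ the same thing with $\beta P$ or $Q$. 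The remaining region is $i\ge a$ and $j\ge b$, which contributes only to total degrees $\ge a+b$. A careful filtration argument using the row and column spectral sequences, or equivalently the exact sequences of brutal truncations, should give the result. The mapping cone of the comparison is concentrated in total degrees $\ge a+b$ with its lowest terms forming an exact row or column. Hence it is acyclic in degrees $\le a+b$, which yields an isomorphism up to degree $a+b$ and an injection in degree $a+b+1$.

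\textbf{Expected obstacle and fallback.} I expect the main difficulty to be the degree bookkeeping at the corner $(a,b)$, needed to get precisely the isomorphism up to $a+b$ plus the extra injection. To handle it I would use the dimension-shift formulation instead. Let $\Omega^a F$ be the $a$-th syzygy and $\Sigma^b G$ the $b$-th cosyzygy. For $*\le a+b$, reduce to $\Ext{}{*-a-b}(\Omega^aF,\Sigma^bG)$ and the map on $\Hom{}$ and $\Ext{}{1}$, which is bijective and injective respectively since $\beta$ is fully faithful and exact on the Morita-equivalent abelian category $\module{\cuspidal{d}}$. Then use the five lemma on the long exact sequences, valid because $\Ext{\field\GL{d}}{>0}(\beta P_i,-)=0$ and $\Ext{\field\GL{d}}{>0}(-,\beta J^j)=0$ in the bounded range.
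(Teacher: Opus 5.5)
Your dimension-shift fallback is the paper's own argument, and it is correct. The paper takes the syzygy $K=\ker(P_{a-1}\to P_{a-2})$ and the cosyzygy $I=\mathrm{Im}(J^{b-1}\to J^{b})$. It uses that $\beta P_0,\dots,\beta P_{a-1}$ stay projective and $\beta J^0,\dots,\beta J^{b-1}$ stay injective over $\field\GL{d}$. It then concludes from $\mathrm{Hom}_{\cuspidal{d}}=\mathrm{Hom}_{\field\GL{d}}$ and the injectivity of $\mathrm{Ext}^1_{\cuspidal{d}}\to\mathrm{Ext}^1_{\field\GL{d}}$; the latter holds because $\cuspidal{d}$-modules form a full subcategory, so a $\field\GL{d}$-splitting is a $\cuspidal{d}$-splitting.

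Running this through the long exact sequences and the five lemma, as you propose, is actually cleaner than the paper's write-up. The paper states the boundary cases as direct isomorphisms, such as $\mathrm{Ext}^{a+b}_{\field\GL{d}}(\beta F,\beta G)\simeq\mathrm{Hom}(\beta K,\beta I)$. These groups are really cokernels, such as $\mathrm{coker}\bigl(\mathrm{Hom}(\beta K,\beta J^{b-1})\to\mathrm{Hom}(\beta K,\beta I)\bigr)$, which happen to coincide in the two categories. Similarly, for $*<a+b$ you do not reduce to $\mathrm{Ext}^{*-a-b}(\Omega^aF,\Sigma^bG)$, which is zero, but to these common truncated Hom-complexes.

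The double-complex ``main step'', however, does not give the range $a+b$ as written. Your column statement concerns $\mathrm{Tot}\,\mathrm{Hom}(\beta P,\beta J)\to\mathrm{Tot}\,\mathrm{Hom}(\beta P,I)$. Your row statement concerns $\mathrm{Tot}\,\mathrm{Hom}(\beta P,\beta J)\to\mathrm{Tot}\,\mathrm{Hom}(Q,\beta J)$. Analysed this way, each comparison is only seen to be an isomorphism up to degree $a$ (respectively $b$), and the two ranges do not add.

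If you want this route, the missing input is a single column-filtration argument for the first map. The $E_1$-page of its cone has entries $\mathrm{Ext}^j_{\field\GL{d}}(\beta P_i,\beta G)$ for $j\ge 1$ and $0$ otherwise. These vanish for $i<a$. They also vanish for $1\le j\le b$ and \emph{every} $i$, for two reasons:
\begin{itemize}
\item each $\beta P_i$ is still $\cuspidal{d}$-projective, so $\mathrm{Hom}(\beta P_i,-)$ is exact on $0\to\beta G\to\beta J^0\to\cdots$;
\item $\beta J^0,\dots,\beta J^{b-1}$ are $\field\GL{d}$-injective.
\end{itemize}
Hence the $E_1$-page is supported in total degrees $\ge a+b+1$. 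So the cone is acyclic in degrees $\le a+b$, which gives exactly the isomorphism/injection statement, and $Q$ is not needed at all.
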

\begin{proof}
    Consider a projective resolution $P_*$ of $F$ with $P_0,...,P_{\projbound{F}-1}$ direct sums of $\ell$-bounded projectives, and an injective coresolution $J^*$ of $G$ with $J^0,...,J^{\injbound{G}-1}$ direct sums of $\ell$-bounded injectives. Let $K$ be the kernel of the map $P_{\projbound{F}-1} \to P_{\projbound{F}-2}$. For $*<\projbound{F}$, we have
    \[
        \Ext{\field \GL{d}}{*}(\beta(F),\beta(G)) \simeq \Ext{\quantumfunctor{}}{*}(F,G)
    \]
    by proposition \ref{prop : projectivity under the morita} and the fact that $\Hom{\cuspidal{d}}(M,N) = \Hom{\field \GL{d}}(M,N)$. For $\projbound{F} \leq * < \projbound{F} + \injbound{G}$, by dimension shifting \cite[Exercice 2.4.3]{weibel1994introduction},
    \[
        \Ext{\quantumfunctor{}}{*}(F,G) \simeq \Ext{\quantumfunctor{}}{*-\projbound{F}}(K,G) \quad \text{and} \quad \Ext{\field \GL{d}}{*}(\beta(F),\beta(G)) \simeq \Ext{\field \GL{d}}{*-\projbound{F}}(\beta(K),\beta(G)) \ .
    \]
    Now, just as $\ell$-bounded projectives remain projective, $\ell$-bounded injective remains injective when we apply $\beta$, and hence
    \[
        \Ext{\field \GL{d}}{*}(\beta(F),\beta(G)) \simeq \Ext{\field \GL{d}}{*-\projbound{F}}(\beta(K),\beta(G)) \simeq \Ext{\quantumfunctor{}}{*-\projbound{F}}(K,G) \simeq \Ext{\quantumfunctor{}}{*}(F,G) \ .
    \]
    For $* = \projbound{F} + \injbound{G}$, we let $I$ denote the image of $J^{\injbound{G}-1} \to J^{\injbound{G}}$ and we have by dimension shifting
    \[
        \Ext{\field \GL{d}}{\projbound{F} + \injbound{G}}(\beta(F),\beta(G)) \simeq \Hom{\field \GL{d}}(\beta(K),\beta(I)) \simeq \Hom{\quantumfunctor{}}(K,I) \simeq \Ext{\quantumfunctor{}}{\projbound{F} + \injbound{G}}(F,G) \ .
    \]
    Finally, for $* = \projbound{F} + \injbound{G} + 1$,
    \begin{align*}
        \Ext{\quantumfunctor{}}{\projbound{F} + \injbound{G} + 1}(F,G) \simeq \Ext{\quantumfunctor{}}{1}(K&,I) \simeq \Ext{\cuspidal{d}}{1}(\beta(K),\beta(I)) \\
        & \hookrightarrow \Ext{\field \GL{d}}{1}(\beta(K),\beta(I)) \simeq \Ext{\field \GL{d}}{\projbound{F} + \injbound{G}+1}(\beta(F),\beta(G)) \ .
    \end{align*}
        
\end{proof}

The remainder of this section is devoted to computing $\projbound{F}$, or at least to finding lower bounds for it, and is based on \cite{touze2018connectedness}, where all the proofs can be found in the classical case, and which follow mostly from a biadjunction that we describe in the next subsection.

\subsection{The sum diagonal adjunction}\label{subsec : sum-diagonal}

A useful tool in the study of strict polynomial functors is a certain adjunction between two functors, called the diagonal and the sum. It is used, for example, in the proof of theorem 2.13 of \cite{friedlander1997cohomology}. In this section, we adapt this adjunction to the quantum case, using some properties of the standard $\hecke{d}$-module $\standard{n}{d}$ and well-known properties of the Hecke algebra (the bi-adjunction between restriction and induction, and Mackey formula, see \cite[Chapter 9]{geck2000characters} for details). This enables us to adapt nearly all the results and proofs of \cite{touze2018connectedness} in the quantum case.

Our adjunction will be between the category $\source{d}$ and a larger category $\source{d;2}$, defined below.

\begin{definition}\label{def : quantum functor with several variable}
    Let $\source{d;2}$ denote the category whose objects are pairs of non-negative integers $(n_1,n_2)$ and homomorphisms
    \begin{align*}
        \Hom{\source{d;2}}((n_1,n_2),(m_1,m_2)) & = \bigoplus_{d_1+d_2=d} \qSchur{n_1}{m_1}{d_1} \otimes \qSchur{n_2}{m_2}{d_2} \\
        & \simeq \bigoplus_{d_1+d_2=d} \Hom{\hecke{d_1} \otimes \hecke{d_2}}(\standard{n_1}{d_1} \otimes \standard{n_2}{d_2}, \standard{m_1}{d_1} \otimes \standard{m_2}{d_2}) \ .
    \end{align*}
    We let $\quantumfunctor{d;2}$ denote the category of all linear functor $\source{d;2} \to \cible$. Such functors will be called quantum polynomial functors in two variables.
\end{definition}

We start by defining the diagonal functor $\diagonal : \source{d} \to \source{d;2}$. We want $\Delta(n) = (n,n)$, hence the name 'diagonal'. To define it on homomorphisms, note that if $d_1+d_2=d$, $\standard{n}{d_1} \otimes \standard{n}{d_2} = \Restriction{\hecke{d_1 \otimes \hecke{d_2}}}{\hecke{d}}(\standard{n}{d})$, where $\Restriction{}{}$ is the restriction functor. This motivates the following definition.

\begin{definition}\label{def : diagonal}
    We define the diagonal functor $\diagonal : \source{d} \to \source{d;2}$ by
    \[
        \diagonal(n) = (n,n) \quad \text{and} \quad \diagonal(f) = \bigoplus_{d_1 + d_2 = d} \Restriction{\hecke{d_1} \otimes \hecke{d_2}}{\hecke{d}}(f) \ .
    \]
\end{definition}

To define its adjoint, we need the following isomorphism:
\[
    \standard{n_1+n_2}{d} \simeq \bigoplus_{d_1+d_2 = d} \Induction{\hecke{d_1} \otimes \hecke{d_2}}{\hecke{d}}(\standard{n_1}{d_1} \otimes \standard{n_2}{d_2}) \ .
\]
This isomorphism can be seen as a consequence of the decomposition into weight module and the isomorphism of lemma \ref{lemma : weight module as induced module}. It can also be checked directly, as it is given by the formula $e^{\otimes \nu} \mapsto e^{\otimes (\nu_1,...,\nu_n)} \otimes e^{\otimes (\nu_{n+1},...,\nu_{n+m})}$. Since induction and restriction are adjoint, this motivates the following definition.

\begin{definition}\label{def : sum}
    We define the sum functor $\sumfunctor : \source{d;2} \to \source{d}$ by
    \[
        \sumfunctor(n,m) = n+m \quad \text{and} \quad \sumfunctor \left (\bigoplus_{d_1+d_2=d} f_{d_1,d_2} \right ) = \sum_{d_1+d_2=d} \Induction{\hecke{d_1} \otimes \hecke{d_2}}{\hecke{d}}(f_{d_1,d_2}) \ .
    \]
\end{definition}

With these definitions, the biadjunction sum-diagonal is a direct consequence of the biadjunction induction-restriction for Hecke algebra \cite[Proposition 9.1.7]{geck2000characters}.

\begin{proposition}\label{prop : adjunction sum diagonal}
    The functors $\diagonal : \source{d} \to \source{d;2}$ and $\sumfunctor : \source{d;2} \to \source{d}$ are bi-adjoint.
\end{proposition}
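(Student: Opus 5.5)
We have to exhibit, for each $n \in \source{d}$ and $(m_1,m_2) \in \source{d;2}$, natural isomorphisms
\[
    \Hom{\source{d;2}}(\diagonal(n),(m_1,m_2)) \simeq \Hom{\source{d}}(n,\sumfunctor(m_1,m_2)) \quad \text{and} \quad \Hom{\source{d;2}}((m_1,m_2),\diagonal(n)) \simeq \Hom{\source{d}}(\sumfunctor(m_1,m_2),n).
\]
These are isomorphisms of hom-spaces of the source categories, natural in both arguments. The plan is to transport the induction–restriction biadjunction for the parabolic subalgebra $\hecke{d_1}\otimes\hecke{d_2} \subset \hecke{d}$ (\cite[Proposition 9.1.7]{geck2000characters}) summand by summand.

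First I write everything in terms of Hecke modules. By Definition \ref{def : quantum functor with several variable},
\[
    \Hom{\source{d;2}}((n,n),(m_1,m_2)) = \bigoplus_{d_1+d_2=d}\Hom{\hecke{d_1}\otimes\hecke{d_2}}\bigl(\Restriction{\hecke{d_1}\otimes\hecke{d_2}}{\hecke{d}}\standard{n}{d}, \standard{m_1}{d_1}\otimes\standard{m_2}{d_2}\bigr).
\]
Here I use $\standard{n}{d_1}\otimes\standard{n}{d_2}=\Restriction{}{}(\standard{n}{d})$. On the other side, the isomorphism $\standard{m_1+m_2}{d}\simeq\bigoplus_{d_1+d_2=d}\Induction{\hecke{d_1}\otimes\hecke{d_2}}{\hecke{d}}(\standard{m_1}{d_1}\otimes\standard{m_2}{d_2})$ gives
\[
    \Hom{\hecke{d}}(\standard{n}{d},\standard{m_1+m_2}{d})=\bigoplus_{d_1+d_2=d}\Hom{\hecke{d}}\bigl(\standard{n}{d},\Induction{}{}(\standard{m_1}{d_1}\otimes\standard{m_2}{d_2})\bigr).
\]
The hom-spaces are finite-dimensional, so the finite direct sum may be pulled out. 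The restriction-is-left-adjoint-to-induction isomorphism, applied termwise, identifies the two sides. Similarly, induction being left adjoint to restriction gives the second isomorphism. Both adjunctions hold because $\hecke{d}$ is free over the parabolic subalgebra and the latter is a symmetric algebra compatible with the symmetrizing form, which is exactly the cited biadjunction.

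The main step is naturality: the bijections must be compatible with composition in $\source{d}$ and $\source{d;2}$. In other words, the functors $\diagonal$ and $\sumfunctor$ of Definitions \ref{def : diagonal} and \ref{def : sum} must be exactly the functors through which the adjunction bijections are natural. For $\diagonal$, naturality in $n$ is just naturality of the Hecke adjunction in its first variable, since $\diagonal(f)$ is literally $\Restriction{}{}(f)$ on each summand. For $\sumfunctor$, I have to check that $\sumfunctor(f)$, built as $\sum\Induction{}{}(f_{d_1,d_2})$ and transported through the fixed isomorphism $\standard{m_1+m_2}{d}\simeq\bigoplus\Induction{}{}(\cdots)$, is compatible with composition. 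I also have to check that the summands with different $(d_1,d_2)$ do not interact. This holds because a morphism in $\source{d;2}$ is a direct sum over $(d_1,d_2)$ that preserves each block, so $\sumfunctor(f)$ is block-diagonal with respect to the decomposition.

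I expect the hard point to be functoriality of $\sumfunctor$, namely $\sumfunctor(g\circ f)=\sumfunctor(g)\circ\sumfunctor(f)$ and preservation of identities. I would verify it directly from the explicit formula $e^{\otimes\nu}\mapsto e^{\otimes(\nu_1,\dots,\nu_n)}\otimes e^{\otimes(\nu_{n+1},\dots)}$, checking that it is natural in the pair $(m_1,m_2)$. Given that, the unit and counit come from those of the Hecke biadjunction summed over $d_1+d_2=d$, and the triangle identities follow termwise. This yields the biadjunction.
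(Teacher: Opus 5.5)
Your proposal is correct and follows the same route as the paper, which deduces the biadjunction summand by summand from the induction--restriction biadjunction for $\hecke{d_1}\otimes\hecke{d_2}\subset\hecke{d}$ \cite[Proposition 9.1.7]{geck2000characters}, using $\Restriction{}{}(\standard{n}{d})=\standard{n}{d_1}\otimes\standard{n}{d_2}$ and the decomposition of $\standard{m_1+m_2}{d}$ into induced modules. The step you flag as the hard point, functoriality of $\sumfunctor$, is in fact automatic: $\sumfunctor$ is the functor $\bigoplus_{d_1+d_2=d}\Induction{\hecke{d_1}\otimes\hecke{d_2}}{\hecke{d}}$ conjugated by fixed isomorphisms, so it needs no check via the explicit formula.
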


The utility of this adjunction is mostly contained in the following corollary.

\begin{corollary}\label{cor : adjunction sum diagonal for ext}
    Let $F \in \quantumfunctor{d}$ and $G \in \quantumfunctor{d;2}$. Then we have isomorphisms
    \[
        \Ext{\quantumfunctor{d}}{*}(F,G\circ \diagonal) \simeq \Ext{\quantumfunctor{d;2}}{*}(F \circ \sumfunctor, G) \quad \text{and} \quad \Ext{\quantumfunctor{d}}{*}(G\circ \diagonal, F) \simeq \Ext{\quantumfunctor{d;2}}{*}(G, F \circ \sumfunctor).
    \]
\end{corollary}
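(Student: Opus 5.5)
We derive the corollary from the biadjunction of Proposition \ref{prop : adjunction sum diagonal} by passing to functor categories and then to derived functors. Precomposition with $\diagonal : \source{d} \to \source{d;2}$ gives an exact functor $\diagonal^* = - \circ \diagonal : \quantumfunctor{d;2} \to \quantumfunctor{d}$. Precomposition with $\sumfunctor : \source{d;2} \to \source{d}$ gives an exact functor $\sumfunctor^* = - \circ \sumfunctor : \quantumfunctor{d} \to \quantumfunctor{d;2}$. Both are exact because kernels and cokernels of functors to $\cible$ are computed pointwise.

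First, we use the general 2-categorical fact that an adjunction $L \dashv R$ between the source categories induces an adjunction $R^* \dashv L^*$ on categories of linear functors. Explicitly, the unit and counit $\eta,\varepsilon$ of $L \dashv R$ give natural transformations $H \circ R \circ L \Leftarrow H$ and related maps, and whiskering these satisfies the triangle identities. Applying this to both $\diagonal \dashv \sumfunctor$ and $\sumfunctor \dashv \diagonal$, we get that $\diagonal^*$ and $\sumfunctor^*$ are biadjoint. Concretely, this gives natural isomorphisms
\[
\Hom{\quantumfunctor{d}}(F, G\circ\diagonal) \simeq \Hom{\quantumfunctor{d;2}}(F\circ\sumfunctor, G), \qquad \Hom{\quantumfunctor{d}}(G\circ\diagonal, F) \simeq \Hom{\quantumfunctor{d;2}}(G, F\circ\sumfunctor).
\]
Alternatively, one can check this directly through the Yoneda lemma. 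Representable functors satisfy $\Hom{\source{d}}(n,-)\circ\sumfunctor \simeq \Hom{\source{d;2}}(\diagonal(n),-)$ by the biadjunction, and a general $F$ is then handled by presenting it by representables.

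Next, we extend the isomorphisms to $\Ext{}{}$. An exact functor with an exact left adjoint preserves injectives: if $\sumfunctor^*$ is exact and left adjoint to $\diagonal^*$, then $\Hom{}(-,\diagonal^* J) \simeq \Hom{}(\sumfunctor^* -, J)$ is exact for $J$ injective. So $\diagonal^*$ preserves injectives, and symmetrically $\sumfunctor^*$ preserves injectives and projectives, and so does $\diagonal^*$. For the first isomorphism, take an injective coresolution $G \to J^\bullet$ in $\quantumfunctor{d;2}$. Then $G\circ\diagonal \to J^\bullet\circ\diagonal$ is an injective coresolution by exactness and preservation of injectives. Applying the natural Hom isomorphism termwise gives an isomorphism of cochain complexes
\[
\Hom{}(F, J^\bullet\circ\diagonal) \simeq \Hom{}(F\circ\sumfunctor, J^\bullet),
\]
whose cohomology is the claimed isomorphism. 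The second isomorphism is proved the same way, using an injective coresolution of $F$ and the fact that $\sumfunctor^*$ preserves injectives.

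The main obstacle is the bookkeeping needed to make the biadjunction genuinely linear and natural at the level of the $\source{}$ categories. The Hom-spaces of $\source{d;2}$ are direct sums over $d_1+d_2=d$, and $\sumfunctor$ sums inductions over these. So one must check that the induction–restriction adjunction isomorphisms for $\hecke{d_1}\otimes\hecke{d_2}\subset\hecke{d}$, together with the identification $\standard{n_1+n_2}{d}\simeq\bigoplus\Induction{}{}(\standard{n_1}{d_1}\otimes\standard{n_2}{d_2})$, are compatible with composition. This compatibility is what makes $\diagonal$ and $\sumfunctor$ actual functors and the unit and counit natural; the problem is of the same kind as the naturality of Frobenius reciprocity. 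Once Proposition \ref{prop : adjunction sum diagonal} is granted, as we do here, the remaining steps are formal. We must also make sure that the categories have enough injectives and projectives; this holds since the representables $\Hom{\source{}}(n,-)$ are projective generators and their duals give injectives, by finite-dimensionality.
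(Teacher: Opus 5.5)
Your proposal is correct and takes essentially the same route as the paper. The paper simply cites Lemma 1.5 of Franjou--Pirashvili's \emph{Introduction to functor homology}, which is exactly your formal argument: the biadjunction of Proposition \ref{prop : adjunction sum diagonal} makes the exact precomposition functors $-\circ\diagonal$ and $-\circ\sumfunctor$ biadjoint, so they preserve injectives, and the termwise Hom isomorphisms on injective coresolutions give the two Ext isomorphisms.
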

\begin{proof}
    See Lemma 1.5 in \cite[Introduction to functor homology]{franjou2003rational}.
\end{proof}

\begin{definition}\label{def : external tensor product}
    Given $F \in \quantumfunctor{d_1}$ and $G \in \quantumfunctor{d_2}$ with $d=d_1+d_2$, we can define $F \boxtimes G \in \quantumfunctor{d;2}$ by $(F \boxtimes G)(n_1,n_2) = F(n_1) \otimes G(n_2)$ and 
    \[
        (F \boxtimes G) \left (\bigoplus_{d'_1+d'_2=d} f_{(d'_1,d'_2)} \otimes f'_{(d'_1,d'_2)} \right ) = F(f_{(d_1,d_2)}) \otimes G(f'_{(d_1,d_2)})
    \]
    using the isomorphism 
    \[
        \Hom{\hecke{d_1} \otimes \hecke{d_2}}(\standard{n_1}{d_1} \otimes \standard{n_2}{d_2}, \standard{m_1}{d_1} \otimes \standard{m_2}{d_2}) \simeq \Hom{\hecke{d_1}}(\standard{n_1}{d_1}, \standard{m_1}{d_1} ) \otimes \Hom{\hecke{d_2}}(\standard{n_2}{d_2}, \standard{m_2}{d_2}) \ .
    \]
\end{definition}

The tensor product $F \otimes G$ can then be defined as $(F \boxtimes G) \circ \diagonal$. Hence, we have in general isomorphisms
\[
    \Ext{\quantumfunctor{d}}{*}(F,G_1 \otimes G_2) \simeq \Ext{\quantumfunctor{d;2}}{*}(F \circ \sumfunctor, G_1 \boxtimes G_2) \quad \text{and} \quad \Ext{\quantumfunctor{d}}{*}(G_1 \otimes G_2, F) \simeq \Ext{\quantumfunctor{d;2}}{*}(G_1 \boxtimes G_2, F \circ \sumfunctor).
\]
When the functor $F$ has good properties with respect to $\sumfunctor$, then this can be used to compute several $\Ext{}{}$-groups. In particular, if $F$ is additive or exponential. Since both will be used later, we introduce them now and state some of their main properties.

\subsubsection{Additive functors}

\begin{definition}\label{def : additive functor}
    A quantum polynomial functor $F \in \quantumfunctor{d}$ is called additive if 
    \[
        F \circ \sumfunctor \simeq F \boxtimes \field \oplus \field \boxtimes F
    \]
    where $\field$ denotes the constant functor equal to $\field$ in $\quantumfunctor{0}$.
\end{definition}

The consequence of the additivity of $F$ on $\Ext{}{}$-groups is a corollary of the following lemma.


\begin{lemma}\label{lemma : external tensor and hom}
    Let $F_1,G_1 \in \quantumfunctor{d_1}$, $F_2, G_2 \in \quantumfunctor{d_2}$ and $d=d_1+d_2$. Then 
    \[
        \Ext{\quantumfunctor{d;2}}{}(F_1 \boxtimes G_1, F_2 \boxtimes G_2) \simeq \Ext{\quantumfunctor{}}{}(F_1,F_2) \otimes \Ext{\quantumfunctor{}}{}(G_1,G_2) \ .
    \]
\end{lemma}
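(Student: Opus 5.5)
This is a Künneth-type statement. The plan is to compare the category $\quantumfunctor{d;2}$ with a tensor product of module categories and then compute Ext with a tensor product of projective resolutions.

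\emph{Reduction to a tensor product of algebras.} Only the summand $(d_1,d_2)$ of $\Hom{\source{d;2}}$ acts nontrivially on $F_1 \boxtimes G_1$ and on $F_2 \boxtimes G_2$. Accordingly, $\quantumfunctor{d;2}$ splits as a product $\prod_{d'_1+d'_2=d} \mathcal{C}_{d'_1,d'_2}$, where $\mathcal{C}_{d'_1,d'_2}$ is the category of linear functors on the category whose morphisms are $\qSchur{n_1}{m_1}{d'_1} \otimes \qSchur{n_2}{m_2}{d'_2}$. The functors $\boxtimes$ land in $\mathcal{C}_{d_1,d_2}$, and Ext in a product of categories is computed componentwise, so we may work in $\mathcal{C}_{d_1,d_2}$. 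Fixing $n \geq d$ and using the same evaluation argument as for $\quantumfunctor{d}$ (theorem 4.7 of \cite{hong2017quantum}, applied in each variable), $\mathcal{C}_{d_1,d_2}$ is equivalent to the category of modules over $\qSchurAlg{n}{d_1} \otimes \qSchurAlg{n}{d_2}$. Under this equivalence $F \boxtimes G$ corresponds to the outer tensor product $F(n) \otimes G(n)$.

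\emph{Künneth step.} Choose projective resolutions $P_\bullet \to F_1$ in $\quantumfunctor{d_1}$ and $Q_\bullet \to G_1$ in $\quantumfunctor{d_2}$ by finite direct sums of the projectives $\qdivided{\nu}$; these exist because the $\qdivided{\nu}$ generate and all objects are finite dimensional. Then:
\begin{itemize}
\item Each $\qdivided{\nu} \boxtimes \qdivided{\mu}$ is projective in $\mathcal{C}_{d_1,d_2}$. It is a direct summand of $A_1 \otimes A_2$, with $A_i$ the relevant $q$-Schur algebra, since each factor is a summand of $A_i$. Equivalently, by the Yoneda lemma it is a representable functor, because $\qdivided{\nu} \boxtimes \qdivided{\mu}$ is the corepresentable functor attached to the tensor product of idempotents.
\item Since we are over a field, the Künneth formula for tensor products of complexes shows that the total complex $\mathrm{Tot}(P_\bullet \boxtimes Q_\bullet)$ is a projective resolution of $F_1 \boxtimes G_1$.
\end{itemize}

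\emph{Hom computation.} We then need the natural isomorphism
\[
\Hom{}(P \boxtimes Q, F_2 \boxtimes G_2) \simeq \Hom{}(P,F_2) \otimes \Hom{}(Q,G_2)
\]
for $P,Q$ finite sums of divided powers. By Yoneda both sides equal $e_\nu F_2(n) \otimes e_\mu G_2(n)$, so the map is an isomorphism. Finite-dimensionality is used here, since Hom only commutes with tensor products for finitely generated projectives. This gives an isomorphism of complexes
\[
\Hom{}(\mathrm{Tot}(P_\bullet \boxtimes Q_\bullet), F_2 \boxtimes G_2) \simeq \Hom{}(P_\bullet,F_2) \otimes \Hom{}(Q_\bullet,G_2).
\]
The algebraic Künneth theorem over the field $\field$ then yields the graded isomorphism
\[
\Ext{\quantumfunctor{d;2}}{*}(F_1 \boxtimes G_1, F_2 \boxtimes G_2) \simeq \bigoplus_{i+j=*} \Ext{}{i}(F_1,F_2) \otimes \Ext{}{j}(G_1,G_2).
\]

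The main obstacle is the first step: checking carefully that morphisms of $\source{d;2}$ with the composition implicit in Definition \ref{def : quantum functor with several variable} really make the $(d_1,d_2)$ part a tensor product of categories. In other words, we must verify that composition respects the tensor decomposition $\Hom{\hecke{d_1}\otimes\hecke{d_2}} \simeq \Hom{\hecke{d_1}} \otimes \Hom{\hecke{d_2}}$ of Definition \ref{def : external tensor product}. Once this is settled, the rest is standard homological algebra.
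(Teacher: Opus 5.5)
Your proposal is correct and takes essentially the paper's route: restrict to the bidegree-$(d_1,d_2)$ component, identify it by evaluation at a large object with modules over $\qSchurAlg{n}{d_1}\otimes\qSchurAlg{n}{d_2}$ (under which $\boxtimes$ becomes the outer tensor product), and apply the K\"unneth formula. The only difference is that the paper simply cites the K\"unneth isomorphism for $\mathrm{Ext}$ over a tensor product of algebras, whereas you prove it explicitly using tensor products of projective resolutions by divided powers.
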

\begin{proof}
    As in \cite[Section 4]{hong2017quantum}, one can prove that evaluation at $(n,m)$ for $n,m \geq d$ defines an equivalence of categories $\quantumfunctor{d;2} \to \module{\qSchurAlg{(n,m)}{d}}$, where 
    \[
        \qSchurAlg{(n,m)}{d} = \bigoplus_{d_1+d_2=d} \qSchurAlg{n}{d_1} \otimes \qSchurAlg{m}{d_2} \ .
    \]
    In particular, evaluation at $(n,n)$ for $n \geq d$ is an equivalence of categories. Hence
    \begin{align*}
        \Ext{\quantumfunctor{d;2}}{}(F_1 \boxtimes G_1, F_2 \boxtimes G_2) 
        & \simeq \Ext{\qSchurAlg{(n,m)}{d}}{}(F_1(n) \otimes G_1(n), F_2(n) \otimes G_2(n)) \\
        & \simeq \Ext{\qSchurAlg{n}{d_1} \otimes \qSchurAlg{n}{d_2}}{}(F_1(n) \otimes G_1(n), F_2(n) \otimes G_2(n)) \\
        & \simeq \Ext{\qSchurAlg{n}{d_1}}{}(F_1(n), F_2(n)) \otimes \Ext{\qSchurAlg{n}{d_2}}{}(G_1(n),G_2(n)) \\
        & \simeq \Ext{\quantumfunctor{}}{}(F_1,F_2) \otimes \Ext{\quantumfunctor{}}{}(G_1,G_2) \ .
    \end{align*}
\end{proof}

\begin{corollary}\label{cor : ext additif tenseur}
    Suppose $F \in \quantumfunctor{d}$ is additive and $G_1 \in \quantumfunctor{d_1}$, $G_2 \in \quantumfunctor{d_2}$ with $d_1,d_2 > 0$. Then
    \[
        \Ext{\quantumfunctor{}}{*}(F,G_1 \otimes G_2) = 0 \quad \text{and} \quad  \Ext{\quantumfunctor{}}{*}(G_1 \otimes G_2,F) = 0 \ .
    \]
\end{corollary}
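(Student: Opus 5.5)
We prove the first vanishing with the sum-diagonal adjunction and Lemma \ref{lemma : external tensor and hom}; the second vanishing is handled the same way. Since $G_1 \otimes G_2 = (G_1 \boxtimes G_2)\circ\diagonal$, Corollary \ref{cor : adjunction sum diagonal for ext} gives
\[
    \Ext{\quantumfunctor{d}}{*}(F,G_1 \otimes G_2) \simeq \Ext{\quantumfunctor{d;2}}{*}(F \circ \sumfunctor, G_1 \boxtimes G_2).
\]
Because $F$ is additive, $F \circ \sumfunctor \simeq F \boxtimes \field \oplus \field \boxtimes F$. The right-hand side therefore splits as
\[
    \Ext{\quantumfunctor{d;2}}{*}(F \boxtimes \field, G_1 \boxtimes G_2) \oplus \Ext{\quantumfunctor{d;2}}{*}(\field \boxtimes F, G_1 \boxtimes G_2).
\]

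Next we observe that $\quantumfunctor{d;2}$ decomposes according to bidegree. The morphism spaces of $\source{d;2}$ are a direct sum over $d_1+d_2=d$, and composition respects this sum. Hence every functor in $\quantumfunctor{d;2}$ splits canonically into bihomogeneous pieces, and there are no morphisms or extensions between pieces of different bidegrees. Formally, the idempotents projecting onto each summand $(d_1,d_2)$ are central in the algebra $\qSchurAlg{(n,m)}{d}$ from the proof of Lemma \ref{lemma : external tensor and hom}, so that algebra is a product of blocks indexed by $(d_1,d_2)$.

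Now compare bidegrees. The functor $F \boxtimes \field$ has bidegree $(d,0)$ and $\field \boxtimes F$ has bidegree $(0,d)$, whereas $G_1 \boxtimes G_2$ has bidegree $(d_1,d_2)$ with $d_1,d_2>0$. Both Ext-groups above therefore vanish. One can also see this from Lemma \ref{lemma : external tensor and hom}, read with the natural convention that Ext between homogeneous functors of different degrees vanishes: it yields $\Ext{}{}(F,G_1)\otimes\Ext{}{}(\field,G_2)=0$ since $\deg G_2>0$, and similarly for the other summand.

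The only point needing care is this bidegree decomposition, since Lemma \ref{lemma : external tensor and hom} is literally stated only for matching bidegrees. I would therefore justify it directly through the central idempotents rather than appeal to the lemma. The second vanishing, $\Ext{}{*}(G_1 \otimes G_2,F)=0$, follows identically using the other isomorphism of Corollary \ref{cor : adjunction sum diagonal for ext}.
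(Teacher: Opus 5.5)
Your proof is correct and follows the paper's route: the sum-diagonal adjunction, then additivity of $F$, then vanishing because the bidegrees $(d,0)$ and $(0,d)$ differ from $(d_1,d_2)$. The only difference is the last step. The paper factors through Lemma~\ref{lemma : external tensor and hom}, which strictly applies only to matching degrees. You instead use the bidegree decomposition of $\quantumfunctor{d;2}$ via the central idempotents $1_{(d_1,d_2)}$, which the paper itself invokes in the proof of Theorem~\ref{thm : cup product}, and this is a slightly cleaner justification.
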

\begin{proof}
    We only prove the first equality, the second is proved similarly. By the adjunction sum-diagonal and additivity of $F$, we have
    \begin{align*}
        \Ext{\quantumfunctor{}}{}(F,G_1 \otimes G_2)
        & \simeq \Ext{\quantumfunctor{}}{}(F \circ \sumfunctor ,G_1 \boxtimes G_2) \\
        & \simeq \Ext{\quantumfunctor{}}{}(F \boxtimes \field \oplus \field \boxtimes F, G_1 \boxtimes G_2) \\
        & \simeq \Ext{\quantumfunctor{}}{}(F,G_1) \otimes \Ext{\quantumfunctor{}}{}(\field, G_2) \oplus \Ext{\quantumfunctor{}}{}(\field ,G_1) \otimes \Ext{\quantumfunctor{}}{}(F, G_2) \\
        & = 0
    \end{align*}
    since $\Ext{\quantumfunctor{}}{}(\field ,G_1)  = \Ext{\quantumfunctor{}}{}(\field, G_2) = 0$, as there are no extensions (or homomorphisms) between functors of different degrees.
\end{proof}

One can show that all additive quantum polynomial functors are direct sums of $I_q$ or of the identity functor twisted by the classical Frobenius twist a certain number of times, and then by the quantum Frobenius twist.

\subsubsection{Exponential functors}

\begin{definition}\label{def : exponential functor}
    Let $F = (F_d)_{d \geq 0}$ be a family of quantum polynomial functors with $F_d \in \quantumfunctor{d}$. We say that $F$ is an exponential quantum polynomial functor if
    \[
        \forall d \geq 0, \quad F_d \circ \sumfunctor \simeq \bigoplus_{d_1+d_2 = d} F_{d_1} \boxtimes F_{d_2} \ .
    \]
\end{definition}

We then have the following corollary to lemma \ref{lemma : external tensor and hom}.

\begin{corollary}\label{cor : ext exponential tenseur}
     Suppose $F$ is an exponential functor and $G_1 \in \quantumfunctor{d_1}$, $G_2 \in \quantumfunctor{d_2}$ with $d_1,d_2$. Then, for $d=d_1+d_2$
    \[
        \Ext{\quantumfunctor{}}{}(F_d,G_1 \otimes G_2) = \Ext{\quantumfunctor{}}{}(F_{d_1},G_1) \otimes \Ext{\quantumfunctor{}}{}(F_{d_2},G_2)
    \]
    and conversely
    \[
        \Ext{\quantumfunctor{}}{}(G_1 \otimes G_2,F_d) = \Ext{\quantumfunctor{}}{}(G_1,F_{d_1}) \otimes \Ext{\quantumfunctor{}}{}(G_2,F_{d_2}) \ .
    \]
\end{corollary}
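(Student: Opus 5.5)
The plan is to combine the sum-diagonal adjunction (Corollary \ref{cor : adjunction sum diagonal for ext}) with Lemma \ref{lemma : external tensor and hom}, exactly as in the proof of Corollary \ref{cor : ext additif tenseur}. I will prove the first isomorphism in detail; the second follows by the same argument using the other half of the biadjunction.

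\textbf{Step 1 (adjunction).} Since $G_1 \otimes G_2 = (G_1 \boxtimes G_2)\circ \diagonal$, Corollary \ref{cor : adjunction sum diagonal for ext} gives
\[
\Ext{\quantumfunctor{d}}{*}(F_d,G_1\otimes G_2) \simeq \Ext{\quantumfunctor{d;2}}{*}(F_d\circ\sumfunctor, G_1\boxtimes G_2).
\]

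\textbf{Step 2 (exponential property).} By definition of an exponential functor, $F_d\circ\sumfunctor \simeq \bigoplus_{e_1+e_2=d} F_{e_1}\boxtimes F_{e_2}$. Since $\Ext{}{}$ commutes with finite direct sums in the first variable, the group above becomes
\[
\bigoplus_{e_1+e_2=d}\Ext{\quantumfunctor{d;2}}{*}(F_{e_1}\boxtimes F_{e_2},G_1\boxtimes G_2).
\]

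\textbf{Step 3 (Künneth and degree vanishing).} Lemma \ref{lemma : external tensor and hom} identifies each summand with $\Ext{\quantumfunctor{}}{}(F_{e_1},G_1)\otimes\Ext{\quantumfunctor{}}{}(F_{e_2},G_2)$, with total degree graded in the usual way. There are no homomorphisms or extensions between homogeneous functors of different degrees, because $\quantumfunctor{}=\bigoplus_d\quantumfunctor{d}$. Hence every summand with $e_1\neq d_1$ vanishes, and only $(e_1,e_2)=(d_1,d_2)$ survives, which gives the claimed formula.

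\textbf{Expected obstacle.} The only delicate point is that Lemma \ref{lemma : external tensor and hom} is stated for bifunctors whose two degree splittings agree. I must check that $\boxtimes$ of functors with mismatched splittings, $(e_1,e_2)\neq(d_1,d_2)$, lie in different summands $\module{\qSchurAlg{n}{e_1}\otimes\qSchurAlg{n}{e_2}}$ of $\quantumfunctor{d;2}$. This holds because, by the equivalence in the proof of that lemma, $\qSchurAlg{(n,m)}{d}$ is a direct product of the algebras indexed by $(d_1,d_2)$, so these modules are orthogonal and their $\Ext{}{}$ vanishes.
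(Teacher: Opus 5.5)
Your proposal is correct and follows the paper's route: the paper only says the proof is similar to that of Corollary~\ref{cor : ext additif tenseur}, which means the sum-diagonal adjunction, then the exponential decomposition of $F_d\circ\sumfunctor$, then Lemma~\ref{lemma : external tensor and hom}, then the vanishing of $\Ext{}{}$ between functors of different degrees. Your remark on the summands with $(e_1,e_2)\neq(d_1,d_2)$ fills the one point the paper leaves implicit, and it does so correctly, using the same bi-degree orthogonality argument the paper itself uses in the proof of Theorem~\ref{thm : cup product}.
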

\begin{proof}
    The proof is similar to that of Corollary \ref{cor : ext additif tenseur}.
\end{proof}

Example of exponential functor include: the quantum divided power $\qdivided{*} = (\qdivided{d})_{d \geq 0}$, the quantum exterior power $\qexterior{*}$, the quantum symmetric power $\qsymmetric{*}$ and exponential strict polynomial functors twisted by the quantum Frobenius twist. Moreover, tensor products of exponential functors, in the sense
\[
    (F \otimes G)_{d \geq 0} = \left ( \bigoplus_{d_1+d_2=d} F_{d_1} \otimes G_{d_2} \right )_{d \geq 0}
\]
are also exponential, as a corollary of the following lemma.

\begin{lemma}\label{lemma : tensor and sum}
    Let $F \in \quantumfunctor{d_1}$, $G \in \quantumfunctor{d_2}$ and $d=d_1+d_2$. Then we have an isomorphism in $\quantumfunctor{d;2}$
    \[
        (F \otimes G) \circ \sumfunctor \simeq (F \circ \sumfunctor) \otimes (G \circ \sumfunctor) \ .
    \]
\end{lemma}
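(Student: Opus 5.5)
We need to prove $(F\otimes G)\circ\Sigma \simeq (F\circ\Sigma)\otimes(G\circ\Sigma)$ in $\quantumfunctor{d;2}$. Here the tensor product on the right is taken in two-variable functors: for $H\in\quantumfunctor{d_1;2}$ and $K\in\quantumfunctor{d_2;2}$, $(H\otimes K)(n_1,n_2)=H(n_1,n_2)\otimes K(n_1,n_2)$, with morphisms acting through restriction along $\hecke{e_1}\otimes\hecke{e_2}\otimes\hecke{e'_1}\otimes\hecke{e'_2}\subset\hecke{e_1+e'_1}\otimes\hecke{e_2+e'_2}$.

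The plan is to reduce to representable (projective) functors and then use a Mackey-type argument. Both sides are right exact in $F$ and in $G$: precomposition with $\Sigma$ is exact, and the tensor product is exact in each variable since it is computed pointwise over $\field$. Every object of $\quantumfunctor{d_1}$ has a presentation by direct sums of the $\qdivided{\nu}$. So it suffices to construct an isomorphism natural in $F$ and $G$ restricted to projectives $F=\qdivided{\nu}$, $G=\qdivided{\mu}$, or equivalently to the representables $\Hom{\source{d_1}}(k,-)$ and $\Hom{\source{d_2}}(k',-)$. Naturality with respect to all morphisms between projectives, which are given by the $q$-Schur algebra, then extends the isomorphism to all $F,G$ by the five lemma.

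The cleaner route avoids the reduction and works at the level of Hecke modules. First I would write $F\otimes G$ concretely, using the definition $F\otimes G=(F\boxtimes G)\circ\Delta$. Evaluated at $n$, this is $F(n)\otimes G(n)$, with $\qSchurAlg{n}{d}$ acting through restriction to $\hecke{d_1}\otimes\hecke{d_2}$. Then $(F\otimes G)\circ\Sigma$ at $(n_1,n_2)$ is $F(n_1+n_2)\otimes G(n_1+n_2)$. For a morphism $f=\bigoplus f_{e_1,e_2}$ in $\source{d;2}$, the action is through $\Induction{\hecke{e_1}\otimes\hecke{e_2}}{\hecke{d}}(f_{e_1,e_2})$ restricted to $\hecke{d_1}\otimes\hecke{d_2}$.

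The key step is the Mackey formula for Hecke algebras applied to $\Restriction{\hecke{d_1}\otimes\hecke{d_2}}{\hecke{d}}\Induction{\hecke{e_1}\otimes\hecke{e_2}}{\hecke{d}}$. Its summands are indexed by distinguished double cosets $\doublecosetrepr{(d_1,d_2)}{(e_1,e_2)}$, that is, by $2\times2$ matrices $\begin{pmatrix}a&b\\c&e\end{pmatrix}$ with row sums $(d_1,d_2)$ and column sums $(e_1,e_2)$. Each summand is an induction from $\hecke{a}\otimes\hecke{b}\otimes\hecke{c}\otimes\hecke{e}$ twisted by $T_{\sigma_A}$. Concretely, on $\standard{n_1+n_2}{d}$ this is the decomposition of a tensor $e_{i_1}\otimes\cdots\otimes e_{i_d}$ according to how many of the first $d_1$ and last $d_2$ indices lie in $\{1,\dots,n_1\}$ versus $\{n_1+1,\dots,n_1+n_2\}$. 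It yields
\[
\Restriction{}{}\bigl(\standard{n_1+n_2}{d}\bigr)\simeq\bigoplus\bigl(\standard{n_1}{a}\otimes\standard{n_2}{b}\text{ induced}\bigr)\otimes\bigl(\standard{n_1}{c}\otimes\standard{n_2}{e}\text{ induced}\bigr).
\]
This is exactly $\Sigma$ applied in each tensor factor: $(F\circ\Sigma)(n_1,n_2)\otimes(G\circ\Sigma)(n_1,n_2)$ has the same underlying spaces, decomposed by bidegrees $(a,b)$ and $(c,e)$. The identification is given at the level of spaces by the map $e^{\otimes\nu}\mapsto e^{\otimes(\nu_1..\nu_n)}\otimes e^{\otimes(\nu_{n+1}..)}$.

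The main obstacle is checking that this linear isomorphism intertwines the two actions of $\Hom{\source{d;2}}$. One must check that the induced map of $f_{e_1,e_2}$, restricted via Mackey, agrees with $f_{a,b}\otimes f'_{c,e}$ on each summand, including the twist by $T_{\sigma_A}$ coming from the braiding. I would first check this on the generators $T_i$ of the Hecke algebras: the only nontrivial case is $T_{d_1}$, which mixes the two factors. Then I would use that the $q$-Schur algebra morphisms are exactly the Hecke-linear maps, so the compatibility reduces to the naturality of the Mackey isomorphism with respect to Hecke-linear maps $f_{e_1,e_2}$. If this direct verification becomes messy, I would fall back on the reduction to $F=\qdivided{\nu}$, $G=\qdivided{\mu}$. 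In that case $F\otimes G=\qdivided{(\nu,\mu)}$, and exponentiality of $\qdivided{*}$ makes both sides $\bigoplus\qdivided{\alpha}\otimes\qdivided{\beta}\boxtimes\cdots$, where matching the summands is combinatorial. Naturality is then checked only on the basis maps $\phi^\sigma$.
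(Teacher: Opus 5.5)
Your main route is essentially the paper's proof, which only says the lemma is a formal consequence of Mackey's formula. You identify both sides with $F(n_1+n_2)\otimes G(n_1+n_2)$ and compare the two actions of $\Hom{\source{d;2}}$ via the decomposition of $\Restriction{\hecke{d_1}\otimes\hecke{d_2}}{\hecke{d}}\Induction{\hecke{e_1}\otimes\hecke{e_2}}{\hecke{d}}$, which is natural in the Hecke-linear map; with this $R$-matrix, the twist by the double coset representative acts on the relevant generating subspaces as a plain flip of tensor factors. Only your fallback needs care: exponentiality of $\qdivided{*}$ controls only $\qdivided{d}\circ\sumfunctor$, so using it to decompose $\qdivided{(\nu,\mu)}\circ\sumfunctor$ already presupposes the lemma for tensor products, and that route would need its own Mackey computation anyway.
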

\begin{proof}
    This is a formal consequence of Mackey's formula.
\end{proof}

\subsection{A homological characterization of the $\projbound{F}$}\label{subsec : homological}

In this subsection, we consider some consequences of the sum-diagonal adjunction, and establish a characterization of $\projbound{F}$ in terms of $\Ext{}{}$-groups. This will be a direct adaptation of \cite{touze2018connectedness} to the quantum case, with all proofs being entirely similar. We will make extensive use of the quantum Frobenius twist and the classical Frobenius twist.

\begin{definition}\label{def : strict polynomial functor}
    The category of strict polynomial functors is the category $\polyfunctor{}$, which is $\quantumfunctor{}$ with $q=1$. Every quantum polynomial functor that we have encountered admits a strict analogue. We denote them by removing the $q$ in the notation, so $\exterior{d}$ for $\qexterior{d}$, $\symmetric{d}$ for $\qsymmetric{d}$,...
\end{definition}

\begin{definition}\label{def : twisted functor}
    Let $F \in \polyfunctor{d}$. For $r \geq 1$, we denote by $\twistr{F}{r} \in \quantumfunctor{dp^{r-1}\ell}$ the quantum polynomial functor obtained by applying the classical Frobenius twist $r-1$ times to $F$, then the quantum Frobenius twist.
\end{definition}
See \cite[Section 1.3]{brundan2001quantum} and \cite{théo2025extgroupcategoryquantumpolynomial} in the case where $\ell$ is odd. In this article, we simply cite the fact about these functors when we need them. We first record the following theorem concerning cup products.

\begin{theorem}\label{thm : cup product}
    Let $F_1,G_1$ be quantum polynomial functors and $F_2,G_2$ be strict polynomial functors. The cup product induces a graded injective map
    \[
        \Ext{\quantumfunctor{}}{*}(F_1,G_1) \otimes \Ext{\quantumfunctor{}}{*}(\twist{F_2},\twist{G_2}) \hookrightarrow \Ext{\quantumfunctor{}}{*}(F_1 \otimes \twist{F_2}, G_1 \otimes \twist{G_2}) \ .
    \]
    Moreover, this map is an isomorphism in degree $k$ in the following situations :
    \begin{enumerate}
        \item when $\deg F_1 < \deg G_1$ and $k < \injbound{G_1}$;
        \item when $\deg F_1 > \deg G_1$ and $k < \projbound{F_1}$;
        \item when $\deg F_1 = \deg G_1$ and $k < \projbound{F_1} + \injbound{G_1}$.
    \end{enumerate}
\end{theorem}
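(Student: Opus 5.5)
We follow the classical argument of \cite{touze2018connectedness}, transported to the quantum setting by the sum--diagonal adjunction. The key input is that precomposition with the quantum Frobenius twist is controlled by a cohomological computation: $\Ext{\quantumfunctor{}}{*}(\twist{F_2},\twist{G_2})$ is computed from $\Ext{}{*}(\twist{I},\twist{I})$-type groups. These are concentrated in degrees that are multiples of $2(\ell-1)$ in a suitable sense, or more precisely they vanish in degrees $0<*<2(\ell-1)$ in the relevant range. We take this as the known input from \cite{théo2025extgroupcategoryquantumpolynomial} and \cite{brundan2001quantum}.

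\textbf{Step 1: construct the map and reduce to a one-variable statement.} The cup product is the composite
\[
\Ext{}{*}(F_1,G_1)\otimes \Ext{}{*}(\twist{F_2},\twist{G_2}) \to \Ext{\quantumfunctor{d;2}}{*}(F_1\boxtimes \twist{F_2}, G_1\boxtimes \twist{G_2}) \xrightarrow{\ \diagonal^*\ } \Ext{}{*}(F_1\otimes\twist{F_2},G_1\otimes\twist{G_2}).
\]
The first arrow is the isomorphism of Lemma \ref{lemma : external tensor and hom}. The second is precomposition by $\diagonal$, which is exact.

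By Corollary \ref{cor : adjunction sum diagonal for ext}, the target equals $\Ext{\quantumfunctor{d;2}}{*}(F_1\boxtimes\twist{F_2},(G_1\otimes\twist{G_2})\circ\sumfunctor)$. By Lemma \ref{lemma : tensor and sum}, we have $(G_1\otimes \twist{G_2})\circ\sumfunctor\simeq (G_1\circ\sumfunctor)\otimes(\twist{G_2}\circ\sumfunctor)$. Since the twist is additive, $\twist{G_2}\circ\sumfunctor$ decomposes as $\bigoplus$ of terms $\twist{(G_2\circ\sumfunctor)}$-pieces. The component of $(G_1\otimes\twist{G_2})\circ\sumfunctor$ of the form $G_1\boxtimes \twist{G_2}$ is a direct summand. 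Under the adjunction, the cup product corresponds to inclusion of this summand. This gives injectivity.

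\textbf{Step 2: show the complementary summands contribute nothing in the stated range.} The complement is a sum of terms of the form $(G_1',G_1'')\boxtimes(\twist{A},\twist{B})$, where $G_1\circ\sumfunctor$ is filtered with pieces in bidegrees $(e_1,e_2)$ and at least one piece is nontrivially split. Applying Lemma \ref{lemma : external tensor and hom}, each such term contributes a tensor product of Ext-groups in which either
\begin{itemize}
\item a functor of degree $\deg F_1$ is compared with a piece of $G_1$ of different degree, mixed with twisted functors; or
\item twisted Ext appears between functors of degree $<\ell$ and twisted ones.
\end{itemize}
The second kind of Ext-group vanishes below degree $\projbound{\cdot}$ or $\injbound{\cdot}$. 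This is the quantum analogue of the vanishing in \cite{touze2018connectedness}: a $\twist{}$-functor admits no nonzero Ext from $\ell$-bounded projectives. So if $P_*$ is a resolution with $\ell$-bounded terms up to degree $\projbound{F_1}-1$, then $\Hom{}(P_k,\twist{X}\otimes Y)$ vanishes for $Y$ of smaller degree, because $\Hom{}(\qdivided{\nu},\twist{X}\otimes Y)$ is a weight space. Any weight of $\twist{X}$ has all parts divisible by $\ell$, which forces a part $\geq\ell$.

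We split into the three cases of the statement according to whether $\deg F_1$ is less than, greater than, or equal to $\deg G_1$. In each case we use a resolution of $F_1$, a coresolution of $G_1$, or both (as in the proof of Theorem \ref{thm : compute ext}) to kill the complementary terms in degrees $k<\injbound{G_1}$, $k<\projbound{F_1}$, and $k<\projbound{F_1}+\injbound{G_1}$ respectively.

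\textbf{Main obstacle.} The main obstacle is Step 2 in the equal-degree case. There we must combine both bounds, and the complementary summand involves Ext between functors where neither side is twisted-free. We plan a spectral sequence or double-complex argument on $\Hom{}(P_*,J^*)$: the $\ell$-bounded parts of $P_*$ and $J^*$ kill the twisted weights up to total degree $\projbound{F_1}+\injbound{G_1}-1$. This adapts Touzé's proof verbatim, since the only inputs are the weight computation, Lemma \ref{lemma : external tensor and hom}, and the adjunction.
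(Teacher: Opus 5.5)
Your Step 1 is correct and has the same skeleton as the paper's (i.e.\ Touz\'e's) argument. After the sum--diagonal adjunction, the cup product becomes postcomposition with the split inclusion of the summand $G_1\boxtimes\twist{G_2}$ of $(G_1\otimes\twist{G_2})\circ\sumfunctor$. It is therefore injective, and it is an isomorphism in degree $k$ exactly when $\Ext{\quantumfunctor{d;2}}{k}(F_1\boxtimes\twist{F_2},\mathcal{C})=0$ for the complementary summand $\mathcal{C}$. When $\deg F_1\neq\deg G_1$ the left-hand side is zero, so cases 1 and 2 are pure vanishing statements. The announced ``key input'' on $\Ext{}{*}(\twist{I},\twist{I})$ is never used and is not needed.

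The gap is Step 2. For general $G_1,G_2$ the pieces of $\mathcal{C}$ are $(G_1\circ\sumfunctor)^{(e_1,e_2)}\otimes\twist{\bigl((G_2\circ\sumfunctor)^{(f_1,f_2)}\bigr)}$. These are not external tensor products, so Lemma~\ref{lemma : external tensor and hom} cannot be applied to them, and your two ``kinds'' of Ext-groups are never actually produced. Nothing in the proposal shows that $\mathcal{C}$ has vanishing Ext in the stated ranges.

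The missing step is the one the paper singles out as its only new ingredient. First treat an $\ell$-bounded injective $G_1=\qsymmetric{\nu}$ (dually an $\ell$-bounded projective $F_1=\qdivided{\nu}$). There $\qsymmetric{\nu}\circ\sumfunctor\simeq\bigoplus_{\nu^1+\nu^2=\nu}\qsymmetric{\nu^1}\boxtimes\qsymmetric{\nu^2}$ splits the first factor into external products. Combined with the bidegree decomposition (no Ext between functors of different bidegrees), this disposes of the complementary pieces and gives the isomorphism in all degrees when $\deg F_1\le\deg G_1$ (dually $\deg F_1\ge\deg G_1$). One then passes to general $F_1,G_1$ by dimension shifting:
\begin{itemize}
\item along the first $\injbound{G_1}$ terms of the coresolution of $G_1$, or the first $\projbound{F_1}$ terms of the resolution of $F_1$;
\item in the equal-degree case, along both in turn;
\item using naturality of the cup product and, at the boundary degree, the injectivity from Step 1 in the five lemma.
\end{itemize}
This is what replaces your unspecified spectral sequence on $\Hom{}(P_*,J^*)$.

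Your weight argument is a legitimate substitute on the projective side, but only in its two-variable form: $\Ext{\quantumfunctor{d;2}}{*}(\qdivided{\nu}\boxtimes A,W)\simeq\Ext{}{*}(A,W_\nu)$, where $W_\nu$ is the $\nu$-weight space in the first variable. This is zero for every piece of $\mathcal{C}$ with a twisted factor of positive degree in the first variable.

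It does not dualize inside the picture you chose, because there the bounded $\qsymmetric{\nu^2}$ sits in the target tensored with a twisted functor. You need one of the following:
\begin{itemize}
\item the other adjunction $\Ext{}{*}(F_1\otimes\twist{F_2},G_1\otimes\twist{G_2})\simeq\Ext{\quantumfunctor{d;2}}{*}((F_1\otimes\twist{F_2})\circ\sumfunctor,G_1\boxtimes\twist{G_2})$, where the bounded injective stands alone in the first variable;
\item a separate proof that $\Ext{}{*}(\twist{F},\qsymmetric{\mu}\otimes\twist{X})=0$ for $\ell$-bounded $\mu\neq0$, for instance from the additivity of $I_q$ as in Corollary~\ref{cor : ext additif tenseur}.
\end{itemize}
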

\begin{proof}
    This is \cite[Theorem 3.6]{touze2018connectedness} and the entire proof adapts directly to the present setting. The only results that we have not yet discussed are the notion of bi-degree, and the decomposition
    \[
        \qsymmetric{\nu} \circ \sumfunctor \simeq \bigoplus_{\nu^1 + \nu^2 = \nu} \qsymmetric{\nu^1} \boxtimes \qsymmetric{\nu^2}
    \]
    where $\nu^1 + \nu^2$ is the sum of compositions, defined termwise. This decomposition is a direct consequence of the fact that $\qsymmetric{*}$ is exponential, and of lemma \ref{lemma : tensor and sum}. The notion of bi-degree refers to the fact that a functor $F \in \quantumfunctor{d;2}$ decomposes as a direct sum
    \[
        F = \bigoplus_{d_1 + d_2 = d} F^{(d_1,d_2)} \quad \text{with } \ F^{(d_1,d_2)}(n_1,n_2) = 1_{(d_1,d_2)}(n_1,n_2) \cdot F(n_1,n_2)
    \]
    where $1_{(d_1,d_2)}(n_1,n_2) \in \Hom{\hecke{d_1} \otimes \hecke{d_2}}(\standard{n_1}{d_1} \otimes \standard{n_2}{d_2}, \standard{n_1}{d_1} \otimes \standard{n_2}{d_2})$ is the identity. If only one of the $F^{(d_1,d_2)}$ is non-zero, we say that $F$ is of bi-degree $(d_1,d_2)$. Then by the fact that the $1_{(d_1,d_2)}(n_1,n_2)$ for $d_1+d_2 =d$ are orthogonal idempotents of the algebra $\bigoplus_{d_1+d_2=d} \qSchurAlg{n_1}{d_1} \otimes \qSchurAlg{n_2}{d_2}$, we deduce that two functors of different bi-degrees have no non-trivial extensions (and $\Hom{}$) between them. 
\end{proof}

We now reformulate the definition of $\projbound{F}$ in terms of projective covers of certain simple object of $\quantumfunctor{}$.

\begin{definition}\label{def : l restricted}
    The simple objects of $\quantumfunctor{d}$ are parametrized by partitions of $d$, which are sequences $\lambda = (\lambda_1,\lambda_2,...)$ of non-negative integers whose sum equals $d$ and such that $\lambda_i \geq \lambda_{i+1}$ for $i \geq 1$ (see \cite[Proposition 6.6]{hong2017quantum}). Such a partition is called $\ell$-restricted if for all $i$, $\lambda_i - \lambda_{i+1} < \ell$. A simple object $\Simple{\lambda}$ in $\quantumfunctor{d}$ associated with an $\ell$-restricted partition $\lambda$ is called an $\ell$-restricted simple.
\end{definition}

\begin{proposition}\label{prop : bound projective cover}
    The integer $\projbound{F}$ is the supremum of all $n \geq 0$ such that $F$ admits a projective resolution $P_*$ in which the first $n$ objects $P_0,...,P_{n-1}$ are direct sums of projective covers of $\ell$-restricted simple.
\end{proposition}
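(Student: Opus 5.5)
The plan is to show that the class of direct sums of $\ell$-bounded projectives and the class of direct sums of projective covers of $\ell$-restricted simples have the same additive closure under taking direct summands. Since resolutions can be modified by splitting off contractible pieces, this means the two suprema coincide. Concretely, I would prove the following two facts.

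\begin{enumerate}
    \item Every indecomposable summand of an $\ell$-bounded projective $\qdivided{\nu}$ is the projective cover $P_\lambda$ of an $\ell$-restricted simple $\Simple{\lambda}$.
    \item Conversely, every such $P_\lambda$ is a direct summand of some $\ell$-bounded projective.
\end{enumerate}

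Granting these, the argument runs as follows. Suppose $P_*$ is a resolution whose first $n$ terms are sums of $\ell$-bounded projectives. By Krull--Schmidt in $\quantumfunctor{d}$ and fact (1), each of these terms is already a direct sum of $P_\lambda$ with $\lambda$ $\ell$-restricted.

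Conversely, suppose $P_0,\dots,P_{n-1}$ are sums of such $P_\lambda$. By fact (2), each $P_i$ has a complement $Q_i$ with $P_i\oplus Q_i$ an $\ell$-bounded projective. I then add to the resolution the contractible complexes $Q_i \xrightarrow{\mathrm{id}} Q_i$, placed in degrees $i+1$ and $i$. This changes degree $i$ to $P_i\oplus Q_i\oplus Q_{i-1}$, which is not what we want. To fix this, I would instead proceed inductively on the degree: replace $P_i$ by $P_i\oplus Q_i$, and compensate by adding $Q_i$ in degree $i+1$. Then degree $i+1$ becomes $P_{i+1}\oplus Q_i$. This term is again a sum of restricted covers, since $Q_i$ is a summand of an $\ell$-bounded projective and so by fact (1) is a sum of restricted covers. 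So the induction continues. After $n$ steps the first $n$ terms are $\ell$-bounded, while the complex remains a resolution of $F$. This gives equality of the suprema.

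For fact (1), I would compute $\Hom{}(\qdivided{\nu},\Simple{\lambda}) \simeq \Simple{\lambda}(\nu)$, the $\nu$-weight space of $\Simple{\lambda}$ under the equivalence with $\qSchurAlg{n}{d}$-modules. So $P_\lambda$ is a summand of $\qdivided{\nu}$ exactly when $\nu$ is a weight of $\Simple{\lambda}$. The key input is then the quantum Steinberg tensor product theorem (see \cite{brundan2001quantum}, Section 1.3): it writes $\Simple{\lambda} \simeq \Simple{\lambda^0}\otimes \twist{L}$ with $\lambda^0$ $\ell$-restricted and $L$ a simple strict polynomial functor. If $\lambda$ is not $\ell$-restricted, then $L$ has positive degree, and every weight of $\twist{L}$ is $\ell$ times a nonzero weight. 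The weights of $\Simple{\lambda}$ are sums of a weight of $\Simple{\lambda^0}$ and a weight of $\twist{L}$. I would need to argue that every such sum has some part $\geq \ell$, and this is the main obstacle. It is probably cleaner to instead use that $\Hom{}(\qdivided{\nu},\Simple{\lambda})\neq 0$ forces a nonzero map $\qdivided{\nu_1}\otimes\cdots\otimes\qdivided{\nu_n}\to \Simple{\lambda^0}\otimes\twist{L}$, and then compare via the cup product Theorem \ref{thm : cup product} in degree $0$.

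An alternative route for fact (1), which I would try first, is the classical argument of \cite{touze2018connectedness}. For $\nu_i<\ell$, the projection $\pi$ built from $y_\nu$ in Proposition \ref{prop : projectivity under the morita} makes $\qdivided{\nu}$ a summand of $\qtensor{d}$. It would then suffice to show that the top of $\qtensor{d}$ consists only of $\ell$-restricted simples; this holds because $\qtensor{d}$ corresponds to $\hecke{d}$ under the Schur functor, and its head consists of the simples $\Simple{\lambda}$ with $\lambda$ $\ell$-restricted (the column-regular ones, in the paper's exterior-power conventions).

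Fact (2) comes from the same source. Each restricted $P_\lambda$ is a summand of $\qtensor{d}$. Moreover, $\qtensor{d}=\qdivided{(1^d)}$ is itself $\ell$-bounded, because $1<\ell$.
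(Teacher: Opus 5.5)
Your proposal is correct and is essentially the argument the paper invokes by adapting Touzé's Proposition 4.1, using the same two ingredients. These are the quantum Clausen--James theorem, which identifies restricted covers as summands of $\qtensor{d}=\qdivided{(1^d)}$, and the Steinberg tensor product theorem; your padding of the resolution by contractible pieces then correctly gives equality of the suprema. The ``main obstacle'' you flag in the Steinberg route is not one: every weight of $\Simple{\lambda^0}\otimes\twist{L}$ has the form $\alpha+\ell\mu$, where $\alpha$ has nonnegative entries and $\mu\neq 0$, so some coordinate is already $\geq \ell$. This gives $\Hom{\quantumfunctor{}}(\qdivided{\nu},\Simple{\lambda})=0$ immediately for $\ell$-bounded $\nu$ and non-restricted $\lambda$.
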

\begin{proof}
    Again, this is \cite[Proposition 4.1]{touze2018connectedness} and the proof adapts directly to the quantum case. The only new ingredients needed are the Steinberg tensor product theorem \cite[(1.3e)]{brundan2001quantum} and the Clausen-James theorem, which in the quantum case is the statement that a partition $\lambda$ is $\ell$-restricted if and only if $\Hom{\quantumfunctor{}}(\Simple{\lambda}, \qtensor{d}) \simeq \Hom{\quantumfunctor{}}(\qtensor{d}, \Simple{\lambda})$ is non-zero. To my knowledge, this theorem does not appear in the literature, but the proof can be adapted from \cite[Theorem B.10]{touze2018connectedness}.
\end{proof}

The next result gives a characterization of $\projbound{F}$ in terms of $\Ext{}{}$-groups, allowing one to compute it without always having to construct an explicit projective resolution of $F$.

\begin{proposition}\label{prop : characterization of projective bound}
    Let $F \in \quantumfunctor{d}$. Then $\projbound{F}$ is equal to the least integer $k$ such that 
    \[
        \Ext{\quantumfunctor{}}{k}(F, \Simple{\lambda}) \neq 0 \quad \text{for a simple } \Simple{\lambda} \text{ which is not } \ell \text{-restricted}.
    \]
    It is also equal to the least integer $k$ such that 
    \[
        \Ext{\quantumfunctor{}}{k}(F, \qtensor{r_0} \otimes \twist{(I^{\otimes r_1})} \otimes \twistr{(I^{\otimes r_2})}{2} \otimes \cdots \otimes \twistr{(I^{\otimes r_t})}{t} ) \neq 0 
    \]
    for some $r_0,...,r_t$ such that $d = r_0 + \ell r_1 + \ell p r_2 + \cdots + \ell p^{t-1} r_t$ and $r_0 \neq d$. In each case, if there is no such integer, $\projbound{F} = \infty$.
\end{proposition}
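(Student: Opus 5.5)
The first characterization will follow from Proposition \ref{prop : bound projective cover} together with minimal projective resolutions. Since $\quantumfunctor{d}$ is equivalent to modules over the finite-dimensional algebra $\qSchurAlg{n}{d}$ for $n\geq d$, every $F$ has a minimal projective resolution $P_*$. For every simple $\Simple{\lambda}$, the multiplicity of the projective cover $P_\lambda$ in $P_k$ equals $\dim \Ext{\quantumfunctor{}}{k}(F,\Simple{\lambda})$ divided by $\dim \Endo{}(\Simple{\lambda})$; by minimality the differentials of $\Hom{}(P_*,\Simple{\lambda})$ vanish.

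Let $k_0$ be the least $k$ with $\Ext{}{k}(F,\Simple{\lambda})\neq0$ for some non-$\ell$-restricted $\lambda$. Then $P_0,\dots,P_{k_0-1}$ in the minimal resolution are sums of covers of $\ell$-restricted simples, so $\projbound{F}\geq k_0$ by Proposition \ref{prop : bound projective cover}.

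For the converse, I would show that any resolution $Q_*$ whose first $n$ terms are sums of covers of $\ell$-restricted simples forces $n\leq k_0$. Any projective resolution is the minimal one plus split contractible summands, with $Q_k \simeq P_k\oplus C_k\oplus C_{k-1}$. So a summand $P_\lambda$ of $P_k$ appears in $Q_k$, which gives $k\geq n$ whenever $\lambda$ is not $\ell$-restricted. Hence $k_0\geq n$, and taking the supremum gives $\projbound{F}=k_0$. This part is routine.

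For the second characterization, I would use the tensor products $T_{\underline r}=\qtensor{r_0}\otimes\twist{(I^{\otimes r_1})}\otimes\cdots\otimes\twistr{(I^{\otimes r_t})}{t}$.

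First I would show that each $T_{\underline r}$ with $r_0\neq d$ has a composition series (indeed a socle and head) involving only non-$\ell$-restricted simples. The composition factors of $\qtensor{r_0}$ are $\Simple{\mu}$ and those of $\twistr{(I^{\otimes r_j})}{j}$ are twisted simples. By the Steinberg tensor product theorem \cite[(1.3e)]{brundan2001quantum}, products of these are simples $\Simple{\mu}\otimes \twist{\Simple{\nu}}$ with $\nu\neq0$ when $r_0\neq d$, and these are not $\ell$-restricted. If only such simples appear as composition factors of $T_{\underline r}$, a long exact sequence argument gives $\Ext{}{k}(F,T_{\underline r})=0$ for $k<k_0$. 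One subtlety is that tensor products of simples need not be simple; I would handle this by filtering $\qtensor{r_0}$ and the twisted factors and applying Steinberg to each composition factor pair, using that $\Simple{\mu}\otimes\twist{L}$ is simple for $\mu$ restricted. Non-restricted $\Simple{\mu}$ can first be rewritten by Steinberg as well.

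So the least $k$ with $\Ext{}{k}(F,T_{\underline r})\neq0$ is at least $k_0$. For equality I need the reverse embedding: every non-$\ell$-restricted simple $\Simple{\lambda}$ should embed into some $T_{\underline r}$ with $r_0\neq d$. By Steinberg, $\Simple{\lambda}=\Simple{\lambda^0}\otimes\twist{\Simple{\lambda'}}$ with $\lambda'\neq0$. The Clausen--James statement quoted in the proof of Proposition \ref{prop : bound projective cover}, applied iteratively in the quantum and classical settings, embeds $\Simple{\lambda^0}$ into $\qtensor{r_0}$. Iterating Steinberg on $\lambda'$ and using exactness of the twists embeds $\twist{\Simple{\lambda'}}$ into a tensor product of twisted $I^{\otimes r_j}$.

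Then at degree $k_0$, with $\Simple{\lambda}\hookrightarrow T$ and all other composition factors of $T$ non-restricted, the long exact sequence gives $\Ext{}{k_0}(F,\Simple{\lambda})\hookrightarrow\Ext{}{k_0}(F,T)$. The map on $\Ext{}{k_0}$ is injective because $\Ext{}{k_0-1}(F,T/\Simple{\lambda})=0$, since that quotient has only non-restricted factors. So the least degree is exactly $k_0$. The infinite cases follow identically.

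The main obstacle is the embedding step: proving that tensor products of restricted simples with twisted functors only produce non-restricted composition factors, and embedding each non-restricted simple into some $T_{\underline r}$. I would follow \cite[Section 4]{touze2018connectedness} closely for this.
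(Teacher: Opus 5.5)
Your proof is correct. For the first characterization it is the expected argument: minimal resolutions combined with Proposition \ref{prop : bound projective cover}.

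For the second characterization you take a different route from the paper.

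\textbf{The paper's route.} Following Touz\'e, the paper gets the vanishing of $\Ext{\quantumfunctor{}}{k}(F,\qtensor{r_0}\otimes\twist{G_2})$ for $k<\projbound{F}$ from Theorem \ref{thm : cup product}. Here $G_2=I^{\otimes r_1}\otimes(I^{\otimes r_2})^{(1)}\otimes\cdots$ has positive degree. One applies case $\deg F_1>\deg G_1$ of that theorem with $F_1=F$, $F_2=\field$ and $G_1=\qtensor{r_0}$. The source of the cup product is zero, because $\Ext{\quantumfunctor{}}{*}(\field,\twist{G_2})=0$ for degree reasons.

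\textbf{Your route.} You prove the same vanishing by d\'evissage, after checking with the Steinberg tensor product theorem that every composition factor of such a tensor product is non-$\ell$-restricted.

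\textbf{Comparison.} Your version is more elementary and self-contained. It only uses Steinberg, exactness and monoidality of the quantum Frobenius twist, and the quantum Clausen--James theorem, all of which are already needed for Proposition \ref{prop : bound projective cover}. It avoids the quantum adaptation of Touz\'e's connectedness theorem, which the paper asserts without proof. It also gives the vanishing for every functor whose composition factors are all non-restricted. The paper's route is a one-liner once Theorem \ref{thm : cup product} is available, and it keeps the statement inside the cup-product framework used elsewhere in the paper.

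Theorem \ref{thm : cup product} gives nothing about non-vanishing in degree $\projbound{F}$, since the source of the cup product is zero here. So on either route an extra argument like yours is needed. You embed a non-restricted $\Simple{\lambda}$ into some $\qtensor{r_0}\otimes\twist{G_2}$ via Clausen--James. You then use that $\Ext{\quantumfunctor{}}{\projbound{F}}(F,-)$ is left exact on functors with only non-restricted composition factors.
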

\begin{proof}
    This is a formal consequence of theorem \ref{thm : cup product} and proposition \ref{prop : bound projective cover}, as explained in the proof of \cite[Proposition 7.1]{touze2018connectedness}.
\end{proof}

\subsection{Properties of $\projbound{F}$}\label{subsec : property bound}

In this subsection, we give some tools for computing $\projbound{F}$, and compute it for several families of quantum polynomial functors: $\qexterior{d}$, $\qdivided{d}$, $\qsymmetric{d}$, and the ribbon quantum Schur functors $\Schur{\lambda/\mu}$, which are the skew-Schur functors defined as in \cite[Definition 6.7]{hashimoto1992quantum}, but associated with ribbon skew-shapes, see definition \ref{def : ribbon}.

\begin{proposition}\label{prop : property of the bound}
    Let $F,G \in \quantumfunctor{}$. Then
    \begin{enumerate}
        \item $\projbound{F \otimes G} = \min(\projbound{F},\projbound{G})$ ;
        \item $\projbound{F \oplus G} = \min(\projbound{F},\projbound{G})$ ;
        \item\label{enum : resolution for bound} If $F_*$ is a resolution of $F$, then $\projbound{F} \geq \min(i+\projbound{F_i} : i \geq 0)$.
    \end{enumerate}
\end{proposition}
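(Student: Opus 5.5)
The plan is to derive all three properties from the homological characterization in Proposition \ref{prop : characterization of projective bound}. That result says $\projbound{F}$ is the least $k$ such that $\Ext{\quantumfunctor{}}{k}(F, T)\neq 0$ for some "non-restricted test functor" $T = \qtensor{r_0} \otimes \twist{(I^{\otimes r_1})} \otimes \cdots \otimes \twistr{(I^{\otimes r_t})}{t}$ with $r_0\neq \deg F$. Equivalently, we may use the first form, with $T$ a simple $\Simple{\lambda}$ that is not $\ell$-restricted.

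Property (2) is immediate: $\Ext{}{k}(F\oplus G,T)=\Ext{}{k}(F,T)\oplus \Ext{}{k}(G,T)$. Since the test objects run over all degrees, a degree mismatch just gives zero.

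For (3), take a resolution $\cdots \to F_1\to F_0\to F\to 0$ and set $m=\min_i(i+\projbound{F_i})$. I will show $\Ext{}{k}(F,T)=0$ for $k<m$ and every non-restricted $T$, using the hypercohomology spectral sequence
\[
E_1^{i,j}=\Ext{}{j}(F_i,T)\Rightarrow \Ext{}{i+j}(F,T).
\]
It converges because the resolution is exact and the category has finite global dimension; alternatively, one can argue by induction via the short exact sequences obtained by truncating the resolution, with iterated dimension shifting. If $i+j<m$, then $j<m-i\leq \projbound{F_i}$, so $E_1^{i,j}=0$. Hence the abutment vanishes in total degree $<m$. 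The one point to check is that the equivalent characterization allows $T$ of the same degree as $F$; since every $F_i$ has the same degree $d$, this is fine.

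For (1), I use the second characterization together with the sum-diagonal adjunction. Write $\deg F=a$ and $\deg G=b$. The test functor $T$ is a tensor product of additive functors, namely copies of $I_q$ and twisted $I$'s. Because $T$ is a tensor product of additive factors, Lemma \ref{lemma : tensor and sum} gives
\[
T\circ\sumfunctor \simeq \bigoplus T'\boxtimes T'',
\]
where the sum runs over all ways of distributing the tensor factors of $T$ into two groups. Corollary \ref{cor : adjunction sum diagonal for ext}, with $F\otimes G=(F\boxtimes G)\circ \diagonal$, together with Lemma \ref{lemma : external tensor and hom}, then gives
\[
\Ext{}{*}(F\otimes G,T)\simeq \bigoplus \Ext{}{*}(F,T')\otimes \Ext{}{*}(G,T''),
\]
summed over splittings with $\deg T'=a$ and $\deg T''=b$. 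If $T$ is non-restricted, at least one twisted factor exists, so in each splitting $T'$ or $T''$ is non-restricted. A Künneth bound then gives $\Ext{}{k}=0$ for $k<\min(\projbound{F},\projbound{G})$, which proves $\geq$.

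For $\leq$, suppose $\projbound{F}$ is the smaller of the two, realized by a nonzero $\Ext{}{k}(F,T')$ with $T'$ non-restricted. Choose $T''=\qtensor{b}$ and $T=T'\otimes\qtensor{b}$. The summand $\Ext{}{k}(F,T')\otimes\Hom{}(G,\qtensor{b})$ is then nonzero, provided $\Hom{}(G,\qtensor{b})\neq 0$. This is the main obstacle, since it fails in general: for example $G$ might be a non-restricted simple with $\projbound{G}=0$. I will handle it by cases. If $\projbound{G}=0$, the minimum is $0$ and we reduce to the symmetric argument with the roles of $F$ and $G$ swapped. If $\projbound{G}>0$, then by Proposition \ref{prop : bound projective cover} the projective cover of $G$ is a sum of covers of $\ell$-restricted simples. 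The top of $G$ then contains some restricted $\Simple{\mu}$, which embeds in $\qtensor{b}$ by the quantum Clausen–James statement, so $\Hom{}(G,\qtensor{b})\neq 0$. Taking the lowest nonvanishing degree on both sides ensures the Künneth summand in degree $k$ is indeed nonzero.
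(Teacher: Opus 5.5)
Your arguments for (2), for (3), and for $\projbound{F\otimes G}\geq\min(\projbound{F},\projbound{G})$ are correct. The reverse inequality in (1), however, has a gap in exactly the case you dispatch ``by symmetry''. Suppose $\projbound{F}\le\projbound{G}=0$. Swapping roles requires $\Hom{}(F,\qtensor{a})\neq 0$. Your only justification for such a nonvanishing is that a positive bound forces an $\ell$-restricted simple into the top. Here $\projbound{F}=0$ as well, so the swap is circular. It really fails for $F=\Simple{\lambda}$, $G=\Simple{\mu}$ with $\lambda,\mu$ both non-restricted. By the Clausen--James statement, neither admits a nonzero map to a tensor power. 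So the K\"unneth summand you single out in $\Ext{}{0}(F\otimes G,T'\otimes\qtensor{b})$ vanishes, and so does its mirror. Nothing else in your argument shows $\projbound{F\otimes G}=0$ in this case.

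The repair is to allow the auxiliary factor to be twisted. For every nonzero $G$ of degree $b$ there is a test functor $T''=\qtensor{s_0}\otimes\twist{(I^{\otimes s_1})}\otimes\cdots$ of degree $b$ with $\Hom{}(G,T'')\neq 0$. To see this, take a simple quotient $\Simple{\mu}$ of $G$ and split it by the Steinberg tensor product theorem into twists of restricted simples. Embed each of these into a (twisted) tensor power by Clausen--James, and use exactness of the twists and of $\otimes$. Then $T'\otimes T''$ is still non-restricted, since it contains the twisted factor of $T'$. Your adjunction/K\"unneth computation then exhibits the nonzero summand $\Ext{}{k}(F,T')\otimes\Hom{}(G,T'')$ in $\Ext{}{k}(F\otimes G,T'\otimes T'')$, which settles all cases at once. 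Note that $G\neq 0$ is genuinely needed: for $G=0$ the equality fails, since $\projbound{0}=\infty$.

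For comparison, the paper refers (1) and (2) to Touz\'e. It proves (3) without the Ext characterization. First it shows $\projbound{F}\geq\min(\projbound{K}+1,\projbound{E})$ for $0\to K\to E\to F\to 0$, using the mapping cone of a lift between good resolutions of $K$ and $E$. It then cuts $F_*$ into short exact sequences. Your spectral-sequence route is an equally valid alternative once the characterization is available. Its convergence is automatic for a first-quadrant double complex, so no global-dimension hypothesis is needed. Finally, the inequality $\geq$ in (1) can also be obtained elementarily by tensoring good resolutions, since $\qdivided{\nu}\otimes\qdivided{\mu}\simeq\qdivided{\nu\cdot\mu}$.
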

\begin{proof}
    Except for claim \ref{enum : resolution for bound}, the proof of proposition 7.3 of \cite{touze2018connectedness} works the same here. So we only prove points \ref{enum : resolution for bound}. Let's start with a short exact sequence
    \[
        0 \to K \to E \to F \to 0.
    \]
    Take projective resolutions $P_*$ of $K$ and $Q_*$ of $E$ such that $P_i$ and $Q_j$ are direct sums of $\ell$-bounded projectives for $i < \projbound{K}$ and $j < \projbound{E}$. Then there exist a chain map $f : P_* \to Q_*$ which extends the map $K \to E$. By \cite[1.5.2]{weibel1994introduction}, the cone of this chain map $\mathrm{cone}(f)_* = P_{*-1} \oplus Q_*$ gives a projective resolution of $F$, and $\mathrm{cone}(f)_*$ is a direct sum of $\ell$-bounded projective for $* < \min(\projbound{K} + 1, \projbound{E})$. Now, given any resolution $F_*$ of $F$, we can cut it in short exact sequences, and a simple induction based on the above result  shows that for all $j \geq 0$, $\projbound{F} \geq \min(\projbound{F_i}+i : 0 \leq i < j)$, which is equivalent to claim \ref{enum : resolution for bound}.

\end{proof}

We now write an explicit resolution of $\qexterior{d}$, which we then use to compute $\projbound{\qexterior{d}}$.

\begin{proposition}\label{prop : projective resolution of quantum exterior}
    For any $d \geq 0$, $\qexterior{d}$ admits a projective resolution $P_*$ with 
    \[
        P_i = \bigoplus_{\nu \in \compositionsanszero{d-i}{d}} \qdivided{\nu}
    \]
    where $\compositionsanszero{n}{d}$ is the set of compositions of $d$ into $n$ \textbf{non-zero} parts.
\end{proposition}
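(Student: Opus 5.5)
The plan is to build the resolution as a quantum analogue of the bar/Koszul-type resolution of $\exterior{d}$ by divided powers. I will first construct it at the level of Hecke algebra modules and then transport it through $\standard{n}{d} \otimes_{\hecke{d}} -$, or rather through $\Hom{\hecke{d}}(-,\standard{n}{d})$.

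Recall the two descriptions already available:
\[
\qdivided{\nu}(n) = \Hom{\hecke{d}}(\weightmod{\nu},\standard{n}{d}), \qquad \weightmod{\nu} \simeq \Induction{\hecke{\nu}}{\hecke{d}}(\sign{\nu}).
\]
Dually, $\qexterior{d}(n) = \standard{n}{d} \otimes_{\hecke{d}} \triv{}$. I will apply the involution $-^{\#}$ to exchange $\triv{}$ and $\sign{}$, so that it suffices to produce an exact complex of $\hecke{d}$-modules of the form
\[
\cdots \to \bigoplus_{\nu \in \compositionsanszero{d-1}{d}} \Induction{\hecke{\nu}}{\hecke{d}}(\triv{\nu}) \to \Induction{\hecke{(1^d)}}{\hecke{d}}(\triv{}) \to \cdots
\]
This is the parabolic (Solomon / Deligne–Lusztig style) complex attached to the Coxeter system of $\symgroup{d}$. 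Compositions of $d$ with nonzero parts correspond to subsets $J$ of the simple reflections $\{s_1,\dots,s_{d-1}\}$, namely the complement of the set of partial sums. A composition with $d-i$ parts corresponds to $|J| = i$. The differential is the alternating sum of the natural inclusions $\Induction{\hecke{J}}{\hecke{d}}\triv{} \to \Induction{\hecke{J'}}{\hecke{d}}\triv{}$ for $J' = J \setminus \{s\}$; with the sign character these become the maps $\phi^{\sigma}$ of Lemma~\ref{lemma : hom between weight module} with $\sigma=\mathrm{id}$.

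Concretely, I will define maps $\qdivided{\nu} \to \qdivided{\mu}$ whenever $\mu$ is obtained from $\nu$ by splitting one part into two. These are the "comultiplication" components $\qdivided{a+b} \to \qdivided{a}\otimes\qdivided{b}$, tensored with identities. They are given by precomposition with $\phi^{\mathrm{id}}_{\mu,\nu}: \weightmod{\mu} \to \weightmod{\nu}$, that is, the natural inclusion of $\hecke{\nu}$-sign-invariants into $\hecke{\mu}$-sign-invariants. With the usual signs $(-1)^{\text{position}}$, the complex is
\[
\cdots \to \bigoplus_{\compositionsanszero{d-1}{d}} \qdivided{\nu} \to \qdivided{d} \to \qdivided{(1^d)} \to \cdots
\]
This needs reversing to get a resolution. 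The right orientation is $P_i \to P_{i-1}$, with $P_i$ indexed by $d-i$ parts: $P_0 = \qdivided{(1^d)} = \qtensor{d}$ and $P_{d-1} = \qdivided{d}$. The maps therefore go from finer to coarser compositions, i.e. they are multiplications $\qdivided{a}\otimes\qdivided{b} \to \qdivided{a+b}$ given by precomposition with $\phi^{\sigma}_{\mu,\nu}$ summed over the shuffle cosets. The augmentation $\qtensor{d} \to \qexterior{d}$ is the quotient $\standard{n}{d} \to \standard{n}{d}\otimes_{\hecke{d}}\triv{}$. That $d^2=0$ follows from associativity of the multiplication on $\qdivided{*}$ together with the signs.

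Exactness is the main obstacle. I will check it after evaluation at $n \geq d$ and restrict to each weight space $\lambda \in \composition{n}{d}$. Evaluated on weight $\lambda$, the complex becomes a sum over set-compositions that is combinatorially a quantum analogue of the Koszul complex between the divided power algebra and the exterior algebra. Even simpler is to use the sum-diagonal/exponential structure: since every term and $\qexterior{*}$ are exponential (Lemma~\ref{lemma : tensor and sum}), exactness can be checked weight by weight, and on a weight with support in $e_1,\dots,e_k$ the complex decomposes as a tensor product over the variables. This reduces exactness to the single-variable case $n=1$, where $\qdivided{\nu}(1)$ is one-dimensional. There the complex is the simplicial cochain complex of the $(d-2)$-simplex of subsets of $\{1,\dots,d-1\}$ (augmented), which is exact except at the end, where it yields $\qexterior{d}(1)$. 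This is zero for $d\ge2$ and consistent with the vanishing. If the tensor-product decomposition over variables fails to be compatible with the differential, I will instead invoke the Solomon/Kilmoyer exactness of the parabolic complex $\bigoplus_{|J|=i}\Induction{\hecke{J}}{\hecke{d}}(\sign{J})$ (see \cite{geck2000characters}), which is exact with homology the sign or trivial character in the extreme degree. I will then apply the exact functor $\Hom{\hecke{d}}(-,\standard{n}{d})$ restricted to the relevant summands; exactness of this functor follows from projectivity arguments as in the proof of Lemma~\ref{lemma : morita tensor power}. Finally, projectivity of each $P_i$ is already known, since the $\qdivided{\nu}$ are projective.
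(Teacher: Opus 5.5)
Your first description of the differential is correct, and it is the complex the paper uses. Its components $\qdivided{\nu}\to\qdivided{\mu}$, for $\mu$ obtained from $\nu$ by splitting a part, are comultiplications, given by precomposition with the natural surjections $\weightmod{\mu}\twoheadrightarrow\weightmod{\nu}$. Since $P_i$ is indexed by compositions with $d-i$ parts, these maps already go $P_i\to P_{i-1}$. For example, $P_1\to P_0$ is the sum of the inclusions $I_q^{\otimes i-1}\otimes\qdivided{2}\otimes I_q^{\otimes d-i-1}\hookrightarrow\qtensor{d}$, whose cokernel is $\qexterior{d}$. This is the degree-$d$ strand of the reduced cobar complex of $\qdivided{*}$, and nothing needs reversing.

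Your sentence ``the maps go from finer to coarser'' contradicts $P_i\to P_{i-1}$, since $P_i$ is the coarser side. The multiplications $\qdivided{a}\otimes\qdivided{b}\to\qdivided{a+b}$ you then adopt give the bar complex of $\qdivided{*}$, which is not exact. At $n=1$ each multiplication is the scalar $\qbinom{a+b}{a}$, so for $d=\ell$ nothing hits $\qdivided{\ell}(1)=\field$, although $\qexterior{\ell}(1)=0$. At $n=1$ this is the complex $\ribext{\bullet}{1^d}$, whose nonzero homology Theorem~\ref{theorem : ext between symmetric and exterior powers} computes. Your one-variable simplex check, with coefficients $\pm1$, checks the comultiplication complex, not the one you finally define.

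The more serious gap is that exactness, which is the whole content of the proposition, is not established by either of your routes.

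Your reduction to one variable is false at chain level. For $d=n=2$ and weight $(1,1)$ the strand is $\field\to\field^2$, while the tensor product of the one-variable complexes is a single $\field$. At best there is an Eilenberg--Zilber-type quasi-isomorphism between the cobar construction of $\qdivided{*}(V\oplus W)$ and the tensor product of the cobar constructions. Since $\qdivided{*}(V\oplus W)$ is only a twisted tensor product of coalgebras, proving this is exactly where the work lies.

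Your fallback rests on a false claim: $\Hom{\hecke{d}}(-,\standard{n}{d})$ is not exact. The module $\standard{n}{d}$ has the summand $\weightmod{(d,0,\ldots,0)}\simeq\sign{}$, which is not injective over $\hecke{d}$ once $d\geq\ell$. Concretely, for $d=\ell$ the monomorphism $\sign{}\hookrightarrow\weightmod{(a,\ell-a)}$ in your parabolic complex induces the zero map $\Hom{\hecke{\ell}}(\weightmod{(a,\ell-a)},\sign{})\to\Hom{\hecke{\ell}}(\sign{},\sign{})$, namely multiplication by $\qbinom{\ell}{a}=0$. The projectivity used in Lemma~\ref{lemma : morita tensor power} is that of $\standard{n}{d}$ over $\qSchurAlg{n}{d}$, which is irrelevant here. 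If you use the surjections instead, applying $\Hom{\hecke{d}}(-,\standard{n}{d})$ to the parabolic complex just returns the cobar strand, so nothing is gained.

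The paper supplies the missing input by citing Koszulity of $\qexterior{*}$ with Koszul dual $\qdivided{*}$ (Proposition~5.4 of the cited work, after applying $\qdual{-}$). To make your route self-contained, keep the comultiplication maps. Then note that the strand evaluated at $n$ is the linear dual of the bar complex of the graded dual algebra of the coalgebra $\qdivided{*}(n)$. That algebra is a quantum affine space, a PBW quadratic algebra, hence Koszul by Priddy's criterion.
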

\begin{proof}
    This projective resolution is the (homogeneous strand of degree $d$ of) the reduced cobar complex of quantum divided power. The fact that this gives a projective resolution of quantum exterior power is due to the fact that $\qexterior{*}$ is a Koszul algebra with Koszul dual $\qdivided{*}$, which is a consequence of \cite[Proposition 5.4]{théo2025extgroupcategoryquantumpolynomial} by applying $\qdual{-}$. 
\end{proof}

As a consequence, $\projbound{\qexterior{d}} \geq \ell - 1$ if $d \geq \ell$. To see that this is an equality, we will use the following proposition.

\begin{proposition}\label{prop : ext exterior twisted}
    Let $G \in \polyfunctor{r}$ be a strict polynomial functor of degree $r$ and $F \in \quantumfunctor{s}$. Set $d=r\ell + s$. Then
    \[
        \Ext{\quantumfunctor{}}{*}(\qexterior{d}, F \otimes \twist{G}) \simeq \Ext{\quantumfunctor{}}{*}(\qexterior{s},F) \otimes \Ext{\polyfunctor{}}{*-r(\ell-1)}(\exterior{r},G) \ .
    \]
\end{proposition}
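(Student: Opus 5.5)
Since $\qexterior{*}$ is exponential, I will apply Corollary \ref{cor : ext exponential tenseur} with $G_1 = F \in \quantumfunctor{s}$ and $G_2 = \twist{G} \in \quantumfunctor{r\ell}$. This gives
\[
    \Ext{\quantumfunctor{}}{*}(\qexterior{d}, F \otimes \twist{G}) \simeq \Ext{\quantumfunctor{}}{*}(\qexterior{s},F) \otimes \Ext{\quantumfunctor{}}{*}(\qexterior{r\ell},\twist{G}) .
\]
The statement therefore reduces to the one-factor computation
\[
    \Ext{\quantumfunctor{}}{*}(\qexterior{r\ell},\twist{G}) \simeq \Ext{\polyfunctor{}}{*-r(\ell-1)}(\exterior{r},G),
\]
which I treat as the core of the proof. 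The degenerate cases $r=0$ or $s=0$ are immediate, because $\qexterior{0} = \field$.

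For the core computation I will use the projective resolution $P_*$ of Proposition \ref{prop : projective resolution of quantum exterior}, with $P_i = \bigoplus_{\nu} \qdivided{\nu}$ over compositions of $r\ell$ into $r\ell - i$ nonzero parts. Applying $\Hom{}(-,\twist{G})$ gives a cochain complex whose terms are $\bigoplus_\nu \twist{G}(\text{weight }\nu)$, since $\Hom{}(\qdivided{\nu}, H)$ is the $\nu$-weight space of $H$. The $\nu$-weight space of a Frobenius-twisted functor vanishes unless every part of $\nu$ is divisible by $\ell$. The surviving compositions are therefore $\nu = \ell\mu$ with $\mu$ a composition of $r$ into nonzero parts, and these sit in cohomological degree $i = r\ell - \#\mu$. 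If $\#\mu = r - j$, then $i = r(\ell-1) + j$. The surviving terms are thus exactly the terms of the complex $\Hom{}(P'_*,G)$, where $P'_*$ is the analogous classical resolution of $\exterior{r}$ by divided powers $\divided{\mu}$ (the $q=1$ version of the same Koszul resolution), shifted up by $r(\ell-1)$. I will use that $\twist{G}(\ell\mu)$ is identified with $G(\mu)$.

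The main obstacle is to check that the differentials agree. The differentials of the cobar complex are built from the comultiplications $\qdivided{a+b} \to \qdivided{a}\otimes\qdivided{b}$. On twisted weight spaces, such a component is nonzero only when $\ell \mid a$ and $\ell \mid b$, or else it lands between weight spaces that are both zero. Every differential out of a surviving term lands in a term indexed by $\nu'$ that refines $\ell\mu$. If $\nu'$ has some part not divisible by $\ell$, that term is zero, so the induced map on surviving terms is the composite given by restricting to refinements of the form $\ell\mu'$. I then need that comultiplication $\qdivided{\ell a+\ell b} \to \qdivided{\ell a}\otimes\qdivided{\ell b}$ acts on twisted weight spaces as the classical comultiplication $\divided{a+b}\to\divided{a}\otimes\divided{b}$, up to nonzero scalars and signs that can be absorbed. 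This should follow from the quantum binomial $\binom{\ell(a+b)}{\ell a}_q = \binom{a+b}{a}$ at a primitive $\ell$-th root (q-Lucas), together with the compatibility of the quantum Frobenius twist with divided powers (its defining property in \cite{brundan2001quantum,théo2025extgroupcategoryquantumpolynomial}). I would try first to verify this on generators $\qdivided{\ell}$ and extend multiplicatively.

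There is a further subtlety: the identification must be natural in $G$. If a direct differential comparison is too messy, an alternative is a dimension-shift/uniqueness argument. Both sides are $\delta$-functors in $G$; the left is effaceable since twisted injectives $\twist{(S^\mu)}$ satisfy the formula directly by the weight computation above. The two sides then agree as long as they match on injectives $S^\mu$ and in the bottom degree $r(\ell-1)$. The cohomology of the surviving complex is then $\Ext{\polyfunctor{}}{*-r(\ell-1)}(\exterior{r},G)$, and combining this with the exponential splitting gives the proposition.
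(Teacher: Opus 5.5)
Your proposal is correct and is essentially the paper's proof. Both use the exponential splitting, the cobar resolution of $\qexterior{r\ell}$ by the $\qdivided{\nu}$, the vanishing of $\Hom{\quantumfunctor{}}(\qdivided{\nu},\twist{G})$ unless $\ell$ divides $\nu$ together with $\Hom{\quantumfunctor{}}(\qdivided{\ell\mu},\twist{G}) \simeq \Hom{\polyfunctor{}}(\divided{\mu},G)$, and the degree shift $r\ell-(j+r(\ell-1)) = r-j$.

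The paper treats the differentials just as briefly: it notes that the quantum Frobenius twist comes from a bialgebra morphism, so the components correspond directly, with identical signs $(-1)^{t-1}$ since $\ell\mu$ and $\mu$ have the same number of parts. That makes your $\delta$-functor fallback unnecessary. It would not bypass the comparison anyway, because computing $\Ext{\quantumfunctor{}}{*}(\qexterior{r\ell},\twist{(\symmetric{\mu})})$ from weight spaces alone still requires knowing the differentials.
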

\begin{proof}
    Since $\qexterior{*}$ is exponential,
    \[
        \Ext{\quantumfunctor{}}{*}(\qexterior{d}, F \otimes \twist{G}) \simeq \Ext{\quantumfunctor{}}{*}(\qexterior{s},F) \otimes \Ext{\quantumfunctor{}}{*}(\qexterior{r\ell}, \twist{G}) \ .
    \]
    Hence we need to show 
    \[
        \Ext{\quantumfunctor{}}{*}(\qexterior{r\ell}, \twist{G}) \simeq   \Ext{\polyfunctor{}}{*-r(\ell-1)}(\exterior{r},G) \ .
    \]
    We use the projective resolution of proposition \ref{prop : projective resolution of quantum exterior} and the projective resolution of $\exterior{r}$ given in the same way by the reduced cobar complex of $\divided{*}$, the classical divided power functor. Thus $\Ext{\polyfunctor{}}{*}(\exterior{r},G)$ is the homology of 
    \[
        \bigoplus_{\nu \in \compositionsanszero{r-*}{r}} \Hom{\polyfunctor{}}(\divided{\nu}, G)
    \]
    and $\Ext{\quantumfunctor{}}{*}(\qexterior{r\ell}, \twist{G})$ of
    \[
        \bigoplus_{\mu \in \compositionsanszero{r\ell-*}{r\ell}} \Hom{\quantumfunctor{}}(\qdivided{\mu}, \twist{G}) \ .
    \]
    By applying the duality $\qdual{-}$ to \cite[Lemma 2.10]{deturck2026quantum}, we have an isomorphism
    \[
        \Hom{\quantumfunctor{}}(\qdivided{\nu},\twist{G}) \simeq \left \{ \begin{array}{cl}
            \Hom{\polyfunctor{}}(\divided{\nu/\ell}, G) & \text{if } \ell \text{ divides } \nu, \\
           0  & \text{otherwise.}
        \end{array} \right .
    \]
    Hence
    \[
        \bigoplus_{\nu \in \compositionsanszero{r-*}{r}} \Hom{\polyfunctor{}}(\divided{\nu}, G) \simeq \bigoplus_{\nu \in \compositionsanszero{r-*}{r}} \Hom{\quantumfunctor{}}(\divided{\ell \nu}, \twist{G}) \simeq \bigoplus_{\mu \in \compositionsanszero{r-*}{r\ell}} \Hom{\quantumfunctor{}}(\divided{\mu}, \twist{G}) \ .
    \]
    The compatibility with the differentials is a consequence of the fact that the quantum Frobenius twist comes from a bialgebra morphism, see \cite[Section 7.2]{parshall1991quantum}. The result follows since $r-* = r\ell - (*+ r(\ell-1))$.
\end{proof}

\begin{proposition}\label{prop : value of bound}
    Let $d \geq 0$. If $d < \ell$, $\projbound{F} = \infty$ for all $F \in \quantumfunctor{d}$. If $d \geq \ell$,
    \begin{enumerate}
        \item $\projbound{\qdivided{d}} = 0$;
        \item $\projbound{\qexterior{d}} = \ell-1$;
        \item $\projbound{\qsymmetric{d}} = 2(\ell-1)$.
        \item Let $\alpha \in \compositionsanszero{n}{d}$. Consider the ribbon $\lambda/\mu$ associated to $\alpha$ (see definition \ref{def : ribbon}), and $\Schur{\lambda/\mu}$ the Schur functor associated to $\lambda/\mu$ (see \cite[Definition 6.7]{hashimoto1992quantum}, where $\Schur{\lambda/\mu}$ is denoted $L_{\tilde{\lambda}/\tilde{\mu}}$). Then 
    \[
        \projbound{\Schur{\lambda/\mu}} \geq \ell-2+k \ ,
    \]
    where $k$ is the minimal number of consecutive parts $\alpha_i,\alpha_{i+1},...,\alpha_{i+k-1}$ of $\alpha$ one needs to add to obtain an integer $\geq \ell$.
    \end{enumerate}
    
\end{proposition}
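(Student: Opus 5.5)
The plan is to treat the four items in turn, using the characterizations of Proposition \ref{prop : bound projective cover} and Proposition \ref{prop : characterization of projective bound}, together with the resolution criterion of Proposition \ref{prop : property of the bound}.

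\textbf{The case $d<\ell$.} Every partition of $d<\ell$ is $\ell$-restricted, since each difference $\lambda_i-\lambda_{i+1}\le d<\ell$. Hence every projective cover of a simple is the cover of an $\ell$-restricted simple. By Proposition \ref{prop : bound projective cover}, any projective resolution is admissible, so $\projbound{F}=\infty$.

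\textbf{The divided power $\qdivided{d}$.} Now assume $d\geq\ell$. The functor $\qdivided{d}$ is the projective cover of $\Simple{(d)}$, and $\Simple{(d)}$ is not $\ell$-restricted. Therefore $\Hom{}(\qdivided{d},\Simple{(d)})\neq0$, and Proposition \ref{prop : characterization of projective bound} gives $\projbound{\qdivided{d}}=0$.

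\textbf{The exterior power $\qexterior{d}$.} For the lower bound, use the resolution of Proposition \ref{prop : projective resolution of quantum exterior}. Its term $P_i$ involves compositions of $d$ into $d-i$ nonzero parts, whose largest part is at most $i+1$. So $P_i$ is a sum of $\ell$-bounded projectives for $i\le\ell-2$, which gives $\projbound{\qexterior{d}}\geq\ell-1$.

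For the upper bound, write $d=\ell+s$ and apply Proposition \ref{prop : ext exterior twisted} with $r=1$, $G=I$ and $F=\qtensor{s}$. This gives
\[
\Ext{\quantumfunctor{}}{\ell-1}(\qexterior{d},\qtensor{s}\otimes\twist{I})\simeq \Hom{}(\qexterior{s},\qtensor{s})\otimes\Hom{}(I,I)\neq0 .
\]
The Hom is nonzero because $\qexterior{s}$ embeds in $\qtensor{s}$; for instance, this follows from the induced-module description together with the nonvanishing of $\Hom{}$ between weight modules. By the second characterization in Proposition \ref{prop : characterization of projective bound}, with $r_0=s\ne d$ and $r_1=1$, we get $\projbound{\qexterior{d}}\le\ell-1$.

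\textbf{The symmetric power $\qsymmetric{d}$.} For the lower bound, use the Koszul resolution of $\qsymmetric{d}$ by terms $\qexterior{\nu}$: this is the dual of the Koszul complex relating $\qsymmetric{*}$ and $\qexterior{*}$. The $i$-th term is a sum of $\qexterior{\nu}$ with $\nu$ a composition into $i+1$ nonzero parts when $i$ is small. If I recall the Koszul complex correctly, its $i$-th term is $\bigoplus\qexterior{\nu}$ where $\nu$ runs over compositions of $d$ with parts summing appropriately.

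A cleaner route is to resolve $\qsymmetric{d}$ by $\qexterior{\nu}\otimes$-terms and apply Proposition \ref{prop : property of the bound}(1),(3). Each $\qexterior{\nu}$ with $\nu_j<\ell$ for all $j$ has bound $\infty$, and a part $\nu_j\ge\ell$ contributes bound $\ell-1$. In the Koszul complex $\qexterior{i}\otimes\qsymmetric{d-i}$-type terms appear, so an induction on $d$ gives $\min_i(i+\projbound{\cdot})\ge 2(\ell-1)$: a factor $\qexterior{j}$ with $j\ge\ell$ sits in homological degree at least $\ell-1$ and has bound $\ell-1$.

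For the upper bound, compute $\Ext{}{}(\qsymmetric{d},\qtensor{s}\otimes\twist{I})$. Use exponentiality (Corollary \ref{cor : ext exponential tenseur}) to reduce to $\Ext{}{*}(\qsymmetric{\ell},\twist{I})$. Then show this group is nonzero in degree $2(\ell-1)$ via the same Frobenius-twist comparison as in Proposition \ref{prop : ext exterior twisted}, now using the bar/cobar complex of $\qdivided{*}$ resolving $\qsymmetric{*}$. Through the Koszul double-complex, the twist contributes a shift of $2(\ell-1)$ (one shift of $\ell-1$ for each of the two Koszul layers), and the classical group $\Hom{}(S^1,I)\neq0$ survives. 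This shift computation is the main obstacle; I would verify it first for $d=\ell$ by an explicit truncated resolution.

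\textbf{Ribbon Schur functors.} Use the known resolution of ribbon Schur functors by tensor products $\qexterior{\beta}$, where $\beta$ runs over coarsenings of $\alpha$ obtained by merging consecutive parts. This resolution is the Jacobi–Trudi/Koszul-type complex, and merging $j$ times places the term in degree $j$. A term that has a part $\ge\ell$ needs at least $k-1$ merges, and by item 2 and Proposition \ref{prop : property of the bound}(1) it has bound $\ell-1$. All other terms have bound $\infty$. Proposition \ref{prop : property of the bound}(3) then yields $\projbound{\Schur{\lambda/\mu}}\ge (k-1)+(\ell-1)=\ell-2+k$.
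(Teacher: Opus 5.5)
\textbf{The gap: the upper bound for $\qsymmetric{d}$.} Your arguments for $d<\ell$, for $\qexterior{d}$ and for ribbons are fine; the last two match the paper. For $\qdivided{d}$, using $\Hom{\quantumfunctor{}}(\qdivided{d},\Simple{(d)})\neq 0$ with the first characterization in Proposition \ref{prop : characterization of projective bound} is a valid and shorter alternative to the paper's weight computation against $\qtensor{d-\ell}\otimes\twist{I}$.

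The genuine gap is $\projbound{\qsymmetric{d}}\le 2(\ell-1)$. Exponentiality correctly reduces this to $\Ext{\quantumfunctor{}}{2(\ell-1)}(\qsymmetric{\ell},\twist{I})\neq 0$, together with $\Ext{\quantumfunctor{}}{*}(\qsymmetric{d-\ell},\qtensor{d-\ell})\simeq\field$ in degree $0$ (injectivity of $\qtensor{d-\ell}$ plus weights). At that point you only offer the heuristic ``one shift of $\ell-1$ per Koszul layer'' and leave it unverified. The tool you invoke is also misidentified: the cobar complex of $\qdivided{*}$ resolves $\qexterior{*}$, not $\qsymmetric{*}$.

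The missing idea is additivity. Since $\twist{I}$ is additive, Corollary \ref{cor : ext additif tenseur} gives $\Ext{\quantumfunctor{}}{*}(\qexterior{\gamma},\twist{I})=0$ whenever $\gamma$ has at least two parts. Take the resolution $\extreso{\ell-*}{1^\ell}$ of $\qsymmetric{\ell}$, or equally your Koszul resolution with terms $\qexterior{i+1}\otimes\qsymmetric{\ell-1-i}$. Every term is such a tensor product except $\qexterior{\ell}$, which sits in homological degree $\ell-1$. Dimension shifting along the resolution then gives $\Ext{\quantumfunctor{}}{*}(\qsymmetric{\ell},\twist{I})\simeq\Ext{\quantumfunctor{}}{*-(\ell-1)}(\qexterior{\ell},\twist{I})$. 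Proposition \ref{prop : ext exterior twisted} with $r=1$ supplies the second shift, so $\Ext{\quantumfunctor{}}{2(\ell-1)}(\qsymmetric{\ell},\twist{I})\simeq\Hom{\polyfunctor{}}(I,I)=\field$.

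Your double-complex picture can also be made rigorous. Splice the cobar resolutions of the $\qexterior{\gamma}$ into a projective resolution of $\qsymmetric{\ell}$ whose terms are $\qdivided{\mu}$. Only $\mu=(\ell)$ has $\Hom{\quantumfunctor{}}(\qdivided{\mu},\twist{I})\neq 0$, and it occurs exactly once, in total degree $2(\ell-1)$. This still has to be argued, not asserted.

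\textbf{The lower bound for $\qsymmetric{d}$.} Your first description of the resolution is wrong. In $\extreso{d-*}{1^d}$ the $i$-th term is indexed by compositions of $d$ into $d-i$ nonzero parts, not $i+1$. With $i+1$ parts the degree-$0$ term would be $\qexterior{d}$, and you would only get $\ell-1$.

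Your Koszul induction does yield $2(\ell-1)$, but it imports exactness of the quantum Koszul complex, which the paper never establishes. You do not need it. The ribbon of $\alpha=(1^d)$ is the one-row shape $(d)$ and $\Schur{(d)}=\qsymmetric{d}$. Your own ribbon argument with $k=\ell$ therefore gives $\projbound{\qsymmetric{d}}\ge 2(\ell-1)$ directly, which is exactly the paper's argument.
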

\begin{proof}
    Any $F \in \quantumfunctor{d}$ admits a projective resolution by direct sums of quantum divided power functors $\qdivided{\nu}$ with $\nu$ compositions of $d$. When $d < \ell$, all these functors are $\ell$-bounded projectives. Now suppose $d \geq \ell$. In the proof, we will use the notion of weight of a quantum polynomial functor. Interested readers may consult \cite[Section 2.3]{théo2025extgroupcategoryquantumpolynomial} and/or \cite[Section 4]{hong2017quantum} for details.
    \begin{enumerate}
        \item We have
        \[
            \Ext{\quantumfunctor{}}{0}(\qdivided{d}, \qtensor{d-\ell} \otimes \twist{(\tensor{1})}) = \Hom{\quantumfunctor{}}(\qdivided{d}, \qtensor{d-\ell} \otimes \twist{(\tensor{1})}) \simeq \field
        \]
        by a simple argument involving weights (see \cite[Proposition 2.11, Proposition 4.3]{théo2025extgroupcategoryquantumpolynomial} and \cite[Corollary 4.10]{hong2017quantum}). Hence $\projbound{\qdivided{d}} \leq 0$ by proposition \ref{prop : characterization of projective bound}, and hence we have the equality. 
        \item By proposition \ref{prop : projective resolution of quantum exterior}, we have $\projbound{\qexterior{d}} \geq \ell-1$. Moreover,
        \[
            \Ext{\quantumfunctor{}}{\ell-1}(\qexterior{d}, \qtensor{d-\ell} \otimes \twist{(\tensor{1})}) \simeq \Hom{\quantumfunctor{}}(\qexterior{d-\ell}, \qtensor{d-\ell}) \otimes \Hom{\polyfunctor{}}(\exterior{1}, \tensor{1}) \simeq \field
        \]
        by proposition \ref{prop : ext exterior twisted} and by \cite[Proposition 3.4 and Proposition 3.5]{théo2025extgroupcategoryquantumpolynomial}. Hence $\projbound{\qexterior{d}} \leq \ell-1$, establishing the equality.
        \item We use resolutions by quantum exterior powers which will be introduced in the next section (see definition \ref{def : resolution of ribbon by exterior} and proposition \ref{prop : resolution of ribbon by exterior}). The complex $\extreso{d-*}{(1^d)}$ is a resolution of $\qsymmetric{d}$. By proposition \ref{prop : property of the bound} and the computation of $\projbound{\qexterior{d}}$, we have
        \[
            \projbound{\extreso{d-*}{(1^d)}} = \left \{ \begin{array}{cl}
                \infty & \text{if } * < \ell - 1 \text{ or } * \geq d,\\
                \ell - 1 & \text{otherwise}.
            \end{array} \right .
        \]
        Hence, $\projbound{\qsymmetric{d}} \geq 2(\ell - 1)$. Moreover, since $\qsymmetric{*}$ is an exponential functor,
        \[
            \Ext{\quantumfunctor{}}{*}(\qsymmetric{d}, \qtensor{d-\ell} \otimes \twist{(\tensor{1})}) \simeq \Ext{\quantumfunctor{}}{*}(\qsymmetric{d-\ell}, \qtensor{d-\ell}) \otimes \Ext{\quantumfunctor{}}{*}(\qsymmetric{\ell}, \twist{(\tensor{1})}) \ .
        \]
        Using the projectivity of $\qtensor{d-\ell}$ and a weight argument as before, we can prove that $\Ext{\quantumfunctor{}}{*}(\qsymmetric{d-\ell}, \qtensor{d-\ell})$ concentrated in degree $0$ where it is of dimension $1$. To compute $\Ext{\quantumfunctor{}}{*}(\qsymmetric{\ell}, \twist{(\tensor{1})})$, we can use the resolution $\extreso{d-*}{(1^d)}$. Using the additivity of $\twist{(\tensor{1})}$ and corollary \ref{cor : ext additif tenseur}, we can prove
        \[
            \Ext{\quantumfunctor{}}{2(\ell-1)}(\qsymmetric{\ell}, \twist{(\tensor{1})}) \simeq \Ext{\quantumfunctor{}}{\ell - 1}(\qexterior{\ell}, \twist{(\tensor{1})}) \simeq \Ext{\polyfunctor{}}{0}(\exterior{1}, \tensor{1}) \simeq \field \ .
        \]
        Hence 
        \[
            \Ext{\quantumfunctor{}}{2(\ell-1)}(\qsymmetric{d}, \qtensor{d-\ell} \otimes \twist{(\tensor{1})}) \simeq \field
        \]
        showing the other inequality.
    \end{enumerate}
    For ribbon Schur functors, using the notations of the proposition, we can also use the resolution $\extreso{d-*}{\alpha}$ to show the inequality $\projbound{\Schur{\lambda/\mu}} \geq \ell - 2 + k$.
\end{proof}

\section{An example: cohomology of divided flag modules}\label{sec : ext symetric and exterior}

In this section, we study some quotients of the (reduced) cobar complex of quantum exterior power $\qexterior{*}$. We note that they provide resolutions of some skew-Schur functors : the ribbon Schur functors. We then use it to compute some Ext-groups, and in particular, the Ext-groups $\Ext{\quantumfunctor{}}{*}(\qsymmetric{d},\qexterior{d})$. These $\Ext{}{}$-groups are isomorphic, via the duality $\qdual{-}$ of quantum polynomial functors, with the Ext-groups $\Ext{\quantumfunctor{}}{*}(\qexterior{d},\qdivided{d})$. This closely follows Sections $5$ and $6$ of \cite{raicu2023stable}. We then state the consequences for the cohomology $H^*(\GL{d},\dividedmod{d})$ of divided flag modules.

By definition, the (reduced) cobar complex of the quantum exterior power $\qexterior{*}$ can be decomposed as a direct sum of homogeneous strands, which are of the form
\[
    \extreso{*}{1^d} = \bigoplus_{\gamma \in \compositionsanszero{*}{d}} \qexterior{\gamma}
\]
modulo some change of indices. The differentials $\partial : \extreso{k}{1^d} \to \extreso{k+1}{1^d}$ can be defined as follows. Let $\alpha \in \compositionsanszero{k}{d}$ and $\beta \in \compositionsanszero{k+1}{d}$. We define the parts $\partial_{\alpha,\beta}$ of the differentials $\partial$ going from $\qexterior{\alpha}$ to $\qexterior{\beta}$ as follows.
\begin{itemize}
    \item If there exist $1 \leq t \leq k$ such that $\alpha = (\beta_1,\beta_2,...,\beta_{t-1},\beta_t + \beta_{t+1}, \beta_{t+2},...,\beta_{k+1})$, then this $t$ is unique, and we let
    \[
        \partial_{\alpha,\beta} = (-1)^{t-1} ( \Id^{\otimes t-1} \otimes \Coproduct{\beta_t,\beta_{t+1}} \otimes \Id^{\otimes k-t}) \ ,
    \]
    where $\Coproduct{\beta_t,\beta_{t+1}} : \qexterior{\beta_t+\beta_{t+1}} \to \qexterior{\beta_t} \otimes \qexterior{\beta_{t+1}}$ denote the coproduct of the quantum exterior algebra, see \cite[Proposition 3.3]{théo2025extgroupcategoryquantumpolynomial}.
    \item Otherwise, we let $\partial_{\alpha,\beta} = 0$.
\end{itemize}
Extending each $\partial_{\alpha,\beta}$ by zero, we then have $\partial = \sum_{\alpha,\beta} \partial_{\alpha,\beta}$ where the sum runs over all $\alpha \in \compositionsanszero{k}{d}$ and $\beta \in \compositionsanszero{k+1}{d}$. To define the quotients of $\extreso{\bullet}{1^d}$, we introduce the following notions on compositions.

\begin{definition}\label{def : refinement}
    Let $\alpha \in \compositionsanszero{n}{d}$. A refinement of $\alpha$ is a composition $\gamma$ obtained from $\alpha$ by adding some consecutive parts. We set $\refinement{k}{\alpha}$ the set of refinements of $\alpha$ of length $k$.
\end{definition}

For example, if $\alpha = (3,2,1,5)$, then the refinement of $\alpha$ are $(3,2,1,5)$, $(5,1,5)$, $(3,3,5)$, $(3,2,6)$, $(6,5)$,$(5,6)$,$(3,8)$ and $(11)$.

Fix $\alpha \in \compositionsanszero{n}{d}$. Remark that if $\gamma \in \compositionsanszero{k}{d}$ is not a refinement of $\alpha$, then any composition $\nu \in \compositionsanszero{k+1}{d}$ such that
\[
    \gamma = (\nu_1,\nu_2,...,\nu_{t-1},\nu_t + \nu_{t+1},\nu_{t+2},\nu_{t+3},...,\nu_{k+1})
\]
for some $t$, is not a refinement of $\alpha$. Hence, the graded subspace of $\extreso{\bullet}{1^d}$ defined as the direct sum of the $\qexterior{\gamma}$ with $\gamma$ not a refinement of $\alpha$ is a sub-complexes. We will be interested by the quotient of $\extreso{\bullet}{1^d}$ by this sub-complexes, which is naturally identified with the following complex.

\begin{definition}\label{def : resolution of ribbon by exterior}
    Let $\alpha \in \compositionsanszero{n}{d}$. For $k \geq 1$, we let
    \[
        \extreso{k}{\alpha} = \bigoplus_{\gamma \in \refinement{k}{\alpha}} \qexterior{\gamma} \ .
    \]
    We define $\partial_{\alpha} : \extreso{k}{\alpha} \to \extreso{k+1}{\alpha}$ by $\partial_\alpha = \sum_{\gamma,\nu} \partial_{\gamma,\nu}$ where the sum run over all $\gamma \in \refinement{k}{\alpha}$ and $\nu \in \refinement{k+1}{\alpha}$. Then $\partial_\alpha$ defines a differential on $\bigoplus_{k \geq 1} \extreso{k}{\alpha}$. We denote by $\extreso{\bullet}{\alpha}$ the complex obtained.
\end{definition}

We consider several complex homomorphisms.
\begin{itemize}
    \item If $\beta$ is a refinement of $\alpha$, then any refinement of $\beta$ is also a refinement of $\alpha$. Thus, as before (in the particular case $\alpha = (1^d)$), we have a quotient map $\extreso{\bullet}{\alpha} \twoheadrightarrow \extreso{\bullet}{\beta}$.
    \item For $\beta = (\alpha_1,\alpha_2,...,\alpha_{t-1},\alpha_t + \alpha_{t+1}, \alpha_{t+2},...,\alpha_{n})$, we can identify the kernel of the complex map $\extreso{\bullet}{\alpha} \twoheadrightarrow \extreso{\bullet}{\beta}$ with $\extreso{\bullet}{\alpha_1,...,\alpha_t} \otimes \extreso{\bullet}{\alpha_{t+1},...,\alpha_n}$ : if $\gamma$ is a refinement of $\alpha$ but not a refinement of $\beta$, then this means that in the process of constructing $\gamma$ by adding consecutive parts of $\alpha$, we did not add $\alpha_t$ and $\alpha_{t+1}$. Thus we can write $\gamma$ as the concatenation $\gamma = \gamma' \cdot \gamma''$ with $\gamma'$ a refinement of $(\alpha_1,...,\alpha_t)$ and $\gamma''$ a refinement of $(\alpha_{t+1},...,\alpha_n)$. Hence
    \[
        \extreso{\bullet}{\alpha_1,...,\alpha_t} \otimes \extreso{\bullet}{\alpha_{t+1},...,\alpha_n} = \bigoplus_{\substack{\gamma' \in \refinement{}{\alpha_1,...,\alpha_t} \\ \gamma'' \in \refinement{}{\alpha_{t+1},...,\alpha_n}}} \qexterior{\gamma' \cdot \gamma''} \ ,
    \]
    which is a sub-complex of $\extreso{\bullet}{\alpha}$, is the kernel of the quotient map.
    \item Let $\beta = (\alpha_1,\alpha_2,...,\alpha_{t-1},\alpha_t + \alpha_{t+1}, \alpha_{t+2},...,\alpha_{n})$. We can define a complex map
    \[
        \psi_{\alpha,\beta} : \extreso{\bullet-1}{\beta} \to \extreso{\bullet}{\alpha_1,...,\alpha_t} \otimes \extreso{\bullet}{\alpha_{t+1},...,\alpha_n}
    \]
    using the coproduct. Let $\gamma \in \refinement{k}{\beta}$. Then there exist $i$ such that $\gamma_1 + \cdots + \gamma_{i-1} < \alpha_1 + \cdots + \alpha_t < \alpha_1 + \cdots + \alpha_t +  \alpha_{t+1} \leq \gamma_1 + \cdots + \gamma_{i-1} + \gamma_i$. Consider
    \[
        \gamma' = (\gamma_1,...,\gamma_{i-1}, \alpha_1 + \cdots + \alpha_t - \gamma_1 - \cdots - \gamma_{i-1})
    \]
    and
    \[
        \gamma'' = (\gamma_1 + \cdots + \gamma_i - \alpha_1 - \cdots - \alpha_t, \gamma_{i+1},...,\gamma_k).
    \]
    Then $\gamma' \in \refinement{i}{\alpha_1,...,\alpha_t}$ and $\gamma'' \in \refinement{k+1-i}{\alpha_{t+1},...,\alpha_n}$. Hence the concatenation $\gamma' \cdot \gamma'' \in \refinement{k+1}{\alpha}$. We let $\psi_{\alpha,\beta,\gamma} = (-1)^{k}\partial_{\gamma,\gamma' \cdot \gamma''} : \qexterior{\gamma} \to \qexterior{\gamma' \cdot \gamma''}$ and 
    \[
        \psi_{\alpha,\beta} = \sum_{\gamma \in \refinement{k}{\beta}} \psi_{\alpha,\beta,\gamma} \ \ ,
    \]
    extending $\psi_{\alpha,\beta,\gamma}$ by zero outside $\qexterior{\gamma}$ for each $\gamma$.
\end{itemize}
There is a simple relation linking these three complex maps, given in the following lemma.
\begin{lemma}\label{lemma : exact triangle}
    The sequence
    \[
        \extreso{\bullet-1}{\beta} \xrightarrow{\psi_{\alpha,\beta}} \extreso{\bullet}{\alpha_1,...,\alpha_t} \otimes \extreso{\bullet}{\alpha_{t+1},...,\alpha_n} \xrightarrow{\subseteq} \extreso{\bullet}{\alpha} \twoheadrightarrow \extreso{\bullet}{\beta}
    \]
    is an exact triangle.
\end{lemma}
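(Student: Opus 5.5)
The plan is to realize the triangle as the standard cone triangle of a chain map. The middle and right maps already form a short exact sequence of complexes,
\[
    0 \to \extreso{\bullet}{\alpha_1,...,\alpha_t} \otimes \extreso{\bullet}{\alpha_{t+1},...,\alpha_n} \xrightarrow{\subseteq} \extreso{\bullet}{\alpha} \twoheadrightarrow \extreso{\bullet}{\beta} \to 0 ,
\]
because, as explained before the lemma, the kernel of the quotient map is exactly the span of the $\qexterior{\gamma'\cdot\gamma''}$ with $\gamma'$ a refinement of $(\alpha_1,...,\alpha_t)$ and $\gamma''$ a refinement of $(\alpha_{t+1},...,\alpha_n)$. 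One point needs care: the differential induced from $\extreso{\bullet}{\alpha}$ on this subcomplex should agree with the tensor product differential. The summands of $\partial$ on $\qexterior{\gamma'\cdot\gamma''}$ that split a part of $\gamma''$ carry the sign $(-1)^{t'-1}$ with $t' = \mathrm{len}(\gamma') + j$. This is exactly the Koszul sign $(-1)^{\mathrm{len}(\gamma')}$ times the sign in $\extreso{\bullet}{\alpha_{t+1},...,\alpha_n}$, so one obtains the tensor product of complexes with its usual sign convention.

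A short exact sequence $0\to A\to B\to C\to 0$ gives an exact triangle $A\to B\to C\to A[1]$. Here the connecting morphism $C\to A[1]$ is represented by $d_B\circ s$, where $s$ is any degreewise splitting. Under the shift convention of the lemma, this connecting morphism is a map $\extreso{\bullet-1}{\beta}\to A$ placed before $A$. So I would do the following, in order.

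First, choose the obvious degreewise section $s:\extreso{k}{\beta}\to\extreso{k}{\alpha}$, which is the inclusion of the summands $\qexterior{\gamma}$ for $\gamma\in\refinement{k}{\beta}\subseteq\refinement{k}{\alpha}$. This exists because both complexes are direct sums over sets of compositions with $\refinement{}{\beta}\subseteq\refinement{}{\alpha}$.

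Second, compute $\partial_\alpha\circ s - s\circ\partial_\beta$ on $\qexterior{\gamma}$. The terms $\partial_{\gamma,\nu}$ with $\nu$ a refinement of $\beta$ cancel. The only remaining $\nu\in\refinement{k+1}{\alpha}$ obtained by splitting one part of $\gamma$ is the one where the part $\gamma_i$ straddling the position $\alpha_1+\cdots+\alpha_t$ is split exactly at that position, giving $\nu=\gamma'\cdot\gamma''$. Hence $\partial_\alpha s - s\partial_\beta = \pm\partial_{\gamma,\gamma'\cdot\gamma''}$, which up to the sign $(-1)^k$ is $\psi_{\alpha,\beta}$. The sign $(-1)^k$ is precisely the sign making $\psi_{\alpha,\beta}$ a chain map from the shifted complex $\extreso{\bullet-1}{\beta}$, whose differential is $-\partial_\beta$. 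Alternatively, one can check the chain map property directly from coassociativity of the coproduct $\Coproduct{}$ of $\qexterior{*}$.

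Third, conclude that $B\cong\mathrm{Cone}(\psi_{\alpha,\beta})$ compatibly with the maps $A\to B\to C$. The cone is $A\oplus C$ with the differential twisted by $\psi_{\alpha,\beta}$, and $s$ together with the inclusion of $A$ identifies it with $\extreso{\bullet}{\alpha}$ as graded objects. The computation of the second step says the differentials match, so we obtain the standard exact triangle $C[-1]\to A\to \mathrm{Cone}\to C$.

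The main obstacle is the sign bookkeeping. It must be consistent across three places: the tensor product differential, the factor $(-1)^k$ in $\psi_{\alpha,\beta,\gamma}$, and the convention for shifts and cones. I would fix the convention $\mathrm{Cone}(f)^k = A^k\oplus C^{k+1}$ and verify the signs on a single summand. The structural content is only the observation that exactly one split of $\gamma$ leaves $\refinement{}{\beta}$.
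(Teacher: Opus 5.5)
Your proposal is correct and is essentially the paper's argument. The paper simply writes $\extreso{k}{\alpha}=\extreso{k}{\beta}\oplus\bigoplus_{i+j=k}\extreso{i}{\alpha_1,\dots,\alpha_t}\otimes\extreso{j}{\alpha_{t+1},\dots,\alpha_n}$ with differential $(a,b)\mapsto(\partial_\beta a,\partial_\otimes b+(-1)^k\psi_{\alpha,\beta}(a))$ and declares this to be the cone; your degreewise splitting, the Koszul-sign check on the subcomplex, and the observation that only the cut at $\alpha_1+\cdots+\alpha_t$ leaves $\refinement{}{\beta}$ are exactly the verification behind that one line.

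One sign correction: on $\qexterior{\gamma}$, $\partial_\alpha s-s\partial_\beta$ equals $\partial_{\gamma,\gamma'\cdot\gamma''}$ with no extra sign, and it is this map that anticommutes with the differentials, so $\psi_{\alpha,\beta}=(-1)^k(\partial_\alpha s-s\partial_\beta)$ is a chain map for the \emph{unsigned} differential $\partial_\beta$ on $\extreso{\bullet-1}{\beta}$. This is the reverse of what you wrote, and it is why the paper reinserts $(-1)^k$ in the cone differential; it does not affect your conclusion.
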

\begin{proof}
    By definition, the cone of $\psi_{\alpha,\beta}$ is the complex $\cone{\bullet}{\psi_{\alpha,\beta}}$ with degree $k$ parts
    \[
        \cone{k}{\psi_{\alpha,\beta}} = \extreso{k}{\beta} \oplus \left ( \bigoplus_{i+j=k} \extreso{i}{\alpha_1,...,\alpha_t} \otimes \extreso{j}{\alpha_{t+1},...,\alpha_n} \right )
    \]
    and differentials $\partial_{cone} : \cone{k}{\psi_{\alpha,\beta}} \to \cone{k+1}{\psi_{\alpha,\beta}}$ 
    \[
        \partial_{cone}(a,b) = (\partial_{\beta}(a), \partial_{\otimes}(b) + (-1)^k\psi_{\alpha,\beta}(a)).
    \]
    Hence, $\cone{\bullet}{\psi_{\alpha,\beta}} = \extreso{\bullet}{\alpha}$.
\end{proof}
The first application of the lemma is the computation of the homology of the complexes $\extreso{\bullet}{\alpha}$. They are given by special skew-Schur functors, which we call ribbon functors.

\begin{definition}\label{def : ribbon}
    Let $\alpha \in \compositionsanszero{n}{d}$. Let $\lambda'$ be the partition of length $n$ and $\mu'$ be the partition of length $n-1$ given by
    \[
        \lambda'_i = d-n+i- \sum_{t=1}^{i-1} \alpha_t \quad \text{and} \quad \mu'_i = \lambda'_i - \alpha_i \ .
    \]
    Consider their conjugate $\lambda,\mu$. Then $\lambda/\mu$ is a skew shape called the ribbon associated with $\alpha$.
\end{definition}
We can represent skew-shape by diagram, see \cite[Definition 6.1]{hashimoto1992quantum}. The ribbon associated to $\alpha$ can be obtained by the following process. Consider $n$ columns $C_1,...,C_n$ of blocks, containing $\alpha_1,\alpha_2,...,\alpha_n$ blocks respectively. Glue the last block of $C_2$ on the right of the first block of $C_1$, then the last block of $C_3$ on the right of the first block of $C_2$, and continue like that until you have glued together every column. For example, with $\alpha = (3,2,1,5)$,
\[
    \begin{ytableau}
        \none & \none & \none & \none & \none & \none &\\
        \none & \none & \none & \none & \none & \none &\\
        \none & \none & \none & \none & \none & \none &\\
        \none & \none & \none & \none & \none & \none &\\
        \none & \none & & \none[\cdots] & & \none[\cdots] &\\
        & \none[\cdots] & & \none & \none & \none & \none\\
        & \none & \none & \none & \none & \none & \none\\
        & \none & \none & \none & \none & \none & \none\\
    \end{ytableau}
    \quad \xrightarrow{\mathrm{glue}} \quad 
    \begin{ytableau}
        \none  & \none & \none &\\
        \none  & \none & \none &\\
        \none  & \none & \none &\\
        \none  & \none & \none &\\
        \none  & & &\\
        & & \none & \none\\
        & \none & \none  & \none\\
        & \none & \none & \none\\
    \end{ytableau}
\]
Associated to a skew-shape $\lambda/\mu$, we have a quantum polynomial functor $\Schur{\lambda/\mu}$, which is called a skew-Schur functor. They are defined as the image of a natural transformation $\qexterior{\lambda'_1 - \mu'_1} \otimes \cdots \otimes \qexterior{\lambda'_h - \mu'_h} \to \qsymmetric{\lambda_1 - \mu_1} \otimes \cdots \otimes \qsymmetric{\lambda_n - \mu_n}$ (with $h = \lambda_1$). We will only use skew-Schur functor of the form $\lambda/\mu$ with $\lambda/\mu$ ribbon associated to some composition $\alpha$. As we explain in the following proposition, they can be defined as the cohomology of the complex $\extreso{\bullet}{\alpha}$.
\begin{proposition}\label{prop : resolution of ribbon by exterior}
    Let $\alpha \in \compositionsanszero{n}{d}$, and let $\lambda/\mu$ be the associated ribbon. Then
    \[
        H^*(\extreso{\bullet}{\alpha}) \simeq \left \{ \begin{array}{cl}
            \Schur{\lambda/\mu} & \text{if } * = n,  \\
            0 & \text{otherwise.} 
        \end{array} \right .
    \]
\end{proposition}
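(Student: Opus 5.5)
We argue by induction on the length $n$ of $\alpha$, using the exact triangle of Lemma \ref{lemma : exact triangle} to reduce to shorter compositions. The base case $n=1$ is immediate: $\extreso{\bullet}{(d)}$ is the single functor $\qexterior{d}$ sitting in degree $1$. The ribbon associated with $(d)$ is a single column of length $d$, and $\Schur{\lambda/\mu}=\qexterior{d}$ by the definition of skew-Schur functors as images of $\qexterior{} \to \qsymmetric{}$ maps (here the map $\qexterior{d}\to\qsymmetric{1}^{\otimes d}$ is injective).

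For the inductive step, let $n\ge 2$ and apply Lemma \ref{lemma : exact triangle} with $t=n-1$. Set $\alpha'=(\alpha_1,\dots,\alpha_{n-1})$ and $\beta=(\alpha_1,\dots,\alpha_{n-2},\alpha_{n-1}+\alpha_n)$. This gives a long exact sequence in cohomology relating the following terms:
\begin{itemize}
\item $H^*(\extreso{\bullet}{\beta})$, which by induction is $\Schur{\beta}$ in degree $n-1$ and zero elsewhere;
\item $H^*(\extreso{\bullet}{\alpha'}\otimes\qexterior{\alpha_n})$, which is computed by a Künneth argument.
\end{itemize}
For the Künneth step, note that tensoring with $\qexterior{\alpha_n}$ (the complex $\extreso{\bullet}{(\alpha_n)}$ concentrated in degree $1$) is exact, since the tensor product of quantum polynomial functors is exact: it is induction from $\hecke{d_1}\otimes\hecke{d_2}$ to $\hecke{d}$, and $\hecke{d}$ is free over this parabolic subalgebra. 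Hence the middle term has cohomology $\Schur{\alpha'}\otimes\qexterior{\alpha_n}$, concentrated in degree $(n-1)+1=n$. The connecting map then sends $H^{n-1}(\extreso{\bullet}{\beta})$, shifted to degree $n$ via $\psi_{\alpha,\beta}$, to this middle cohomology. The long exact sequence reads
\[
0\to H^{n-1}(\extreso{\bullet}{\alpha})\to \Schur{\beta}\xrightarrow{\ \delta\ } \Schur{\alpha'}\otimes\qexterior{\alpha_n}\to H^{n}(\extreso{\bullet}{\alpha})\to 0 ,
\]
and all other cohomology groups vanish. So the whole statement reduces to two facts: $\delta$ is injective, and its cokernel is $\Schur{\lambda/\mu}$ for the ribbon of $\alpha$.

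I expect this identification to be the main obstacle. The route I would try is the quantum analogue of the classical ribbon exact sequence
\[
0\to \Schur{\beta}\to \Schur{\alpha'}\otimes \qexterior{\alpha_n}\to \Schur{\alpha}\to 0 ,
\]
which classically follows from the Jacobi–Trudi/ribbon relations: concatenating and near-concatenating ribbons gives the product of ribbon Schur functions. To control the maps, I would realise everything inside $\extreso{\bullet}{\alpha}$ itself, since its top cohomology is a cokernel by construction. Concretely,
\[
H^n(\extreso{\bullet}{\alpha})=\mathrm{coker}\Bigl(\bigoplus_{\gamma\in\refinement{n-1}{\alpha}}\qexterior{\gamma}\to\qexterior{\alpha}\Bigr).
\]
I would show this coincides with the image of the composite $\qexterior{\alpha}\to\qsymmetric{}$ defining $\Schur{\lambda/\mu}$, in the spirit of Hashimoto–Hayashi's presentation of quantum skew-Schur functors by generators and relations \cite[Definition 6.7]{hashimoto1992quantum}. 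For ribbon shapes those relations are exactly the coproduct maps $\Id\otimes\Coproduct{}\otimes\Id$ merging adjacent columns, with the signs fixed above. Granting this presentation, the top cohomology is $\Schur{\lambda/\mu}$, with no restriction on $\ell$ or the characteristic.

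It remains to prove vanishing in degree $n-1$, which is equivalent to injectivity of $\delta$. For this I would compare characters or dimensions: evaluate at a large $n$ and use that skew-Schur functors have Weyl-type filtrations whose characters are the ribbon Schur functions, independently of $q$ and of the characteristic. The classical identity $s_{\alpha'}e_{\alpha_n}=s_{\alpha}+s_{\beta}$ then forces
\[
\dim \Schur{\alpha'}\otimes\qexterior{\alpha_n} = \dim H^n(\extreso{\bullet}{\alpha}) + \dim \Schur{\beta},
\]
so $\ker\delta=0$ and $H^{n-1}(\extreso{\bullet}{\alpha})=0$. The lower cohomology groups vanish directly from the long exact sequence and the inductive hypothesis, which completes the induction.
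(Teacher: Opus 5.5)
Your proof is correct. Its skeleton is the paper's: the top cohomology is identified with $\Schur{\lambda/\mu}$ through the Hashimoto--Hayashi presentation, which the paper simply cites as \cite[Theorem 6.19]{hashimoto1992quantum} and which you ``grant''. Concentration is then proved by induction on $n$ through the short exact sequence behind Lemma \ref{lemma : exact triangle}. The paper splits off $\alpha_1$ and you split off $\alpha_n$, which is immaterial.

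The real difference is at the concentration step. The paper writes the sequence as $0\to\extreso{\bullet}{\alpha_1+\alpha_2,\alpha_3,\dots,\alpha_n}\to\extreso{\bullet}{\alpha}\to\extreso{\bullet}{\alpha_1}\otimes\extreso{\bullet}{\alpha_2,\dots,\alpha_n}\to 0$. It then asserts that concentration in degree $n$ follows formally from the long exact sequence. But the differential splits parts, so the concatenations $\gamma'\cdot\gamma''$ form the subcomplex and $\extreso{\bullet}{\beta}$ is the quotient. This is the orientation you use, and the one the paper itself states just before Lemma \ref{lemma : exact triangle}. With it, one only gets $H^{n-1}(\extreso{\bullet}{\alpha})=\ker\delta$. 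Taken literally, the paper's orientation would even give $H^{n-1}(\extreso{\bullet}{\alpha})\simeq\Schur{\beta}\neq 0$. Already for $\alpha=(1,1)$, the vanishing of $H^1$ is the injectivity of the coproduct $\qexterior{2}\to\qtensor{2}$, which is not a formal consequence of anything.

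Your character count supplies exactly this missing argument. Only $H^{n-1}$ and $H^n$ can survive. Comparing $\mathrm{ch}\,(\Schur{\alpha'}\otimes\qexterior{\alpha_n})$ with $\mathrm{ch}\,\Schur{\alpha}+\mathrm{ch}\,\Schur{\beta}$ via the ribbon product rule then forces $\ker\delta=0$. Equivalently, the Euler characteristic $\sum_{\gamma}(-1)^{\ell(\gamma)}\mathrm{ch}\,\qexterior{\gamma}$ must equal $\pm\mathrm{ch}\,\Schur{\lambda/\mu}$.

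The price is one extra input: the characters of quantum ribbon Schur functors are the classical ribbon Schur functions, for every $q$ and every field. The clean reference for this is the Hashimoto--Hayashi standard basis theorem (semistandard tableaux), not a ``Weyl-type filtration''.

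One small correction: exactness of $-\otimes\qexterior{\alpha_n}$ needs no Hecke-algebra freeness. It holds simply because $(F\otimes G)(N)=F(N)\otimes_{\field}G(N)$ as vector spaces.
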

\begin{proof}
    By \cite[Theorem 6.19]{hashimoto1992quantum} (where our $\Schur{\lambda/\mu}$ is $L_{\lambda'/\mu'}$ with $\lambda',\mu'$ the conjugate partitions), the cokernel of the map
    \[
        \extreso{n-1}{\alpha} \xrightarrow{\partial} \extreso{n}{\alpha}
    \]
    is exactly $\Schur{\lambda/\mu}$. Since $\extreso{n+1}{\alpha} = 0$ (a refinement of $\alpha$ has less parts than $\alpha$), this proves that 
    \[
        H^n(\extreso{\bullet}{\alpha}) \simeq \Schur{\lambda/\mu} \ .
    \]
    Now, we prove by induction on $n$ that the cohomology is concentrated on degree $n$. For $n = 1$, $\extreso{\bullet}{\alpha}$ is concentrated in degree $1$, so there is nothing to prove.

    For $n > 1$, we have a short exact sequence,
    \[
        0 \to \extreso{\bullet}{\alpha_1 + \alpha_2,\alpha_3,...,\alpha_n} \to \extreso{\bullet}{\alpha} \to \extreso{\bullet}{\alpha_1} \otimes \extreso{\bullet}{\alpha_2,...,\alpha_n} \to 0.
    \]
    By induction hypothesis, the complex on the left has cohomology concentrated in degree $n-1$, and the complex on the right has cohomology concentrated in degree $n$. The long exact sequence associated with this short exact sequence then implies that $\extreso{\bullet}{\alpha}$ has cohomology concentrated in degree $n$.
\end{proof}
The quantum symmetric and quantum exterior power functors are ribbon Schur functors :
\[
    \Schur{(d)} = \qsymmetric{d} \quad \text{and} \quad \Schur{(1^d)} = \qexterior{d} \ .
\]
Hence, the proposition gives resolutions by exterior powers of these two functors :
\[
    H^d(\extreso{\bullet}{1^d}) = \qsymmetric{d} \quad \text{and} \quad H^1(\extreso{\bullet}{d}) = \qexterior{d} \ .
\]
We will use these resolutions to compute the Ext-groups $\Ext{\quantumfunctor{}}{*}(\qsymmetric{d},\qexterior{d})$. The following lemma shows that the non-projectivity of these resolutions is not important.
\begin{lemma}\label{lemma : ext between tensor of exterior and exterior}
    For any composition $\alpha \in \compositionsanszero{n}{d}$,
    \[
        \Ext{\quantumfunctor{}}{*}(\qexterior{\alpha},\qexterior{d})
        \simeq \left \{ \begin{array}{cl}
            \field & \text{if } *=0, \\
            0 & \text{otherwise.}
        \end{array} \right .
    \]
\end{lemma}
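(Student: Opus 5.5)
We would prove the statement by reducing, via the exponential property, to the case of a single exterior power. Since $\qexterior{\alpha} \simeq \qexterior{\alpha_1} \otimes \cdots \otimes \qexterior{\alpha_n}$ and $\qexterior{*}$ is an exponential functor, Corollary \ref{cor : ext exponential tenseur} (second isomorphism, with $F = \qexterior{*}$) gives, by induction on $n$,
\[
    \Ext{\quantumfunctor{}}{*}(\qexterior{\alpha},\qexterior{d}) \simeq \Ext{\quantumfunctor{}}{*}(\qexterior{\alpha_1},\qexterior{\alpha_1}) \otimes \cdots \otimes \Ext{\quantumfunctor{}}{*}(\qexterior{\alpha_n},\qexterior{\alpha_n}) .
\]
Here the Künneth-type identification of Lemma \ref{lemma : external tensor and hom} is used inside the corollary, and the terms with $d_1 \neq \alpha_1$ drop out because there are no extensions between functors of different degrees. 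It therefore suffices to show $\Ext{\quantumfunctor{}}{*}(\qexterior{m},\qexterior{m}) = \field$ concentrated in degree $0$ for every $m \geq 1$.

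For this single-variable statement, we would use the duality $\qdual{-}$ together with the identification $\qdual{\qexterior{m}} \simeq \qexterior{m}$. This follows from the definition via $\triv{}$ and the anti-automorphism $\tau$, or alternatively by checking on the coresolution. Next we apply the projective resolution of Proposition \ref{prop : projective resolution of quantum exterior}, in which $P_i = \bigoplus_{\nu \in \compositionsanszero{m-i}{m}} \qdivided{\nu}$. Then $\Ext{}{*}(\qexterior{m},\qexterior{m})$ is the cohomology of the complex with terms $\Hom{\quantumfunctor{}}(\qdivided{\nu},\qexterior{m})$. By Yoneda-type evaluation (Corollary 4.10 of \cite{hong2017quantum}), $\Hom{}(\qdivided{\nu},G)$ is the $\nu$-weight space of $G(n)$. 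The $\nu$-weight space of $\qexterior{m}$ is nonzero (and one-dimensional) exactly when all parts of $\nu$ are $\leq 1$, that is, $\nu = (1^m)$. Among compositions with nonzero parts this occurs only for $i = 0$. Hence the complex is concentrated in degree $0$ with $\Hom{}(\qdivided{(1^m)},\qexterior{m}) \simeq \field$, and the conclusion follows.

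The main obstacle is the weight computation: we must justify that $\Hom{}(\qdivided{\nu},\qexterior{m})$ vanishes unless $\nu=(1^m)$. We would carry this out concretely by evaluating at $n \geq m$. Here $\qexterior{m}(n) = \standard{n}{m}\otimes_{\hecke{m}}\triv{}$, whose weight-$\nu$ piece is $\weightmod{\nu}\otimes_{\hecke{m}}\triv{} \simeq \sign{\nu}\otimes_{\hecke{\nu}}\triv{\nu}$ by Lemma \ref{lemma : weight module as induced module}. This module vanishes whenever some $\nu_i \geq 2$, since $T_j$ acts by $-1$ on one side and by $q$ on the other, and $q+1 \neq 0$ for $\ell > 2$. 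The case $\ell = 2$ needs extra care, because then $-1 = q$ and this argument breaks down. If it fails there, we would instead argue using the known identification $\Hom{}(\qdivided{\nu},G)\simeq G(n)_\nu$ from \cite{hong2017quantum} directly, computed on the standard basis of $\qexterior{m}(n)$ from \cite{hashimoto1992quantum}.
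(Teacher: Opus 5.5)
Your argument is correct for $\ell \geq 3$, which is the range the paper works in. It computes $\Ext{\quantumfunctor{}}{*}(\qexterior{m},\qexterior{m})$ by a genuinely different mechanism from the paper.

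The paper uses the highest weight structure of $\quantumfunctor{}$. Since $\qexterior{d}=\Weyl{(1^d)}=\Schur{(1^d)}$ is both standard and costandard, the general vanishing $\Ext{}{>0}(\text{standard},\text{costandard})=0$ from \cite{deng2008finite} kills all higher Ext-groups. It then passes to $\qexterior{\alpha}$ by exponentiality, which is your first reduction done in the other order. The one-dimensional $\Hom{}$ is read off separately from Lemma~\ref{lemma : morita exterior flag}.

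You instead apply $\Hom{}(-,\qexterior{m})$ to the Koszul resolution of Proposition~\ref{prop : projective resolution of quantum exterior}, and use $\Hom{}(\qdivided{\nu},G)\simeq 1_\nu G(n)$. Since $1_\nu\qexterior{m}(n)\simeq \sign{\nu}\otimes_{\hecke{\nu}}\triv{\nu}$ vanishes as soon as $\nu$ has a part $\geq 2$, only $P_0=\qdivided{(1^m)}$ contributes. The complex is therefore $\field$ in degree $0$. This gives the vanishing and the value of $\Hom{}$ in one stroke. It also avoids identifying $\qexterior{d}$ with Weyl and Schur functors and any quasi-hereditary input. The price is that you depend on the Koszul resolution, which the paper imports from another work, and on the weight-space description of $\Hom{}(\qdivided{\nu},-)$. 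The paper's route is shorter and is an instance of a general $\Delta$--$\nabla$ vanishing principle.

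Two small remarks. First, the duality $\qdual{\qexterior{m}}\simeq\qexterior{m}$ you mention is never actually used in your argument. Second, your proposed fallback for $\ell=2$ would not work. When $q=-1$ one has $\triv{}=\sign{}$, so with the paper's definitions $\qexterior{m}=\qsymmetric{m}$ and all its weight spaces are one-dimensional. In that degenerate case, however, $\qexterior{m}$ is injective, so the statement follows immediately anyway.
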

\begin{proof}
    The category $\quantumfunctor{}$ admits standard and costandard objects (see \cite[Chapter 9]{deng2008finite}) which are the Weyl functors $\Weyl{\mu}$ (the dual of the Schur module) and the Schur functors $\Schur{\mu}$). The functor $\qexterior{d}$ is both standard and costandard.
    \[ 
        \qexterior{d} = \Schur{(1^d)} = \Weyl{(1^d)}.
    \]
    Hence \cite[Proposition 9.31]{deng2008finite} shows that $\Ext{\quantumfunctor{}}{*}(\qexterior{d},\qexterior{d}) = 0$ for $*>0$. Using the exponentiality of $\qexterior{*}$, this implies $\Ext{\quantumfunctor{}}{*}(\qexterior{\alpha},\qexterior{d}) = 0$ for $*>0$. Finally, $\Ext{\quantumfunctor{}}{*}(\qexterior{\alpha},\qexterior{d}) = \Hom{\quantumfunctor{}}(\qexterior{\alpha},\qexterior{d})$ is given by Lemma \ref{lemma : morita exterior flag}, and is one-dimensional
\end{proof}

Since $\extreso{*}{\alpha}$ is a direct sum of tensor power of exterior functor, lemma \ref{lemma : ext between tensor of exterior and exterior} and proposition \ref{prop : resolution of ribbon by exterior} implies the following.

\begin{corollary}\label{cor : computation of ext between ribbon and exterior using the complexes}
    Let $\alpha \in \compositionsanszero{n}{d}$ and $\lambda/\mu$ be the corresponding ribbon. Let $\ribext{\bullet}{\alpha}$ denote the complex
    \[
        \ribext{*}{\alpha} = \Hom{\quantumfunctor{}}(\extreso{*}{\alpha},\qexterior{d}) \ .
    \]
    Then
    \[
        \Ext{\quantumfunctor{}}{*}(\Schur{\lambda/\mu},\qexterior{d}) \simeq H_{n-*}(\ribext{\bullet}{\alpha}) \ .
    \]
\end{corollary}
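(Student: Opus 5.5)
The plan is to read $\extreso{\bullet}{\alpha}$ as a resolution of $\Schur{\lambda/\mu}$ by $\Hom{}(-,\qexterior{d})$-acyclic objects and compute $\Ext{}{}$ with it. First I will turn the complex into a resolution in the usual homological sense. By Proposition \ref{prop : resolution of ribbon by exterior}, $\extreso{\bullet}{\alpha}$ lives in degrees $1,\dots,n$ and its cohomology is $\Schur{\lambda/\mu}$, concentrated in degree $n$. Set $Q_j = \extreso{n-j}{\alpha}$ for $0 \le j \le n-1$. Then
\[
  0 \to Q_{n-1} \to \cdots \to Q_1 \to Q_0 \to \Schur{\lambda/\mu} \to 0
\]
is exact, so $Q_\bullet$ is a resolution of $\Schur{\lambda/\mu}$. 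Each $Q_j$ is a direct sum of functors $\qexterior{\gamma}$ with $\gamma$ a composition with non-zero parts.

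Next, Lemma \ref{lemma : ext between tensor of exterior and exterior} gives $\Ext{\quantumfunctor{}}{i}(Q_j,\qexterior{d}) = 0$ for all $i>0$, since finite direct sums commute with $\Ext{}{}$. So every term of the resolution is acyclic for $\Hom{\quantumfunctor{}}(-,\qexterior{d})$. The standard acyclic-resolution lemma (see \cite{weibel1994introduction}) then says that $\Ext{\quantumfunctor{}}{*}(\Schur{\lambda/\mu},\qexterior{d})$ is the cohomology of $\Hom{\quantumfunctor{}}(Q_\bullet,\qexterior{d})$ in degree $*$. To see this directly, I would cut $Q_\bullet$ into short exact sequences $0\to K_{j+1}\to Q_j \to K_j \to 0$, with $K_0=\Schur{\lambda/\mu}$, and use dimension shifting. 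Acyclicity of $Q_j$ gives $\Ext{}{i}(K_{j+1},\qexterior{d})\simeq \Ext{}{i+1}(K_j,\qexterior{d})$ for $i\ge1$, together with the usual cokernel description in low degree. Iterating these identifies $\Ext{}{*}(K_0,\qexterior{d})$ with the cohomology of $\Hom{}(Q_\bullet,\qexterior{d})$ at spot $*$.

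Finally, I reindex. We have $\Hom{\quantumfunctor{}}(Q_j,\qexterior{d}) = \ribext{n-j}{\alpha}$. The differential of $\Hom{}(Q_\bullet,\qexterior{d})$ raises $j$, which means it lowers the index of $\ribext{\bullet}{\alpha}$, since $\Hom{}(-,\qexterior{d})$ is contravariant. So the cochain complex $\Hom{}(Q_\bullet,\qexterior{d})$ is exactly $\ribext{\bullet}{\alpha}$ viewed as a chain complex. Its cohomology at $j=*$ is therefore $H_{n-*}(\ribext{\bullet}{\alpha})$, which is the statement.

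The only point that needs care, and the main obstacle, is this bookkeeping of gradings and signs. The differential on $\ribext{\bullet}{\alpha}$ is $-\circ\partial_\alpha$, and any sign twist does not affect homology. We also need the sequence to terminate properly: $Q_j=0$ for $j\ge n$, and exactness at the top holds because $H^*(\extreso{\bullet}{\alpha})$ vanishes below degree $n$. That vanishing is already given by Proposition \ref{prop : resolution of ribbon by exterior}, so no further input is required.
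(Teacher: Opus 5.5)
Your proposal is correct and follows the same route as the paper, which deduces the corollary in one line from Proposition \ref{prop : resolution of ribbon by exterior} and Lemma \ref{lemma : ext between tensor of exterior and exterior}. Like the paper, you treat $\extreso{\bullet}{\alpha}$ as a resolution of $\Schur{\lambda/\mu}$ by $\Hom{\quantumfunctor{}}(-,\qexterior{d})$-acyclic objects; you simply spell out the dimension-shifting argument and the reindexing $Q_j=\extreso{n-j}{\alpha}$ that the paper leaves implicit.
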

We will use this corollary to compute some Ext-groups of the form $\Ext{\quantumfunctor{}}{*}(\Schur{\lambda/\mu},\qexterior{d})$, but we will also use it in the other direction to compute some $H_{*}(\ribext{\bullet}{\alpha})$. More precisely, we will need the following computation.
\begin{proposition}\label{prop : ext between some hook and exterior}
    Let $a,b \geq 1$. If $\ell$ does not divide $a+b$ and does not divide $b$, then $\ribext{\bullet}{a,1^b}$ is acyclic.
\end{proposition}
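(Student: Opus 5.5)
The plan is to reduce the acyclicity of $\ribext{\bullet}{a,1^b}$ to a vanishing of $\Ext{}{}$-groups, and to obtain that vanishing from the block decomposition of $\quantumfunctor{d}$. By Corollary \ref{cor : computation of ext between ribbon and exterior using the complexes}, with $d=a+b$ and $\lambda/\mu$ the ribbon associated with $\alpha=(a,1^b)$, we have
\[
H_{n-*}(\ribext{\bullet}{a,1^b}) \simeq \Ext{\quantumfunctor{}}{*}(\Schur{\lambda/\mu},\qexterior{d}).
\]
So it suffices to show that $\Ext{\quantumfunctor{}}{*}(\Schur{\lambda/\mu},\qexterior{d})=0$ in all degrees.

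First, I identify the ribbon. Apply the gluing procedure after Definition \ref{def : ribbon} to the columns $C_1,\dots,C_{b+1}$ of sizes $a,1,\dots,1$. Each one-box column is glued to the right of the top box of the previous column, so the boxes of $C_2,\dots,C_{b+1}$ form a row extending the top box of $C_1$. The resulting shape is an honest (non-skew) hook: a first row of length $b+1$ and a first column of length $a$. (Depending on conventions this is the hook partition $(b+1,1^{a-1})$ or its conjugate; the argument below is insensitive to conjugation.) Hence $\Schur{\lambda/\mu}$ is the Schur functor $\Schur{\rho}$ of a hook partition $\rho$ of $d$. This is a costandard object of the highest weight category $\quantumfunctor{d}$, so all its composition factors lie in the block of $\rho$. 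Similarly $\qexterior{d}=\Schur{(1^d)}$ is costandard, and all its composition factors lie in the block of $(1^d)$.

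Next, I use the block classification for $q$-Schur algebras (the $q$-analogue of Nakayama's conjecture, due to James--Mathas and Dipper--James). Two partitions of $d$ lie in the same block if and only if they have the same $\ell$-core. Equivalently, the multisets of residues modulo $\ell$ of the contents $j-i$ of their boxes coincide. Extensions between objects whose composition factors lie in different blocks vanish in all degrees. So it remains to compare residue multisets.

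\begin{itemize}
\item The column $(1^d)$ has contents $0,-1,\dots,-(a+b-1)$.
\item The hook has contents $0,-1,\dots,-(a-1)$ together with $1,\dots,b$.
\end{itemize}

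After removing the common part, the comparison is between the residues of $\{1,\dots,b\}$ and those of $\{-(a+b-1),\dots,-a\}$. The second set is the first set translated by $-(a+b)$. Now take a multiset of $b$ consecutive residues modulo $\ell$, and write $b=m\ell+r$ with $0\le r<\ell$. It consists of every residue $m$ times, plus $r$ consecutive residues once more. If $r\neq 0$, it is invariant under a translation by $s$ only when $s\equiv 0 \pmod{\ell}$. Hence the two multisets agree if and only if $\ell\mid b$ or $\ell\mid a+b$. (Conjugating the hook negates all contents, which changes nothing here.) Under our hypotheses the two partitions therefore lie in different blocks, the $\Ext{}{}$-groups vanish, and $\ribext{\bullet}{a,1^b}$ is acyclic.

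The main obstacle is citing the block theorem in the form needed here: for the quantum polynomial functor category with the paper's normalization of $q$ and $\ell$, including the case $\ell=2$. I also need to make sure the block linkage is by $\ell$-cores (with $\ell$ the order of $q$) and not by something involving $p$. For the $q$-Schur algebra with $q\neq 1$ a primitive $\ell$-th root of unity this is indeed the James--Mathas result. A second point to check carefully is the identification of the hook ribbon functor with the straight Schur functor $\Schur{\rho}$, as opposed to its conjugate or a Weyl functor. If only the Weyl version is available, the same block argument applies verbatim, since Weyl functors also have all their composition factors in the block of $\rho$.
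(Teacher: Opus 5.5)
Your proposal is correct and follows essentially the paper's route. You reduce to $\Ext{\quantumfunctor{}}{*}(\Schur{(b+1,1^{a-1})},\qexterior{d})$ via Corollary~\ref{cor : computation of ext between ribbon and exterior using the complexes} and separate blocks by $\ell$-cores; your residue-multiset comparison is a cleaner, fully checked substitute for the paper's unproved direct computation of the $\ell$-core of the hook.

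One caution on your parenthetical about conjugation. The comparison is invariant only under conjugating both shapes at once, not the hook alone: for $\ell=3$, $a=4$, $b=1$ the conjugate hook $(4,1)$ has the same $3$-core $(1,1)$ as $(1^5)$. So it genuinely matters that the hook is oriented as $(b+1,1^{a-1})$ relative to $\qexterior{d}=\Schur{(1^d)}$. Your orientation is the correct one; for instance, $a=1$ must give $\qsymmetric{b+1}=\Schur{(b+1)}$.
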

\begin{proof}
    The shape associated to $(a,1^b)$ is the hook $(b+1,1^{a-1})$. We use the block theory of the classical and quantum Schur algebras. More precisely, we use the dual version of \cite[Theorem 5.37]{mathas1999iwahori}.
    \begin{lemma*}\label{lemma : block for Schur}
        Two (not skew-shape) Schur functors $\Schur{\lambda}$ and $\Schur{\mu}$ are in the same block if and only if the partitions $\lambda$ and $\mu$ have the same $\ell$-core.
    \end{lemma*}
    On one hand, $\qexterior{d} = \Schur{(1^d)}$, and the $\ell$-core of $(1^d)$ is $(1^{d'})$ where $d'$ is the rest in the division of $d$ by $\ell$. On the other hand, under our condition, the $\ell$-core of $(b+1,1^{a-1})$ is $(b'+1,1^{a'})$ where $b'$ is the rest in the division of $b$ by $\ell$ and $a'$ the rest in the division of $a-1$ by $\ell$. Since $b' \neq 0$, this show that $\qexterior{d}$ and $\Schur{(b+1,1^{a-1})}$ are not in the same block. Hence, 
    \[
        H_{b+1-*}(\ribext{\bullet}{a,1^b}) \simeq \Ext{\quantumfunctor{}}{*}(\Schur{(b+1,1^{a-1})}, \qexterior{d}) = 0.
    \]
\end{proof}

Let us describe concretely the complex $\ribext{\bullet}{\alpha}$. By definition,
\[
    \ribext{k}{\alpha} = \bigoplus_{\gamma \in \refinement{k}{\alpha}} \Hom{\quantumfunctor{}}(\qexterior{\gamma},\qexterior{d}) \ .
\]
The $\field$-vector space $\Hom{\quantumfunctor{}}(\qexterior{\gamma},\qexterior{d})$ is one dimensional, spanned by the product, that we will denote $[\gamma]$. Using the relation between product and coproduct of $\qexterior{*}$, we can prove that the differential is of the form
\[
    \partial[\gamma] = \sum_{t=1}^{k-1} (-1)^{t-1} \qbinom{\gamma_t + \gamma_{t+1}}{\gamma_t} [\gamma_1,...,\gamma_t + \gamma_{t+1},...,\gamma_{k}] \ .
\]
Our goal is to compute the homology of $\ribext{\bullet}{1^d}$. The main computation tools we will use are the complex morphisms induced by the precompositions by the complex morphism mentionned in lemma \ref{lemma : exact triangle}.
\begin{itemize}
    \item When $\beta$ is a refinement of $\alpha$, we have a quotient map $\extreso{\bullet}{\alpha} \twoheadrightarrow \extreso{\bullet}{\beta}$. This induces an inclusion $\iota : \ribext{\bullet}{\beta} \to \ribext{\bullet}{\alpha}$ (which sends $[\gamma]$ in $\ribext{\bullet}{\beta}$ to $[\gamma]$ in $\ribext{\bullet}{\alpha}$).
    \item The inclusion $\extreso{\bullet}{\alpha_1,...,\alpha_t} \otimes \extreso{\bullet}{\alpha_{t+1},...,\alpha_n} \subseteq \extreso{\bullet}{\alpha}$ induces a quotient map
    \[
        \pi : \ribext{\bullet}{\alpha} \twoheadrightarrow \ribext{\bullet}{\alpha_1,...,\alpha_t} \otimes \ribext{\bullet}{\alpha_{t+1},...,\alpha_n}
    \]
    given by
    \[
        [\gamma] \mapsto \left \{ \begin{array}{cl}
            [\gamma_1,...,\gamma_i] \otimes [\gamma_{i+1},...,\gamma_k] & \text{if } \gamma_1 + \cdots + \gamma_i = \alpha_1 + \cdots + \alpha_t, \\
            0 & \text{if the above equality is false for all } i. 
        \end{array} \right .
    \]
    \item The maps $\psi_{\alpha,\beta}$ induces complex morphisms
    \[
        \Psi : \ribext{\bullet}{\alpha_1,...,\alpha_t} \otimes \ribext{\bullet}{\alpha_{t+1},...,\alpha_n} \to \ribext{\bullet - 1}{\alpha_1,...,\alpha_t + \alpha_{t+1},...,\alpha_n}
    \]
    given by
    \[
        \Psi ([\gamma_1,...,\gamma_i] \otimes [\gamma_{i+1},....,\gamma_k]) = (-1)^{k-i-1} \qbinom{\gamma_i + \gamma_{i+1}}{\gamma_i} [\gamma_1,...,\gamma_i + \gamma_{i+1},...,\gamma_k] \ .
    \]
\end{itemize}

\begin{lemma}\label{lemma : exact triangle dual}
    Let $\alpha \in \compositionsanszero{n}{d}$, $\alpha' = (\alpha_1,...,\alpha_t)$, $\alpha'' = (\alpha_{t+1},...,\alpha_n)$ and $\beta = (\alpha_1,...,\alpha_{t} + \alpha_{t+1},...,\alpha_n)$. The sequence
    \[
        \ribext{\bullet+1}{\alpha'} \otimes \ribext{\bullet}{\alpha''}
        \xrightarrow{\Psi} \ribext{\bullet}{\beta}
        \xrightarrow{\iota} \ribext{\bullet}{\alpha} \xrightarrow{\pi} \ribext{\bullet}{\alpha'} \otimes \ribext{\bullet}{\alpha''}
    \]
    is an exact triangle. Hence, we have a long exact sequence
    \[
        \cdots \to (H(\alpha') \otimes H(\alpha''))_{*+1} \xrightarrow{\pm H_*(\Psi)} H_*(\beta) \xrightarrow{H_*(\iota)} H_*(\alpha) \xrightarrow{H_*(\pi)} (H(\alpha') \otimes H(\alpha''))_{*} \xrightarrow{\mp H_{*-1}(\Psi)} \cdots
    \]
    where we write $H_*(\alpha)$ for $H_*(\ribext{\bullet}{\alpha})$ to simplify notation.
\end{lemma}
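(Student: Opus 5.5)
Apply the contravariant functor $\Hom{\quantumfunctor{}}(-,\qexterior{d})$ to the exact triangle of Lemma \ref{lemma : exact triangle}. Since $\Hom{}$ is additive and sends a cone to a (shifted) cone, the image of a distinguished triangle is again one; only the identification of the three maps and the indexing need care.

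First, the triangle of Lemma \ref{lemma : exact triangle} comes from a termwise split short exact sequence of graded objects,
\[
0 \to \extreso{\bullet}{\alpha'} \otimes \extreso{\bullet}{\alpha''} \to \extreso{\bullet}{\alpha} \to \extreso{\bullet}{\beta} \to 0 ,
\]
because in each degree $\extreso{k}{\alpha}$ is the direct sum of the $\qexterior{\gamma}$ over $\gamma \in \refinement{k}{\alpha}$, and this index set splits into refinements of $\beta$ and concatenations $\gamma'\cdot\gamma''$. Applying $\Hom{\quantumfunctor{}}(-,\qexterior{d})$ termwise gives a termwise split short exact sequence of complexes
\[
0 \to \ribext{\bullet}{\beta} \xrightarrow{\iota} \ribext{\bullet}{\alpha} \xrightarrow{\pi} \Hom{\quantumfunctor{}}(\extreso{\bullet}{\alpha'} \otimes \extreso{\bullet}{\alpha''},\qexterior{d}) \to 0 .
\]

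Second, identify the right-hand term with $\ribext{\bullet}{\alpha'} \otimes \ribext{\bullet}{\alpha''}$. Each space $\Hom{\quantumfunctor{}}(\qexterior{\gamma'\cdot\gamma''},\qexterior{d})$ is one-dimensional (Lemma \ref{lemma : ext between tensor of exterior and exterior}), spanned by $[\gamma'\cdot\gamma'']$, and we send it to $[\gamma']\otimes[\gamma'']$. To check compatibility with the differentials, use the formula for $\partial[\gamma]$. A contraction inside $\gamma'$ produces the same $q$-binomial coefficient on both sides. A contraction inside $\gamma''$ at position $i+s$ carries the sign $(-1)^{i+s-1}=(-1)^{i}(-1)^{s-1}$, which is the Koszul sign of the tensor differential once $[\gamma']$ is given degree equal to its length. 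Contracting across the junction is exactly the term removed by passing to the quotient. With this identification, $\pi$ is the map described above.

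Third, a termwise split short exact sequence is isomorphic to the triangle
\[
\ribext{\bullet}{\beta} \to \ribext{\bullet}{\alpha} \to \ribext{\bullet}{\alpha'}\otimes\ribext{\bullet}{\alpha''} \xrightarrow{\delta} \ribext{\bullet}{\beta}[\pm 1],
\]
where the connecting map $\delta$ is computed by lifting through the splitting and applying the differential of $\ribext{\bullet}{\alpha}$. Take $[\gamma']\otimes[\gamma'']$, lift it to $[\gamma'\cdot\gamma'']$, and apply $\partial$; the only component landing in $\ribext{\bullet}{\beta}$ is the junction contraction, namely
\[
(-1)^{i-1}\qbinom{\gamma'_i+\gamma''_1}{\gamma'_i}\,[\ldots,\gamma'_i+\gamma''_1,\ldots].
\]
This agrees with $\Psi$ up to a global sign depending only on degree, which is harmless for exactness. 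Dually, this is the statement that $\psi_{\alpha,\beta}$ is the component $\partial_{\gamma,\gamma'\cdot\gamma''}$ of the cone differential. Rotating the triangle then gives the sequence in the statement, and the long exact sequence in homology follows by the standard argument; the signs $\pm H_*(\Psi)$ record the degree-dependent sign.

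The main obstacle is the bookkeeping of degree shifts and signs: which factor carries the extra index in $\ribext{\bullet+1}{\alpha'}$, and the conventions $(-1)^k$ in $\psi$ versus $(-1)^{k-i-1}$ in $\Psi$. I would fix the grading by the lengths of $\gamma'$ and $\gamma''$ and verify the connecting map on a single generator.
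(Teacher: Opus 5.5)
Your proposal is correct and is essentially the paper's argument. The paper only says the proof is similar to Lemma~\ref{lemma : exact triangle}: split $\ribext{k}{\alpha}$ degreewise as $\ribext{k}{\beta} \oplus (\ribext{\bullet}{\alpha'}\otimes\ribext{\bullet}{\alpha''})_k$ and observe that the off-diagonal component of the differential is $\pm\Psi$, so that $\ribext{\bullet}{\alpha}$ is the cone of $\Psi$. That is exactly your termwise split short exact sequence with the junction contraction as connecting map, and your checks (that this map is $(-1)^k\Psi$ in total degree $k$, and the Koszul-sign compatibility of the identification) fill in bookkeeping the paper leaves implicit.
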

\begin{proof}
    The proof is similar to lemma \ref{lemma : exact triangle}.
\end{proof}
Most of our computation will be done using the long exact sequence of lemma \ref{lemma : exact triangle dual}. We introduce one more useful tool for our computation : the shuffle product.
\begin{definition}\label{def : shuffle product}
    We define the following product on $\bigoplus_{d,k \geq 0} \ribext{k}{1^d}$ :
    \[
        [\gamma_1,...,\gamma_i] * [\gamma_{i+1},...,\gamma_k] = \sum_{\sigma \in \symgroup{}^{(i,k-i)}} (-1)^{\ell(\sigma)} [\gamma_{\sigma^{-1}(1)},...,\gamma_{\sigma^{-1}(d)}]
    \]
    where $\symgroup{}^{(i,k-i)}$ is the set of $(i,k-i)$-shuffle (see the section 4 of \cite{hashimoto1992quantum}, just before proposition 4.8). Here, $\ribext{\bullet}{1^0}$ denote the complex concentrated in degree $0$, where $\ribext{0}{1^0} = \field []$, $[]$ being a unit for the product.
\end{definition}
This product will be used to lift cycles of $\ribext{\bullet}{\alpha'} \otimes \ribext{\bullet}{\alpha''}$ to cycles of $\ribext{\bullet}{\alpha}$. This is justified by the following lemma.
\begin{lemma}\label{lemma : border of a shuffle}
    The shuffle product equips the complex $\bigoplus_{d,k \geq 0} \ribext{k}{1^d}$ with a structure of graded differential algebra. In other words, let $a \otimes b \in \ribext{i}{1^r} \otimes \ribext{j}{1^s}$. Then
    \[
        \partial(a * b) = \partial(a) * b + (-1)^i a * \partial(b) \ .    
    \]
\end{lemma}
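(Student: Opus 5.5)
The plan is a direct verification on basis elements, reducing the claim to a single identity for $q$-binomial coefficients combined with the sign bookkeeping of shuffles. Since both sides of the claimed Leibniz rule are bilinear in $(a,b)$, it suffices to take $a=[\gamma_1,\dots,\gamma_i]$ and $b=[\gamma_{i+1},\dots,\gamma_k]$. Then $a*b=\sum_{\sigma}(-1)^{\ell(\sigma)}[\gamma_{\sigma^{-1}(1)},\dots,\gamma_{\sigma^{-1}(k)}]$, and applying $\partial$ produces terms in which two adjacent entries $\gamma_{\sigma^{-1}(t)},\gamma_{\sigma^{-1}(t+1)}$ are merged, with sign $(-1)^{t-1}$ and coefficient $\qbinom{\gamma_{\sigma^{-1}(t)}+\gamma_{\sigma^{-1}(t+1)}}{\gamma_{\sigma^{-1}(t)}}$.

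I will sort these terms according to where the two merged entries come from. \emph{Type 1}: both entries come from $a$, so they are $\gamma_s,\gamma_{s+1}$ with $s<i$. \emph{Type 2}: both come from $b$. \emph{Type 3}: one comes from $a$ and the other from $b$.

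For type 1, each such term corresponds bijectively to a shuffle of the merged word $(\gamma_1,\dots,\gamma_s+\gamma_{s+1},\dots,\gamma_i)$ with $b$. I must check that the signs agree: the length of the shuffle and the position $t$ of the merged pair must combine to give $(-1)^{s-1}(-1)^{\ell(\sigma')}$. Here the position $t$ equals $s$ plus the number of $b$-letters placed before $\gamma_s$. Each such $b$-letter also contributes one extra inversion, because it crosses two $a$-letters in the original shuffle but only one in the merged shuffle. These two effects cancel modulo $2$, so the type 1 terms sum to $\partial(a)*b$.

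Type 2 is handled the same way. The only difference is that the position of the merged pair is shifted by all $i$ letters of $a$, which yields the factor $(-1)^i$; the terms therefore sum to $(-1)^i a*\partial(b)$.

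Type 3 is where I expect the real difficulty. These terms must cancel in pairs: merging $\gamma_s$ (from $a$) with $\gamma_r$ (from $b$) gives the same word whether $\gamma_s$ precedes $\gamma_r$ or $\gamma_r$ precedes $\gamma_s$. The two shuffles in such a pair differ by one adjacent transposition, so their signs $(-1)^{\ell(\sigma)}$ are opposite, while the position $t$ is the same in both. The pair therefore contributes $\bigl(\qbinom{x+y}{x}-\qbinom{x+y}{y}\bigr)$ times a single basis element, where $x=\gamma_s$ and $y=\gamma_r$. This vanishes by the symmetry $\qbinom{x+y}{x}=\qbinom{x+y}{y}$.

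The main obstacle is keeping the sign conventions straight. In particular, I must confirm that the convention for $\ell(\sigma)$ and the indexing $\sigma^{-1}$ make the type 1 and type 2 signs come out exactly as claimed. I also need to treat the unit $[]$ and the degree-zero conventions, which are trivial to check.
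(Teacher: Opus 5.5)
Your proof is correct. It reaches the result by a different, more elementary route than the paper.

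The paper does not compute. It identifies $\bigoplus_d \ribext{\bullet}{1^d}$ with the shifted reduced bar complex of the commutative algebra $\qdivided{*}(1)$, where $x^{(a)}x^{(b)}=\qbinom{a+b}{a}x^{(a+b)}$. It then cites the standard fact (Loday, \S 4.2) that the shuffle product makes the bar complex of a commutative algebra a differential graded algebra. Your three-type decomposition is the usual proof of that standard fact, written out for the paper's explicit formulas. Your type 3 cancellation is exactly where commutativity enters, since $\qbinom{x+y}{x}=\qbinom{x+y}{y}$ is the commutativity of $\qdivided{*}(1)$.

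Your route is self-contained and checks the signs directly against the paper's own conventions for $\partial$ and $*$. The paper leaves implicit that these conventions match Loday's. The paper's route is shorter and explains conceptually why the Leibniz rule holds.

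One correction to your type 2 bookkeeping. The position $t$ of the merged pair is shifted only by the number $m'$ of letters of $a$ that precede it, not by all $i$ of them. The other $i-m'$ letters of $a$, those after the pair, each lose one inversion when the two $b$-letters are merged. It is the sum $m'+(i-m')=i$ that produces the factor $(-1)^i$, so your conclusion stands. This is the type 1 computation with the roles reversed: there the two contributions cancel, here they add up.

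Your worry about conventions is also unnecessary, since $\ell(\sigma)=\ell(\sigma^{-1})$. For a shuffle, both count the pairs consisting of a letter of $b$ placed before a letter of $a$.
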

\begin{proof}
    The complex $\bigoplus_{d \geq 0} \ribext{\bullet}{1^d}$ is isomorphic to the (reduced) bar complex of the q-divided power algebra $\qdivided{*}(1)$, shifted by one degree, which is commutative. The shuffle product is the usual shuffle product on the reduced bar complex, see section 4.2 of \cite{loday2013cyclic}.
\end{proof}


We can now state the main result.

\begin{theorem}\label{theorem : ext between symmetric and exterior powers}
    Suppose $\field$ is a field of characteristic $p>0$, and $d \geq 0$. Let
    \[
        A_d = \left \{(a_i)_{i \geq 0} \ | \ a_0 \equiv 0,1 \modulo \ell, \ a_i \equiv 0,1 \modulo p \text{ for } i \geq 1 \text{ and } a_0 + \sum_{i \geq 1} a_i\ell p^{i-1} = d \right \}
    \]
    For $a_0 \equiv 0,1 \modulo \ell$, let
    \[
        c_0(a_0) = \left \{ \begin{array}{cl}
            [\underbrace{\ell-1,1,\ell-1,1,...,\ell-1,1}_{\text{with } \frac{a_0}{\ell} \text{ times } \ell-1,1}] & \text{if } a_0 \equiv 0 \modulo \ell,\\
             
             [1,\underbrace{\ell-1,1,\ell-1,1,...,\ell-1,1}_{\text{with } \frac{a_0-1}{\ell} \text{ times } \ell-1,1}] & \text{if } a_0 \equiv 1 \modulo \ell.
        \end{array} \right .
    \]
    Hence $c_0(a_0) \in \ribext{2\frac{a_0}{\ell}}{1^{a_0}}$ if $a_0 \equiv 0 \modulo \ell$ and $c_0(a_0) \in \ribext{2\frac{a_0-1}{\ell}+1}{1^{a_0}}$ if $a_0 \equiv 1 \modulo \ell$.
    Similarly, for $i \geq 1$ and $a_i \equiv 0,1 \modulo p$, let
    \[
        c_i(a_i) = \left \{ \begin{array}{cl}
            [\underbrace{(p-1)\ell p^{i-1},\ell p^{i-1},...,(p-1)\ell p^{i-1},\ell p^{i-1}}_{\text{with } \frac{a_i}{p} \text{ times } (p-1)\ell p^{i-1},\ell p^{i-1}}] & \text{if } a_i \equiv 0 \modulo p,\\
             
             [\ell p^{i-1},\underbrace{(p-1)\ell p^{i-1},\ell p^{i-1},...,(p-1)\ell p^{i-1},\ell p^{i-1}}_{\text{with } \frac{a_i-1}{p} \text{ times } (p-1)\ell p^{i-1},\ell p^{i-1}}] & \text{if } a_i \equiv 1 \modulo p.
        \end{array} \right .
    \]
    Hence $c_i(a_i) \in \ribext{2\frac{a_i}{p}}{1^{\ell p^{i-1}a_i}}$ if $a_i \equiv 0 \modulo p$ and $c_i(a_i) \in \ribext{2\frac{a_i-1}{\ell}+1}{1^{\ell p^{i-1} a_i}}$ if $a_i \equiv 1 \modulo p$. Finally, for $a = (a_i)_{i \geq 0} \in A_d$ let
    \[
        c(a) = c_0(a_0) * c_1(a_1) * \cdots * c_r(a_r).
    \]
    with $r \geq 0$ big enough so that $a_i = 0$ for $i>r$. Then
    \[
        H_\bullet(1^d) = \bigoplus_{a \in A_d} \field \overline{c(a)}\ .
    \]
    where $\overline{c(a)}$ is the class of $c(a)$ in $H_\bullet(1^d)$.
\end{theorem}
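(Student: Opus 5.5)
The argument is an induction on $d$ that compares $H_\bullet(1^d)$ with the homology of the hook complexes, using the long exact sequence of Lemma \ref{lemma : exact triangle dual}, and then checks that the classes $\overline{c(a)}$ are cycles, are independent, and span.

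\textbf{Step 1: the $c(a)$ are cycles.} For $c_0(a_0)$, each block $[\ell-1,1]$ has boundary $\qbinom{\ell}{1}[\ell] = \qinteger{\ell}[\ell] = 0$, and $[1]$ has boundary $0$. Since $c_0(a_0)$ is a shuffle power, Lemma \ref{lemma : border of a shuffle} then gives $\partial c_0(a_0) = 0$. The same argument handles $c_i(a_i)$: the block $[(p-1)\ell p^{i-1},\ell p^{i-1}]$ has boundary $\qbinom{\ell p^i}{\ell p^{i-1}}[\ell p^i]$. By the $q$-Lucas theorem this coefficient equals $\binom{p^i}{p^{i-1}}$, which vanishes in characteristic $p$.

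One caveat: I would write $c_0(a_0)$ as the plain concatenation displayed in the statement, not as a shuffle power of $[\ell-1,1]$. The shuffle product is graded-commutative, so shuffle powers of odd-degree classes may collapse, and I would need to check which form is a cycle. Concatenations of blocks whose internal partial sums avoid multiples of $\ell$ give cycles directly, because every merge coefficient that could survive is some $\qbinom{m\ell}{k}$ with $\ell\nmid k$, and these vanish.

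\textbf{Step 2: recursion.} I would apply Lemma \ref{lemma : exact triangle dual} with $\alpha=(1^d)$, $\alpha'=(1)$, $\alpha''=(1^{d-1})$ and $\beta=(2,1^{d-2})$. Since $H(1)=\field[1]$, this gives a long exact sequence relating $H_\bullet(1^d)$, $H_\bullet(1^{d-1})$ shifted by one, and $H_\bullet(2,1^{d-2})$.

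The complex $\ribext{\bullet}{2,1^{d-2}}$ can in turn be filtered through the chain
\[
\ribext{\bullet}{a,1^{d-a}} \subset \ribext{\bullet}{a-1,1^{d-a+1}} \subset \cdots,
\]
and at each step the cokernel is $\ribext{\bullet}{a-1}\otimes\ribext{\bullet}{1^{d-a+1}}$. By Proposition \ref{prop : ext between some hook and exterior}, $\ribext{\bullet}{a,1^{b}}$ is acyclic whenever $\ell\nmid a+b$ and $\ell\nmid b$. This lets me discard most hooks and reduce $H_\bullet(1^d)$ to the homology of hooks $(a,1^b)$ with $\ell\mid b$ or $\ell\mid a+b$.

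The outcome should be the dichotomy below, where $d = m\ell + r$ with $0\le r<\ell$:
\begin{itemize}
\item If $r\notin\{0,1\}$, then $H_\bullet(1^d)=0$.
\item If $r\in\{0,1\}$, then $H_\bullet(1^d)$ is $H_\bullet$ of the complex $\ribext{\bullet}{1^m}$ computed with block sizes multiplied by $\ell$ (the subcomplex of compositions all of whose partial sums are divisible by $\ell$), tensored with the factor $c_0$.
\end{itemize}

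\textbf{Step 3: passing to the next prime.} The divisible subcomplex is the bar complex of the divided power algebra twisted by the quantum Frobenius. Its coefficients $\qbinom{\ell x}{\ell y}$ equal the ordinary binomials $\binom{x}{y}$ in $\field$, again by $q$-Lucas. So the same analysis applies with $\ell$ replaced by $p$, iterated over $p$, $p^2$, and so on. This is the classical bar homology of $\Gamma^*$ in characteristic $p$, and it matches the index sets $a_i\equiv 0,1 \bmod p$ in $A_d$. Independence and spanning then follow by a dimension count against the recursion, together with a check that the map $\pi$ carries $\overline{c(a)}$ onto the corresponding classes in $H(1)\otimes H(1^{d-1})$.

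\textbf{Main obstacle.} The hardest part is Step 2: justifying that all the connecting maps $H(\Psi)$ behave as needed, in particular that the non-acyclic hooks contribute exactly the classes $c_0$. I would first try to do this by explicitly computing $\Psi$ on the shuffle lifts of the generators, using Lemma \ref{lemma : border of a shuffle} to lift cycles of $H(\alpha')\otimes H(\alpha'')$ to cycles of $H(\alpha)$.
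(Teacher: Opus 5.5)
There is a genuine gap. Your Step 1 (in the concatenation form of your caveat) and your Step 3 match what the paper does. The case $d\equiv 0 \modulo \ell$, which you yourself call the main obstacle, is left unproved, and it needs two ideas that are not in the proposal.

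Your peeling of first parts does lead to the right place. For $1\le j\le \ell-2$ the cokernel $\ribext{\bullet}{j}\otimes\ribext{\bullet}{1^{d-j}}$ is acyclic because $H_\bullet(1^{d-j})=0$. (The hook proposition applied to $(j,1^{d-j})$ itself never helps, since $\ell\mid d$.) So $H_\bullet(1^d)\simeq H_\bullet(\ell-1,1^{d-\ell+1})$, and you are left with the triangle for $\alpha'=(\ell-1)$, $\alpha''=(1^{d-\ell+1})$, $\beta=(\ell,1^{d-\ell})$.

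\emph{First missing idea.} You need $H_\bullet(\ell,1^{d-\ell})$, and ``discarding hooks'' does not give it. The paper shows that $\ribext{\bullet}{\ell^{d/\ell}}\hookrightarrow\ribext{\bullet}{\ell,1^{d-\ell}}$ is a quasi-isomorphism through the chain $\ribext{\bullet}{\ell^{d/\ell}}\subset\ribext{\bullet}{\ell,1,\ell-1,\ell^{d/\ell-2}}\subset\cdots\subset\ribext{\bullet}{\ell,1^{\ell},\ell^{d/\ell-2}}\subset\cdots$. At each step the triangle has \emph{left} factor a hook $(\ell,1^i)$ with $\ell\nmid i$, which is acyclic by the block argument. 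Only after this does your Step 3 apply.

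\emph{Second missing idea.} You need $H(\Psi)=0$, and computing $\Psi([\ell-1]\otimes c(a))$ on lifts does not show this by itself. The element is generally non-zero on chains. For example, with $d=2\ell$ and $c(a)=[1]*[\ell]$ one gets $\pm[2\ell-1,1]$, which is a boundary only because $\partial[\ell,\ell-1,1]=[2\ell-1,1]$.

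What is needed is a splitting: $\ribext{\bullet}{\ell,1^{d-\ell}}$ is the direct sum of $\ribext{\bullet}{\ell^{d/\ell}}$ and the span of the $[\gamma]$ having some part not divisible by $\ell$.
\begin{itemize}
\item The span is a subcomplex, because merging two non-multiples of $\ell$ into $m\ell$ has coefficient $\qbinom{m\ell}{k}=0$.
\item It is therefore acyclic, by the first idea.
\item $\Psi([\ell-1]\otimes c(a))$ lies in it. A merge with a leading $1$ has coefficient $\qbinom{\ell}{1}=0$, and every other leading part $\gamma_1$ of $c(a)$ gives $\ell-1+\gamma_1\not\equiv 0 \modulo \ell$.
\end{itemize}
So $\Psi([\ell-1]\otimes c(a))$ is a cycle in an acyclic complex, hence a boundary.

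Your forecast of the outcome also has the wrong shape. For $d=m\ell$ the argument gives
\[
0\to H_\bullet(\ell,1^{d-\ell})\to H_\bullet(1^d)\to H_1(\ell-1)\otimes H_{\bullet-1}(1^{d-\ell+1})\to 0 .
\]
The classes with $a_0\ge \ell$, i.e. those containing $[\ell-1,1]$ blocks, come from the cokernel term. Hence $H_\bullet(1^{m\ell})$ contains $c_0(k\ell)$ shuffled with the $\ell$-divisible classes of weight $(m-k)\ell$, for every $0\le k\le m$. It is not the $q=1$ homology of $1^m$ with a single $c_0$ factor. Already $H_\bullet(1^\ell)$ has two classes, $[\ell]$ and $[\ell-1,1]$, where your description gives one.

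Accordingly, spanning in this case comes from the section $\overline{c(a)}\mapsto\overline{c(a_0+\ell-1,a_1,\ldots)}$ of $H(\pi)$ for $\alpha'=(\ell-1)$. Injectivity of $H(\iota)$ is again the statement $H(\Psi)=0$. The map $\pi$ onto $H(1)\otimes H(1^{d-1})$ that you invoke only handles $d\equiv 1 \modulo \ell$; for $d\equiv 0$ and $\ell>2$ its target is zero.
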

Hence, we have the following Ext-groups computations.
\begin{corollary}\label{cor : ext between symmetric and exterior}
    Let $d \geq 0$. Then the dimension of $\Ext{\quantumfunctor{}}{k}(\qsymmetric{d},\qexterior{d})$ is equal to the number of $(a_0,a_1,a_2,...) \in A_d$ such that, letting $a_0 = b_0 \ell + c_0$, $a_i = b_i p + c_i$ (for $i \geq 1$) with $c_i \in \{0,1\}$ for $i \geq 0$, we have
    \[
        d-k = \sum_{i \geq 0} 2 b_i + c_i \ .
    \]
\end{corollary}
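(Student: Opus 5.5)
The corollary should follow directly from Theorem \ref{theorem : ext between symmetric and exterior powers} combined with Corollary \ref{cor : computation of ext between ribbon and exterior using the complexes}, applied to the composition $\alpha = (1^d)$. Proposition \ref{prop : resolution of ribbon by exterior} gives $\Schur{\lambda/\mu} = \qsymmetric{d}$ for this composition, and here $n = d$. Corollary \ref{cor : computation of ext between ribbon and exterior using the complexes} therefore yields
\[
    \Ext{\quantumfunctor{}}{k}(\qsymmetric{d},\qexterior{d}) \simeq H_{d-k}(\ribext{\bullet}{1^d}) .
\]
It remains to compute $\dim H_{d-k}(1^d)$. This is the number of $a \in A_d$ whose basis class $\overline{c(a)}$ lies in homological degree $d-k$.

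The main step is the degree of $c(a)$. The shuffle product is graded, so by Lemma \ref{lemma : border of a shuffle} a shuffle of elements of degrees $i$ and $j$ has degree $i+j$: its terms are brackets with $i+j$ entries. The degree of $c(a) = c_0(a_0) * \cdots * c_r(a_r)$ is therefore $\sum_i \deg c_i(a_i)$. We read these degrees from the definitions in Theorem \ref{theorem : ext between symmetric and exterior powers}.

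For $i = 0$, write $a_0 = b_0\ell + c_0$ with $c_0 \in \{0,1\}$. Then $c_0(a_0)$ has $b_0$ blocks $(\ell-1,1)$, plus a leading $1$ when $c_0 = 1$. Its degree is $2b_0 + c_0$.

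For $i \ge 1$, write $a_i = b_i p + c_i$. Then $c_i(a_i)$ has $b_i$ pairs $((p-1)\ell p^{i-1}, \ell p^{i-1})$, plus a leading entry when $c_i = 1$. Its degree is $2b_i + c_i$. The theorem statement prints the denominator as $\ell$ in the $a_i \equiv 1 \bmod p$ case; this looks like a typo, and counting entries directly gives $2b_i + c_i$.

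Summing gives $\deg c(a) = \sum_{i\ge 0}(2b_i + c_i)$. Since the classes $\overline{c(a)}$ form a basis of $H_\bullet(1^d)$ by Theorem \ref{theorem : ext between symmetric and exterior powers}, $\dim H_{d-k}(1^d)$ is the number of $a \in A_d$ with $\sum_i (2b_i + c_i) = d-k$. This is exactly the count in the corollary.

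The main obstacle is bookkeeping. We need to confirm that each $c(a)$ is homogeneous, meaning the shuffle does not mix degrees. We also need to confirm the indexing convention in Corollary \ref{cor : computation of ext between ribbon and exterior using the complexes}, namely that $\ribext{k}{\alpha}$ sits in homological degree $k$ equal to the number of parts. Both checks are immediate from the definitions.
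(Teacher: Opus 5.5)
Your proposal is correct and follows the paper's own route. The paper derives the corollary directly from Theorem~\ref{theorem : ext between symmetric and exterior powers} together with Corollary~\ref{cor : computation of ext between ribbon and exterior using the complexes} at $\alpha=(1^d)$, which gives $\Ext{\quantumfunctor{}}{k}(\qsymmetric{d},\qexterior{d})\simeq H_{d-k}(\ribext{\bullet}{1^d})$, with each basis class $\overline{c(a)}$ sitting in degree $\sum_i(2b_i+c_i)$ by counting entries. You are also right that the denominator $\ell$ in the degree of $c_i(a_i)$ for $a_i\equiv 1 \modulo p$ is a typo for $p$.
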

We will do the proof of theorem \ref{theorem : ext between symmetric and exterior powers} by induction, but we will first need several lemma.
\begin{lemma}\label{lemma : the cycle}
    Let $a \in A_d$. Then $c(a)$ is a cycle.
\end{lemma}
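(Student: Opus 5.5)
By Lemma \ref{lemma : border of a shuffle}, the shuffle product is a derivation-compatible product, so a shuffle product of cycles is a cycle. It therefore suffices to show that each factor $c_0(a_0)$ and $c_i(a_i)$ (for $i \geq 1$) is a cycle in $\ribext{\bullet}{1^{\ast}}$. I would treat each factor by computing its boundary directly from the formula
\[
    \partial[\gamma] = \sum_{t=1}^{k-1} (-1)^{t-1} \qbinom{\gamma_t + \gamma_{t+1}}{\gamma_t} [\gamma_1,...,\gamma_t + \gamma_{t+1},...,\gamma_{k}] \ ,
\]
and checking that every coefficient $\qbinom{\gamma_t+\gamma_{t+1}}{\gamma_t}$ appearing is zero in $\field$.

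For $c_0(a_0)$, the consecutive pairs $(\gamma_t,\gamma_{t+1})$ are $(\ell-1,1)$ or $(1,\ell-1)$, whose sum is $\ell$. We have $\qbinom{\ell}{1} = \qinteger{\ell} = (q^\ell-1)/(q-1)$, which vanishes in $\field$ because $q$ has order exactly $\ell$ and $q \neq 1$. The same holds for $\qbinom{\ell}{\ell-1}$, by symmetry of the $q$-binomial coefficients. Hence every term of $\partial c_0(a_0)$ vanishes. This covers both the case $a_0 \equiv 0$ and the case $a_0 \equiv 1 \modulo \ell$, since the leading $1$ is followed by $\ell-1$.

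For $c_i(a_i)$ with $i \geq 1$, the adjacent pairs are $((p-1)\ell p^{i-1}, \ell p^{i-1})$ or $(\ell p^{i-1},(p-1)\ell p^{i-1})$, whose sum is $\ell p^{i}$. The key step is the quantum Lucas theorem: for $q$ a primitive $\ell$-th root of unity in characteristic $p$,
\[
    \qbinom{m}{k} = \qbinom{m_0}{k_0}\binom{m'}{k'} \quad \text{in } \field,
\]
where $m = m_0 + \ell m'$ and $k = k_0 + \ell k'$ with $0 \leq m_0,k_0 < \ell$, and the classical binomial coefficient is reduced mod $p$. Here $m_0 = k_0 = 0$, so the coefficient equals $\binom{p^i}{p^{i-1}}$, or $\binom{p^i}{(p-1)p^{i-1}}$. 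By classical Lucas this is $\binom{p}{1}$ times trivial factors, which is $0$ mod $p$. Hence $c_i(a_i)$ is also a cycle.

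The main obstacle is justifying the quantum Lucas theorem. It is standard, for instance in \cite{mathas1999iwahori}. To argue it directly, I would use the $q$-binomial theorem in the ring $\field[x,y]/(yx-qxy)$. There $(x+y)^\ell = x^\ell + y^\ell$, because the middle coefficients $\qbinom{\ell}{j}$ vanish for $0<j<\ell$, each containing the factor $\qinteger{\ell}=0$ in the numerator but not in the denominator. Moreover $x^\ell$ and $y^\ell$ commute, since $q^{\ell^2}=1$. Writing $(x+y)^m = ((x+y)^\ell)^{m'}(x+y)^{m_0}$ and expanding with the ordinary binomial theorem then yields the factorisation. Combining this with the derivation property of the shuffle product proves that $c(a)$ is a cycle.
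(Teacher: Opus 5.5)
Your proposal is correct and follows the paper's proof. The Leibniz rule for the shuffle product reduces everything to the factors $c_i(a_i)$, and every coefficient in their boundary is $\qbinom{\ell}{1}$, $\qbinom{\ell}{\ell-1}$, $\qbinom{\ell p^i}{\ell p^{i-1}}$ or $\qbinom{\ell p^i}{(p-1)\ell p^{i-1}}$, each of which vanishes by the $q$-Lucas theorem.

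The only difference is that you use the two-step form of $q$-Lucas (the $q$-binomial of the residues times an ordinary binomial, justified via the quantum plane) followed by classical Lucas. The paper instead directly invokes the fully expanded mixed version from \cite{desarmenien1982analogue}. A small imprecision: digitwise classical Lucas gives the vanishing factor $\binom{0}{1}$ (or $\binom{0}{p-1}$), not ``$\binom{p}{1}$ times trivial factors''. Iterating $\binom{pm}{pk}\equiv\binom{m}{k}$ does reduce it to $\binom{p}{1}=p\equiv 0$, so your conclusion is unaffected.
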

\begin{proof}
    By lemma \ref{lemma : border of a shuffle}, we only need to prove that the $c_i(a_i)$ are cycle. This is a consequence of Lucas's theorem for $q$-binomial coefficient which can be stated as follow. Let $n,k \geq 0$. In $\field$, consider the decompositions
    \[  
        n = n_0 + \ell n_1 + \ell p n_2 + \cdots + \ell p^{r-1} n_r, \quad 0 \leq n_0 < \ell \ \text{ and } \ 0 \leq n_i < p \ \text{ for } i \geq 1 \ ,
    \]
    \[  
        k = k_0 + \ell k_1 + \ell p k_2 + \cdots + \ell p^{r-1} k_r, \quad 0 \leq k_0 < \ell \ \text{ and } \ 0 \leq k_i < p \ \text{ for } i \geq 1 \ .
    \]
    Then
    \[
        \qbinom{n}{k} = \qbinom{n_0}{k_0} \binom{n_1}{k_1} \binom{n_2}{k_2} \cdots \binom{n_r}{k_r}
    \]
    where only the first binomial coefficient in the product is a $q$-binomial coefficient, the rest being classical binomial coefficient (see proposition 2.2 of \cite{desarmenien1982analogue}). In particular, $\qbinom{n}{k} = 0$ if and only if $n_i < k_i$ for at least one $i$. Consider first $c_0(a_0)$. Computing $\partial(c_0(a_0))$, we have a sum with all coefficients multiple of $\qbinom{\ell}{1} = \qbinom{\ell}{\ell-1}$. This coefficient is equal to $0$ by Lucas's theorem. For $c_i(a_i)$, this is the same things, but with all coefficients multiple of
    \[
        \qbinom{p^i \ell}{p^{i-1} \ell} = \qbinom{p^i \ell}{(p-1)p^{i-1} \ell} = 0
    \]
    again by Lucas's theorem.
\end{proof}
Another consequences of Lucas's theorem is the following.
\begin{lemma}\label{lemma : going from classical to quantum}
    Let $\underline{\ribext{\bullet}{\alpha}}$ denote the complex defined as $\ribext{\bullet}{\alpha}$ but with $q=1$ (if it was not already the case). Then the following map is an isomorphism of complex :
    \[
        \underline{\ribext{\bullet}{\alpha}} \to \ribext{\bullet}{\ell \alpha}, \quad [\gamma]\mapsto [\ell \gamma].
    \]
\end{lemma}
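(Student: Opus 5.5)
The plan is to check directly that the map $[\gamma]\mapsto[\ell\gamma]$ is a bijection on bases in each degree, and then to compare the differentials coefficient by coefficient using the $q$-Lucas theorem quoted in the proof of Lemma \ref{lemma : the cycle}.

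\emph{Bijection on bases.} By definition, $\ribext{k}{\alpha}$ has basis $\{[\gamma]\}_{\gamma\in\refinement{k}{\alpha}}$, and similarly for $\underline{\ribext{k}{\alpha}}$. Multiplication by $\ell$ sends a refinement of $\alpha$ of length $k$ to a refinement of $\ell\alpha$ of length $k$, since adding consecutive parts commutes with scaling. Conversely, every part of a refinement of $\ell\alpha$ is a sum of consecutive parts of $\ell\alpha$, so it is divisible by $\ell$ and is $\ell$ times the corresponding part of a refinement of $\alpha$. Hence $\gamma\mapsto\ell\gamma$ is a bijection $\refinement{k}{\alpha}\to\refinement{k}{\ell\alpha}$, and the map is a linear isomorphism in each degree.

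\emph{Compatibility with differentials.} By the explicit formula for $\partial$, this reduces to the identity
\[
\qbinom{\ell\gamma_t+\ell\gamma_{t+1}}{\ell\gamma_t}=\binom{\gamma_t+\gamma_{t+1}}{\gamma_t}\quad\text{in }\field .
\]
Here the right-hand side is the $q=1$ coefficient appearing in $\underline{\ribext{\bullet}{\alpha}}$. The signs $(-1)^{t-1}$ agree on both sides because the lengths and positions are preserved.

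To prove the identity, write $n=\gamma_t+\gamma_{t+1}$ and $m=\gamma_t$. The $q$-Lucas decomposition of $\ell n$ has $n_0=0$, and its remaining digits $(n_1,n_2,\dots)$ are exactly the digits of $n$ in the mixed base $1,p,p^2,\dots$, i.e.\ its ordinary base-$p$ digits; the same holds for $\ell m$. The $q$-Lucas theorem therefore gives
\[
\qbinom{\ell n}{\ell m}=\qbinom{0}{0}\prod_{i\ge1}\binom{n_i}{m_i}.
\]
The classical Lucas theorem in characteristic $p$ identifies this product with $\binom{n}{m}$ modulo $p$, which is the desired equality in $\field$.

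\emph{Main obstacle.} The only delicate point is making sure the $q=1$ complex $\underline{\ribext{\bullet}{\alpha}}$ really has differential coefficients given by ordinary binomial coefficients, reduced in $\field$ of characteristic $p$. This holds because it is defined by the same formula with $q=1$, where $\qbinom{n}{m}$ specializes to $\binom{n}{m}$. A second point is to apply the $q$-Lucas decomposition with the convention that the digit $n_i$ multiplies $\ell p^{i-1}$, so that multiplication by $\ell$ shifts the base-$p$ digits into positions $i\ge1$.
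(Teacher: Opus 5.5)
Your proposal is correct and follows the same route as the paper: the paper's proof is the one-line observation that, by Lucas's theorem, $\binom{\gamma_t+\gamma_{t+1}}{\gamma_t}$ and $\qbinom{\ell\gamma_t+\ell\gamma_{t+1}}{\ell\gamma_t}$ agree, with bijectivity called clear. You supply the details it leaves implicit: the bijection $\refinement{k}{\alpha}\to\refinement{k}{\ell\alpha}$, the $q$-Lucas digit shift, and the final appeal to classical Lucas.
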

\begin{proof}
    By Lucas's theorem, the coefficients $\binom{\gamma_t + \gamma_{t+1}}{\gamma_t}$ and $\qbinom{\ell \gamma_t + \ell \gamma_{t+1}}{\ell \gamma_t}$ are equal, proving that the map is a morphism of complex, which is clearly bijective.
\end{proof}
We need one last lemma, which will be useful in a crucial step of the proof of the theorem.
\begin{lemma}\label{lemma : going from classical to quantum 2}
    Let $d = d' \ell$. Then the inclusion $\ribext{\bullet}{\ell^{d'}} \xrightarrow{\iota} \ribext{\bullet}{\ell,1^{d-\ell}}$ is a quasi-isomorphism.
\end{lemma}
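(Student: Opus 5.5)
We argue by induction on $d'$, peeling off one block of size $\ell$ at a time with the long exact sequence of Lemma \ref{lemma : exact triangle dual}. To go from $(\ell,1^{d-\ell})$ down to $(\ell^{d'})$, we merge consecutive ones into blocks. We interpolate through the compositions
\[
\alpha^{(j,m)} = (\ell^{j}, m, 1^{d-j\ell-m}), \qquad 1 \le m \le \ell .
\]
Here $\alpha^{(j,\ell)} = \alpha^{(j+1,0)}$ by convention, so $\alpha^{(1,\ell)} \dots$ chains down to $(\ell^{d'})$. Each step $\alpha^{(j,m)} \to \alpha^{(j,m+1)}$ merges the part $m$ with the next part $1$. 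The inclusion $\iota$ of Lemma \ref{lemma : exact triangle dual} then sits in a triangle whose third term is
\[
\ribext{\bullet}{\ell^{j},m}\otimes \ribext{\bullet}{1^{d-j\ell-m}} .
\]
Consequently $\iota$ is a quasi-isomorphism as soon as this tensor product is acyclic. Composing all these inclusions gives the inclusion $\ribext{\bullet}{\ell^{d'}} \to \ribext{\bullet}{\ell,1^{d-\ell}}$, which is therefore a quasi-isomorphism.

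It remains to show that $\ribext{\bullet}{\ell^j,m}$ is acyclic for $1 \le m \le \ell-1$ (here $j \ge 1$ and the total degree $j\ell+m$ is not divisible by $\ell$). Since $\mathbb{k}$ is a field, the Künneth formula reduces acyclicity of the tensor product to acyclicity of this first factor. We first treat $j=0$: the complex $\ribext{\bullet}{m}$ is $\field[m]$ alone, which is not acyclic. Because of this, the chain has to be started correctly, and it begins at $\alpha^{(1,1)} = (\ell,1^{d-\ell})$ (so with $j=1$ and $m\geq 1$).

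For $\ribext{\bullet}{\ell^j,m}$ we run a second induction on $j$, again using Lemma \ref{lemma : exact triangle dual}, now with $\alpha = (\ell^j,m)$, $\alpha' = (\ell^j)$, $\alpha''=(m)$ and $\beta = (\ell^{j-1},\ell+m)$.
\begin{enumerate}
\item The map $\pi$ lands in $H(\ell^j)\otimes \field[m]$. The connecting map $\Psi$ multiplies by a coefficient $\qbinom{\gamma_i+m}{m}$ where $\ell \mid \gamma_i$.
\item By Lucas's theorem, $\qbinom{\ell a + m}{m} = \qbinom{m}{m}\binom{a}{0} = 1$, so this coefficient is nonzero.
\end{enumerate}
This suggests that $\Psi$ identifies $\ribext{\bullet}{\ell^j}\otimes \field[m]$ isomorphically with a subcomplex of $\ribext{\bullet}{\ell^{j-1},\ell+m}$. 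Concretely, $\Psi$ is the map adding $m$ to the last part, and it is bijective onto the refinements of $(\ell^{j-1},\ell+m)$ whose last part is at least $\ell+m$.

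A cleaner route avoids the triangles: filter $\ribext{\bullet}{\ell^j,m}$ according to whether the last part of $\gamma$ equals $m$ or exceeds it. The two pieces are $[\gamma',m]$ and $[\gamma'+m]$ for $\gamma'$ a refinement of $(\ell^j)$. The differential component $[\gamma',m] \mapsto [\gamma'_{\text{last}}+m]$ has coefficient $\qbinom{\ell a+m}{m}=1$, so it is an isomorphism between these two pieces compatible with the remaining differentials. The complex is then the cone of an isomorphism, hence acyclic. I would verify the sign compatibility using the explicit differential formula; this is the main point to check.

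So the plan has two steps. First, show that $\ribext{\bullet}{\ell^j,m}$ is acyclic for $0<m<\ell$ by the cone-of-isomorphism argument, which rests on Lucas's theorem. Second, chain the quasi-isomorphisms $\iota$ through the intermediate compositions $\alpha^{(j,m)}$ using Lemma \ref{lemma : exact triangle dual} and Künneth. The main obstacle is the bookkeeping in the first step: checking that the off-diagonal differential really splits as the identification $\Psi$, with the signs fitting the cone formula, rather than picking up extra terms from merges that involve the last part.
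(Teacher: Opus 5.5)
Your proof is correct. It differs from the paper's in where the acyclicity comes from.

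\textbf{Comparison with the paper.} The paper also chains inclusions through the short exact sequences behind Lemma~\ref{lemma : exact triangle dual}. Its intermediate compositions are $(\ell,1^{k\ell+i},\ell-i,\ell^{d'-k-2})$: it breaks the blocks to the right of the first one into ones, one at a time. At each step it splits off the hook $\alpha'=(\ell,1^{k\ell+i})$ with $1\le i\le \ell-1$. Acyclicity of $\ribext{\bullet}{\ell,1^{k\ell+i}}$ is imported from Proposition~\ref{prop : ext between some hook and exterior}. That proposition rests on the $\ell$-core description of blocks of the $q$-Schur algebra and on the identification $H_{n-*}(\ribext{\bullet}{\alpha})\simeq \Ext{\quantumfunctor{}}{*}(\Schur{\lambda/\mu},\qexterior{d})$.

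You instead keep the full blocks on the left and split off $\alpha'=(\ell^j,m)$. You then prove acyclicity of $\ribext{\bullet}{\ell^j,m}$ directly from the explicit differential and the $q$-Lucas theorem. Your route is elementary and self-contained: it needs no block theory, no Ext interpretation of the complexes, and no standard/costandard vanishing. It also gives acyclicity for a family, the $(\ell^j,m)$, that are mostly not hooks. The paper's route is shorter only because Proposition~\ref{prop : ext between some hook and exterior} is needed elsewhere anyway.

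\textbf{The sign check is automatic.} The step you flag as the main obstacle needs no verification.
\begin{itemize}
\item The span $B$ of the $[\gamma]$ whose last part exceeds $m$ is a subcomplex, since merging never shrinks the last part.
\item The span $A$ of the $[\gamma',m]$ is then the quotient complex.
\item The only component of $\partial$ from $A$ to $B$ is $f[\gamma',m]=\pm\qbinom{\gamma'_s+m}{m}[\gamma'_1,\dots,\gamma'_s+m]=\pm[\gamma'_1,\dots,\gamma'_s+m]$. This map is bijective in each degree.
\end{itemize}
Because $\partial^2=0$, the map $f$ anticommutes with the induced differentials on $A$ and $B$. It therefore induces the connecting map of $0\to B\to\ribext{\bullet}{\ell^j,m}\to A\to 0$. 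A bijection that (anti)commutes with the differentials is an isomorphism on homology, so the complex is acyclic. You do not need to track signs or compare the internal differentials with those of $\ribext{\bullet}{\ell^j}$.
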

\begin{proof}
    Consider the long exact sequence of lemma \ref{lemma : exact triangle dual} with $\alpha' = (\ell,1)$ and $\alpha'' = (\ell-1,\ell^{d'-2})$. By proposition \ref{prop : ext between some hook and exterior}, $H_*(\alpha') = 0$, showing that the inclusion
    \[
        \ribext{\bullet}{\ell^{d'}} \hookrightarrow \ribext{\bullet}{\ell,1,\ell-1,\ell^{d'-2}}
    \]
    is a quasi-isomorphism. We continue like that with $\alpha' = (\ell,1^i)$, $\alpha'' = (\ell-i, \ell^{d'-2})$ for $i=2,3,...,\ell-1$. In each case, the inclusion are quasi-isomorphism and hence the inclusion
    \[
        \ribext{\bullet}{\ell^{d'}} \hookrightarrow \ribext{\bullet}{\ell,1^{\ell},\ell^{d'-2}}
    \]
    is a quasi-isomorphism. We continue with $\alpha' = (\ell,1^{\ell+i})$, $\alpha'' = (\ell-i, \ell^{d'-3})$ for $i=1,2,...,\ell-1$, showing that the inclusions
    \[
        \ribext{\bullet}{\ell^{d'}} \hookrightarrow \ribext{\bullet}{\ell,1^{2\ell},\ell^{d'-3}}
    \]
    is a quasi-isomorphism. We reduce like that the number of $\ell$ at the end, until there is no more left, to obtain the quasi-isomorphism needed.
\end{proof}
We can now prove the theorem.
\begin{proof}[Proof of the theorem]

    We do the proof by induction on $d$. For $d=0$, letting $c(0) = [] \in \ribext{0}{1^0}$ where $0$ is the sequence with only zero, the result follows from the fact that $\ribext{0}{1^0} = \field[]$ and $\ribext{\bullet}{1^0}$ is concentrated in degree $0$.

    Now we attack the induction step. We separate in three case :
    \begin{itemize}
        \item The case $d \not \equiv 0,1 \modulo \ell$;
        \item The case $d \equiv 1 \modulo \ell$;
        \item And the case $d \equiv 0 \modulo \ell$;
    \end{itemize}
    The case $d \not \equiv 0,1 \modulo \ell$ is just a special case of proposition \ref{prop : ext between some hook and exterior}. In this case, $A_d$ is empty, since if $(a_i)_{i \geq 0} \in A_d$, then $d \equiv a_0 \equiv 0,1 \modulo \ell$. 
    
    
    We now do the case $d \equiv 1 \modulo \ell$. We use the long exact sequence of lemma \ref{lemma : exact triangle dual} with $\alpha' = (1)$ and $\alpha'' = (1^{d-1})$. In this case, $\beta = (2,1^{d-2})$, and proposition \ref{prop : ext between some hook and exterior} show that $\ribext{\bullet}{\beta}$ is acyclic. 
    We conclude that
    \[
        H_*(\pi) : H_*(1^d) \to H_1(1) \otimes H_{*-1}(1^{d-1})
    \]
    is an isomorphism (remember that $\ribext{\bullet}{1}$ is concentrated in degree $1$, where it is $\field [1]$). By the induction hypothesis, $H_\bullet(1^{d-1})$ has basis the $c(a_0,a_1,...)$, where $(a_0,a_1,...) \in A_{d-1}$. Hence, we need to find a cycle in $\ribext{\bullet}{1^d}$ which is send by $\pi$ to $c(a_0,a_1,...)$ (up to addition by a border). We consider $c(a_0+1,a_1,...)$. First, note that since $a_0 \equiv d-1 \equiv 0 \modulo \ell$, and hence $(a_0+1,a_1,...) \in A_d$. Now, $\pi$ sends an element $[\gamma]$ to a non-zero element of $\ribext{\bullet}{1} \otimes \ribext{\bullet}{1^{d-1}}$ if and only if $\gamma_1 = 1$. Now, let $(\gamma_1,\gamma_2,...,\gamma_k)$ be the compositions obtained by concatenation of the compositions who defined $c_0(a_0+1)$, $c_1(a_1)$,...,$c_r(a_r)$ (with $r$ maximal such that $a_r \neq 0$). Then 
    \[
        c(a_0+1,a_2,...) = \sum_{\sigma} (-1)^{\ell(\sigma)} [\gamma_{\sigma^{-1}(1)},...,\gamma_{\sigma^{-1}(k)}]
    \]
    where the sum run over all $(\alpha_1,...,\alpha_r)$-shuffle where $\alpha_i$ is the size of $c_i(a_i)$. In this sum, the partition $(\gamma_{\sigma^{-1}(1)},...,\gamma_{\sigma^{-1}(k)})$ starts by a $1$ if and only if $\sigma(1) = 1$. Thus,
    \[
        \pi (c(a_0+1,a_1,a_2,...)) = \sum_{\sigma \text{ such that } \sigma(1) = 1} (-1)^{\ell(\sigma)} [1] \otimes [\gamma_{\sigma^{-1}(2)},...,\gamma_{\sigma^{-1}(k)}] \ .
    \]
    Note that $(\gamma_2,...,\gamma_k)$ is the compositions obtained by concatenation of $c_0(a_0)$, $c_1(a_1)$,...,$c_r(a_r)$. Thus, this sums is equal to $[1] * c(a_0,a_1,...)$. Hence the inverse of the isomorphism $H_*(\pi)$ is given by
    \[
        \overline{c(a_0,a_1,...)} \mapsto \overline{c(a_0+1,a_1,...)} \ .
    \]
    Since $(a_0,a_1,...) \mapsto (a_0+1,a_1,...)$ is a bijection from $A_{d-1}$ to $A_d$, this shows that the induction hold in this case.

    Finally, we do the case $d \equiv 0 \modulo \ell$.
    This is the most difficult part. We use the long exact sequence of lemma \ref{lemma : exact triangle dual} with $\alpha' = (\ell-1)$ and $\alpha'' = 1^{d-\ell+1}$. 
    We will prove later that the maps $H_*(\Psi)$ are $0$, and hence that we have short exact sequences
    \[
        0 \to H_*(\ell,1^{d-\ell}) \xrightarrow{H_*(\iota)} H_*(\ell-1,1^{d-\ell+1})  \xrightarrow{H_*(\pi)} H_1(\ell-1) \otimes H_{*-1}(1^{d-\ell+1}) \to 0
    \]
    We first use these short exact sequences.
    
    \begin{itemize}
        \item On one hand, consider $(a_0,a_1,a_2,...) \in A_{d - \ell + 1}$. Then $(a_0+\ell-1,a_1,a_2,...) \in A_d$, and as before, we can prove that
        \[
            \pi(c(a_0+\ell-1,a_1,a_2,...)) = [\ell-1] \otimes c(a_0,a_1,a_2,...).
        \]
        Hence $\overline{c(a_0,a_1,a_2,...)} \mapsto \overline{c(a_0+ \ell -1,a_1,a_2,...)}$ defines a section of $H_*(\pi)$.
        \item On the other hand, our induction hypothesis and the lemmas \ref{lemma : going from classical to quantum} and \ref{lemma : going from classical to quantum 2} show that 
        \[
            H_\bullet(\ell,1^{d-\ell}) = \bigoplus_{a \in \underline{A_{d/\ell}}} \field c(0,a_0,a_1,...) \ .
        \]
        where $\underline{A_{d/\ell}}$ is defined as $A_{d/\ell}$ but with $\ell = p$ in the definition (this is the case $q=1$).
    \end{itemize}
    Hence
    \[
        H_\bullet(1^{d}) = \left ( \bigoplus_{a \in A_{d - \ell + 1}} \field c(a_0+\ell-1,a_1,a_2,...) \right ) \oplus \left ( \bigoplus_{a \in \underline{A_{d/\ell}}} \field c(0,a_0,a_1,a_2,...) \right ) \ .
    \]
    Since the map
    \[
        \begin{array}{rcl}
            A_{d - \ell + 1} \sqcup \underline{A_{d/\ell}} & \to & A_d\\
            a & \mapsto & \left \{ \begin{array}{cl}
                (a_0 + \ell-1,a_1,a_2,...) & \text{if } a \in A_{d - \ell + 1}, \\
                (0,a_0,a_1,a_2,...) & \text{if } a \in \underline{A_{d/\ell}},
            \end{array} \right .
        \end{array}
    \]
    is a bijection, this proves the result for this case. Hence, to end this proof, we just need to show that the map $H_*(\Psi)$ in the long exact sequence of lemma \ref{lemma : exact triangle dual} with $\alpha' = (\ell-1), \alpha'' = (1^{d-\ell+1})$ are all zero. In other words, we need to prove that $\Psi([\ell-1] \otimes c(a))$ is a boundary for all $a \in A_{d-\ell+1}$. 
    By definition, $c(a)$ is a sum of scalar multiple of $[\gamma]$ with the $\gamma$ being composition starting by one of the following integer : $1,\ell-1,\ell,(p-1)\ell,p\ell,(p-1)p\ell,...$. Hence, $\Psi([\ell-1] \otimes c(a))$ is a sum of scalar multiple of $[\gamma]$ with $\gamma$ starting by $\ell$ or an integer not divisible by $\ell$. In the first case, the coefficient between the $\gamma$ is a multiple of $\binom{\ell}{1}_{q^-2} = 0$ by definition of $\Psi$. This show that $\Psi([\ell-1] \otimes c(a))$ is inside
    \[
        \bigoplus_{\substack{\gamma \in \refinement{}{\ell,1^{d-\ell}}\\ \text{such that } \ell \text{ does not divide } \gamma}} \field [\gamma] \ .
    \]
    It is in fact a subcomplex of $\ribext{\bullet}{\ell,1^{d-\ell}}$, which is complementary to $\ribext{\bullet}{\ell^{d/\ell}}$. This is a subcomplex since if $\gamma_t$ is not divisible by $\ell$, but $\gamma_t + \gamma_{t+1}$ is, then $\qbinom{\gamma_t + \gamma_{t+1}}{\gamma_t} = 0$. The fact that it is complementary is trivial ($\ribext{\bullet}{\ell^{d/\ell}}$ is spanned by the $[\gamma]$ with $\gamma$ refinement of $(\ell,1^{d-\ell})$ which are divisible by $\ell$). But since the injection $\ribext{\bullet}{\ell^{d/\ell}} \hookrightarrow \ribext{\bullet}{\ell,1^{d-\ell}}$ is a quasi-isomorphism, this shows that this subcomplex is acyclic, and hence $\Psi([\ell-1] \otimes c(a))$ is a boundary.
\end{proof}

In conclusion, we go back to our original aim : computing $\Ext{}{}$-groups in the category of $\field \GL{d}$-module. Remember that $\projbound{\qsymmetric{d}} = 2(\ell - 1)$ for $d \geq \ell$ and $\injbound{\qexterior{d}} = \projbound{\qexterior{d}} = \ell - 1$ (since $\qdual{\qexterior{}} \simeq \qexterior{d}$). Hence we can use corollary \ref{cor : ext between symmetric and exterior} to compute $\Ext{\field \GL{d}}{*}(\beta(\qsymmetric{d}), \beta(\qexterior{d}))$ in degree $* < 3(\ell - 1)$. Since $\beta(\qsymmetric{d}) = \symflag{d}$ and $\beta(\qexterior{d}) = \field$ (the trivial representation), we obtain the following computations.

\begin{example}\label{exam : coh 1}
    If $d \not \equiv 0,1 \modulo \ell$, then
    \[
        \Ext{\field \GL{d}}{*}(\symflag{d}, \field) = 0 \quad \text{for } * \leq 3(\ell-1).
    \]
    Using the duality $\qdual{-}$, we can also interpret this in term of group cohomology :
    \[
        H^*(\GL{d},\dividedmod{d}) = 0 \quad \text{for } * \leq 3(\ell-1).
    \]
    In the rest of this example, we state the results in this form.
    
    
\end{example}

\begin{example}\label{exam : coh 2}
    When $d = k \ell +1$, corollary \ref{cor : ext between symmetric and exterior} implies that $\Ext{\quantumfunctor{}}{*}(\qsymmetric{k\ell+1},\qexterior{k \ell+1}) \simeq \Ext{\quantumfunctor{}}{*}(\qsymmetric{k\ell},\qexterior{k \ell})$. Hence,
    \[
        H^*(\GL{k\ell+1},\dividedmod{k \ell+1}) \simeq H^*(\GL{k\ell},\dividedmod{k \ell}) \quad \text{for } * \leq 3(\ell-1).
    \]
\end{example}


\begin{example}\label{exam : coh 3}
    Using corollary \ref{cor : ext between symmetric and exterior}, we obtain the following equalities.
    \[
        \dim H^*(\GL{\ell},\dividedmod{\ell}) = \left \{ \begin{array}{cl}
            1 & \text{if } *=\ell-2 \text{ or } * = \ell -1 , \\
            0 & \text{if } * < \ell - 2 \text{ or } \ell \leq * < 3\ell-3.
        \end{array} \right .
    \]
    \[
        \dim H^*(\GL{2\ell},\dividedmod{2\ell}) = \left \{ \begin{array}{cl}
            1 & \text{if } *=2\ell-4 \text{ or } * = 2\ell - 3 , \\
            0 & \text{if } * < 2\ell - 4 \text{ or } 2\ell -2 \leq * \leq 3\ell-3.
        \end{array} \right .
    \]
    \[
        \dim H^*(\GL{3\ell},\dividedmod{3\ell}) = \left \{ \begin{array}{cl}
            1 & \text{if } *=3\ell-6 \text{ or } * = 3\ell - 5 , \\
            0 & \text{if } * < 3\ell - 6 \text{ or } * = 3\ell - 4, 3\ell - 3 .
        \end{array} \right .
    \]
    Moreover,
    \[
        \dim H^*(\GL{k\ell},\dividedmod{k\ell}) = 0 \quad \text{for } * \leq 3\ell-3 \text{ when } \left \{ \begin{array}{c}
            k \geq 4, \ \ell \geq 6 \\
            k \geq 5, \ \ell \geq 4 \\
            k \geq 7, \ \ell \geq 3
        \end{array} \right .
    \]
    Note that with our hypothesis on $q$, we always have $\ell \geq 3$ and the characteristic $p$ of $\field$ is $\neq 2$. When $p=3$, we also obtain $\dim H^{3\ell-2}(\GL{3\ell}, \dividedmod{3\ell}) \geq 1$.
\end{example}

\begin{example}\label{exam : coh 4}
    For low value of $\ell$, we can obtain more results.
    \begin{itemize}
        \item For $\ell = 6$, we have $\dim H^{16}(\GL{24},\dividedmod{24}) \geq 1$.
        \item For $\ell = 5$, we have
        \[
            \dim H^{13}(\GL{20},\dividedmod{20}) \geq 1 \quad \text{and} \quad \dim H^{*}(\GL{20},\dividedmod{20}) = \left \{\begin{array}{cl}
                1 & \text{if } *=12, \\
                0 & \text{if } *<12. 
            \end{array} \right .
        \]
        \item For $\ell = 4$, we have
        \[
            \dim H^{10}(\GL{20},\dividedmod{20}) \geq 1 \quad \text{and} \quad\dim H^{*}(\GL{16},\dividedmod{16}) = \left \{\begin{array}{cl}
                1 & \text{if } *=8,9, \\
                0 & \text{if } *<8. 
            \end{array} \right .
        \]
        \item Finally, for $\ell = 3$,
        \begin{align*}
            \dim H^{*}(\GL{12},\dividedmod{12}) & = \left \{\begin{array}{cl}
                1 & \text{if } *=4,5, \\
                0 & \text{if } *<4 \text{ or } * = 6,
            \end{array} \right . \\ 
            \dim H^{*}(\GL{15},\dividedmod{15}) & = \left \{\begin{array}{cl}
                1 & \text{if } *=5,6, \\
                0 & \text{if } *<5. 
            \end{array} \right . \\
            \dim H^{*}(\GL{18},\dividedmod{18}) & = \left \{\begin{array}{cl}
                1 & \text{if } *=6, \\
                0 & \text{if } *<6,
            \end{array} \right . \\ 
            \dim H^{7}(\GL{18},\dividedmod{18}) & \geq 1 \ .
        \end{align*}
            
    \end{itemize}
\end{example}

Using the exponentiality of $\qexterior{*}$, we can also compute some cohomology groups of the form $H^*(\GL{d},\dividedmod{\nu})$ with various composition $\nu$ of $d$.

\bibliography{biblio.bib}
\bibliographystyle{plain}

\end{document}